\newcommand{\ru}[1]{\textbf{R#1}}
\newtheorem{theorem}{Theorem}
\newtheorem{corollary}[theorem]{Corollary}
\newtheorem{definition}[theorem]{Definition}
\crefname{proposition}{Proposition}{Propositions}
\newtheorem{lemma}[theorem]{Lemma}
\newtheorem{conjecture}[theorem]{Conjecture}
\newtheorem{observation}[theorem]{Observation}
\crefname{claim}{claim}{claims}
\Crefname{claim}{Claim}{Claims}
\DeclareMathOperator{\mad}{mad}
\title{\bf $2$-distance, injective, and exact square list-coloring of planar graphs with maximum degree 4}
\author[1]{Hoang La\thanks{xuan-hoang.la@lirmm.fr}}
\author[2]{Kenny \v{S}torgel\thanks{kennystorgel.research@gmail.com}}
\affil[1]{LIRMM, Université de Montpellier, CNRS, Montpellier, France}
\affil[2]{Faculty of Information Studies in Novo mesto, Slovenia}
\begin{document}
  \maketitle

\begin{abstract}
In the past various distance based colorings on planar graphs were introduced. We turn our focus to three of them, namely $2$-distance coloring, injective coloring, and exact square coloring. A $2$-distance coloring is a proper coloring of the vertices in which no two vertices at distance $2$ receive the same color, an injective coloring is a coloring of the vertices in which no two vertices with a common neighbor receive the same color, and an exact square coloring is a coloring of the vertices in which no two vertices at distance exactly $2$ receive the same color. We prove that planar graphs with maximum degree $\Delta = 4$ and girth at least $4$ are $2$-distance list $(\Delta + 7)$-colorable and injectively list $(\Delta + 5)$-colorable. Additionally, we prove that planar graphs with $\Delta = 4$ are injectively list $(\Delta + 7)$-colorable and exact square list $(\Delta + 6)$-colorable.
\end{abstract}

\section{Introduction}

A \emph{$2$-distance coloring} of a graph $G$ is a proper coloring of the vertices of $G$ such that no pair of vertices at distance at most $2$ receive the same color. The \emph{$2$-distance chromatic number} of a graph $G$, denoted by $\chi^2(G)$, is the smallest integer $k$ such that there exists a $2$-distance coloring of $G$ with $k$ colors. An \emph{injective coloring} of a graph $G$ is a coloring of the vertices of $G$ in which every pair of vertices with a common neighbor receive distinct colors. The \emph{injective chromatic number}, denoted by $\chi^i(G)$, is the smallest integer $k$ such that there exists an injective coloring of $G$ with $k$ colors. An \emph{exact square coloring} of a graph $G$ is a coloring of the vertices of $G$ in which every pair of vertices at distance exactly $2$ receive distinct colors. The \emph{exact square chromatic number}, denoted by $\chi^{\#2}(G)$, is the smallest integer $k$ such that there exists an exact square coloring of $G$ with $k$ colors. Given a list assignment $L$ of $G$, a $2$-distance (injective, exact square) coloring $\phi$ of $G$ is called a \emph{$2$-distance (injective, exact square) list-coloring} if $\phi(v)\in L(v)$ for every $v\in V(G)$. A graph $G$ is \emph{$2$-distance (injective, exact square) $k$-choosable} if $G$ admits a $2$-distance list-coloring for any list assignment $L$ with $|L(v)| \ge k$ for each $v\in V(G)$. The \emph{$2$-distance (injective, exact square) choosability number} of $G$, denoted by $\chi_\ell^2(G)$ ($\chi_\ell^i(G)$, $\chi_\ell^{\#2}(G)$), is the smallest integer $k$ such that $G$ is $2$-distance (injective, exact square) $k$-choosable. 

Unlike the $2$-distance coloring, both the injective coloring and the exact square coloring are not necessarily proper, i.e., adjacent vertices can receive the same color, provided that they satisfy certain conditions. For instance, in the exact square coloring two vertices can be colored with the same color if they are adjacent, and in the injective coloring two vertices can be colored with the same color if they are adjacent and do not share a common neighbor. See \Cref{fig:coloring_example} for a comparison of these colorings.

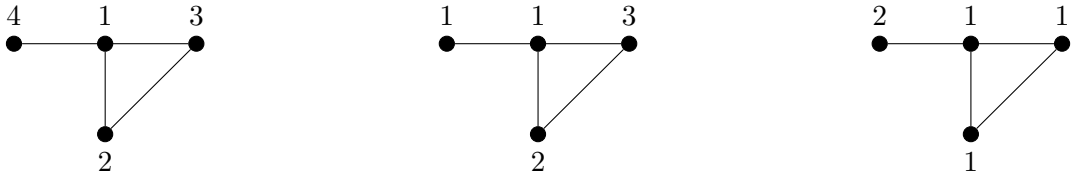
\begin{figure}[H]
\begin{subfigure}[b]{0.33\textwidth}
\centering
\begin{tikzpicture}[scale=0.6]{thick}
\begin{scope}[every node/.style={fill,circle,draw,minimum size=1pt,inner sep=2}]
    \node[label={above:4}] (1') at (8,0) {};
    \node[label={above:1}] (2') at (10,0) {};
    \node[label={below:2}] (20') at (10,-2) {};
    \node[label={above:3}] (3') at (12,0) {};
\end{scope}

\begin{scope}[every edge/.style={draw=black}]
    \path (1') edge (3');
    \path (2') edge (20');
    \path (3') edge (20');
\end{scope}
\end{tikzpicture}
\caption{A 2-distance coloring.}
\end{subfigure}
\begin{subfigure}[b]{0.33\textwidth}
\centering
\begin{tikzpicture}[scale=0.6]{thick}
\begin{scope}[every node/.style={fill,circle,draw,minimum size=1pt,inner sep=2}]
    \node[label={above:1}] (1') at (8,0) {};
    \node[label={above:1}] (2') at (10,0) {};
    \node[label={below:2}] (20') at (10,-2) {};
    \node[label={above:3}] (3') at (12,0) {};
\end{scope}

\begin{scope}[every edge/.style={draw=black}]
    \path (1') edge (3');
    \path (2') edge (20');
    \path (3') edge (20');
\end{scope}
\end{tikzpicture}
\caption{An injective coloring.}
\end{subfigure}
\begin{subfigure}[b]{0.33\textwidth}
\centering
\begin{tikzpicture}[scale=0.6]{thick}
\begin{scope}[every node/.style={fill,circle,draw,minimum size=1pt,inner sep=2}]
    \node[label={above:2}] (1') at (8,0) {};
    \node[label={above:1}] (2') at (10,0) {};
    \node[label={below:1}] (20') at (10,-2) {};
    \node[label={above:1}] (3') at (12,0) {};
\end{scope}

\begin{scope}[every edge/.style={draw=black}]
    \path (1') edge (3');
    \path (2') edge (20');
    \path (3') edge (20');
\end{scope}
\end{tikzpicture}
\caption{An exact square coloring.}
\end{subfigure}
\caption{A 2-distance coloring, injective coloring, and exact square coloring of the same graph.\label{fig:coloring_example}}
\end{figure}

It is therefore easy to observe that every $2$-distance coloring is an injective coloring and every injective coloring is an exact square coloring. Thus, for every graph $G$ we have the following chain of inequalities:

$$\chi^{\#2}(G)\le \chi^i(G)\le \chi^2(G).$$

Moreover, $\chi^{\#2}(G) = \chi^i(G)$ in the case of triangle-free graphs, i.e., graphs in which no pair of adjacent vertices share a common neighbor.

The notion of distance based colorings was first introduced in 1969 by Kramer and Kramer~\cite{kramer2,kramer1} when they introduced the notion of a $p$-distance coloring. In this type of coloring, we require that vertices at distance at most $p$ receive distinct colors. When $p=1$ we obtain the familiar proper coloring. Thus, a $p$-distance coloring is a generalization of the the classical proper coloring. Throughout the years, $p$-distance colorings, in particular the case when $p=2$, became a focus for many researchers, see~\cite{bi12,havet2,havet,tho18}. Only in recent years, several results were investigated with regards to the $2$-distance coloring of planar graphs, see~\cite{BouDesMeyPie2021,CheMiaZho22,FedHelSub21}. For more of the recent results see also~\cite{la21,lm21g21d3,lm21}. It is easy to observe that for every graph $G$, where $\Delta(G)$ (or simply $\Delta$ when $G$ is clear from the context) is the maximum degree of $G$, $\Delta(G) + 1\le \chi^2(G)\le \Delta^2(G) + 1$. Moreover, the upper bound, which follows from a greedy algorithm, is known to be tight for the family of Moore graphs (see, e.g., \cite{lm21g7d10g8d6}). A famous conjecture of Wegner from 1977~\cite{wegner} states that for planar graphs $\chi^2(G)$ is linear in terms of $\Delta$.

\begin{conjecture}[Wegner \cite{wegner}]
\label{conj:Wegner}
Let $G$ be a planar graph with maximum degree $\Delta$. Then,
$$
\chi^2(G) \leq \left\{
    \begin{array}{ll}
        7, & \mbox{if } \Delta\leq 3, \\
        \Delta + 5, & \mbox{if } 4\leq \Delta\leq 7,\\
        \left\lfloor\frac{3}{2}\Delta\right\rfloor + 1, & \mbox{if } \Delta\geq 8.
    \end{array}
\right.
$$
\end{conjecture}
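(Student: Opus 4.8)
Conjecture~\ref{conj:Wegner} is Wegner's conjecture, which after more than four decades remains open in general; I can therefore only describe the strategy by which one would attack it, namely the same strategy that has produced the known partial results and the weaker bounds established in this paper. The plan is the standard one for structural results on planar graphs: assume for contradiction that the conjecture fails, choose a counterexample $G$ with $|V(G)|+|E(G)|$ minimum for the relevant value of $\Delta$, write $k$ for the claimed right-hand side, and derive a contradiction by combining \emph{reducibility} with \emph{discharging}. It is natural to treat the three ranges $\Delta\le 3$, $4\le\Delta\le 7$, and $\Delta\ge 8$ separately, since the amount of slack between $k$ and the trivial bound $\chi^2(G)\le\Delta^2+1$ — and hence the difficulty — differs wildly: the regime $4\le\Delta\le 7$, where $k=\Delta+5$, is by far the most demanding.

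\textbf{Step 1: reducible configurations.} First I would assemble a list of local configurations that cannot occur in $G$. Typical members are: a vertex of small degree with a small distance-$2$ ball; two vertices of small degree that are adjacent or share a neighbour; a short face all of whose incident vertices have bounded degree; and various prescribed neighbourhoods of a vertex of degree $\Delta$. For each such configuration one deletes (or contracts) a bounded piece, invokes minimality to $2$-distance list-color the rest with $k$ colors, and then colors the removed vertices one at a time, the point being that the configuration forces each such vertex to see strictly fewer than $k$ colors within distance $2$, so a free color always remains. For $\Delta\ge 8$ the gap up to $\lfloor\frac32\Delta\rfloor+1$ is wide enough that many of these extensions are almost automatic, and one can also lean on the known asymptotic estimate $\chi^2(G)\le(\frac32+o(1))\Delta$ for the large-$\Delta$ part; for $4\le\Delta\le 7$ every color must be accounted for exactly.

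\textbf{Step 2: discharging.} With the reducible configurations forbidden, I would fix a plane embedding of $G$ and run a discharging argument. Assign to each vertex $v$ the charge $\ch(v)=\deg(v)-6$ and to each face $f$ the charge $\ch(f)=2\deg(f)-6$; by Euler's formula $\sum_v\ch(v)+\sum_f\ch(f)=-12<0$. One then designs local redistribution rules sending charge from vertices of large degree and from long faces toward the low-degree vertices and short faces — precisely the objects that, by Step~1, must be surrounded by high-degree vertices and long faces — and verifies that after discharging every vertex and every face has nonnegative charge, contradicting the negative total. Calibrating these rules so that the low-degree vertices and triangles are fully compensated while no high-degree vertex or long face is overdrawn is the technical core of the argument.

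\textbf{Main obstacle.} The genuine difficulty, and the reason the conjecture is still open, is that the conjectured bounds are essentially best possible: Wegner's own constructions saturate $\Delta+5$ for $4\le\Delta\le 7$ and $\lfloor\frac32\Delta\rfloor+1$ for $\Delta\ge 8$, so the discharging must finish with \emph{exactly} zero surplus at the extremal configurations, leaving no room for a lossy rule or for a missing reducible configuration. Producing a reducible set and a discharging scheme tight enough to close this gap — in particular pinning down the exact constant rather than merely its asymptotics — has so far resisted every attempt, which is precisely why the present paper settles for the provable, slightly weaker bounds ($2$-distance list $(\Delta+7)$-colorability, improved to $\Delta+5$ under the girth-$4$ hypothesis, and the injective/exact-square analogues) rather than the conjectured values.
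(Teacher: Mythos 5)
The statement you were asked about is Wegner's conjecture, which the paper only \emph{states} (with attribution to~\cite{wegner}) and does not prove --- it has been open since 1977, with only the $\Delta\le 3$ case and various partial or asymptotic results known. You correctly recognized this and, rather than fabricating an argument, gave an honest sketch of the standard reducibility-plus-discharging strategy together with the reason it has not yet succeeded (the conjectured bounds are attained by explicit constructions, so the discharging must close with zero slack). There is nothing in the paper to compare your text against, and your account of the state of the art and of the method is accurate; the only caveat is that what you wrote is a research program, not a proof, which you yourself make clear.
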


If true, then these upper bounds are tight, as there exist graphs that attain them (see~\cite{wegner}). In 2018, the case when $\Delta\le 3$ was proved independently by Thomassen~\cite{tho18} and by Hartke \textit{et al.}~\cite{har16}. Additionally, for $\Delta\geq 8$, Havet \textit{et al.}~\cite{havet} proved that the bound is $\frac{3}{2}\Delta(1+o(1))$. Moreover, \Cref{conj:Wegner} is known to be true for some subfamilies of planar graphs (e.g., $K_4$-minor free graphs~\cite{lwz03}). 

In~\cite{lm21g7d10g8d6}, La and Montassier presented a summary of the latest known results regarding the $2$-distance coloring of planar graphs for different girth values, where \emph{girth} of a graph $G$, denoted by $g(G)$, is defined as the length of the shortest cycle. 
An additional more recent result is due to Bousquet \textit{et al.}~\cite{BouDesMeyPie2021}. They improved a general result, in the case of the $2$-distance coloring, stating that $2\Delta + 7$ colors are sufficient for all planar graphs with maximum degree between $6$ and $31$. Additionally, in~\cite{BouMeyDesPie22}, the same authors proved that 12 colors are sufficient when $\Delta = 4$, the case that we are particularly interested in.

The injective coloring was first introduced in 2002 by Hahn, Kratochv\'{i}l, \v{S}ir\'{a}\v{n}, and Sotteau~\cite{HahnKraSirSott02}. The authors proved that for every graph $G$, $\Delta\le \chi^i(G)\le \Delta^2 - \Delta + 1$. They also characterized the regular graphs which achieve the lower bound, and the graphs which attain the upper bound. In 2005, Doyon, Hahn, and Raspaud~\cite{DoyHahnRas10}\footnote{The manuscript was presented in 2005 and the paper appeared in journal in 2010.} presented the first results on injective colorings of planar graphs and later Chen \textit{et al.}~\cite{ChenHahnRasWang12} proved that for every $K_4$-minor free graph $G$, $\chi^i(G)\le \lceil \frac{3}{2}\Delta\rceil$. In the same paper they also posed the first conjecture which was proven to be incorrect by Lu\v{z}ar and \v{S}krekovski~\cite{LuzSkre15} who provided an infinite family of planar graphs with small maximum degree (between $4$ and $7$), or of even maximum degree, for which the original conjecture is false.
Although, the original conjecture was supported by several results proving that in the case of planar graphs with girth at least $5$, $\Delta + C$ colors are sufficient, where $C$ is a small constant (see, e.g.,~\cite{BorIva11,BuChenRasWang09,CraKimYu11,DongLin13,LuzSkreTan09}). 
Moreover, Lu\v{z}ar and \v{S}krekovski~\cite{LuzSkre15} proposed a new Wegner type conjecture.

\begin{conjecture}[Lu\v{z}ar and \v{S}krekovski~\cite{LuzSkre15}]
\label{conj:injective2}
Let $G$ be a planar graph with maximum degree $\Delta$. Then,
$$
\chi^i(G) \leq \left\{
    \begin{array}{ll}
        5, & \mbox{if } \Delta\leq 3, \\
        \Delta + 5, & \mbox{if } 4\leq \Delta\leq 7,\\
        \left\lfloor\frac{3}{2}\Delta\right\rfloor + 1, & \mbox{if } \Delta\geq 8.
    \end{array}
\right.
$$
\end{conjecture}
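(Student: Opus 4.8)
The plan is to attack all three regimes of the conjecture by the standard \emph{minimal counterexample plus discharging} framework, after reformulating injective coloring as ordinary proper coloring of the auxiliary \emph{conflict graph} $H = H(G)$, whose vertex set is $V(G)$ and in which two vertices are adjacent precisely when they share a common neighbor in $G$ (equivalently, when they lie at distance exactly $2$ in $G$, or are the two endpoints of an edge contained in a triangle). Since $\chi^i(G) = \chi(H)$, it suffices to bound $\chi(H)$ by the three prescribed functions of $\Delta$. I would treat $\Delta \le 3$, $4 \le \Delta \le 7$, and $\Delta \ge 8$ as three separate theorems, because the bound changes character (from a constant, to the additive $\Delta + 5$, to the multiplicative $\lfloor 3\Delta/2\rfloor + 1$) and each regime forces a different catalogue of reducible configurations and a different set of discharging rules. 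The common skeleton is to fix $k$ equal to the claimed bound, suppose $G$ is a planar graph with $\chi^i(G) > k$ minimizing the number of vertices, and derive a contradiction.

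Next I would compile the reducible configurations, i.e.\ the local structures a minimal counterexample cannot contain. The basic mechanism is that if $G$ contains a vertex $v$ whose \emph{injective degree} $d_H(v) = |\{w \neq v : N(v) \cap N(w) \neq \emptyset\}|$ is small, together with a guarantee that the neighbors $N(v)$ can be made rainbow, then an injective $k$-coloring of a smaller graph extends to $v$. The delicate point — and the reason the configurations must be chosen with care rather than read off a crude degree count — is that deleting $v$ also deletes the mutual constraints among the vertices of $N(v)$, which are pairwise adjacent in $H$ through $v$; one therefore cannot simply color $G - v$ and re-insert $v$, but must control the local structure so that $N(v)$ receives distinct colors \emph{and} $v$ retains a free color. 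Since $d_H(v) \le \sum_{u \in N(v)}(\deg_G(u) - 1)$, a low-degree vertex all of whose neighbors also have bounded degree is reducible once $k \ge d_H(v) + 1$; the whole art is to prove, for each regime, that a minimal counterexample avoids an explicit finite family of such low-degree clusters (small-degree vertices adjacent to one another, short paths or small faces of small-degree vertices, and so on).

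With the reducible family in hand I would run a discharging argument on the planar embedding of $G$. Assigning to each vertex and face the initial charge $\mu(x) = \deg(x) - 4$, Euler's formula gives $\sum_x \mu(x) = -8 < 0$, so some element carries negative charge. I would then design local redistribution rules that move charge from high-degree vertices and large faces toward the low-degree vertices that would otherwise end negative, and show, using the absence of the reducible configurations, that after discharging every vertex and every face has nonnegative charge — contradicting the negative total. The rules must be tuned to each regime: for $4 \le \Delta \le 7$ the accounting has to be razor-sharp, since the additive bound $\Delta + 5$ is attained by the extremal constructions of Lu\v{z}ar and \v{S}krekovski, leaving essentially no slack, whereas for $\Delta \ge 8$ the multiplicative $\lfloor 3\Delta/2\rfloor + 1$ affords more room per high-degree vertex but demands that the constant be pinned down exactly rather than asymptotically.

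The hard part — and the reason this remains a conjecture rather than a theorem — is precisely this absence of slack in the middle regime $4 \le \Delta \le 7$: the extremal examples show that any reducibility-plus-discharging scheme must be tight to within a single color, so the reducible family cannot be enlarged by relaxing thresholds, and the discharging must close a deficit of exactly the right size with no margin for error. For $\Delta \ge 8$ the obstruction is of a different flavor: the asymptotically optimal bound $\tfrac{3}{2}\Delta(1 + o(1))$ of Havet \emph{et al.}~\cite{havet} already matches the leading term, but upgrading $(1+o(1))$ to the exact $\lfloor 3\Delta/2\rfloor + 1$ requires controlling the lower-order error term, which a probabilistic argument does not deliver. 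My honest expectation is that the techniques sketched here yield the conjecture in full only under additional hypotheses — bounded girth, or a sparsity bound such as a constraint on $\mad(G)$ — and that a complete proof valid for all $\Delta$ would require a genuinely new idea beyond this discharging toolkit.
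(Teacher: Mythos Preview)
The statement you were asked to prove is a \emph{conjecture}, not a theorem: the paper states it as Conjecture~\ref{conj:injective2}, attributes it to Lu\v{z}ar and \v{S}krekovski, and does not prove it. The paper's own contributions (Theorems~\ref{thm:2dg4}--\ref{thm:exact}) are partial results in the direction of this conjecture --- for instance, Theorem~\ref{thm:injg4} establishes the bound $\chi^i_\ell(G)\le \Delta+5$ only in the special case $\Delta=4$ and girth at least $4$, not for all $4\le\Delta\le 7$ and all planar $G$. There is therefore no proof in the paper to compare your proposal against.

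Your write-up is not a proof either, and to your credit you say so explicitly in the final paragraph: you outline the standard minimal-counterexample-plus-discharging framework, note correctly that the tightness of the extremal constructions leaves no slack in the middle regime, and conclude that a full proof ``would require a genuinely new idea beyond this discharging toolkit.'' That assessment is accurate and matches the state of the art reflected in the paper's \Cref{recap table inj}. What you have written is a reasonable research plan, but the task of supplying the actual catalogue of reducible configurations and the matching discharging rules --- the ``hard part'' you identify --- is precisely the open problem. In short: there is no gap to name in your argument because there is no argument, only a correct acknowledgment that the conjecture is open.
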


Note that since injective coloring is a relaxation of the $2$-distance coloring, proving Wegner's conjecture would prove \Cref{conj:injective2}, except in the case of subcubic graphs, i.e., the class of graphs with maximum degree $3$. Brimkov \textit{et al.}~\cite{BriEdmLazLidMesWal17} proved that $5$ colors suffice for subcubic planar graphs with girth at least $6$, but in general that case is still open. If true, then the conjectured upper bound for subcubic graphs is also tight (see, e.g.,~\cite{LuzSkre15}). For the sake of completeness we present a table summarizing the latest known results regarding the injective chromatic number of planar graphs for different girth values. A somewhat different table was presented in 2017 by Brimkov \textit{et al.}~\cite{BriEdmLazLidMesWal17}.

\begin{table}[!htb]
\begin{center}
\scalebox{0.8}{%
\begin{tabular}{|c||c|c|c|c|c|c|c|c|}
\hline
\backslashbox{$g_0$ \kern-1em}{\kern-1em $\chi^i$} & $\Delta$ & $\Delta+1$ & $\Delta+2$ & $\Delta+3$ & $\Delta+4$ & $\Delta+5$ & $\Delta+6$ & $\Delta+7$\\
\hline \hline
$3$ & \slashbox{\phantom{}}{} & \slashbox{\phantom{}}{}~\cite{ChenHahnRasWang12} & \cancel{$\Delta\geq 4$}~\cite{LuzSkre15} & \makecell{\cancel{$\Delta\geq 4$}~\cite{LuzSkre15}\\$\Delta = 3$~\cite{ChenHahnRasWang12}} & \cancel{$\Delta\geq 4$}~\cite{LuzSkre15} & \cancel{$\Delta\geq 10$}~\cite{LuzSkre15} & \cancel{$\Delta\geq 12$}~\cite{LuzSkre15} & \cellcolor{gray!50}\colorbox{gray!50}{\makecell{\cancel{$\Delta\geq 14$}~\cite{LuzSkre15}\\$\Delta = 4$}}\\
\hline
$4$ & \slashbox{\phantom{}}{} & \cancel{$\Delta\geq 4$}~\cite{LuzSkreTan09} & \cancel{$\Delta\geq 6$}~\cite{LuzSkreTan09} & \makecell{\cancel{$\Delta\geq 8$}~\cite{LuzSkreTan09}\\$\Delta = 3$~\cite{ChenHahnRasWang12}} & \cancel{$\Delta\geq 10$}~\cite{LuzSkreTan09} & \cellcolor{gray!50}\colorbox{gray!50}{\makecell{\cancel{$\Delta\geq 12$}~\cite{LuzSkreTan09}\\$\Delta = 4$}} & \cancel{$\Delta\geq 14$}~\cite{LuzSkreTan09} & \cancel{$\Delta\geq 16$}~\cite{LuzSkreTan09}\\
\hline
$5$ & \slashbox{\phantom{}}{} & & & \makecell{$\Delta = 3$~\cite{ChenHahnRasWang12}\\$\Delta\geq 35$~\cite{DongLin14}} & $\Delta\geq 11$~\cite{BuHua18} & \cellcolor{gray!50}\colorbox{gray!50}{\makecell{$\Delta = 4$\\$\Delta\geq 11$~\cite{BuHua18}}} & $\Delta\geq 3$~\cite{DongLin14} & \\
\hline
$6$ & \slashbox{\phantom{}}{}~\cite{LuzSkreTan09} & $\Delta\geq 17$~\cite{DongLin13} & \makecell{$\Delta = 3$~\cite{BriEdmLazLidMesWal17}\\$\Delta\geq 8$~\cite{BuLu13}} & $\Delta\geq 3$~\cite{DongLin13} & & & & \\
\hline
$7$ & $\Delta\geq 16$~\cite{BorIva11} & $\Delta\geq 7$~\cite{BuLu12} & \makecell{$\Delta = 3$~\cite{LuzSkreTan09}\\$\Delta\geq 4$~\cite{CraKimYu11}} & & & & & \\
\hline
$8$ & $\Delta\geq 10$~\cite{BorIva11} & $\Delta\geq 5$~\cite{BuLu13} & & & & & & \\
\hline
$9$ & $\Delta\geq 9$~\cite{BuLuYang15} & $\Delta\geq 4$~\cite{CraKimYu2010} & & & & & & \\
\hline
$10$ & \makecell{$\Delta\geq 6$~\cite{BorIva11}\\ \cancel{$\Delta=3$}~\cite{LuzSkreTan09}} & $\Delta\geq 3$~\cite{LuzSkreTan09} & & & & & & \\
\hline
$11$ & & & & & & & & \\
\hline
$12$ & $\Delta\geq 5$~\cite{BorIva11} & & & & & & & \\
\hline
$13$ & $\Delta\geq 4$~\cite{CraKimYu2010} & & & & & & & \\
\hline
$19$ & $\Delta\geq 3$~\cite{LuzSkreTan09} & & & & & & & \\
\hline
\end{tabular}}
\caption{Summary of the latest results with a coefficient 1 before $\Delta$ in the upper bound of $\chi^i$ for different girth values while for the crossed out cases there exist counterexamples.}
\label{recap table inj}
\end{center}
\end{table}

\Cref{recap table inj} reads as follows. For example, the result from line ``7'' and column ``$\Delta$'' states that every planar graph $G$ of girth at least 7 and maximum degree $\Delta(G)\ge 16$ satisfies $\chi^i(G)\leq \Delta(G)$. In the first column (``$\Delta$''), the first four crossed out cases follow from the graph in \Cref{fig:injCE}. In this same column, row ``$10$'' with a crossed out value of $\Delta = 3$ corresponds to a construction of a planar graph $G$ with $\Delta(G) = 3$, girth $10$, and $\chi^i(G)\geq \Delta+1$~\cite{LuzSkreTan09}. Similar results in the same vein in columns ``$\Delta+1$'' to ``$\Delta+4$'' in row ``3'' are presented in~\cite{ChenHahnRasWang12} and~\cite{LuzSkre15}. There exists a planar graph $G$ with girth 3 and $\chi^i(G)\geq \lfloor\frac{3}{2}\Delta\rfloor+1$ for all $\Delta\geq 8$~\cite{LuzSkre15}, which justifies the remaining crossed out cases in row ``3''. Similarly, there also exists a planar graph $G$ with girth 4 and $\chi^i(G)\geq \frac{3}{2}\Delta$ for all $\Delta\geq 3$~\cite{LuzSkreTan09}, which justifies the crossed out cases in row ``4''. Every highlighted result without a reference in \Cref{recap table inj} is a part of our contribution in this paper. Note that the highlighted result in row ``5'' follows from the result in row ``4'',

\begin{figure}[H]
\centering
\begin{tikzpicture}
\begin{scope}[every node/.style={circle,thick,draw,minimum size=1pt,inner sep=2}]
    \node[fill] (y3) at (3,2) {};
    
    \node[fill] (y'3) at (3,3) {};
    \node[fill] (y'32) at (2,3) {};
    \node[fill] (y'33) at (4,3) {};
    
    \node[fill] (x'3) at (3,4) {};
    \node[fill] (x'32) at (2,4) {};
    \node[fill] (x'33) at (4,4) {};
    
    \node[fill] (x3) at (3,5) {};
    
    \node[draw=none] at (5.5,2.4) {$\Delta$ vertices};
\end{scope}

\begin{scope}[every edge/.style={draw=black,thick}]
	\path (x3) edge (y3);
	\path (x3) edge (x'32);
	\path (x3) edge (x'33);
	\path (y3) edge (y'32);
	\path (y3) edge (y'33);
	\path (x'32) edge (y'32);
	\path (x'33) edge (y'33);
\end{scope}
\draw (3,3) ellipse (1.5cm and 0.5cm);
\end{tikzpicture}
\caption{A graph with girth 6 and $\chi^i\geq \Delta+1$ (drawn for $\Delta=3$)~\cite{LuzSkreTan09}.}
\label{fig:injCE}
\end{figure}
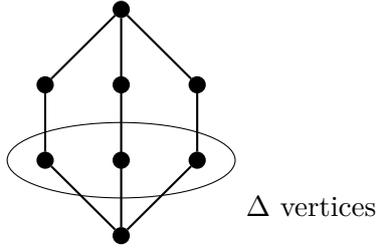

Finally, the study of the exact distance $p$-powers of graphs was started by Simi\'{c}~\cite{Sim83} and exact $p$-distance colorings have first been studied for graphs of bounded expansion~\cite{NesMen12}, see also~\cite{HeuKieQui18}. This parameter received an increasing attention in the last decade, see~\cite{BouEspHarJoa19,FouHocMisNarNasSopVal20,HeuKieQui18,Qui20}. In~\cite{FouHocMisNarNasSopVal20}, Foucaud \textit{et al.} began the first systematic study of the exact square coloring (i.e., exact $2$-distance coloring) with respect to the maximum degree. In their paper, they considered the exact square coloring for some specific classes of subcubic graphs. Both in the case of subcubic $K_4$-minor free graphs and subcubic planar bipartite graphs, they prove that $4$ colors suffice in any exact square coloring. Moreover, they provide examples attaining this bound. Furthermore, they prove that \Cref{conj:injective2} holds for fullerene graphs, i.e., cubic planar graphs in which every face has size $5$ or $6$. Since for every graph $G$ with girth at least $4$ we have equality between $\chi^{\#2}(G)$ and $\chi^i(G)$, all the results in \Cref{recap table inj}, except for the row corresponding to girth at least $3$, hold also for the exact square coloring.

In this paper we present some results regarding the $2$-distance, injective, and exact square coloring of planar graphs of small girth ($3$ or $4$), thus filling some gaps from the literature. We focus on graphs which have maximum degree $4$, also known in the literature as \emph{subquartic graphs}. 

With respect to the $2$-distance coloring, we consider planar graphs with maximum degree $4$ and girth at least $4$ to get the following result for $2$-distance choosability number.

\begin{theorem} \label{thm:2dg4}
If $G$ is a planar graph with $g(G)\geq 4$ and $\Delta(G)=4$, then $\chi^2_\ell(G)\leq \Delta(G) + 7$.
\end{theorem}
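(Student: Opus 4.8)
The plan is to run a classical discharging argument on a minimal counterexample. Let $G$ be a counterexample minimizing $|V(G)|+|E(G)|$ among planar graphs with $g(G)\ge 4$ and $\Delta(G)\le 4$ that fail to be $2$-distance list $11$-colorable, and fix a plane embedding together with a bad list assignment $L$, $|L(v)|\ge 11$. First I would dispose of the case $\Delta(G)\le 3$: then every vertex has at most $3+3\cdot 2=9$ vertices at distance at most $2$, so $G^2$ has maximum degree at most $9$ and $G$ is $2$-distance list $10$-colorable, a contradiction; hence $\Delta(G)=4$. Next, a routine ``delete-and-extend'' step gives $\delta(G)\ge 3$: a vertex $v$ with $d(v)\le 2$ has at most $2+2(\Delta-1)=8<11$ vertices at distance at most $2$, and $G-v$ is a smaller planar graph with girth $\ge 4$ and $\Delta\le 4$, hence $2$-distance list $11$-colorable (by minimality if $\Delta(G-v)=4$, by the paragraph's first case otherwise), so the coloring extends to $v$.

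Second, and this is the technical heart, I would assemble a list of \emph{reducible configurations}. The governing count is that a degree-$3$ vertex $v$ whose three neighbours all have degree $4$ sees up to $3+3\cdot 3=12$ vertices at distance at most $2$ — one more than the available colours — and that each $4$-face incident to $v$ (equivalently, each $4$-cycle through $v$) forces a coincidence among these vertices and lowers the count by one, as does each neighbour of degree at most $3$. So the genuinely dangerous degree-$3$ vertices are those incident to at least two faces of length $\ge 5$ whose neighbours all have degree $4$, and the reducible configurations should certify that such vertices — and low-degree vertices generally — cannot all sit far apart. Reducibility is established by deleting a bounded set of vertices, $2$-distance list-colouring the rest by minimality, and reinserting the deleted vertices in a carefully chosen order; for the borderline patterns in which a last vertex would face exactly $11$ distinct forbidden colours one needs extra leverage — recolouring a distance-$2$ neighbour, or identifying two vertices of $G$ that lie at distance at least $3$ so that a shared colour is forced (possible because, for a planar graph with $\Delta=4$, the distance-$2$ neighbourhood of $v$ cannot be a clique in $G^2$, $\omega(G^2)$ being a small constant well below $12$), while checking that the identification preserves planarity and girth.

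Third comes the discharging. Assign $\mu(x)=d(x)-4$ to each vertex $x$ and $\mu(f)=\ell(f)-4$ to each face $f$; by Euler's formula $\sum_{x\in V}\mu(x)+\sum_{f\in F}\mu(f)=-8$. Since $g(G)\ge 4$ every face has nonnegative charge, and since $\delta(G)\ge 3$ the only negative charges are the $-1$'s on degree-$3$ vertices, while all positive charge lives on faces of length at least $5$. The rules then route charge from $\ge\!5$-faces to the degree-$3$ vertices that need it, possibly using incident degree-$4$ vertices as relays, and are calibrated so that the absence of the reducible configurations forces every degree-$3$ vertex to finish with charge $\ge 0$ while no face or degree-$4$ vertex drops below $0$. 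This makes the total charge nonnegative, contradicting $-8$. (One may restate the structural part for the class $\mad(G)<4$, which $g(G)\ge 4$ guarantees; and if the reducible configurations can be pushed to force $\delta(G)\ge 4$, then $\mad(G)<4$ yields the contradiction with no rules at all — otherwise the face charges are genuinely needed.)

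The step I expect to be the main obstacle is exactly the reducibility of these borderline degree-$3$ configurations: a single degree-$3$ vertex with all neighbours of degree $4$ already lies at the colourability threshold, so the reducible configurations must be strong enough to extract real slack — via identifications or recolourings that must respect planarity and girth — yet numerous enough that a discharging scheme powered solely by faces of length $\ge 5$ can cover every degree-$3$ vertex. Reconciling this tension, and in particular handling degree-$3$ vertices enclosed by large faces (which are simultaneously the hardest to reduce and the ones near which surplus face-charge is available), is where the real work lies.
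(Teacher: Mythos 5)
Your proposal is a strategy outline rather than a proof: it correctly identifies the framework the paper actually uses (minimal counterexample, $\delta(G)\ge 3$, reducible configurations around degree-$3$ vertices and $5$-faces, discharging with $\mu(x)=d(x)-4$ and Euler's formula), but the entire technical core is left unexecuted. You never specify the reducible configurations, never prove any of them reducible, never state the discharging rules, and never verify the final charges — and you say yourself that "this is where the real work lies." In the paper this work consists of: forbidding a $3$-vertex on two $4$-cycles, a $3$-vertex on a $4$-cycle adjacent to a $3$-vertex, and a $3$-vertex with two $3$-neighbours; two delicate lemmas about pairs of $5$-faces each carrying three $3$-vertices (proved by deleting two vertices, locating a pair at distance $\ge 3$ via planarity and girth, and invoking Hall's theorem); and three calibrated rules ($\tfrac13$ and $\tfrac12$ from $5^+$-faces to $3$-vertices depending on whether they lie on a $4$-face, plus a $\tfrac16$ face-to-face transfer) whose balance check is itself a nontrivial case analysis. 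None of this is derivable from the outline, so the proposal does not establish the theorem. Your side remark that $\omega(G^2)$ is "a small constant well below $12$" is also unsubstantiated and is not the mechanism the paper uses; the identifications there are justified configuration by configuration.

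There is also a concrete error in the one step you do carry out, the reduction of a $2$-vertex $u$ with neighbours $v$ and $w$. A $2$-distance list-coloring of $G-\{u\}$ need not restrict to a valid partial coloring of $G$: in $G$ the vertices $v$ and $w$ are at distance $2$ through $u$ and must receive distinct colors, but in $G-\{u\}$ they may be far apart and hence colored identically, in which case no extension to $u$ exists. The paper repairs this by coloring $G-\{u\}+\{vw\}$ when $d_{G-\{u\}}(v,w)\ge 3$ (checking that adding this edge preserves planarity, girth $\ge 4$, and maximum degree $4$), and $G-\{u\}$ otherwise. Your version of the lemma, as written, fails.
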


In the case of the injective coloring, we prove that the same bound as in \Cref{thm:2dg4} also holds, but without any restrictions on the girth.

\begin{theorem} \label{thm:injg3}
If $G$ is a planar graph with $\Delta(G)=4$, then $\chi^i_\ell(G)\leq \Delta(G) + 7$.
\end{theorem}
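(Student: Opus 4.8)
The plan is to argue by contradiction, via a minimal counterexample and the discharging method, in the spirit of the other Wegner-type results quoted above.

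Suppose the statement fails and let $(G,L)$ be a counterexample minimizing $|V(G)|+|E(G)|$: so $|L(v)|\ge 11$ for every vertex $v$, $G$ has no injective $L$-coloring, yet every planar graph with maximum degree at most $4$ and strictly fewer vertices-plus-edges is injectively $11$-choosable (those with maximum degree at most $3$ trivially, the rest by minimality). The opening reductions are cheap. A vertex $v$ has at most $\sum_{u\in N(v)}(d(u)-1)\le d(v)(\Delta-1)$ vertices that an injective coloring forces to avoid its color, so every planar graph with $\Delta\le 3$ is injectively $11$-choosable greedily; hence $\Delta(G)=4$. Moreover, by \Cref{thm:2dg4} and the fact (noted above) that every $2$-distance coloring is injective, if $g(G)\ge 4$ then $G$ would already be injectively $11$-choosable; hence $G$ contains a triangle, and the genuinely new content of the theorem lies there. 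Routine minimality arguments (delete a small-degree vertex, color the rest by minimality, extend) then give that $G$ is $2$-connected with $\delta(G)\ge 3$; the only mildly delicate point, a $2$-vertex inside a triangle, is handled by first recoloring its two adjacent neighbors to be distinct, which is possible because each of them then has fewer than $11$ forbidden colors.

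The core of the proof is a list of reducible configurations, all built around $3$-vertices, triangular faces, and the distribution of $3$-vertices among neighborhoods of vertices. The governing count is the one just used: $v$ has at most $\sum_{u\in N(v)}(d(u)-1)$ \emph{injective neighbors}, and every overlap --- a chord of $N(v)$, a triangle through $v$, a common neighbor of two neighbors of $v$ --- strictly lowers it. Consequently a $4$-vertex whose neighbors all have degree $4$ and whose neighborhoods overlap little has injective degree up to $12$ and is \emph{not} removable by a greedy extension (it must therefore be pinned down by surrounding structure), whereas, for instance, a $4$-vertex incident with at least three triangular faces has injective degree at most $10$ and is reducible; one similarly expects configurations such as two adjacent $3$-vertices, a $3$-vertex with two $3$-neighbors, a triangle incident with a $3$-vertex, or two triangles sharing an edge near low-degree vertices to be reducible. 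Each reducibility proof follows the standard template: delete a carefully chosen vertex or small set, $L$-color the remainder by minimality, and extend --- possibly after uncoloring and recoloring a bounded number of already-colored vertices --- using that every recolored or newly colored vertex sees fewer than $11$ colored injective neighbors. At the tightest configurations, where the naive count equals exactly $11$, one squeezes out the last color via an overlap or a short Kempe-type exchange rather than a plain greedy step.

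For the discharging, assign initial charges summing to a negative constant by Euler's formula. A convenient choice when $\Delta=4$ is $\mu(v)=2d(v)-6$ for vertices and $\mu(f)=\ell(f)-6$ for faces, so that $\sum_v\mu(v)+\sum_f\mu(f)=-12$: then $4$-vertices carry charge $+2$ and faces of length at least $7$ carry positive charge, while $3$-vertices are neutral and faces of length $3$, $4$, $5$ carry the deficit. (The classical weights $\mu(v)=d(v)-4$, $\mu(f)=\ell(f)-4$, with total $-8$, under which $3$-vertices and triangular faces are the only sinks, are a possible alternative.) One then designs discharging rules sending charge from $4$-vertices and large faces to the small faces and the $3$-vertices, and shows --- crucially using that $G$ contains none of the reducible configurations --- that every vertex and every face ends with non-negative charge, contradicting the negative total. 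Hence no counterexample exists, proving \Cref{thm:injg3}.

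The main obstacle is the combination of two features. First, girth $3$ forces triangles to be treated head-on: a triangle both imposes three distinct colors on its vertices and, in the discharging, is a rigid sink that may sit far from any large face, so the scheme must route scarce surplus into triangles through their incident $4$-vertices, which in turn demands restrictive reducible configurations controlling how triangles and small faces cluster around a vertex. Second, with only $11$ colors --- below the greedy threshold $\Delta^2-\Delta+1=13$, and even below the $12$ that would suffice to finish an arbitrary $4$-vertex last --- essentially every reduction must draw its saving from a $3$-vertex or from overlapping neighborhoods, and the tight cases genuinely require recoloring rather than greedy extension. Assembling a family of reducible configurations rich enough for the discharging to close while keeping each of them actually reducible is the delicate balance; the gap between the target $\Delta+7$ and the conjectured $\Delta+5$ supplies exactly the two colors of slack that make this possible.
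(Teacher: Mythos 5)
Your outline reproduces the paper's general strategy (minimal counterexample, reducible configurations, discharging with Euler's formula), and some of your preliminary observations are correct and even slightly streamline the setup --- e.g.\ that $\Delta(G)=4$ may be assumed because the injective degree of any vertex in a subcubic graph is at most $6$, and that $G$ must contain a triangle since otherwise \Cref{thm:2dg4} already gives $\chi^i_\ell(G)\leq\chi^2_\ell(G)\leq 11$. But what you have written is a plan, not a proof: the entire mathematical content of the theorem lies in (a) exhibiting a \emph{specific} list of reducible configurations, (b) proving each one reducible, and (c) writing down discharging rules and verifying that every vertex and face ends with non-negative charge, and none of these three steps is carried out. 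You say "one similarly expects configurations such as \dots to be reducible" and "one then designs discharging rules \dots and shows \dots", which is precisely the work that has to be done. In the paper this requires, among other things: a descending chain of five auxiliary configurations to show that a $3$-cycle cannot be adjacent to a $4$-cycle, each step using \Cref{cor:Hall} after locating a pair of vertices that do not share a neighbor and hence either have disjoint lists or can receive the same color; a detailed analysis of $5$-faces adjacent to three, four, or five $3$-faces; the notion of a \emph{bad} $5$-face (one adjacent to four $3$-faces) together with a rule by which $6^+$-faces subsidize bad $5$-faces, without which the charge of a bad $5$-face would be $1-4\cdot\frac13<0$. None of this is recoverable from your sketch, and the balance you correctly identify as "delicate" is exactly what is missing.

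There is also a concrete technical slip in the one reduction you do assert: you claim that a $4$-vertex $u$ of injective degree at most $10$ is reducible by coloring $G-\{u\}$ and extending greedily. This fails in general, because two neighbors $v,w$ of $u$ whose \emph{only} common neighbor is $u$ may legitimately receive the same color in $G-\{u\}$ but conflict in $G$; reinserting $u$ creates new constraints between already-colored vertices, not just constraints on $u$. This is why the paper's reductions delete edges rather than vertices wherever possible, or delete a vertex and then uncolor a carefully chosen surrounding set whose recoloring is controlled by Hall's theorem. Your parenthetical "possibly after uncoloring and recoloring a bounded number of already-colored vertices" acknowledges the issue but does not resolve it; with only $11$ colors and injective degrees as high as $12$, identifying which vertices to uncolor and why the extension then succeeds is the crux of each reducibility lemma.
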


Furthermore, in the case when girth is at least $4$, we improve the bound implied by \Cref{thm:2dg4} to $\Delta + 5$ colors.

\begin{theorem} \label{thm:injg4}
If $G$ is a planar graph with $g(G)\geq 4$ and $\Delta(G)=4$, then $\chi^i_\ell(G)\leq \Delta(G) + 5$.
\end{theorem}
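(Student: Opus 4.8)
The plan is a reducibility‑plus‑discharging argument. Since injective list‑choosability is monotone under subgraphs, it is equivalent — and more convenient — to prove the bound for the class $\mathcal{C}$ of planar graphs with girth at least $4$ and maximum degree at most $4$. So suppose for contradiction that some graph in $\mathcal{C}$ is not injectively $9$-choosable, and let $G$, together with a list assignment $L$ with $|L(v)|\ge 9$ for all $v$ and admitting no injective $L$-coloring, be such a pair with $|V(G)|$ minimum. Then $G$ is connected, and whenever we delete vertices of $G$, or identify two vertices, so as to stay in $\mathcal{C}$ with fewer vertices, the resulting graph has an injective $L$-coloring that we then try to extend to $G$ — a contradiction.

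First I would establish the reducible configurations. The basic fact, using $g(G)\ge 4$, is that in an injective coloring a vertex $v$ conflicts only with its second neighbours, of which there are at most $\sum_{u\in N(v)}(d(u)-1)\le 3d(v)$. Hence a vertex of degree at most $2$ can always be colored last (at most $6<9$ forbidden colors), so $\delta(G)\ge 3$; likewise a degree-$3$ vertex with a neighbour of degree at most $3$, or one two of whose neighbours share a second common neighbour, has at most $8$ second neighbours and is reducible. Consequently, in $G$ the degree-$3$ vertices form an independent set, each has exactly three degree-$4$ neighbours, and its nine second neighbours are distinct (a ``spread-out claw''). Eliminating the remaining obstructions — essentially clusters of degree-$3$ vertices sharing degree-$4$ neighbours, and degree-$3$ vertices incident to too many short faces — is carried out by deleting a well-chosen vertex set, recoloring the rest by minimality, and extending greedily; in the tightest cases one also identifies two non-conflicting second neighbours of a vertex to force a repeated color, which requires checking that planarity, girth $\ge 4$ and $\Delta\le 4$ survive and that the vertex count strictly drops.

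Then comes discharging. By Euler's formula, assigning $\mu(v)=d(v)-4$ and $\mu(f)=\ell(f)-4$ gives $\sum_v\mu(v)+\sum_f\mu(f)=-8$; since $g(G)\ge 4$ every face has $\mu(f)\ge 0$, and since $\delta(G)\ge 3$ the only negatively charged elements are the degree-$3$ vertices, each with charge exactly $-1$ (so $G$ has at least eight of them). I would then design discharging rules that move charge from the faces of size at least $5$ — the only positively charged elements — toward the degree-$3$ vertices, possibly routed through their degree-$4$ neighbours, and use the reducible configurations to show that afterwards every vertex and every face is nonnegative, contradicting the total $-8$.

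The crux, and the step I expect to be hardest, is making this discharging balance. The budget is extremely tight: degree-$4$ vertices and $4$-faces are neutral, so all surplus lives on faces of size at least $5$, while a planar graph with $\Delta\le 4$ and $g\ge 4$ may contain very many $4$-faces and must contain at least eight degree-$3$ vertices. Everything thus rests on using the reductions to show that degree-$3$ vertices are neither clustered nor surrounded only by short faces — so that each is reached by enough large-face charge — and on bounding how much a single $5$-face (which carries only one unit) can be forced to contribute to the degree-$3$ vertices around it. Producing the precise reducible configurations needed to close these cases, especially the ones requiring vertex identification, is where most of the work will lie.
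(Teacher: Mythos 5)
Your overall architecture (minimal counterexample, reducible configurations, discharging from the $d-4$ charge with only $5^+$-faces positive) matches the paper, but there is a genuine gap at the very first structural step, and it propagates through the rest of the plan. You claim that a vertex $v$ of degree at most $2$ "can always be colored last", hence $\delta(G)\ge 3$. This is false for injective coloring: the two neighbours $a,b$ of a $2$-vertex $v$ see each other in $G$ (they share the neighbour $v$), but after deleting $v$ they may be at distance $\ge 3$ in $G-v$, so the injective $L$-coloring of $G-v$ obtained by minimality may assign them the same color, and then no color on $v$ yields an injective coloring of $G$. The fixes available in the $2$-distance setting do not apply here: adding the edge $ab$ does not force $a\ne b$ in an injective coloring (adjacent vertices may share a color), identifying $a$ and $b$ breaks $\Delta\le 4$, and recoloring a degree-$4$ neighbour is impossible since it can see up to $12>9$ colors. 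The same objection defeats your claimed reduction of a $3$-vertex two of whose neighbours share a second common neighbour. The paper accepts that $2$-vertices are irreducible: it only proves $\delta(G)\ge 2$, establishes a counting lemma (a $4$-vertex adjacent to a $2$-vertex has $d^{\#2}\ge 9$, forcing all its other neighbours and second neighbours to have degree $4$), and then must carry $2$-vertices of charge $-2$ through the discharging, which is why it introduces \emph{bad} $5$-faces (a $5$-face with both a $2$-vertex and a $3$-vertex) and a three-way classification of $3^-$-vertices into small, medium, and large, with $5^+$-faces paying $1$, $1$, $\tfrac12$, $\tfrac13$ respectively.

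Concretely, your downstream claims — that degree-$3$ vertices are the only negatively charged elements, that each has three degree-$4$ neighbours and nine distinct second neighbours — are either unjustified or outright unavailable in the minimal counterexample, so the discharging you sketch is balanced against the wrong population of deficient objects. To repair the proof you would need to either (a) find a genuinely new reduction of $2$-vertices compatible with $\Delta\le 4$ and girth $4$ (the paper's authors state they could not), or (b) redo the structural analysis and discharging with $2$-vertices present, which is essentially the content of the paper's Lemmas on bad faces and facial distance between small vertices. As written, the proposal does not close.
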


Finally, in the case of the exact square coloring, we improve the bound implied by \Cref{thm:injg3} to $\Delta + 6$ colors.

\begin{theorem} \label{thm:exact}
If $G$ is a planar graph with $\Delta(G)=4$, then $\chi^{\#2}_\ell(G)\leq \Delta(G) + 6$.
\end{theorem}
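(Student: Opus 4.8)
The plan is a proof by contradiction using the discharging method, following the same overall strategy as the proof of \Cref{thm:injg3}. Suppose the statement is false, and let $G$ be a counterexample — a planar graph with $\Delta(G)\le 4$ that, for some list assignment $L$ with $|L(v)|=\Delta+6=10$ for all $v$, admits no exact square $L$-coloring — chosen with $|V(G)|+|E(G)|$ minimum. We may assume $G$ is connected, and since a planar graph with maximum degree at most $3$ is exact square $10$-choosable by a trivial greedy argument (every vertex having at most $3\cdot 2=6<10$ vertices at distance exactly $2$), we have $\Delta(G)=4$. The argument then splits into a reducibility phase, producing a list of local configurations that cannot occur in $G$, and a discharging phase, showing that a planar graph avoiding all of them cannot exist.

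For reducibility I would repeatedly use the scheme: assume $G$ contains a configuration $C$, delete a carefully chosen set $S$ of its vertices, apply minimality to the smaller planar graph $G-S$ (whose maximum degree is still at most $4$) to obtain an exact square $L$-coloring, and extend it greedily — possibly after locally recoloring a vertex near $S$ — to $S$. The basic instance is that $G$ has no vertex of degree $1$: deleting a leaf changes no distance between the surviving vertices, and the leaf has at most $3$ vertices at distance exactly $2$, so it can always be recolored. Two phenomena govern how far this scheme can be pushed. First, deleting a vertex $v$ can only increase distances, so a pair of vertices lying at distance exactly $2$ in $G$ through $v$ may become unconstrained in $G-v$; the extension must nevertheless honour this ``phantom'' constraint, which restricts the admissible configurations to those in which any such pair is already at distance at most $2$ in $G-S$ — most conveniently because the two vertices are adjacent (in which case, crucially, they never conflict) or share another common neighbour. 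Second — and this is where exact square coloring is strictly more forgiving than injective coloring — adjacent vertices never conflict, so every triangle incident to a low-degree vertex neutralizes constraints that would be active in the injective setting; this is precisely the slack that upgrades \Cref{thm:injg3} from $\Delta+7$ to $\Delta+6$, and several configurations that are irreducible for injective coloring become reducible here. The quantitative backbone is the bound that a vertex $v$ has at most $\sum_{u\in N(v)}(d(u)-1)$ vertices at distance exactly $2$, which drops below $10$ as soon as $v$ is incident to a triangle, has a neighbour of degree at most $3$, or has two neighbours sharing a second common neighbour; hence the obstructive vertices are essentially the $4$-vertices all of whose neighbours are $4$-vertices and whose neighbourhood carries no triangle.

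For the discharging phase, assign each vertex $v$ the initial charge $\mu(v)=d(v)-4$ and each face $f$ the initial charge $\mu(f)=d(f)-4$; by Euler's formula the total charge is $\sum_v\mu(v)+\sum_f\mu(f)=-8<0$. Only $2$- and $3$-vertices and triangular faces start with negative charge, so I would design discharging rules moving charge from $4$-vertices and from faces of length at least $5$ toward these deficient objects, and then verify, configuration by configuration, that the reducibility constraints force every vertex and every face to finish with non-negative charge, contradicting the total $-8$.

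The step I expect to be the main obstacle is the girth-$3$ part of the discharging. A triangular face carries charge $-1$ and can be replenished only by its three incident vertices, so one must show that a triangle cannot be bordered by too many low-degree vertices, which requires a rather fine catalogue of how $2$- and $3$-vertices may cluster around a triangle; and each reduction feeding this catalogue must be checked honestly against the phantom distance-$2$ constraints above. Making this catalogue simultaneously tight enough for the discharging to balance and sound enough for every reduction to go through is the delicate core of the proof, and it is the leniency of exact square coloring around triangles that lets the count close at $\Delta+6$ rather than $\Delta+7$.
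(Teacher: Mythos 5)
Your outline reproduces the paper's framework exactly — minimal counterexample, a catalogue of reducible configurations obtained by deleting a small set and extending a coloring of the smaller graph, and discharging from the charges $d(\cdot)-4$ on vertices and faces — and it correctly isolates the key leverage point, namely that adjacent vertices never conflict in an exact square coloring, so triangles are cheap. But as written this is a plan, not a proof: beyond the degree-$1$ vertex you prove no reducible configuration, you state no discharging rule, and you verify no final charge; the part you yourself identify as "the delicate core" is deferred rather than resolved. That deferred part is essentially the whole content of the paper's argument: a counting lemma showing that in a minimal counterexample every $4^-$-vertex adjacent to a $3^-$-vertex satisfies $d^{\#2}\geq 10$ (\Cref{exact:counting}), the resulting list of forbidden local configurations (\Cref{exact:config1}), and three lemmas controlling triangles (\Cref{exact:config2,exact:config3,exact:config4}). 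Your informal quantitative claim also does not land as stated: the bound $\sum_{u\in N(v)}(d(u)-1)$ does not "drop below $10$" merely because $v$ has one neighbour of degree $3$ (one degree-$3$ and three degree-$4$ neighbours give $11$); the actual reductions need both endpoints of an edge uncoloured and recoloured in the right order, and the threshold has to be set up with that in mind.

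More importantly, the one concrete design decision you commit to for the hard part points the wrong way. You assert that a triangular face "can be replenished only by its three incident vertices"; since $4$-vertices carry initial charge $0$, that would force a two-stage transfer through vertices and exactly the "fine catalogue of how $2$- and $3$-vertices cluster around a triangle" that you fear. The paper sidesteps this entirely: it proves that a $3$-cycle is adjacent to no $4^-$-cycle (\Cref{exact:config2}), so every $3$-face borders three $5^+$-faces and simply receives $\frac13$ from each of them, while \Cref{exact:config1}(iv, v) shows that no $3^-$-vertex lies on or at distance $1$ from a triangle, killing the clustering problem at the source. So the approach is the right one in spirit, but the proof has not been carried out, and the single guess you make about how to close the hardest case is not the one that works.
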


The structure of the paper is organized as follows. In \Cref{prelim}, we present notations and auxiliary results. The proofs of \Cref{thm:2dg4,thm:injg3,thm:injg4,thm:exact} are then provided in \Cref{2dg4,injg3,injg4,exact}. We conclude the paper with some additional remarks in \Cref{conclusion}.

\section{Preliminaries}\label{prelim}

We denote by $F(G)$ the set of faces of a planar graph $G$. Given two vertices $u$ and $v$ of a graph $G$, and a set $S\subseteq V(G)$, we denote by $d_G(u,v)$ the distance between $u$ and $v$ in $G$, and define the distance between the vertex $v$ and the set $S$ as $d_G(v,S) :=\min\{d_G(v,w)\mid w\in S\}$. For a vertex $v$ of $G$, we define the \emph{neighborhood}, \emph{$2$-distance neighborhood}, and \emph{exact $2$-distance neighborhood} respectively as follows:
\begin{itemize}
    \item $N_G(v):=\{w\in V(G)\mid d_G(v,w)=1\}$,
    \item $N^*_G(v):=\{w\in V(G)\mid 1\leq d_G(v,w)\leq 2\}$,
    \item $N^{\#2}_G(v):=\{w\in V(G)\mid d_G(v,w) = 2\}$.
\end{itemize}
Additionally, we define the \emph{$2$-distance degree} and the \emph{exact $2$-distance degree} respectively as follows:
\begin{itemize}
    \item $d^*_G(v):= |N_G^*(v)|$,
    \item $d^{\#2}_G(v):= |N_G^{\#2}(v)|$.
\end{itemize}
Furthermore, we define by $d$-vertex ($d^+$-vertex, $d^-$-vertex) a vertex of degree $d$ (at least $d$, at most $d$) and by $d$-face ($d^+$-face, $d^-$-face) a face of size $d$ (at least $d$, at most $d$). For $S\subseteq V(G)\cup E(G)$, we denote by $G-S$ the graph obtained from $G$ by removing the elements from $S$. Similarly, we denote by $G+S$ the graph obtained from $G$ by adding the elements from $S$.

We will drop the subscript in the notations when the graph is clear from the context. Also for conciseness, from now on, when we say ``to color'' a vertex, it means to color such a vertex under the constraint of the current $2$-distance, injective, or exact square list-coloring. We also say that a vertex $u$ ``sees'' a vertex $v$ if $u$ and $v$ are at distance at most $2$ from each other (resp. share a neighbor, or are at distance exactly 2 from each other) for the $2$-distance (resp. injective, or exact square) list-coloring.
We abuse this vocabulary to say that $u$ ``sees a color'' $c$ when $u$ sees a vertex colored $c$. 

As a drawing convention for the rest of the figures, black vertices have a fixed degree, all of their edges are drawn, and white vertices may have a higher degree than what is drawn. We often show the reducibility of a configuration by extending a certain precoloring. We denote by $L(u)$ the list of remaining colors for an uncolored vertex $u$. We will also indicate the lower bound on the size of $|L(u)|$ next to the relevant vertices on the figures. 

A useful tool in proving coloring results is also Hall's Theorem, which guarantees distinct colors for a set of vertices.
\begin{theorem}[Hall~\cite{Hal35}]
	\label{thm:Hall}
	A bipartite graph with partition sets $A$ and $B$ admits a matching that covers every vertex
	of $A$ if and only if for every set $S \subseteq A$ the number of vertices of $B$ with a neighbor in $S$ is at least $|S|$.
\end{theorem}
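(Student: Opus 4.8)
The plan is to argue by contradiction through the discharging method. Suppose the theorem fails and let $(G,L)$ be a counterexample minimizing $|V(G)|$, where $L$ is a list assignment with $|L(v)|\ge\Delta(G)+6=10$ for every $v$; thus $G$ is planar with $\Delta(G)=4$, it has no exact square $L$-coloring, yet every planar graph of maximum degree at most $4$ with fewer vertices does admit such a coloring for any lists of size at least $10$. The governing local quantity is the exact $2$-distance degree $d^{\#2}_G(v)=|N^{\#2}_G(v)|$, which satisfies $d^{\#2}_G(v)\le 12$; since every list has size at least $10$, any vertex seeing at most $9$ distinct colors can be colored after all others. The crucial structural asset, and the reason we gain a color over the injective bound of \Cref{thm:injg3}, is that adjacent vertices never conflict in an exact square coloring: every edge inside a second neighborhood (in particular every triangle) strictly lowers the number of colors a vertex actually sees, so dense local structure works in our favor.

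First I would assemble a catalog of reducible configurations. The elementary ones delete a small set $S$ of vertices centered at a vertex of low degree, color $G-S$ by minimality, and then extend the coloring to $S$. When a greedy choice for a single vertex is not enough, I build the bipartite incidence graph between the uncolored vertices of $S$ and their admissible colors and check Hall's condition (\Cref{thm:Hall}) to recolor all of $S$ at once. The delicate point, which I expect to be the main obstacle, is genuinely specific to exact square coloring: deleting a vertex $v$ can move two of its neighbors that were at distance exactly $2$ only through $v$ to distance greater than $2$, so a coloring of $G-v$ need not respect the exact square constraint of $G$ between them. I would control this by choosing $S$ so that every such pair keeps a path of length $2$ in $G-S$ (for instance when the two neighbors share a second common neighbor or lie on a common short cycle), and otherwise by placing both endpoints of the pair into $S$ itself, so that their mutual constraint is reinstated explicitly when $S$ is recolored via \Cref{thm:Hall}. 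Iterating these reductions yields lower bounds on the degrees of vertices and of the neighbors of small-degree vertices, and forbids every local configuration in which some vertex is forced to see at most nine colors.

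The second phase is a discharging argument. I assign to each vertex and each face the initial charge $\ch(x)=\deg(x)-4$; by Euler's formula $|V(G)|-|E(G)|+|F(G)|=2$ together with $\sum_{v}\deg(v)=\sum_{f}\deg(f)=2|E(G)|$, the total charge equals $-8$. Because $\Delta(G)=4$, no vertex carries positive charge, so the only surplus sits on faces of size at least $5$, while the deficient objects are the $2$- and $3$-vertices and the $3$-faces. I then design discharging rules moving charge from large faces toward the deficient vertices and triangles in amounts calibrated to the forbidden configurations, and verify that, in the absence of every reducible configuration, each vertex and each face ends with nonnegative final charge. Summing the final charges then gives a total at least $0$, contradicting the value $-8$ and hence the existence of $G$.

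The technical heart lies in two places. The first is the reducibility bookkeeping just described: making the single-vertex deletions robust against the distance-$2$ relationships that exact square coloring can silently destroy, which is exactly where the Hall-theorem reinstatement of constraints is indispensable. The second is matching the discharging rules to the configuration list so tightly that triangles, abundant here since no girth hypothesis is available, are paid for by nearby large faces; the girth-$3$ setting is what makes this balance subtle, in contrast to the girth-$4$ companion result where all faces already start with nonnegative charge.
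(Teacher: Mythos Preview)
You have proved the wrong statement. The theorem you were asked to prove is Hall's marriage theorem itself: a bipartite graph with parts $A$ and $B$ has a matching saturating $A$ if and only if $|N(S)|\ge |S|$ for every $S\subseteq A$. The paper does not prove this at all; it simply quotes it as a classical result (with a citation to Hall, 1935) and then derives the list-coloring corollary used throughout. Your proposal is instead a sketch of the discharging proof of \Cref{thm:exact} (the bound $\chi^{\#2}_\ell(G)\le \Delta+6$ for planar graphs with $\Delta=4$), in which Hall's theorem appears only as a black-box tool.

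If the intent really was to supply a proof of Hall's theorem, then nothing in your write-up addresses it: there is no bipartite graph, no matching, no alternating-path or minimum-vertex-cover argument. A correct proof would, for instance, take a maximum matching $M$ that misses some $a_0\in A$, grow the set of vertices reachable from $a_0$ by $M$-alternating paths, and use the Hall condition to find an $M$-augmenting path, contradicting maximality; or equivalently invoke K\"onig's theorem. Your discharging outline, while broadly in the spirit of the paper's proof of \Cref{thm:exact}, is irrelevant to the statement at hand.
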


If we consider $A$ to be the set of vertices and $B$ to be the set of colors, then we get the following result for list coloring.

\begin{corollary}[Hall]
\label{cor:Hall}
Let $L$ be a list-assignment on a set of vertices $V$, for every subset $S\subseteq V$ of size $k$, if $|\bigcup_{u\in S}L(u)|\geq k$, then $V$ is $L$-list-colorable where every vertex has a distinct color.
\end{corollary}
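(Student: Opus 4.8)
The plan is to deduce this directly from Hall's Theorem (\Cref{thm:Hall}) by encoding the list-assignment as a bipartite graph, so that the two partition sets play the roles of $A$ and $B$ in the statement of that theorem. First I would construct the bipartite graph $H$ whose partition sets are $A := V$ and $B := \bigcup_{u \in V} L(u)$, the latter being the set of all colors that appear in some list. I join a vertex $u \in A$ to a color $c \in B$ by an edge precisely when $c \in L(u)$. In this graph, the set of colors in $B$ adjacent to some vertex of a subset $S \subseteq A$ is by definition exactly $\bigcup_{u \in S} L(u)$; hence the hypothesis of the corollary, that $|\bigcup_{u \in S} L(u)| \geq |S|$ for every $S$ (reading $k = |S|$), is literally the neighborhood condition required by \Cref{thm:Hall}.

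Applying Hall's Theorem then yields a matching $M$ of $H$ that covers every vertex of $A = V$. I would read off the desired coloring by setting $\phi(u)$ to be the unique color matched to $u$ by $M$. Two properties then need to be verified, both immediate. Each matched edge corresponds to a pair $(u,c)$ with $c \in L(u)$, so $\phi(u) \in L(u)$ and $\phi$ respects the lists; and since $M$ is a matching, no color of $B$ is matched to two distinct vertices of $A$, so $\phi$ assigns pairwise distinct colors. This produces an $L$-list-coloring of $V$ in which every vertex receives a distinct color, which is exactly the conclusion sought.

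Since the statement is a routine specialization of Hall's Theorem, there is no genuine obstacle; the only point requiring care is the faithful translation of the hypothesis. I would make explicit that the neighborhood of $S$ in $B$ equals $\bigcup_{u \in S} L(u)$, and that the $k$ in the statement is to be read as $|S|$, so that the quantifier ``for every $S$ of size $k$'' together with ``$|\bigcup_{u \in S} L(u)| \geq k$'' reproduces Hall's condition verbatim. I would also dispose of the harmless case $S = \emptyset$, for which the condition holds trivially, so that the correspondence is complete and the corollary follows.
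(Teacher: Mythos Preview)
Your proposal is correct and takes essentially the same approach as the paper, which does not give a formal proof but only the one-line hint ``If we consider $A$ to be the set of vertices and $B$ to be the set of colors, then we get the following result for list coloring.'' Your construction of the bipartite graph with $A=V$ and $B=\bigcup_{u\in V}L(u)$ and the verification that the hypothesis is exactly Hall's condition faithfully carries this out.
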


The proof of each theorem will be structured as follows. First we will consider a counterexample $G$ minimizing the number of vertices plus the number of edges. Such counterexamples are always connected, otherwise a component of $G$ would be a smaller counterexample. Thus, the minimum degree is always at least 1.
After studying the structural properties of $G$, we will proceed with a discharging procedure where we always assign the initial charge $\mu(u)=d(u)-4$ to each vertex $u\in V(G)$, and $\mu(f)=d(f)-4$ to each face $f\in F(G)$. By Euler's formula, we must have 
\begin{equation}\label{equation}
\sum_{u\in V(G)} (d(u)-4) + \sum_{f\in F(G)} (d(f)-4)  < 0.
\end{equation}
Finally, to prove that $G$ does not exist, we will redistribute the charges while preserving the total sum and prove that the final charge of each vertex and face is non-negative, which will be a contradiction to \Cref{equation}.

\section{2-distance list-coloring of triangle-free planar graphs}\label{2dg4}

In this section, we provide the proof of \Cref{thm:2dg4}. Let $G$ be a minimal counterexample to \Cref{thm:2dg4}, namely $G$ has maximum degree 4, girth at least 4, and $\chi^2_\ell(G)\geq 12$. 

\subsection{Structural properties of $G$}\label{2dg4:properties}

We start by proving that $G$ cannot be too sparse. More precisely, we have a lower bound on the minimum degree of $G$.

\begin{lemma}\label{2dg4:minimumDegree}
The minimum degree of $G$ is at least 3.
\end{lemma}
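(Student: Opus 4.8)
The plan is the usual minimal-counterexample reduction: assume $G$ contains a vertex of degree at most $2$, delete it, colour what remains, and put the colour back. Since it is already known that the minimum degree of $G$ is at least $1$, suppose for contradiction that $G$ has a vertex $v$ with $1\le d(v)\le 2$, fix a list assignment $L$ with $|L(v)|\ge \Delta+7=11$ for every vertex witnessing $\chi^2_\ell(G)\ge 12$, and set $G':=G-v$. Then $G'$ is planar, has girth at least $4$ (every cycle of $G'$ is a cycle of $G$, so $g(G')\ge g(G)\ge 4$), satisfies $\Delta(G')\le 4$, and has strictly fewer vertices plus edges than $G$.

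First I would colour $G'$ with respect to $L$ restricted to $V(G')$. If $\Delta(G')=4$, then $G'$ is not a counterexample to \Cref{thm:2dg4} by the minimality of $G$, hence $\chi^2_\ell(G')\le 11$; if instead $\Delta(G')\le 3$, then $d^*_{G'}(w)\le \Delta(G')+\Delta(G')(\Delta(G')-1)\le 9$ for every $w\in V(G')$, so a greedy colouring in any vertex order gives $\chi^2_\ell(G')\le 10$ (and if $G'$ is disconnected, one applies this componentwise). In either case, since every list has size at least $11$, $G'$ admits a $2$-distance list-colouring $\phi'$.

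Next I would extend $\phi'$ to $v$. The vertex $v$ only conflicts with the vertices of $N^*_G(v)$, and since every vertex at distance exactly $2$ from $v$ is a neighbour other than $v$ of some vertex in $N_G(v)$,
\[
d^*_G(v)=|N^*_G(v)|\le d(v)+\sum_{u\in N_G(v)}\bigl(d(u)-1\bigr)\le 2+2\cdot 3=8,
\]
using $d(v)\le 2$ and $\Delta(G)=4$. Thus $v$ sees at most $8$ colours while $|L(v)|\ge 11$, so at least one colour remains available; assigning it to $v$ produces a $2$-distance list-colouring of $G$, contradicting the choice of $G$. Hence the minimum degree of $G$ is at least $3$.

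As for the difficulty: this lemma is essentially immediate once the bound $d^*_G(v)\le 8$ is written down, and the only point that needs care is checking that $G-v$ really is a legitimate smaller instance — in particular that a drop of the maximum degree to $3$, or a disconnection, does not break the induction, which is exactly why the greedy fallback $\chi^2_\ell\le \Delta^2+1$ is invoked in the subcubic case. I expect the genuinely delicate structural lemmas (controlling the local configurations around $2$- and $3$-vertices, $3$-faces, etc., that feed the discharging argument) to come afterwards; this one is just the initial sparsity bound.
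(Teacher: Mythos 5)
There is a genuine gap in the degree-$2$ case. When you delete a $2$-vertex $v$ with neighbours $u_1$ and $u_2$, distances in $G-v$ can only increase, and in particular the pair $(u_1,u_2)$, which is at distance $2$ in $G$ via the path $u_1vu_2$, may be at distance at least $3$ in $G-v$. A $2$-distance list-colouring $\phi'$ of $G-v$ is then free to assign $u_1$ and $u_2$ the same colour, and no choice of colour for $v$ repairs this: the resulting colouring of $G$ violates the distance-$2$ constraint between $u_1$ and $u_2$. Your extension step only verifies that $v$ itself sees at most $8$ colours; it does not verify that $\phi'$ remains a valid partial $2$-distance colouring of $G$ on $V(G)\setminus\{v\}$, which is exactly where the argument breaks. (The degree-$1$ case is fine, since a $1$-vertex is never the interior vertex of a path of length $2$, so deleting it changes no distance among the remaining vertices.)

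The paper closes this gap by splitting into two subcases: if $d_{G-v}(u_1,u_2)\le 2$ the colouring of $G-v$ already separates $u_1$ and $u_2$; otherwise one colours $(G-v)+\{u_1u_2\}$ instead, where the added edge forces $u_1$ and $u_2$ to receive distinct colours, and one checks that this modified graph is still planar (replace $v$ by the edge in the embedding), still has girth at least $4$ (because $d_{G-v}(u_1,u_2)\ge 3$), still has maximum degree $4$, and is still smaller in vertices plus edges. Your proposal would be repaired by inserting exactly this case distinction before invoking minimality. On the positive side, your greedy fallback for the case $\Delta(G')\le 3$ is a legitimate (and slightly more careful) way to handle the possibility that the maximum degree drops after deletion, and your bound $d^*_G(v)\le 8$ matches the paper's count for colouring $v$ itself.
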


\begin{proof}
If $G$ contains a 1-vertex $v$, then we can simply remove $v$ and color the resulting graph (with 11 colors), which is possible by minimality of $G$. Then, we add $v$ back and extend the coloring (at most $4$ constraints and $11$ colors).

If $G$ contains a vertex $u$ of degree $2$ with neighbors $v$ and $w$ (see \Cref{fig:2dg4:minimumDegree}), then let $H=G-\{u\}$. We color $H+\{vw\}$ (resp. $H$) by minimality if $d_H(v,w)\geq 3$ (resp. $d_H(v,w)\leq 2$). Observe that in both cases, the girth (at least 4) and maximum degree (4) of the resulting graph are preserved. We extend such coloring to $G$ by coloring $u$ which sees only at most $8$ different colors.
\end{proof}

\begin{figure}[H]
\centering
\begin{subfigure}[b]{0.49\textwidth}
\centering
\begin{tikzpicture}[scale=0.6]{thick}
\begin{scope}[every node/.style={circle,draw,minimum size=1pt,inner sep=2}]
    \node[label={above:$v$}] (1) at (0,0) {};
    \node[fill,label={above:$u$},label={below:$3$}] (2) at (2,0) {};
    \node[label={above:$w$}] (3) at (4,0) {};
\end{scope}

\begin{scope}[every edge/.style={draw=black}]
    \path (1) edge (3);
    \path (1) edge[bend left = 60,dashed] (3);
\end{scope}
\end{tikzpicture}
\caption{If $d_{G-\{u\}}(v,w)\geq 3$, then we add the dashed edge.}
\end{subfigure}
\begin{subfigure}[b]{0.49\textwidth}
\centering
\begin{tikzpicture}[scale=0.6]{thick}
\begin{scope}[every node/.style={circle,draw,minimum size=1pt,inner sep=2}]
    \node[label={above:$v$}] (1) at (0,0) {};
    \node[fill,label={above:$u$},label={below:$3$}] (2) at (2,0) {};
    \node[label={above:$w$}] (3) at (4,0) {};
    \node (20) at (2,2) {};
\end{scope}

\begin{scope}[every edge/.style={draw=black}]
    \path (1) edge (3);
    \path (1) edge (20);
    \path (20) edge (3);
\end{scope}
\end{tikzpicture}
\caption{If $d_{G-\{u\}}(v,w)\leq 2$, then $v$ and $w$ already have different colors in a coloring of $G-\{u\}$.}
\end{subfigure}

\caption{Reducible configurations in \Cref{2dg4:minimumDegree}.}
\label{fig:2dg4:minimumDegree}
\end{figure}

Along the same line, we prove that objects will a smaller neighborhood cannot be close together. Otherwise, $G$ would be colorable.

\begin{lemma}\label{2dg4:config1}
Graph $G$ does not contain the following configurations:
\begin{itemize}
\item[(i)] A $3$-vertex incident to two $4$-cycles.
\item[(ii)] A $3$-vertex incident to a $4$-cycle and adjacent to a $3$-vertex. 
\item[(iii)] A $3$-vertex adjacent with two $3$-vertices.
\end{itemize}
\end{lemma}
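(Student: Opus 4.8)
The plan is to prove each of the three configurations reducible by the standard deletion-recolor argument: assume $G$ contains the configuration, delete a small well-chosen set of vertices (typically the central $3$-vertex), apply minimality of $G$ to color the rest with $11$ colors, and show the deleted vertices can be colored back greedily because their lists are large enough relative to the number of colors they see. Throughout I would track, for an uncolored vertex $u$ left after deletion, the quantity $d^*_G(u)$ (its $2$-distance degree in $G$), since the number of already-colored vertices $u$ sees is at most $d^*_G(u)$ minus the number of still-uncolored vertices among them, and $u$ is colorable as long as $11$ exceeds that count. As in the proof of \Cref{2dg4:minimumDegree}, when deleting a low-degree vertex $u$ one must be careful that the resulting graph still has girth at least $4$ and maximum degree $4$, and when $u$ has exactly two neighbors $v,w$ one may need to add the edge $vw$ (if $d(v,w)\ge 3$ in the reduced graph) so that $v$ and $w$ still receive distinct colors.

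For (i), let $v$ be a $3$-vertex lying on two $4$-cycles. Then the three neighbors of $v$ are pairwise at distance $2$ (each pair shares $v$), and moreover the $4$-cycles force extra adjacencies among the second neighborhood, so $v$ itself has a small exact-$2$-distance neighborhood; more importantly, I would delete $v$, color $G-v$ by minimality, and recolor $v$. The vertex $v$ sees its $3$ neighbors and its $\le ?$ second neighbors; the two $4$-cycles mean the second neighbors are heavily shared, so $d^*_G(v)$ is small — at most $3 + 4 = 7$ naively but actually smaller because of the cycle structure — and certainly $d^*_G(v) \le 8 < 11$, so $v$ can be colored. I would verify the exact count using that each $4$-cycle through $v$ contributes a neighbor $a$ with $a$ adjacent to one of $v$'s other neighbors, collapsing second-neighbor slots.

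For (ii) and (iii), the argument is similar but now two low-degree vertices interact. For (iii), let $v$ be a $3$-vertex adjacent to $3$-vertices $x$ and $y$. Delete $v$; then in $G-v$ both $x$ and $y$ have degree $2$. I would then either contract/handle them as in \Cref{2dg4:minimumDegree} or, cleaner, delete $v$ and color $G-v$, then recolor $v$: it sees at most $3$ neighbors plus at most $2+1+1 = 4$ second neighbors (since two of its neighbors are $3$-vertices contributing only one further neighbor each, and using girth $\ge 4$ so no two of $v$'s neighbors are adjacent and second neighbors are distinct from neighbors), giving $d^*_G(v) \le 7 < 11$. For (ii), with $v$ a $3$-vertex on a $4$-cycle and adjacent to a $3$-vertex, combining the second-neighbor collapse from the $4$-cycle with the low-degree neighbor again yields $d^*_G(v) \le 8 < 11$, so $v$ is colorable. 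In each case the girth-$4$ hypothesis is essential: it guarantees a $3$-vertex's neighbors are pairwise nonadjacent, so the ``neighbors'' and ``second neighbors'' sets are what one expects and the $4$-cycles are the only source of extra coincidences.

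The main obstacle I anticipate is \emph{bookkeeping the second neighborhood precisely}: I must make sure that when I claim $d^*_G(v) \le 8$ I have not double-counted or, worse, undercounted — in particular, a second neighbor of $v$ could coincide with a neighbor of $v$ only if there is a triangle, which girth $\ge 4$ forbids, so that pitfall is avoided, but two distinct $4$-cycles through $v$ could share more than one extra vertex, and I need to handle whether the two $4$-cycles in (i) share an edge or are edge-disjoint. A secondary subtlety is that after deleting $v$, some remaining vertex might drop to degree $1$ or $2$ and the reduced graph might fail girth $\ge 4$ if I add edges carelessly; I would resolve this exactly as \Cref{2dg4:minimumDegree} does, by only adding an edge $vw$ when the two endpoints are far apart in the reduced graph, or by arguing the relevant pair is already at distance $\le 2$. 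Once the second-neighborhood counts are nailed down, each configuration is colorable with $11 < 12$ colors, contradicting that $G$ is a counterexample.
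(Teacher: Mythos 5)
There is a genuine gap. Your reductions for (i) and (iii) delete the $3$-vertex $v$ itself, color $G-v$ by minimality, and then recolor $v$. But a $2$-distance coloring of $G-v$ need not restrict to a valid partial $2$-distance coloring of $G$: the three neighbors of $v$ are pairwise at distance $2$ in $G$ (through $v$), yet in $G-v$ some of these pairs may be at distance $\geq 3$ and therefore receive the \emph{same} color, which no choice of color for $v$ can repair. You anticipate this for $2$-vertices (adding the edge $vw$ as in \Cref{2dg4:minimumDegree}), but the fix does not generalize to a $3$-vertex: to force all three neighbor pairs apart you may need to add up to three edges among them, which either creates a triangle (violating $g\geq 4$) or pushes a neighbor that had degree $4$ in $G$ above degree $4$ in the reduced graph, so minimality can no longer be invoked. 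This is precisely why the paper deletes an \emph{edge} instead of a vertex: in (i) the edge $e$ shared by the two $4$-cycles, in (ii) and (iii) the edge $uv$ joining the two low-degree vertices. Any pair at distance $\leq 2$ in $G$ but not in $G-e$ must contain an endpoint of $e$, so after uncoloring the endpoint(s) of the deleted edge the partial coloring is valid for $G$, and one then recolors the one or two uncolored vertices greedily in the right order (the $4$-cycles and low degrees guarantee enough slack in their lists).

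Two secondary issues: your second-neighborhood counts are off. In (i) the correct bound is $d^*_G(v)\leq 3+9-2=10$ (each $4$-cycle collapses one second-neighbor slot), not $\leq 8$; in (iii) a $3$-neighbor of $v$ contributes two further vertices, not one, so the bound is $d^*_G(v)\leq 3+(2+2+3)=10$, not $7$. Both still satisfy $10<11$, so these slips would not by themselves break the greedy step for $v$ — but the validity of the partial coloring is the real obstruction, and as written your argument does not establish it.
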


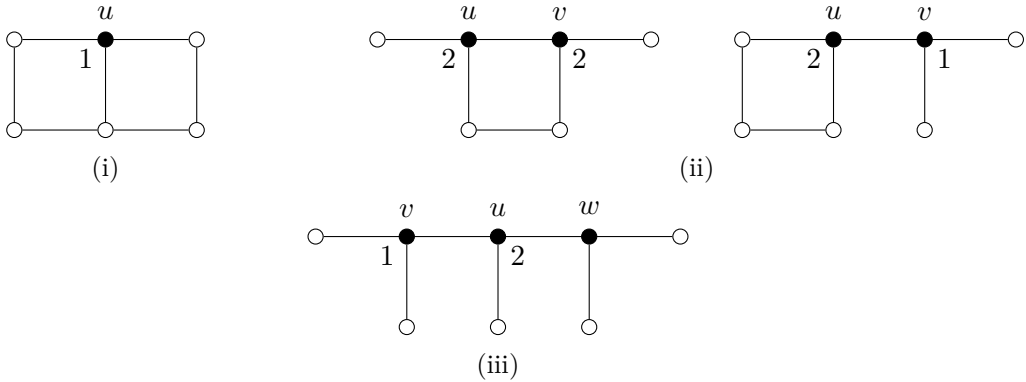
\begin{figure}[H]
\centering
\begin{subfigure}[b]{0.3\textwidth}
\centering
\begin{tikzpicture}[scale=0.6]{thick}
\begin{scope}[every node/.style={circle,draw,minimum size=1pt,inner sep=2}]
    \node (1) at (0,0) {};
    \node (10) at (0,-2) {};
    \node[fill,label={above:$u$},label={below left:$1$}] (2) at (2,0) {};
    \node (20) at (2,-2) {};
    \node (3) at (4,0) {};
    \node (30) at (4,-2) {};
\end{scope}

\begin{scope}[every edge/.style={draw=black}]
    \path (1) edge (3);
    \path (1) edge (10);
    \path (2) edge (20);
    \path (3) edge (30);
    \path (10) edge (20);
    \path (20) edge (30);
\end{scope}
\end{tikzpicture}
\caption{}
\end{subfigure}
\begin{subfigure}[b]{0.6\textwidth}
\centering
\begin{tikzpicture}[scale=0.6]{thick}
\begin{scope}[every node/.style={circle,draw,minimum size=1pt,inner sep=2}]
    \node (1) at (0,0) {};
    \node[fill,label={above:$u$},label={below left:$2$}] (2) at (2,0) {};
    \node (20) at (2,-2) {};
    \node[fill,label={above:$v$},label={below right:$2$}] (3) at (4,0) {};
    \node (30) at (4,-2) {};
    \node (4) at (6,0) {};
    
    \node (1') at (8,0) {};
    \node (10') at (8,-2) {};
    \node[fill,label={above:$u$},label={below left:$2$}] (2') at (10,0) {};
    \node (20') at (10,-2) {};
    \node[fill,label={above:$v$},label={below right:$1$}] (3') at (12,0) {};
    \node (30') at (12,-2) {};
    \node (4') at (14,0) {};
\end{scope}

\begin{scope}[every edge/.style={draw=black}]
    \path (1) edge (4);
    \path (2) edge (20);
    \path (3) edge (30);
    \path (20) edge (30);
    
    \path (1') edge (4');
    \path (1') edge (10');
    \path (2') edge (20');
    \path (3') edge (30');
    \path (10') edge (20');
\end{scope}
\end{tikzpicture}
\caption{}
\end{subfigure}
\begin{subfigure}[b]{0.6\textwidth}
\centering
\begin{tikzpicture}[scale=0.6]{thick}
\begin{scope}[every node/.style={circle,draw,minimum size=1pt,inner sep=2}]
    \node (0) at (-2,0) {};
    \node[fill,label={above:$v$},label={below left:$1$}] (1) at (0,0) {};
    \node (10) at (0,-2) {};
    \node[fill,label={above:$u$},label={below right:$2$}] (2) at (2,0) {};
    \node (20) at (2,-2) {};
    \node[fill,label={above:$w$}] (3) at (4,0) {};
    \node (30) at (4,-2) {};
    \node (4) at (6,0) {};
\end{scope}

\begin{scope}[every edge/.style={draw=black}]
    \path (0) edge (4);
    \path (1) edge (10);
    \path (2) edge (20);
    \path (3) edge (30);
\end{scope}
\end{tikzpicture}
\caption{}
\end{subfigure}
\caption{Reducible configurations in \Cref{2dg4:config1}.}
\label{fig:2dg4:config1}
\end{figure}

\begin{proof} 
We reduce each configuration separately (see \Cref{fig:2dg4:config1}) by precoloring a subgraph of $G$ and extending the coloring to $G$.
\begin{itemize}
    \item[(i)] Let $u$ be a $3$-vertex incident to two $4$-cycles. Let $e$ be the incident edge to $u$ that is also incident to both cycles. Color $G-\{e\}$ by minimality and uncolor $u$. Observe that $d^*_G(u)\leq 10$ so $u$ has at least one available color.
    \item[(ii)] Let $u$ and $v$ be two adjacent $3$-vertices and suppose that $u$ is incident to a $4$-cycle. Color $G-\{uv\}$ and uncolor $u$ and $v$. Observe that $v$ has at least one and $u$ has at least two available colors.
    \item[(iii)] Let $u$ be a $3$-vertex with two $3$-neighbors $v$ and $w$. Color $G-\{uv\}$ and uncolor $u$ and $v$. Observe that $v$ has at least one and $u$ has at least two available colors.
\end{itemize}
This concludes the proof.
\end{proof}

In a $2$-distance coloring, $5$-faces play an important role as they are right at the limit of being ``big'' enough objects that are not directly colorable, but ``small'' enough that a planar graph can contain only those. Thus, we turn our attention to configurations surrounding $5$-faces.

\begin{lemma}\label{2dg4:config2i}
Let $f=v_1v_2v_3v_4v_5$ be a $5$-face in $G$ such that $d(v_1)=d(v_3)=d(v_5)=3$, and $v_2v_3$ is incident to a $4$-face. Let $f'=v'_1v'_2v'_3v'_4v'_5$ be another $5$-face in $G$ such that $d(v'_2)=d(v'_3)=d(v'_5)=3$. Then, $v'_4\neq v_4$ or $v'_5\neq v_5$.
\end{lemma}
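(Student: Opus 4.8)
Here is how I would attack the statement.

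\medskip

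The plan is to argue by contradiction: suppose $v'_4=v_4$ and $v'_5=v_5$, so $f$ and $f'$ are distinct $5$-faces sharing exactly the edge $v_4v_5$, with $v_5=v'_5$ a common $3$-vertex. First I would nail down the local structure. Writing $q,r,s,t$ for the third neighbours of $v_3,v_1,v'_2,v'_3$ respectively (so $q$ lies on the $4$-face on $v_2v_3$), \Cref{2dg4:config1}(ii) applied to $v_3$ forces $d(v_2)=d(q)=d(v_4)=4$, while \Cref{2dg4:config1}(iii) applied to $v_1,v_5,v'_2$ and $v'_3$ (each having a $3$-neighbour) forces $d(r)=d(v'_1)=d(s)=d(t)=4$. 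Together with these degree facts and girth $\ge 4$, all of $v_1,\dots,v_5,v'_1,v'_2,v'_3$ turn out to be pairwise distinct (each coincidence gives a degree mismatch, a triangle, or two faces meeting in a path of length $2$), and one obtains a short list of forced non-adjacencies (e.g.\ $v_5$ lies on no triangle or $4$-cycle, $q\not\sim v_5$, $v'_1\not\sim v_3$, \dots; the contrary always produces a triangle or a second $4$-cycle through a $3$-vertex, contradicting \Cref{2dg4:config1}). Hence $f\cup f'$ is a closed disk bounded by the $8$-cycle $v_1v_2v_3v_4v'_3v'_2v'_1v_5$ with the chord $v_4v_5$ inside, and nothing else of $G$ lies in this disk.

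Next I would set $U=\{v_1,v_3,v_5,v'_2,v'_3\}$, colour $G-U$ (by minimality of $G$ if $\Delta(G-U)=4$, otherwise using the greedy bound $\chi^2_\ell\le\Delta^2+1\le 10$), and let $H$ be the graph on $U$ with $uv\in E(H)$ iff $d_G(u,v)\le 2$. Each $u\in U$ is a $3$-vertex whose second neighbourhood either meets another $3$-vertex or contains the repeated vertex $p$ of the $4$-face on $v_2v_3$, so $|N^*_G(u)|\le 11$, whence $|L(u)|\ge 11-|N^*_G(u)|+|N^*_G(u)\cap U|\ge\deg_H(u)$. The seven pairs $v_1v_3,v_1v_5,v_3v_5,v_3v'_3,v_5v'_2,v_5v'_3,v'_2v'_3$ always lie in $E(H)$, so $K_5$ minus the path $v'_3v_1v'_2v_3$ is a subgraph of $H\subseteq K_5$; checking the three remaining pairs shows $v_1v'_3\in E(H)$ iff $r=t$, $v_1v'_2\in E(H)$ iff $v_2=s$ or $r=s$, and $v_3v'_2\in E(H)$ iff $v_2=s$ or $q=s$, and since $r=s$ (resp.\ $q=s=v_2$) forces a triangle once $r=t$, one gets $H=K_5$ exactly when $r=t$ and $v_2=s$.

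I would then exclude the case $r=t\wedge v_2=s$ by planarity: the edge $v_2v'_2$ (present because $v_2=s$) is a chord of the boundary $8$-cycle that must be drawn in the region outside the disk $f\cup f'$, and this chord separates $v_1$ from $v'_3$ in that outer region, so the common neighbour $\rho:=r=t$ of $v_1$ and $v'_3$ — which by the forced non-adjacencies is none of the named vertices and hence also lies outside the disk — cannot be joined to both of them, a contradiction. Therefore $H\ne K_5$; then $H$ is $2$-connected and is neither complete nor an odd cycle, and for such a $5$-vertex graph one checks directly that it is $L$-colourable whenever $|L(u)|\ge\deg_H(u)$ for all $u$: if $L(v_5)\not\subseteq L(v_1)$ colour $v_5$ with a colour outside $L(v_1)$ and then greedily colour $v'_2,v'_3,v_3,v_1$, symmetrically if $L(v_5)\not\subseteq L(v'_2)$, and if neither holds then $|L(v_1)|,|L(v'_2)|\ge 4$ and the order $v_3,v'_3,v_5,v_1,v'_2$ works. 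This colours $U$ and extends the precolouring to a $2$-distance $L$-colouring of $G$, a contradiction; so $v'_4\ne v_4$ or $v'_5\ne v_5$.

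The step I expect to be most delicate is pinning down $H$: establishing all the forced degrees and non-adjacencies from \Cref{2dg4:config1}, deriving the precise criterion $H=K_5\iff(r=t\wedge v_2=s)$, and then carrying out the planarity argument that rules out this last case cleanly; the final list-colouring of the small graph $H$ is then just a short finite case check.
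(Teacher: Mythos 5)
Your architecture is genuinely different from the paper's: you delete the five $3$-vertices $U=\{v_1,v_3,v_5,v'_2,v'_3\}$, show the conflict graph $H$ on $U$ sits between $K_5$ minus a path and $K_5$, rule out $K_5$ by a planarity/separation argument (which is a nice analogue of the paper's ``four vertices with a common neighbour'' step), and finish by degree-choosability; the paper instead deletes only $v_4,v_5$, uncolours all eight vertices of $f\cup f'$, and closes with a list-disjointness claim plus Hall. However, your reduction step has a genuine gap: a $2$-distance colouring of $G-U$ need not be a partial $2$-distance colouring of $G$. For instance, $v_2$ and $v_4$ are at distance $2$ in $G$ through $v_3$, and $v_3$ is their \emph{only} common neighbour (a second one would put the $3$-vertex $v_3$ on two $4$-cycles, contradicting \Cref{2dg4:config1}(i)), so in $G-U$ they are at distance at least $3$ and may legitimately receive the same colour; the extension to $G$ is then invalid. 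The same failure occurs for $(v_4,q)$, $(v_2,r)$, $(v_4,v'_1)$, $(v'_1,s)$, and $(v_4,t)$, whose unique common neighbour lies in $U$. This is precisely the pitfall the paper's deletion set is chosen to avoid: after removing only $v_4,v_5$, every pair of \emph{coloured} vertices whose distance-$2$ connection runs through a removed vertex also involves a vertex that is kept (merely uncoloured), so all constraints among coloured vertices survive. Patching your version is not routine: adding the six ``virtual'' edges to $G-U$ would, for example, raise $d(v_4)$ above $4$ and destroy the hypotheses needed to invoke minimality.

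A secondary, smaller issue: even granting the reduction, the final ``finite case check'' does not quite close. In the subcase $L(v_5)\subseteq L(v_1)$ and $L(v_5)\subseteq L(v'_2)$, the order $v_3,v'_3,v_5,v_1,v'_2$ can fail at $v'_2$ when $\deg_H(v'_2)=4$ (which occurs exactly when $v_2\sim v'_2$, a configuration your structural observations do not exclude, while $H\neq K_5$ then forces $r\neq t$) and $|L(v'_2)|=4$: the four coloured $H$-neighbours of $v'_2$ may exhaust $L(v'_2)$. The claim you are invoking --- that any graph strictly between $K_5$ minus a path and $K_5$ is degree-choosable --- is true, since such a graph is $2$-connected, not complete, and not an odd cycle, but it needs either a citation of that theorem or the standard argument splitting on whether the lists of the non-adjacent pair (here $v_1$ and $v'_3$) intersect; that latter argument is essentially what the paper carries out with its pairs $(v_1,v'_3)$ and $(v_3,v'_2)$.
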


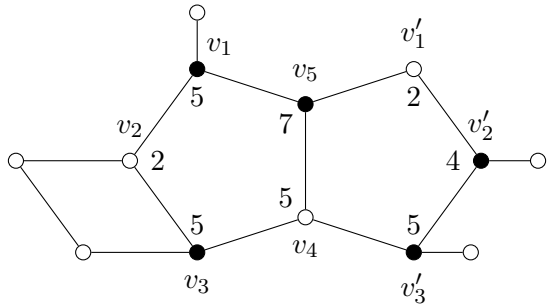
\begin{figure}[H]
\centering
\begin{tikzpicture}[scale=1.5]{thick}
\begin{scope}[every node/.style={circle,draw,minimum size=1pt,inner sep=2}]
    \node[fill,label={above right:$v_1$},label={below:$5$}] (v1) at (0.05,1.31) {};
    \node[label={above:$v_2$},label={right:$2$}] (v2) at (-0.54,0.5) {};
    \node[fill,label={below:$v_3$},label={above:$5$}] (v3) at (0.05,-0.31) {};
    \node[label={below:$v_4$},label={above left:$5$}] (v4) at (1,0) {};
    \node[fill,label={above:$v_5$},label={below left:$7$}] (v5) at (1,1) {};
    \node[label={above:$v'_1$},label={below:$2$}] (v'1) at (1.95,1.31) {};
    \node[fill,label={above:$v'_2$},label={left:$4$}] (v'2) at (2.54,0.5) {};
    \node[fill,label={below:$v'_3$},label={above:$5$}] (v'3) at (1.95,-0.31) {};

    \node (1) at (0.05,1.81) {};
    \node (2) at (-1.54,0.5) {};
    \node (3) at (-0.95,-0.31) {};
    \node (2') at (3.04,0.5) {};
    \node (3') at (2.45,-0.31) {};
\end{scope}

\begin{scope}[every edge/.style={draw=black}]
    \path (v1) edge (v2);
    \path (v2) edge (v3);
    \path (v3) edge (v4);
    \path (v4) edge (v5);
    \path (v5) edge (v1);
    \path (v5) edge (v'1);
    \path (v'1) edge (v'2);
    \path (v'3) edge (v'2);
    \path (v'3) edge (v4);
    \path (v1) edge (1);
    \path (v2) edge (2);
    \path (v3) edge (3);
    \path (2) edge (3);
    \path (v'2) edge (2');
    \path (v'3) edge (3');
\end{scope}
\end{tikzpicture}
\caption{Reducible configuration in \Cref{2dg4:config2i}.}
\label{fig:2dg4:config2i}
\end{figure}

\begin{proof}
Suppose by contradiction that $v'_4=v_4$ and $v'_5=v_5$ (see \Cref{fig:2dg4:config2i}).
First, observe that the vertices of $H=G[\{v_1,v_2,v_3,v_4,v_5,v'_1,v'_2,v'_3\}]$ are all distinct since $g(G)\geq 4$ and due to \Cref{2dg4:config1}(ii). Color $G-\{v_4,v_5\}$ by minimality and uncolor the other vertices of $f$ and $f'$. Observe that the remaining list of colors for these vertices have size: $|L(v_1)|\geq 5$, $|L(v_2)|\geq 2$, $|L(v_3)|\geq 5$, $|L(v_4)|\geq 5$, $|L(v_5)|\geq 7$, $|L(v'_1)|\geq 2$, $|L(v'_2)|\geq 4$, and $|L(v'_3)|\geq 5$. Note that if for any pair of vertices $(u,v)$ in $V(H)$, if $d_G(u,v)\leq 2$ while $d_H(u,v)\geq 3$, then both $u$ and $v$ have at least one more remaining color.

We claim that $L(v_1)\cap L(v'_3)=\emptyset$ or $L(v_3)\cap L(v'_2)=\emptyset$. Suppose by contradiction that $L(v_1)\cap L(v'_3)\neq\emptyset$ and $L(v_3)\cap L(v'_2)\neq\emptyset$. 

If $d_G(v_1,v'_3)\geq 3$. Then, we can color $v_1$ and $v'_3$ with the same color $c$, then we finish by coloring $v'_1$, $v_2$, $v'_2$, $v_3$, $v_4$, and $v_5$ in this order. This is a contradiction so $d_G(v_1,v'_3)\leq 2$. If $d_G(v_1,v'_3)=1$, then $v_1$, $v'_3$, $v_4$, and $v_5$ form the configuration from \Cref{2dg4:config1}(ii). So, $d_G(v_1,v'_3)=2$.

If $d_G(v_3,v'_2)\geq 3$. Then, we can color $v_3$ and $v'_2$ with the same color $c$, then we finish by coloring $v'_1$, $v_2$, $v_1$, $v'_3$, $v_4$, and $v_5$ in this order. This is a contradiction so $d_G(v_3,v'_2)\leq 2$. If $d_G(v_3,v'_2)=1$, then $v'_2$, $v'_3$, $v_3$, and $v_4$ form the configuration from \Cref{2dg4:config1}(ii). So, $d_G(v_3,v'_2)=2$.

As a result, by planarity, $v_1$, $v_3$, $v'_2$, and $v'_3$ must have a common neighbor $u$. However, $uv'_2v'_3$ is a triangle while $g(G)\geq 4$.

Consequently, we have $|L(v_1)\cup L(v'_3)|\geq 10$ or $|L(v'_2)\cup L(v_3)|\geq 9$. Moreover, note that $|L(v_1)\cup L(v'_3)|\geq 6$ and $|L(v_3)\cup L(v'_2)|\geq 6$. Indeed, these inequalities hold when $d_G(v_1,v'_3)\leq 2$ (resp. $d_G(v_3,v'_2)\leq 2$) as $|L(v_1)|\geq 6$ (resp. $|L(v_3)|\geq 6$). And when $d_G(v_1,v'_3)\geq 3$ (resp. $d_G(v_3,v'_2)\geq 3$), we also the same inequalities, otherwise $L(v_1)$ and $L(v'_3)$ (resp. $L(v_3)$ and $L(v'_2)$) will have a common color, in which case $G$ is colorable as seen previously. 
  
Finally, $G$ is $L$-list-colorable by \Cref{cor:Hall} as $|\bigcup_{u\in S}L(u)|\geq k$ for every subset $S$ of size $k$ in $V(H)$.
\end{proof}

\begin{lemma}\label{2dg4:config2ii}
Let $f=v_1v_2v_3v_4v_5$ be a $5$-face in $G$ such that $d(v_1)=d(v_3)=d(v_5)=3$, and $v_2v_3$ is incident to a $4$-face. Let $f'=v'_1v'_2v'_3v'_4v'_5$ be another $5$-face in $G$ such that $d(v'_1)=d(v'_3)=d(v'_5)=3$, and $v'_2v'_3$ is incident to a $4$-face. Then, $v'_1\neq v_1$ or $v'_5\neq v_5$.
\end{lemma}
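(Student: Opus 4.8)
The plan is to argue by contradiction. Assume $f\neq f'$ while $v_1'=v_1$ and $v_5'=v_5$, so that $f$ and $f'$ share the edge $v_1v_5$ with $v_1,v_5$ both $3$-vertices. Write $N(v_1)=\{v_2,v_5,v_2'\}$ and $N(v_5)=\{v_1,v_4,v_4'\}$, let $x$ (resp. $x'$) be the third neighbour of the $3$-vertex $v_3$ (resp. $v_3'$), and let $v_2v_3xy$ (resp. $v_2'v_3'x'y'$) be the $4$-face incident to $v_2v_3$ (resp. to $v_2'v_3'$). First I would verify that $v_1,\dots,v_5,v_2',v_3',v_4'$ are pairwise distinct and that $v_3$ is not adjacent to $v_3'$: any bad identification either creates a cycle of length $\leq 3$ (contradicting $g(G)\geq 4$), forces one of $v_1,v_3,v_5,v_3'$ to have degree $2$ (contradicting \Cref{2dg4:minimumDegree}) or $4$ (contradicting the hypotheses), or produces a $3$-vertex incident to a $4$-cycle and adjacent to a $3$-vertex (contradicting \Cref{2dg4:config1}(ii)); the last case also rules out $v_3\sim v_3'$, since then $v_3'=x$ would lie on the $4$-face $v_2v_3v_3'y$ and be adjacent to the $3$-vertex $v_3$.

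Next I would delete $v_1$ and $v_5$, colour $G-\{v_1,v_5\}$ by minimality (girth and maximum degree are preserved), and additionally uncolour $v_3$ and $v_3'$. Since $v_1,v_5$ are $3$-vertices and $v_3,v_3'$ are uncoloured vertices at distance $2$ from each of them, $v_1$ and $v_5$ see at most $d^*(\cdot)-3\leq 8$ coloured vertices, so $|L(v_1)|,|L(v_5)|\geq 3$. Since $v_3$ is a $3$-vertex incident to the $4$-face $v_2v_3xy$, the common neighbour $y$ of $v_2$ and $x$ yields $d^*(v_3)\leq d(v_2)+d(v_4)+d(x)-1\leq 11$; as $v_1,v_5\in N^*(v_3)$ are uncoloured, $|L(v_3)|\geq 2$, and symmetrically $|L(v_3')|\geq 2$. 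Moreover, if $d_G(v_3,v_3')=2$ then $v_3'\in N^*(v_3)$ is a third uncoloured vertex, so in that case $|L(v_3)|,|L(v_3')|\geq 3$.

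Now split on $d_G(v_3,v_3')$. If $d_G(v_3,v_3')\geq 3$, then among $v_1,v_5,v_3,v_3'$ only $v_3$ and $v_3'$ fail to see each other (e.g. $v_1$ sees $v_3$ at distance exactly $2$ through $v_2$). A pair $\{c_1,c_5\}$ with $c_1\in L(v_1)$, $c_5\in L(v_5)$, $c_1\neq c_5$ is forbidden only if it equals $L(v_3)$ or $L(v_3')$, so there are at most $2$ forbidden pairs, whereas an easy count from $|L(v_1)|,|L(v_5)|\geq 3$ gives at least $3$ such pairs; colour $v_1,v_5$ with a non-forbidden one, then colour $v_3$ (since $L(v_3)\not\subseteq\{c_1,c_5\}$) and $v_3'$ (which does not see $v_3$, and $L(v_3')\not\subseteq\{c_1,c_5\}$) — a contradiction. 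If $d_G(v_3,v_3')=2$, the four vertices pairwise see each other and each has a list of size $\geq 3$, so by \Cref{cor:Hall} they can be properly coloured unless $|L(v_1)\cup L(v_5)\cup L(v_3)\cup L(v_3')|=3$, i.e. all four lists coincide with a single $3$-set and have size exactly $3$. Tightness of $|L(v_1)|=3$ forces $d^*(v_1)=11$ (hence $d(v_2)=d(v_2')=4$ and $N(v_2),N(v_2'),N(v_5)$ meet only in $v_1$) and the $8$ coloured vertices of $N^*(v_1)$ to receive distinct colours; likewise for $v_5$ (forcing $d(v_4)=d(v_4')=4$) and for $v_3,v_3'$ (forcing $d(x)=d(x')=4$, with $N^*(v_3),N^*(v_3')$ also "rainbow"). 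The common neighbour of $v_3$ and $v_3'$ lies in $N(v_3)=\{v_2,v_4,x\}$: if it is $v_2$ then $v_3'$ is adjacent to both $v_2$ and $v_2'$, so $d^*(v_1)\leq 10$ and $|L(v_1)|\geq 4$, a contradiction; if it is $v_4$, symmetrically $|L(v_5)|\geq 4$; if it is $x$ with $x=v_2'$ (resp. $x=v_4'$) then $v_3$ is adjacent to both $v_2,v_2'$ (resp. $v_4,v_4'$), again giving $|L(v_1)|\geq 4$ (resp. $|L(v_5)|\geq 4$). The only remaining possibility is $x=x'$, so $N(x)=\{v_3,v_3',y,y'\}$ with $y\neq y'$ (else $y$ is adjacent to $v_2$ and $v_2'$ and again $|L(v_1)|\geq 4$). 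Then a colour count finishes: $v_2,v_2',v_4,v_4'$ all lie in the rainbow sets $N^*(v_1)$ and $N^*(v_5)$, which forces $\{c(y),c(z),c(y'),c(z')\}=\{c(a),c(b),c(a'),c(b')\}$, where $z,z'$ are the fourth neighbours of $v_2,v_2'$ and $a,b,a',b'$ the remaining neighbours of $v_4,v_4'$; but $N^*(v_3)$ being rainbow requires $c(y),c(z),c(y'),c(a),c(b)$ to be $5$ pairwise distinct colours, impossible inside a $4$-element set. So $G$ is $L$-colorable in every case, contradicting its choice.

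The step I expect to be the main obstacle is the degenerate case of the Hall argument when $d_G(v_3,v_3')=2$: ruling it out requires reading off exactly which coincidences are forced once all four list-size bounds are simultaneously tight, carefully case-splitting on where the common neighbour of $v_3$ and $v_3'$ can sit, and in the final sub-case extracting a contradiction from a colour count across the (necessarily rainbow) second neighbourhoods of $v_1$, $v_3$, $v_5$, $v_3'$.
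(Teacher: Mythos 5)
Your reduction keeps $v_2$, $v_2'$, $v_4$, $v_4'$ coloured, and that is where the argument breaks. In $G$ the vertices $v_2$ and $v_2'$ are at distance exactly $2$ (both are neighbours of $v_1$), and likewise $v_4$ and $v_4'$ (both are neighbours of $v_5$); but in $G-\{v_1,v_5\}$ these pairs may well be at distance at least $3$, so the colouring obtained by minimality is free to give $v_2$ and $v_2'$ (or $v_4$ and $v_4'$) the same colour. Recolouring only $v_1,v_5,v_3,v_3'$ can never repair such a conflict, so the colouring you ``extend'' need not be a $2$-distance colouring of $G$ and no contradiction is reached. Note that your list counts are unaffected by this — if $c(v_2)=c(v_2')$ then $L(v_1)$ only gets larger — which is why the problem is invisible in your case analysis; only in the very tight sub-case of your second case, where $N^*(v_1)$ is forced to be rainbow, does the conflict incidentally disappear.

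To close the gap you must either uncolour $v_2,v_2',v_4,v_4'$ as well — this is what the paper does: it uncolours all of $v_2,v_3,v_4,v_2',v_3',v_4'$, proves claims of the form $L(v_4)\cap L(v_2')=\emptyset$ or $L(v_4')\cap L(v_2)=\emptyset$ (using planarity, girth $\geq 4$ and \Cref{2dg4:config1}(ii) to exclude the colourings that would identify two lists), and finishes with \Cref{cor:Hall} — or colour by minimality the graph $G-\{v_1,v_5\}$ with the edges $v_2v_2'$ and $v_4v_4'$ added whenever the corresponding distance is at least $3$, in the spirit of \Cref{2dg4:minimumDegree}, after checking that girth at least $4$ and maximum degree $4$ are preserved. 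Either repair changes your lists for $v_3$ and $v_3'$ (in the first option $v_2,v_2',v_4,v_4'$ are no longer coloured), so the two-forbidden-pairs count in your first case and the tightness analysis in your second case would have to be redone; as written, the proof has a genuine gap.
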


\begin{figure}[H]
\centering
\begin{tikzpicture}[scale=1.5]{thick}
\begin{scope}[every node/.style={circle,draw,minimum size=1pt,inner sep=2}]
    \node[label={above:$v_2$},label={below:$3$}] (v2) at (0.05,1.31) {};
    \node[fill,label={above:$v_3$},label={right:$4$}] (v3) at (-0.54,0.5) {};
    \node[label={below:$v_4$},label={above:$2$}] (v4) at (0.05,-0.31) {};
    \node[fill,label={below:$v_5$},label={above left:$7$}] (v5) at (1,0) {};
    \node[fill,label={above:$v_1$},label={below left:$7$}] (v1) at (1,1) {};
    \node[label={above:$v'_2$},label={below:$3$}] (v'2) at (1.95,1.31) {};
    \node[fill,label={above:$v'_3$},label={left:$4$}] (v'3) at (2.54,0.5) {};
    \node[label={below:$v'_4$},label={above:$2$}] (v'4) at (1.95,-0.31) {};

    \node (2) at (-0.95,1.31) {};
    \node (3) at (-1.54,0.5) {};
    \node (2') at (2.95,1.31) {};
    \node (3') at (3.54,0.5) {};
\end{scope}

\begin{scope}[every edge/.style={draw=black}]
    \path (v1) edge (v2);
    \path (v2) edge (v3);
    \path (v3) edge (v4);
    \path (v4) edge (v5);
    \path (v5) edge (v1);
    \path (v5) edge (v'4);
    \path (v1) edge (v'2);
    \path (v'3) edge (v'2);
    \path (v'3) edge (v'4);
    \path (v2) edge (2);
    \path (v3) edge (3);
    \path (2) edge (3);
    \path (2') edge (3');
    \path (v'2) edge (2');
    \path (v'3) edge (3');
\end{scope}
\end{tikzpicture}
\caption{Reducible configuration in \Cref{2dg4:config2ii}.}
\label{fig:2dg4:config2ii}
\end{figure}
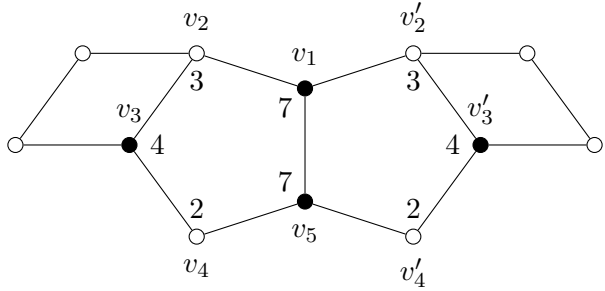

\begin{proof}
Suppose by contradiction that $v'_1=v_1$ and $v'_5=v_5$ (see \Cref{fig:2dg4:config2ii}). First, observe that the vertices of $H=G[\{v_1,v_2,v_3,v_4,v_5,v'_2,v'_3,v'_4\}]$ are all distinct since $g(G)\geq 4$ and due to \Cref{2dg4:config1}(ii). Color $G-\{v_1,v_5\}$ by minimality and uncolor the other vertices of $f$ and $f'$. Observe that the remaining list of colors for these vertices have size: $|L(v_1)|\geq 7$, $|L(v_2)|\geq 3$, $|L(v_3)|\geq 4$, $|L(v_4)|\geq 2$, $|L(v_5)|\geq 7$, $|L(v'_2)|\geq 3$, $|L(v'_3)|\geq 4$, and $|L(v'_4)|\geq 2$. Note that if for any pair of vertices $(u,v)$ in $V(H)$, if $d_G(u,v)\leq 2$ while $d_H(u,v)\geq 3$, then both $u$ and $v$ have at least one more remaining color.  
We start with the following claims:
\begin{itemize}
\item $L(v_4)\cap L(v'_2)=\emptyset$ or $L(v'_4)\cap L(v_2)=\emptyset$.\\
Suppose by contradiction that $L(v_4)\cap L(v'_2)\neq\emptyset$ and $L(v'_4)\cap L(v_2)\neq\emptyset$. Suppose w.l.o.g. that $d_G(v_4,v'_2)\geq 3$. Then, we can color $v_4$ and $v'_2$ with the same color $c$, then we finish by coloring $v'_4$, $v_2$, $v_3$, $v'_3$, $v_1$, and $v_5$ in this order. This is a contradiction so $d_G(v_4,v'_2)\leq 2$ and $d(v'_4,v_2)\leq 2$. If $d_G(v_4,v'_2)=1$, then $v_1$, $v'_2$, $v_4$, and $v_5$ form the configuration from \Cref{2dg4:config1}(ii). By symmetry, we get $d_G(v_4,v'_2)=d_G(v'_4,v_2)=2$. As a result, by planarity, $v_2$, $v'_2$, $v_4$, and $v'_4$ must have a common neighbor $u$. However, $v_1$, $v_2$, $u$, and $v'_2$ form the configuration from \Cref{2dg4:config1}(ii). 

As a consequence, we have $|L(v_4)\cup L(v'_2)|\geq 5$ or $|L(v'_4)\cup L(v_2)|\geq 5$. 

\item $L(v_4)\cap L(v'_3)=\emptyset$ or $L(v'_4)\cap L(v_3)=\emptyset$.\\
Suppose by contradiction that $L(v_4)\cap L(v'_3)\neq\emptyset$ and $L(v'_4)\cap L(v_3)\neq\emptyset$. Suppose w.l.o.g. that $d_G(v_4,v'_3)\geq 3$. Then, we can color $v_4$ and $v'_3$ with the same color $c$, then we finish by coloring $v'_4$, $v'_2$, $v_2$, $v_3$, $v_1$, and $v_5$ in this order. This is a contradiction so $d_G(v_4,v'_3)\leq 2$ and $d_G(v'_4,v_3)\leq 2$. If $d_G(v_4,v'_3)=1$, then $v_4$, $v_5$, $v'_4$, and $v'_3$ form the configuration from \Cref{2dg4:config1}(ii). By symmetry, we get $d_G(v_4,v'_3)=d_G(v'_4,v_3)=2$. As a result, by planarity, $v_3$, $v'_3$, $v_4$, and $v'_4$ must have a common neighbor $u$. However, $v_3v_4u$ is a triangle.

As a consequence, we have $|L(v_4)\cup L(v'_3)|\geq 5$ or $|L(v'_4)\cup L(v_3)|\geq 5$. 

\item $|L(v_2)\cup L(v_3)\cup L(v'_2)\cup L(v'_3)|\geq 8$.\\
Suppose by contradiction that $|L(v_2)\cup L(v_3)\cup L(v'_2)\cup L(v'_3)|\leq 7$. Since $|L(v_3)|\geq 4$ and $|L(v'_3)|\geq 4$, we get $L(v_3)\cap L(v'_3)\neq \emptyset$. Suppose that $d_G(v_3,v'_3)\geq 3$. Then, we can color $v_3$ and $v'_3$ with the same color $c$. Since $L(v_4)\cap L(v'_3)=\emptyset$ or $L(v'_4)\cap L(v_3)=\emptyset$, w.l.o.g. we can color $v_4$ then $v'_4$. Since $L(v_4)\cap L(v'_2)=\emptyset$ or $L(v'_4)\cap L(v_2)=\emptyset$, w.l.o.g. we can color $v_2$ then $v'_2$. Finish by coloring $v_1$ and $v_5$. So, $d_G(v_3,v'_3)\leq 2$. 

If $d_G(v_3,v'_3)=1$, then let $w\notin \{v_2,v_4\}$ be the last neighbor of $v_3$, and we know that $v'_3\in \{w,v_2,v_4\}$. We cannot have $v'_3=v_2$ since $v_1v_2v'_2$ is a triangle. By symmetry, $v'_3\neq v_4$. Finally, if $v'_3=w$, then $v_3$ and $v'_3$ are adjacent $3$-vertices lying on the same $4$-cycle, which is impossible by \Cref{2dg4:config1}(ii).

So, we have $d(v_3,v'_3)=2$. In this case, observe that $|L(v_3)|\geq 5$ and $|L(v'_3)|\geq 5$. Recall that $|L(v_2)\cup L(v_3)\cup L(v'_2)\cup L(v'_3)|\leq 7$. As a result, $L(v_3)\cap L(v'_2)\neq \emptyset$ since $|L(v_3)|\geq 5$ and $|L(v'_2)|\geq 3$. The same holds for $L(v'_3)$ and $L(v_2)$. Suppose that $d_G(v_3,v'_2)\geq 3$, then we can color $v_3$ and $v'_2$ with the same color $c$. Since $L(v_3)\cap L(v'_4)=\emptyset$ or $L(v_4)\cap L(v'_2)=\emptyset$, w.l.o.g. we can color $v_4$ then $v'_4$. Finish by coloring $v_2$, $v'_3$, $v_1$ and $v_5$. So, $d_G(v_3,v'_2)\leq 2$. By symmetry, the same holds for $d_G(v'_3,v_2)$. By \Cref{2dg4:config1}, $d_G(v_3,v'_2)>1$. So, we get $d_G(v_3,v'_2)=d_G(v'_3,v_2)=2$. As a result, by planarity, $v_2$, $v'_2$, $v_3$, and $v'_3$ must have a common neighbor $u$. However, $uv_2v_3$ is a triangle. 
\end{itemize}

Due to the above claims, $G$ is $L$-list-colorable by \Cref{cor:Hall}.
\end{proof}

\subsection{Discharging procedure}\label{2dg4:discharging}

To prove that at least one of reducible configurations in $G$ is unavoidable in a planar graph, we apply the following rules in the discharging procedure:

\begin{itemize}
\item[\ru0] Every $5^+$-face $f$ gives $\frac{1}{3}$ to each incident $3$-vertex that is not incident to a $4$-face.
\item[\ru1] Every $5^+$-face $f$ gives $\frac{1}{2}$ to each incident $3$-vertex that is incident to a $4$-face.
\item[\ru2] Let $f'=u_1u_2u_3u_4u_5$ be a $5$-face where $d(u_1)=d(u_3)=d(u_4)=3$, $u_1u_2$ is incident to a $4$-face and let $f$ be incident to $u_4u_5$. If $f$ is a $5^+$-face, then $f$ gives $\frac{1}{6}$ to $f'$.
\end{itemize}

\begin{figure}[H]
\begin{minipage}[b]{0.24\textwidth}
\centering
\begin{tikzpicture}[scale=1.5]{thick}
\begin{scope}[every node/.style={circle,draw,minimum size=1pt,inner sep=2}]
    \node[draw=none] (f) at (1.54,0.5) {$f$};
    
    \node[draw=none] at (3.04,0) {$5^+$-face};
    \node[draw=none] at (3.04,1) {$5^+$-face};
    
    \node (v'2) at (1.95,1.31) {};
    \node[fill] (v'3) at (2.54,0.5) {};
    \node (v'4) at (1.95,-0.31) {};
    \node (3') at (3.54,0.5) {};
\end{scope}

\begin{scope}[every edge/.style={draw=black}]
    \path (v'3) edge (v'2);
    \path (v'3) edge (v'4);
    \path (v'3) edge (3');
    \path[->] (f) edge node[above] {$\frac13$} (v'3);
\end{scope}
\end{tikzpicture}
\caption{\ru0.}
\label{fig:2dg4:ru0}
\end{minipage}
\begin{minipage}[b]{0.24\textwidth}
\centering
\begin{tikzpicture}[scale=1.5]{thick}
\begin{scope}[every node/.style={circle,draw,minimum size=1pt,inner sep=2}]
    \node[draw=none] (f) at (1.54,0.5) {$f$};
    
    \node[draw=none] at (3.04,0) {$5^+$-face};
    
    \node (v'2) at (1.95,1.31) {};
    \node[fill] (v'3) at (2.54,0.5) {};
    \node (v'4) at (1.95,-0.31) {};

    \node (2') at (2.95,1.31) {};
    \node (3') at (3.54,0.5) {};
\end{scope}

\begin{scope}[every edge/.style={draw=black}]
    \path (v'3) edge (v'2);
    \path (v'3) edge (v'4);
    \path (2') edge (3');
    \path (v'2) edge (2');
    \path (v'3) edge (3');
    \path[->] (f) edge node[above] {$\frac12$} (v'3);
\end{scope}
\end{tikzpicture}
\caption{\ru1.}
\label{fig:2dg4:ru1}
\end{minipage}
\begin{minipage}[b]{0.49\textwidth}
\centering
\begin{tikzpicture}[scale=1.5]{thick}
\begin{scope}[every node/.style={circle,draw,minimum size=1pt,inner sep=2}]
    \node (v2) at (0.05,1.31) {};
    \node (v4) at (0.05,-0.31) {};
    \node[fill,label={below:$u_4$}] (v5) at (1,0) {};
    \node[label={above:$u_5$}] (v1) at (1,1) {};
    \node[fill,label={above:$u_1$}] (v'2) at (1.95,1.31) {};
    \node[label={above:$u_2$}] (v'3) at (2.54,0.5) {};
    \node[fill,label={below:$u_3$}] (v'4) at (1.95,-0.31) {};
 
    \node (2') at (2.95,1.31) {};
    \node (3') at (3.54,0.5) {};
    \node (4') at (2.45,-0.31) {};
    
    \node[draw=none] (f') at (1.69,0.5) {$f'$};
    \node[draw=none] (f) at (0.31,0.5) {$f$};
\end{scope}

\begin{scope}[every edge/.style={draw=black}]
    \path (v1) edge (v2);
    \path (v5) edge (v1);
    \path (v4) edge (v5);
    \path (v5) edge (v'4);
    \path (v1) edge (v'2);
    \path (v'3) edge (v'2);
    \path (v'3) edge (v'4);
    \path (2') edge (3');
    \path (v'2) edge (2');
    \path (v'3) edge (3');
    \path (v'4) edge (4');
    \path[->] (f) edge node[above left] {$\frac16$} (f');
\end{scope}
\end{tikzpicture}
\caption{\ru2.}
\label{fig:2dg4:ru2}
\end{minipage}
\end{figure}

We are now ready to prove \Cref{thm:2dg4} using discharging procedure together with the structural properties of $G$ proven in \Cref{2dg4:properties} and the discharging rules stated above.

\begin{proof}[Proof of \Cref{thm:2dg4}]
Let $G$ be a minimal counterexample to the theorem. Let $\mu(u)$ be the initial charge assignment for the vertices and faces of $G$ with the charge $\mu(u)=d(u)-4$ for each vertex $u\in V(G)$, and $\mu(f)=d(f)-4$ for each face $f\in F(G)$. By \Cref{equation}, we have that the total sum of the charges is negative.

Let $\mu^*$ be the charge assignment after the discharging procedure. In what follows, we prove that: $$\forall x \in V(G)\cup F(G), \mu^*(x)\geq 0.$$

First, we prove that the final charge on each vertex is non-negative. Let $u$ be a vertex in $V(G)$. Recall that $\Delta(G)=4$ and $\mu(u)=d(u)-4$. By \Cref{2dg4:minimumDegree}, vertex $u$ has degree at least $3$, thus we consider the following two cases.

\textbf{Case 1:} $d(u)=4$\\ 
By \ru0-\ru2, $u$ does not give any charge. So,
$$\mu^*(u)=\mu(u)=d(u)-4=0.$$

\textbf{Case 2:} $d(u)=3$\\
Recall that $\mu(u)=d(u)-4=-1$ and we have the following cases:
\begin{itemize}
\item If $u$ is not incident to any $4$-face, then it receives $\frac{1}{3}$ from each of the three incident $5^+$-faces (since $g(G)\geq 4$) by \ru0. So,
$$ \mu^*(u) = -1 + 3\cdot\frac13 = 0.$$
\item If $u$ is incident to a $4$-face, then it has exactly one incident $4$-face due to \Cref{2dg4:config1}(i). Therefore, $u$ is incident to two $5^+$-faces and receives $\frac12$ from each by \ru1. So,
$$ \mu^*(u) = -1 + 2\cdot\frac12 = 0.$$
\end{itemize}

Secondly, we prove that the final charge on each face is non-negative. Let $f$ be a face in $F(G)$ and let $i_0$, $i_1$, and $i_2$ be respectively the number of times $f$ gives charge by \ru0, \ru1, and \ru2. Recall that $\mu(f)=d(f)-4$. Moreover, $d(f)\geq 4$ since $g(G)\geq 4$, thus we consider the following three cases.

\textbf{Case 1:} $d(f)\geq 6$\\
By \Cref{2dg4:config1}(iii), there are at most $\frac23 d(f)$ $3$-vertices incident to $f$. As a result, we get $i_0+i_1\leq \frac{2}{3}d(f)$. Moreover, \ru2 can only be applied whenever \ru0 is applied since $u_4$ (from the statement of \ru2) is a $3$-vertex that is not incident to a $4$-face by \Cref{2dg4:config1}(ii). Additionally, by \Cref{2dg4:config2ii}, for each application of \ru0, \ru2 is applied at most once. In other words, $i_2\leq i_0$. Finally,
\begin{align*}
\mu^*(f)& \geq \mu(f)-\frac{1}{3}i_0-\frac{1}{2}i_1-\frac{1}{6}i_2\\
& \geq \mu(f)-\frac{1}{3}i_0-\frac{1}{2}i_1-\frac{1}{6}i_0\\
& \geq \mu(f)-\frac{1}{2}(i_0+i_1)\\
& \geq \mu(f)-\frac{1}{3}d(f)\\
& \geq \frac23 d(f) - 4\\
& \geq 0
\end{align*}
since $d(f)\geq 6$.

\textbf{Case 2:} $d(f)=5$\\
Recall that $\mu(f)=d(f)-4=1$. Observe that we have the following inequalities:
\begin{itemize}
\item $i_0+i_1\leq 3$ and $i_2\leq i_0$ (as in the previous case).
\item $i_1\leq 2$. Indeed, by \Cref{2dg4:config1}(ii), the $3$-vertex in \ru1 must be adjacent to only $4$-vertices. As a result, if we have equality, then $f$ is incident to exactly two $3$-vertices.
\item $i_2\leq 2$. Indeed, by \Cref{2dg4:config1}(iii), the neighbors of $u_4$ (as in the statement of \ru2) that are incident to $f$ must be $4$-vertices. As a result, if we have equality, then $f$ is incident to exactly two $3$-vertices.
\end{itemize}

Recall that $f$ gives $\frac13 i_0 + \frac12 i_1 + \frac16 i_2$ by \ru0, \ru1, and \ru2.\\
If $i_0\leq 1$, $i_1\leq 1$, and $i_2\leq 1$, then
$$ \mu^*(f)\geq 1 - \frac13- \frac12- \frac16=0.$$ 

If $i_2\geq 2$, then $i_2=2$ since $i_2\leq 2$ and $f$ is incident to exactly two $3$-vertices. Thus, $i_0+i_1\leq 2$. Moreover, since $i_2\leq i_0$, we also have $i_0=2$ and $i_1=0$. So,
$$ \mu^*(f)\geq 1 - 2\cdot\frac13- 2\cdot\frac16=0.$$

If $i_1\geq 2$, then $i_1=2$ since $i_1\leq 2$ and $f$ is incident to exactly two $3$-vertices. Thus, $i_0+i_1\leq 2$. Therefore, $i_0=0$. Moreover, since $i_2\leq i_0$, we also have $i_2=0$. So,
$$ \mu^*(f)\geq 1 - 2\cdot\frac12=0.$$

If $i_0\geq 3$, then $i_0=3$ and $i_1=0$ since $i_0+i_1\leq 3$. Let $f=v_1v_2v_3v_4v_5$. W.l.o.g. we have $d(v_1)=d(v_2)=d(v_4)=3$ and $d(v_3)=d(v_5)=4$. Observe that $f$ cannot give charge through $v_5v_1$, $v_1v_2$, or $v_2v_3$ by \ru2 due to \Cref{2dg4:config1}(i). If it gives through $v_4v_5$, then we have \Cref{2dg4:config2i}, which is a contradiction. The same holds for $v_4v_3$ by symmetry. As a result, $i_2=0$. So,
$$ \mu^*(f)\geq 1 - 3\cdot\frac13=0.$$

If $i_0=2$, then we have the following two cases since $i_0+i_1\leq 3$.
\begin{itemize}
\item If $i_1=0$, then we already know that $i_2\leq i_0 \leq 2$. So,
$$\mu^*(f)\geq 1 - 2\cdot\frac13 - 2\cdot\frac16=0. $$
\item If $i_1=1$, then let $f=u_1u_2u_3u_4u_5$. W.l.o.g. we have $d(u_1)=d(u_3)=d(u_4)=3$ and $d(u_2)=d(u_5)=4$. For $1\leq i\leq 5$, let $f_i\neq f$ be the face incident to $u_{i}u_{i+1(\text{mod }5)}$. Observe that neither $u_3$ nor $u_4$ can be incident to a $4$-face due to \Cref{2dg4:config1}(ii) so $f_2$, $f_3$, and $f_4$ must be $5^+$-faces. Moreover, we can assume w.l.o.g. that $f_1$ is a $4$-face. In such a case, $f$ receives $\frac16$ from $f_4$ by \ru2. Additionally, $f$ cannot give charge by \ru2 to neither $f_1$ nor $f_5$ due to \Cref{2dg4:config1}(ii). Furthermore, $f$ cannot gives to $f_2$, $f_3$, nor $f_4$ by \ru2 due to \Cref{2dg4:config1}(iii). As a result, $i_2=0$. So,
$$ \mu^*(f)\geq 1 +\frac16 - 2\cdot\frac13 - \frac12=0.$$
\end{itemize}
It follows that after the discharging procedure $5$-faces have non-negative charge.

\textbf{Case 3:} $d(f)=4$\\
Recall that $\mu(f)=d(f)-4=0$. Since $f$ does not give any charge, we have
$$ \mu^*(f)=\mu(f)=0.$$

To conclude the proof, after the discharging procedure, which preserved the total sum, we end up with a non-negative total sum, a contradiction to \Cref{equation}.
\end{proof}

\section{Injective list-coloring of planar graphs}\label{injg3}

In this section, we provide the proof to \Cref{thm:injg3}. Let $G$ be a minimal counterexample to \Cref{thm:injg3} with the fewest number of vertices plus edges. More precisely, $G$ has maximum degree $4$ and $\chi^i_\ell(G)\geq 12$.

\subsection{Structural properties of $G$\label{injg3:structure}}

We follow the same ideas as in the previous proof, starting by bounding the minimum degree of $G$ and proving that ``smaller'' objects are far away from each other. Recall that for injective coloring, two vertices see each other only when they share a neighbor.

\begin{lemma}\label{injg3:minimumDegree}
The minimum degree of $G$ is at least 3.
\end{lemma}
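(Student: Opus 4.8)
The plan is to follow exactly the template used for \Cref{2dg4:minimumDegree}, but adapted to the injective coloring constraint, where the number of colors is $\Delta+7 = 11$ and where a vertex $u$ ``sees'' another vertex $v$ only if $u$ and $v$ share a common neighbor. As always, $G$ is a minimal counterexample, hence connected, so $\delta(G) \geq 1$; I only need to rule out $1$-vertices and $2$-vertices.

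First I would handle a $1$-vertex $v$ with neighbor $w$. Let $H = G - v$; by minimality $H$ has an injective list-coloring with the restricted lists. Adding $v$ back, the only vertices $v$ sees are the other neighbors of $w$, of which there are at most $\Delta - 1 = 3$. Since $|L(v)| \geq 11 > 3$, we can extend the coloring to $v$. (Note no edge needs to be added here, since removing a pendant vertex cannot decrease any distance or create a common neighbor between the remaining vertices, and it clearly preserves planarity and $\Delta \leq 4$.)

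Next I would handle a $2$-vertex $u$ with neighbors $v$ and $w$. Here the subtlety, as in \Cref{2dg4:minimumDegree}, is that deleting $u$ may break the constraint that forced $v$ and $w$ to get different colors — namely, $u$ itself was the common neighbor witnessing $v \sim w$ in the injective sense. So let $H = G - u$. If $v$ and $w$ still have a common neighbor in $H$ (equivalently, $d_H(v,w) \le 2$ via some vertex other than the deleted $u$, witnessing the injective constraint), then color $H$ by minimality and $v,w$ already receive distinct colors. Otherwise, color $H + \{vw\}$ by minimality: adding the edge $vw$ does not create a common neighbor for any pair (it only makes $v$ and $w$ adjacent, which is irrelevant for injective coloring) — wait, this is where I must be careful. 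Actually, for the injective coloring to be usable, I want $v$ and $w$ to be forced distinct in the auxiliary graph. Adding the edge $vw$ alone does not do that for injective coloring. Instead, the cleaner move is: when $v$ and $w$ have no common neighbor in $H$, contract or rather add a path — but the slickest approach matching the paper's style is to observe that in any case we will re-color $u$ at the end and only need $v$ and $w$ distinct when they share a neighbor in $G - u$; if they do not share a neighbor in $G-u$, then in $G$ the only common neighbor of $v$ and $w$ is $u$, so $v$ and $w$ need not be distinct, and we color $H$ by minimality directly and extend.

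Then in all cases I extend to $G$ by coloring $u$. The vertex $u$ sees only: the other neighbors of $v$ (at most $\Delta - 1 = 3$ of them) and the other neighbors of $w$ (at most $3$), for a total of at most $6$ forbidden colors; with $|L(u)| \geq 11$ there is always an available color. I should double-check that the auxiliary graph in each branch still has maximum degree at most $4$ and is planar (deleting a vertex, or deleting a vertex and adding one edge between two of its former neighbors, preserves planarity and does not raise the degree of $v$ or $w$ above their original degree since they each lost their edge to $u$). The main obstacle — really the only place requiring thought — is the $2$-vertex case and precisely the bookkeeping of when $v$ and $w$ are forced to differ; everything else is a one-line greedy count. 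I expect the author's proof to resolve this by the same case split on whether $v$ and $w$ have a common neighbor in $G - u$.
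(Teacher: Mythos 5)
Your $1$-vertex case is fine, but there is a genuine gap in the $2$-vertex case. The claim on which your second branch rests — that if $v$ and $w$ have no common neighbor in $G-\{u\}$ then ``$v$ and $w$ need not be distinct'' — is false: $u$ itself is a common neighbor of $v$ and $w$ in $G$, so the injective constraint forces $\phi(v)\neq\phi(w)$ in any injective coloring of $G$, regardless of what happens in $G-\{u\}$. In that branch you color $H=G-\{u\}$ by minimality and then only color $u$; but the coloring of $H$ may well assign $v$ and $w$ the same color, and coloring $u$ afterwards cannot repair that conflict. (You correctly diagnosed that adding the edge $vw$ does not encode the constraint for injective coloring, but the replacement you chose drops the constraint instead of enforcing it.)

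The paper closes exactly this hole by recoloring a neighbor: color $G-\{u\}$ by minimality, then \emph{uncolor} $v$. Because $u$ is a $2$-vertex, $v$ sees at most $3\cdot 3+1=10$ vertices in $G$ (through $u$ it sees only $w$; through each of its at most three other neighbors it sees at most three vertices), so with $11$ colors $|L(v)|\geq 1$; and $u$ sees at most $6$ vertices, so $|L(u)|\geq 5$. One then colors $v$ first (which in particular makes it differ from $w$) and $u$ last. Your greedy count for $u$ is correct, but without this extra recoloring step for $v$ — and the degree count showing it is feasible — the argument does not go through.
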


\begin{proof}
Let $u$ be a $2^-$-vertex in $G$. If $u$ is a $1$-vertex, then we color $G-\{u\}$, and we extend the coloring to $u$ by using one of the at least $8$ remaining colors. On the other hand, if $u$ is a $2$-vertex and $v$ one of its neighbors (see \Cref{fig:injg3:minimumDegree}), then we color $G-\{u\}$ and uncolor $v$. Observe that $u$ has at least five and $v$ has at least one available color. Thus, we first color $v$ and then $u$ to complete the coloring. 
\end{proof}

\begin{figure}[H]
\centering
\begin{tikzpicture}[scale=0.6]{thick}
\begin{scope}[every node/.style={circle,draw,minimum size=1pt,inner sep=2}]
    \node[label={above:$v$},label={below:$1$}] (1) at (0,0) {};
    \node[fill,label={above:$u$},label={below:$5$}] (2) at (2,0) {};
    \node (3) at (4,0) {};
\end{scope}

\begin{scope}[every edge/.style={draw=black}]
    \path (1) edge (3);
\end{scope}
\end{tikzpicture}
\caption{Reducible configuration in \Cref{injg3:minimumDegree}.}
\label{fig:injg3:minimumDegree}
\end{figure}
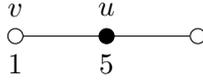

\begin{lemma}\label{injg3:config1}
Graph $G$ does not contain the following configurations:
\begin{itemize}
\item[(i)] Two adjacent $3$-vertices.
\item[(ii)] A $3$-vertex incident to a $4^-$-cycle.
\item[(iii)] A $3$-vertex at distance 1 from two adjacent $3$-cycles. 
\item[(iv)] A $4$-vertex incident to two adjacent $3$-cycles and another $4^-$-cycle.
\end{itemize}
\end{lemma}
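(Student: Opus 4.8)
\textbf{Proof plan for \Cref{injg3:config1}.}

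The plan is to reduce each of the four configurations separately, in each case by deleting one carefully chosen edge (or a vertex), invoking minimality of $G$ to color the smaller graph with $\Delta+7=11$ colors, uncoloring a small set of vertices, and then checking that each uncolored vertex has more available colors than it has constraints (i.e., than the number of colored vertices it ``sees''). Recall that in the injective setting a vertex $u$ of degree $d$ sees at most $\sum_{w\in N(u)}(d(w)-1)$ other vertices, so a $3$-vertex whose neighbors all have degree $4$ sees at most $9$ vertices, leaving a list of size at least $2$, and a $4$-vertex sees at most $12$ vertices. The arithmetic of ``$11$ colors versus at most $9$ or $12$ constraints, some of which are freed by uncoloring'' is what drives every case.

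For (i), let $u,v$ be adjacent $3$-vertices; color $G-\{uv\}$ by minimality, uncolor $u$ and $v$. Removing the edge $uv$ does not change which vertices $u$ sees (injective coloring only cares about common neighbors), but after uncoloring, each of $u,v$ sees at most $8$ colored vertices — at most $9$ through its own incident edges minus the now-uncolored partner which lay on a common path — so each has a list of size at least $2$; color $u$ then $v$ greedily. For (ii), a $3$-vertex $u$ on a $4^-$-cycle has two of its neighbors joined by a path of length $2$ through a common vertex, so $u$ actually sees at most $8$ vertices; delete an edge of the cycle incident to $u$, color by minimality, uncolor $u$, and observe $|L(u)|\geq 11-8>0$ — in fact one need not even uncolor anything beyond arranging that $u$ is on the boundary of the deleted structure. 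The main point in (ii) is bookkeeping: a short cycle forces coincidences among the neighbors-of-neighbors of $u$, so the naive bound $9$ drops.

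Configurations (iii) and (iv) are the substantial ones and I expect (iv) to be the main obstacle. In (iii), a $3$-vertex $v$ at distance $1$ from two adjacent $3$-cycles sits near a dense cluster; the idea is to pick the edge of $G$ joining $v$ to that cluster (or an edge inside the cluster), delete it, color by minimality, then uncolor $v$ together with one or two vertices of the cluster, and use \Cref{cor:Hall} on the resulting set of $2$ or $3$ uncolored vertices — the adjacent triangles force many shared colors among their vertices, so the union of any $k$ of the relevant lists has size at least $k$. For (iv), a $4$-vertex $u$ incident to two adjacent $3$-cycles plus a further $4^-$-cycle: here $u$'s neighborhood is so constrained that several of the $\le 12$ vertices $u$ would see collapse onto each other, and one deletes a well-chosen edge so that after uncoloring $u$ (and possibly a neighbor) the list at $u$ is nonempty; the delicate part is verifying that the two triangles sharing an edge at $u$, together with the third short cycle, really do reduce $d^i(u)=\sum_{w\in N(u)}(d(w)-1)$ below $11$, which requires a short planarity/case argument on how the cycles can overlap (and using (i) and (ii) to rule out $3$-neighbors and other short cycles through the relevant vertices). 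Throughout, I would accompany each case with a figure in the style already established (black vertices of fixed degree, white vertices possibly of higher degree, list-size lower bounds annotated), and I would lean on \Cref{cor:Hall} whenever more than one vertex is left uncolored so as to avoid an ad hoc ordering argument.
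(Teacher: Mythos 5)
Your overall strategy --- delete a single edge, color the smaller graph by minimality, uncolor a couple of vertices, and recolor them greedily --- is exactly the paper's, and your treatment of (i), (iii), and (iv) is in the right spirit. (The paper in fact never needs \Cref{cor:Hall} here: in each of (iii) and (iv) a short case split on where the extra $3$-vertex or extra short cycle attaches reduces everything to deleting one edge and recoloring its two endpoints in a suitable order, with list sizes as small as $1$ and $2$.)

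There is, however, a genuine error in your justification, and it surfaces concretely in (ii). The parenthetical claim in (i) that removing the edge $uv$ ``does not change which vertices $u$ sees'' is false: after deleting $uv$, the vertex $v$ is no longer a common neighbor of $u$ and the vertices of $N_G(v)\setminus\{u\}$, and $u$ is no longer a common neighbor of $v$ and the vertices of $N_G(u)\setminus\{v\}$. Hence a coloring of $G-\{uv\}$ obtained by minimality may assign the same color to $v$ and to some $c\in N_G(u)\setminus\{v\}$, which is forbidden in $G$. In (i) this does no harm, because you uncolor both $u$ and $v$, and those are exactly the two vertices involved in every constraint destroyed by the edge deletion. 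But in (ii) you propose to uncolor only $u$ (``one need not even uncolor anything beyond arranging that $u$ is on the boundary''), which leaves the possibly conflicting pair $(v,c)$ both colored; the reduction fails at that point. The fix is what the paper does: uncolor both endpoints of the deleted edge and verify that $v$ can also be recolored --- this is the tight part of the count, since $v$ may be a $4$-vertex and the $4^-$-cycle only guarantees $|L(v)|\geq 1$, so $v$ must be colored before $u$. The same discipline (both endpoints of every deleted edge get uncolored and recounted) is needed to make your sketches of (iii) and (iv) into proofs.
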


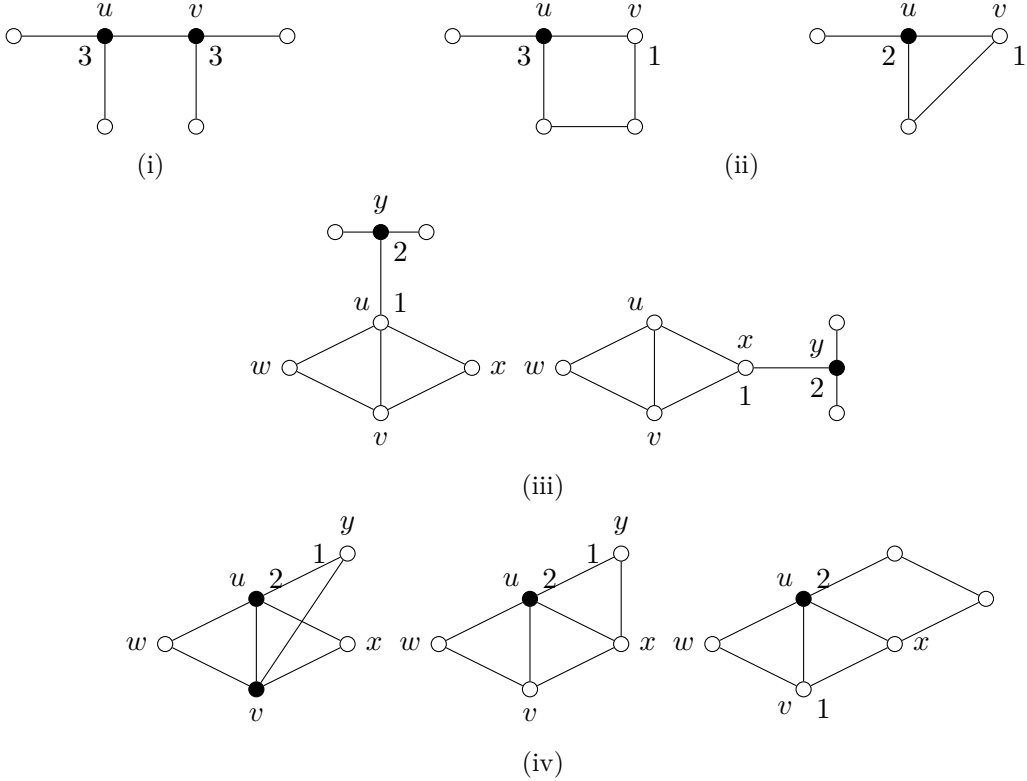
\begin{figure}[!htb]
\centering
\begin{subfigure}[b]{0.3\textwidth}
\centering
\begin{tikzpicture}[scale=0.6]{thick}
\begin{scope}[every node/.style={circle,draw,minimum size=1pt,inner sep=2}]
    \node (1) at (0,0) {};
    \node[fill,label={above:$u$},label={below left:$3$}] (2) at (2,0) {};
    \node (20) at (2,-2) {};
    \node[fill,label={above:$v$},label={below right:$3$}] (3) at (4,0) {};
    \node (30) at (4,-2) {};
    \node (4) at (6,0) {};
\end{scope}

\begin{scope}[every edge/.style={draw=black}]
    \path (1) edge (4);
    \path (2) edge (20);
    \path (3) edge (30);
\end{scope}
\end{tikzpicture}
\caption{}
\end{subfigure}
\begin{subfigure}[b]{0.6\textwidth}
\centering
\begin{tikzpicture}[scale=0.6]{thick}
\begin{scope}[every node/.style={circle,draw,minimum size=1pt,inner sep=2}]
    \node (1) at (0,0) {};
    \node[fill,label={above:$u$},label={below left:$3$}] (2) at (2,0) {};
    \node (20) at (2,-2) {};
    \node[label={above:$v$},label={below right:$1$}] (3) at (4,0) {};
    \node (30) at (4,-2) {};
    
    \node (1') at (8,0) {};
    \node[fill,label={above:$u$},label={below left:$2$}] (2') at (10,0) {};
    \node (20') at (10,-2) {};
    \node[label={above:$v$},label={below right:$1$}] (3') at (12,0) {};
\end{scope}

\begin{scope}[every edge/.style={draw=black}]
    \path (1) edge (3);
    \path (2) edge (20);
    \path (3) edge (30);
    \path (20) edge (30);
    
    \path (1') edge (3');
    \path (2') edge (20');
    \path (3') edge (20');
\end{scope}
\end{tikzpicture}
\caption{}
\end{subfigure}
\begin{subfigure}[b]{0.49\textwidth}
\centering
\begin{tikzpicture}[scale=0.6]{thick}
\begin{scope}[every node/.style={circle,draw,minimum size=1pt,inner sep=2}]
    \node[label={left: $w$}] (1) at (0,-1) {};
    \node[label={above left: $u$},label={above right: $1$}] (2) at (2,0) {};
    \node[label={below:$v$}] (3) at (2,-2) {};
    \node[label={right:$x$}] (4) at (4,-1) {};
    \node[fill,label={above: $y$},label={below right:$2$}] (20) at (2,2) {};
    \node (10) at (1,2) {};
    \node (30) at (3,2) {};
    
    \node[label={left: $w$}] (1') at (6,-1) {};
    \node[label={above left: $u$}] (2') at (8,0) {};
    \node[label={below:$v$}] (3') at (8,-2) {};
    \node[label={above:$x$},label={below: $1$}] (4') at (10,-1) {};
    \node[fill,label={above left: $y$},label={below left:$2$}] (20') at (12,-1) {};
    \node (10') at (12,0) {};
    \node (30') at (12,-2) {};
\end{scope}

\begin{scope}[every edge/.style={draw=black}]
    \path (1) edge (2);
    \path (2) edge (3);
    \path (3) edge (1);
    \path (2) edge (4);
    \path (3) edge (4);
    \path (2) edge (20);
    \path (10) edge (30);
    
    \path (1') edge (2');
    \path (2') edge (3');
    \path (3') edge (1');
    \path (2') edge (4');
    \path (3') edge (4');
    \path (4') edge (20');
    \path (10') edge (30');
\end{scope}
\end{tikzpicture}
\caption{}
\end{subfigure}

\begin{subfigure}[b]{0.66\textwidth}
\centering
\begin{tikzpicture}[scale=0.6]{thick}
\begin{scope}[every node/.style={circle,draw,minimum size=1pt,inner sep=2}]
    \node[label={left: $w$}] (1) at (-6,-1) {};
    \node[fill,label={above left: $u$},label={above right: $2$}] (2) at (-4,0) {};
    \node[fill,label={below:$v$}] (3) at (-4,-2) {};
    \node[label={right:$x$}] (4) at (-2,-1) {};
    \node[label={above:$y$},label={left:$1$}] (20) at (-2,1) {};
    
    \node[label={left: $w$}] (1') at (0,-1) {};
    \node[fill,label={above left: $u$},label={above right: $2$}] (2') at (2,0) {};
    \node[label={below:$v$}] (3') at (2,-2) {};
    \node[label={right:$x$}] (4') at (4,-1) {};
    \node[label={above:$y$},label={left:$1$}] (20') at (4,1) {};
    
    \node[label={left: $w$}] (1'') at (6,-1) {};
    \node[fill,label={above left: $u$},label={above right: $2$}] (2'') at (8,0) {};
    \node[label={below left:$v$},label={below right:$1$}] (3'') at (8,-2) {};
    \node[label={right:$x$}] (4'') at (10,-1) {};
    \node (20'') at (10,1) {};
    \node (30'') at (12,0) {};
\end{scope}

\begin{scope}[every edge/.style={draw=black}]
    \path (1) edge (2);
    \path (2) edge (3);
    \path (3) edge (1);
    \path (2) edge (4);
    \path (3) edge (4);
    \path (2) edge (20);
    \path (3) edge (20);
    
    \path (1') edge (2');
    \path (2') edge (3');
    \path (3') edge (1');
    \path (2') edge (4');
    \path (3') edge (4');
    \path (2') edge (20');
    \path (4') edge (20');
    
    \path (1'') edge (2'');
    \path (2'') edge (3'');
    \path (3'') edge (1'');
    \path (2'') edge (4'');
    \path (3'') edge (4'');
    \path (2'') edge (20'');
    \path (20'') edge (30'');
    \path (4'') edge (30'');
\end{scope}
\end{tikzpicture}
\caption{}
\end{subfigure}
\caption{Reducible configurations in \Cref{injg3:config1}.}
\label{fig:injg3:config1}
\end{figure}

\begin{proof}
We give a proof for each configuration separately (see \Cref{fig:injg3:config1}).
\begin{itemize}
\item[(i)] Suppose by contradiction that there exist two adjacent $3$-vertices $u$ and $v$. Color $G-\{uv\}$ by minimality and uncolor the vertices $u$ and $v$. Observe that each of them can see at most $8$ colors. Thus, $u$ and $v$ are colorable.
\item[(ii)] Suppose by contradiction that there exists a $3$-vertex $u$ that is incident to a $4^-$-cycle $C$. Let $v$ be a vertex incident to $C$ and adjacent with $u$. Color $G-\{uv\}$ by minimality and uncolor the vertices $u$ and $v$. Observe that $|L(u)|\geq 2$ and $|L(v)|\geq 1$. Thus, $u$ and $v$ are colorable.
\item[(iii)] Let $uvw$ and $uvx$ be $3$-cycles where $x\neq w$. First, suppose by contradiction that there exists a $3$-vertex $y$ that is at distance $1$ from $G[\{u,v,w,x\}]$. If $y$ is adjacent to $u$, color $G-\{uy\}$ by minimality and uncolor the vertices $u$ and $y$. Observe that $|L(u)|\geq 1$ and $|L(y)|\geq 2$. Thus, $u$ and $y$ are colorable. Now, suppose that $y$ is adjacent to $x$ instead. In this case, color $G-\{xy\}$ by minimality and uncolor the vertices $x$ and $y$. Observe that $|L(x)|\geq 1$ and $|L(y)|\geq 2$. Thus, $x$ and $y$ are colorable.
\item[(iv)] Let $u$ be a $4$-vertex that is incident to two adjacent $3$-cycles $uvw$ and $uvx$ where $x\neq w$. First, suppose by contradiction that $u$ is incident to another $3$-cycle. If there exists $y$ such that $y\notin \{x,w\}$ and $uvy$ is a $3$-cycle, then color $G-\{uy\}$ by minimality and uncolor the vertices $u$ and $y$. Observe that $|L(u)|\geq 2$ and $|L(y)|\geq 1$. Thus, $u$ and $y$ are colorable. If $uxy$ is a $3$-cycle with $y\notin \{v,w\}$, then the same exact arguments also hold in this case. Now, suppose that $u$ is incident to a $4$-cycle. Color $G-\{uv\}$ by minimality and uncolor the vertices $u$ and $v$. Observe that $|L(u)|\geq 2$ and $|L(v)|\geq 1$. Thus, $u$ and $v$ are colorable.
\end{itemize}
This concludes the proof.
\end{proof}

\begin{lemma}\label{injg3:config2}
A $3$-cycle in $G$ is not adjacent with a $4$-cycle.
\end{lemma}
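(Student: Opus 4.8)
The plan is to contradict the minimality of $G$ by removing the edge that the $3$-cycle and the $4$-cycle share, colouring the rest with $\Delta+7=11$ colours, and then repairing the colouring locally.

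So suppose $G$ contains a $3$-cycle $xyz$ sharing an edge, say $xy$, with a $4$-cycle, which we write as $xyab$ with edges $xy,ya,ab,bx$. By \Cref{injg3:config1}(ii) every vertex on a $4^-$-cycle has degree $4$, so $x,y,z,a,b$ are $4$-vertices; let $x_1,y_1$ be the fourth neighbours of $x,y$, and let $z_1,z_2$ (resp. $a_1,a_2$, $b_1,b_2$) be the neighbours of $z$ (resp. $a$, $b$) outside the configuration. If $x$ and $y$ had a common neighbour $z'\ne z$, then $xyz$ and $xyz'$ would be two $3$-cycles through $xy$, so $x$ would be incident to two adjacent $3$-cycles and to the $4$-cycle $xyab$, contradicting \Cref{injg3:config1}(iv); hence $z$ is the unique common neighbour of $x$ and $y$. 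By the same reasoning $x\not\sim a$ and $y\not\sim b$ (a chord $xa$, resp. $yb$, would make $xya$, resp. $xyb$, a $3$-cycle adjacent to $xyz$), and $z\not\sim a$ and $z\not\sim b$ (an edge $za$, resp. $zb$, would make $ayz$, resp. $bxz$, a $3$-cycle adjacent to $xyz$), the $4$-cycle $xyab$ serving in each case as the further $4^-$-cycle forbidden by \Cref{injg3:config1}(iv). In particular $z_1,z_2,a_1,a_2,b_1,b_2\notin\{x,y,z,a,b\}$ and $x_1,y_1\notin\{a,y,z,x,b\}$.

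Now delete the edge $xy$ and colour $G-\{xy\}$ by minimality with $11$ colours. Re-inserting $xy$ creates only the co-neighbour pairs $\{y,z\},\{y,b\},\{y,x_1\}$ (through $x$) and $\{x,z\},\{x,a\},\{x,y_1\}$ (through $y$); but $x$ and $a$ already share $b$, $y$ and $b$ already share $a$, and $x$ and $y$ already share $z$, so only the colours of $x$ and $y$ can have become illegal. Uncolour $x,y,z,a,b$; using $\Delta=4$ and the non-adjacencies above, a direct count of the colours still forbidden gives $|L(x)|,|L(y)|,|L(z)|\ge 3$ and $|L(a)|,|L(b)|\ge 2$. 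In $G$ the co-neighbour relations inside $\{x,y,z,a,b\}$ are precisely $xy,xz,yz$, together with $xa$ (shared neighbours $y,b$), $yb$ (shared neighbours $x,a$), $za$ (shared neighbour $y$) and $zb$ (shared neighbour $x$), while $ab,xb,ya$ are non-relations; so it remains to list-colour this auxiliary graph from the lists $L$. Colour the triangle $xyz$ with distinct colours — possible by \Cref{cor:Hall}, each list having size at least $3$ — choosing $\phi(z)$ outside $L(a)\cap L(b)$ (which is allowed since $|L(a)\cap L(b)|\le 2<3\le|L(z)|$), say with $\phi(z)\notin L(b)$, and choosing $\phi(x)$ distinct in addition from the at most one partner of $\phi(z)$ in $L(a)$; then $\{\phi(x),\phi(z)\}\ne L(a)$ and $\phi(z)\notin L(b)$, so $|L(a)\setminus\{\phi(x),\phi(z)\}|\ge 1$ and $|L(b)\setminus\{\phi(y),\phi(z)\}|\ge 1$, and $a,b$ can be coloured. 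This extends the colouring to $G$, a contradiction.

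The step I expect to need the most care is the assertion that $ab$, $xb$ and $ya$ are non-relations inside $\{x,y,z,a,b\}$: this fails exactly when $a$ and $b$ have a common neighbour off $xyab$, or $x_1\sim b$, or $y_1\sim a$, and each of these yields another $3$-cycle ($abc$, $xbx_1$ or $yay_1$) sharing an edge with the $4$-cycle $xyab$. I would resolve these degenerate situations by a short case analysis — either re-choosing the shared edge, or uncolouring one more vertex, noting that whenever such a coincidence is present the relevant availability bounds only improve — so that a Hall-type completion still closes each case.
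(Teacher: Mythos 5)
Your generic argument is correct and genuinely more direct than the paper's: the paper obtains this lemma as the last step (configuration (v) of Figure~\ref{fig:injg3:config2}) of a descending chain of five reducible configurations, each a $4$-cycle decorated with progressively more pendant triangles, deleting the two vertices $u_2,u_3$ each time and using the non-existence of the previous configuration to find non-conflicting pairs; you delete only the shared edge, uncolour the five vertices of the bare configuration, and finish on the conflict graph $K_5-\{ab,xb,ya\}$ with list sizes $3,3,3,2,2$. That completion is sound (the inequality $|L(a)\cap L(b)|\le 2$ tacitly requires first truncating $L(a)$ and $L(b)$ to exactly two colours, which is harmless), and if the degenerate cases really were a one-line matter this would be a substantially shorter proof.

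The degenerate cases, however, are a genuine gap and not a short verification --- they are precisely what the paper's auxiliary configurations (i)--(iv) exist to eliminate. Your observation that ``the relevant availability bounds only improve'' is true but does not close them, because each coincidence adds an edge to the conflict graph at exactly the same rate. For instance, if $x_1\sim b$ then $x$ conflicts with all four of $y,z,a,b$ while the count gives only $|L(x)|\ge 4$; and if all four edges of the $4$-cycle lie on triangles (apexes $z$, $x_1$, $y_1$ and a common neighbour $c$ of $a,b$), the conflict graph becomes the full $K_5$ while $z$, which conflicts with the other four, still only satisfies $|L(z)|\ge 3$, so five distinct colours need not exist and no Hall-type completion is available from these bounds alone. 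Re-choosing the shared edge does not escape this (for the edge $xb$ the analogous degeneracy condition becomes $z\sim y$, which holds), and \Cref{injg3:config1}(iv) does not exclude these configurations since the two triangles $xyz$ and $xbx_1$ share only a vertex, not an edge. The decorated $4$-cycles therefore need their own reducibility arguments --- the role played by configurations (i)--(iv) in the paper --- before your five-vertex argument can run; as written, the proof is incomplete.
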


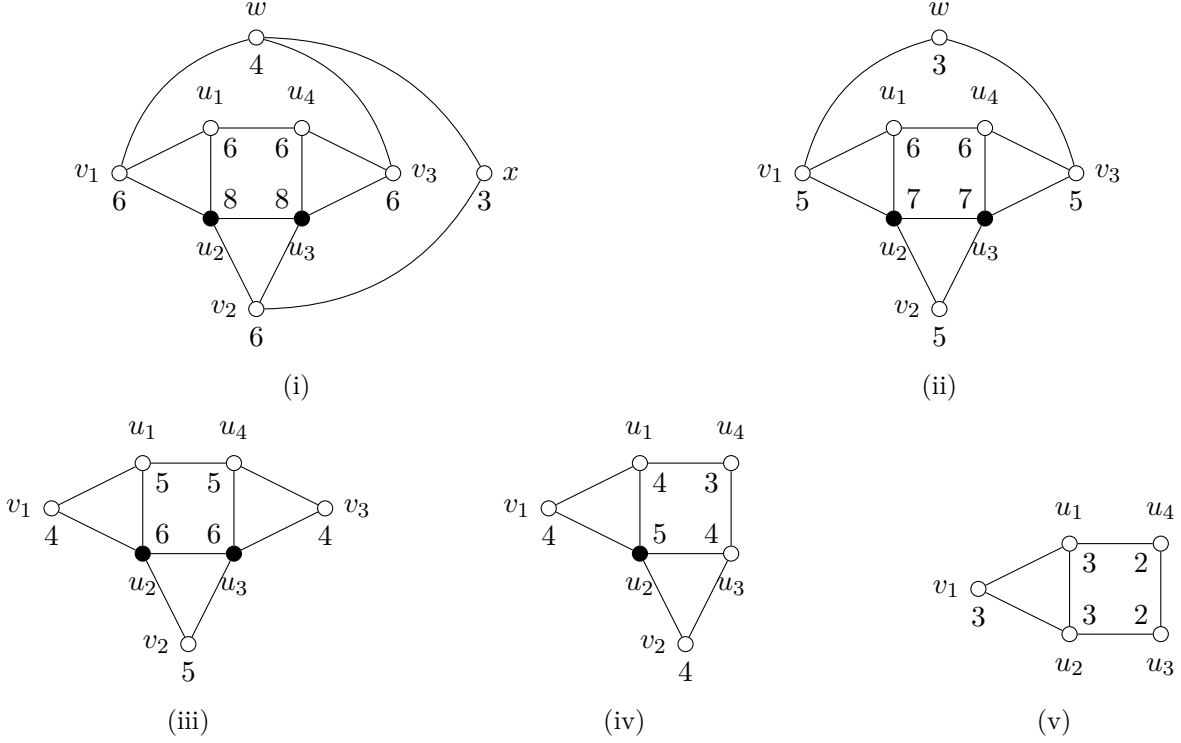
\begin{figure}[H]
\centering
\begin{subfigure}[b]{0.49\textwidth}
\centering
\begin{tikzpicture}[scale=0.6]{thick}
\begin{scope}[every node/.style={circle,draw,minimum size=1pt,inner sep=2}]
    \node[label={above:$u_1$},label={below right:$6$}] (u1) at (2,0) {};
    \node[fill,label={below:$u_2$},label={above right:$8$}] (u2) at (2,-2) {};
    \node[fill,label={below:$u_3$},label={above left:$8$}] (u3) at (4,-2) {};
    \node[label={above:$u_4$},label={below left:$6$}] (u4) at (4,0) {};
    
    \node[label={left:$v_1$},label={below:$6$}] (v1) at (0,-1) {};
    \node[label={left:$v_2$},label={below:$6$}] (v2) at (3,-4) {};
    \node[label={right:$v_3$},label={below:$6$}] (v3) at (6,-1) {};
    \node[label={above:$w$},label={below:$4$}] (w) at (3,2) {};
    \node[label={right:$x$},label={below:$3$}] (x) at (8,-1) {};
\end{scope}

\begin{scope}[every edge/.style={draw=black}]
    \path (u1) edge (u4);
    \path (u1) edge (u2);
    \path (u2) edge (u3);
    \path (u3) edge (u4);
    \path (u1) edge (v1);
    \path (v1) edge (u2);
    \path (v2) edge (u2);
    \path (v2) edge (u3);
    \path (v3) edge (u3);
    \path (v3) edge (u4);
    \path (v3) edge[bend right] (w);
    \path (v1) edge[bend left] (w);
    \path (v2) edge[bend right] (x);
    \path (w) edge[bend left] (x);
\end{scope}
\end{tikzpicture}
\caption{}
\end{subfigure}
\begin{subfigure}[b]{0.49\textwidth}
\centering
\begin{tikzpicture}[scale=0.6]{thick}
\begin{scope}[every node/.style={circle,draw,minimum size=1pt,inner sep=2}]
    \node[label={above:$u_1$},label={below right:$6$}] (u1) at (2,0) {};
    \node[fill,label={below:$u_2$},label={above right:$7$}] (u2) at (2,-2) {};
    \node[fill,label={below:$u_3$},label={above left:$7$}] (u3) at (4,-2) {};
    \node[label={above:$u_4$},label={below left:$6$}] (u4) at (4,0) {};
    
    \node[label={left:$v_1$},label={below:$5$}] (v1) at (0,-1) {};
    \node[label={left:$v_2$},label={below:$5$}] (v2) at (3,-4) {};
    \node[label={right:$v_3$},label={below:$5$}] (v3) at (6,-1) {};
    \node[label={above:$w$},label={below:$3$}] (w) at (3,2) {};
\end{scope}

\begin{scope}[every edge/.style={draw=black}]
    \path (u1) edge (u4);
    \path (u1) edge (u2);
    \path (u2) edge (u3);
    \path (u3) edge (u4);
    \path (u1) edge (v1);
    \path (v1) edge (u2);
    \path (v2) edge (u2);
    \path (v2) edge (u3);
    \path (v3) edge (u3);
    \path (v3) edge (u4);
    \path (v3) edge[bend right] (w);
    \path (v1) edge[bend left] (w);
\end{scope}
\end{tikzpicture}
\caption{}
\end{subfigure}

\begin{subfigure}[b]{0.33\textwidth}
\centering
\begin{tikzpicture}[scale=0.6]{thick}
\begin{scope}[every node/.style={circle,draw,minimum size=1pt,inner sep=2}]
    \node[label={above:$u_1$},label={below right:$5$}] (u1) at (2,0) {};
    \node[fill,label={below:$u_2$},label={above right:$6$}] (u2) at (2,-2) {};
    \node[fill,label={below:$u_3$},label={above left:$6$}] (u3) at (4,-2) {};
    \node[label={above:$u_4$},label={below left:$5$}] (u4) at (4,0) {};
    
    \node[label={left:$v_1$},label={below:$4$}] (v1) at (0,-1) {};
    \node[label={left:$v_2$},label={below:$5$}] (v2) at (3,-4) {};
    \node[label={right:$v_3$},label={below:$4$}] (v3) at (6,-1) {};
\end{scope}

\begin{scope}[every edge/.style={draw=black}]
    \path (u1) edge (u4);
    \path (u1) edge (u2);
    \path (u2) edge (u3);
    \path (u3) edge (u4);
    \path (u1) edge (v1);
    \path (v1) edge (u2);
    \path (v2) edge (u2);
    \path (v2) edge (u3);
    \path (v3) edge (u3);
    \path (v3) edge (u4);
\end{scope}
\end{tikzpicture}
\caption{}
\end{subfigure}
\begin{subfigure}[b]{0.33\textwidth}
\centering
\begin{tikzpicture}[scale=0.6]{thick}
\begin{scope}[every node/.style={circle,draw,minimum size=1pt,inner sep=2}]
    \node[label={above:$u_1$},label={below right:$4$}] (u1) at (2,0) {};
    \node[fill,label={below:$u_2$},label={above right:$5$}] (u2) at (2,-2) {};
    \node[label={below:$u_3$},label={above left:$4$}] (u3) at (4,-2) {};
    \node[label={above:$u_4$},label={below left:$3$}] (u4) at (4,0) {};
    
    \node[label={left:$v_1$},label={below:$4$}] (v1) at (0,-1) {};
    \node[label={left:$v_2$},label={below:$4$}] (v2) at (3,-4) {};
\end{scope}

\begin{scope}[every edge/.style={draw=black}]
    \path (u1) edge (u4);
    \path (u1) edge (u2);
    \path (u2) edge (u3);
    \path (u3) edge (u4);
    \path (u1) edge (v1);
    \path (v1) edge (u2);
    \path (v2) edge (u2);
    \path (v2) edge (u3);
\end{scope}
\end{tikzpicture}
\caption{}
\end{subfigure}
\begin{subfigure}[b]{0.32\textwidth}
\centering
\begin{tikzpicture}[scale=0.6]{thick}
\begin{scope}[every node/.style={circle,draw,minimum size=1pt,inner sep=2}]
    \node[label={above:$u_1$},label={below right:$3$}] (u1) at (2,0) {};
    \node[label={below:$u_2$},label={above right:$3$}] (u2) at (2,-2) {};
    \node[label={below:$u_3$},label={above left:$2$}] (u3) at (4,-2) {};
    \node[label={above:$u_4$},label={below left:$2$}] (u4) at (4,0) {};
    
    \node[label={left:$v_1$},label={below:$3$}] (v1) at (0,-1) {};
\end{scope}

\begin{scope}[every edge/.style={draw=black}]
    \path (u1) edge (u4);
    \path (u1) edge (u2);
    \path (u2) edge (u3);
    \path (u3) edge (u4);
    \path (u1) edge (v1);
    \path (v1) edge (u2);
\end{scope}
\end{tikzpicture}
\caption{}
\end{subfigure}
\caption{Reducible configurations in the proof of \Cref{injg3:config2}.}
\label{fig:injg3:config2}
\end{figure}

\begin{proof}
To prove \Cref{injg3:config2}, we will start by proving that the configurations in \Cref{fig:injg3:config2} are reducible and observe that \Cref{injg3:config2} corresponds to \Cref{fig:injg3:config2}(v). First, observe that the vertices in $S=\{u_1,u_2,u_3,u_4,v_1,v_2,v_3,w,x\}$ are pairwise distinct due to \Cref{injg3:config1}(iv).
\begin{itemize}
\item[(i)] Suppose first that $G$ contains a configuration shown in \Cref{fig:injg3:config2}(i). Color $G-\{u_2,u_3\}$ and uncolor the remaining vertices in $S$. The remaining list of colors for the non-colored vertices have size: $|L(u_1)|\geq 6$, $|L(u_2)|\geq 8$, $|L(u_3)|\geq 8$, $|L(u_4)|\geq 6$, $|L(v_1)|\geq 6$, $|L(v_2)|\geq 6$, $|L(v_3)|\geq 6$, $|L(w)|\geq 4$, and $|L(x)|\geq 3$. Observe that $u_1$ cannot be adjacent to $w$ as $u_1$, $u_2$, $u_3$, $u_4$, $v_1$, and $w$ would form the configuration in \Cref{injg3:config1}(iv). The same holds for $u_4$ and $w$. Moreover, $v_1$ and $v_3$ cannot both be adjacent to $x$ as $v_3$, $u_3$, $v_2$, $x$, $v_1$, and $w$ would form the configuration in \Cref{injg3:config1}(iv). Due to the previous observations and due to planarity, either $u_1$ or $u_4$ do not share a common neighbor with $x$, say $u_1$ does not. In this case, we show that $L(u_1)\cap L(x)=\emptyset$. Otherwise, we can color $u_1$ and $x$ with the same color and finish the coloring by coloring $w$, $v_1$, $v_2$, $v_3$, $u_4$, $u_2$, and $u_3$, in this order. Consequently, $|L(u_1)\cup L(x)|\geq 9$. But then, $G$ is $L$-list-colorable by \Cref{cor:Hall}.

\item[(ii)] Suppose first that $G$ contains a configuration shown in \Cref{fig:injg3:config2}(ii). Color $G-\{u_2,u_3\}$ and uncolor the other vertices. The remaining list of colors for these vertices have size: $|L(u_1)|\geq 6$, $|L(u_2)|\geq 7$, $|L(u_3)|\geq 7$, $|L(u_4)|\geq 6$, $|L(v_1)|\geq 5$, $|L(v_2)|\geq 5$, $|L(v_3)|\geq 5$, and $|L(w)|\geq 3$. Due to (i), $v_2$ and $w$ cannot share a neighbor. We claim that $L(v_2)\cap L(w)=\emptyset$. Otherwise, we can color $v_2$ and $w$ with the same color and finish the coloring in this order: $v_1$, $v_3$, $u_1$, $u_4$, $u_2$, and $u_3$. Consequently, $|L(v_2)\cup L(w)|\geq 8$. By \Cref{cor:Hall}, this configuration is reducible.

\item[(iii)] Suppose first that $G$ contains a configuration shown in \Cref{fig:injg3:config2}(iii). Color $G-\{u_2,u_3\}$ and uncolor the other vertices. The remaining list of colors for these vertices have size: $|L(u_1)|\geq 5$, $|L(u_2)|\geq 6$, $|L(u_3)|\geq 6$, $|L(u_4)|\geq 5$, $|L(v_1)|\geq 4$, $|L(v_2)|\geq 5$, and $|L(v_3)|\geq 4$. Due to (ii), $v_1$ and $v_3$ cannot share a neighbor. We claim that $L(v_1)\cap L(v_3)=\emptyset$. Otherwise, we can color $v_1$ and $v_3$ with the same color and finish the coloring in this order: $u_1$, $u_4$, $v_2$, $u_2$, and $u_3$. Consequently, $|L(v_1)\cup L(v_3)|\geq 8$. By \Cref{cor:Hall}, this configuration is reducible.

\item[(iv)] Suppose first that $G$ contains a configuration shown in \Cref{fig:injg3:config2}(iv). Color $G-\{u_2,u_3\}$ and uncolor the other vertices. The remaining list of colors for these vertices have size: $|L(u_1)|\geq 4$, $|L(u_2)|\geq 5$, $|L(u_3)|\geq 4$, $|L(u_4)|\geq 3$, $|L(v_1)|\geq 4$, and $|L(v_2)|\geq 4$. Due to (iii), $u_3$ and $u_4$ cannot share a neighbor. We claim that $L(u_3)\cap L(u_4)=\emptyset$. Otherwise, we can color $u_3$ and $u_4$ with the same color and finish the coloring in this order: $v_1$, $v_2$, $u_1$ and $u_2$. Consequently, $|L(u_3)\cup L(u_4)|\geq 7$. By \Cref{cor:Hall}, this configuration is reducible.

\item[(v)] Finally, suppose first that $G$ contains a configuration shown in \Cref{fig:injg3:config2}(v). Color $G-\{u_2,u_3\}$ and uncolor the other vertices. The remaining list of colors for these vertices have size: $|L(u_1)|\geq 2$, $|L(u_2)|\geq 3$, $|L(u_3)|\geq 3$, $|L(u_4)|\geq 2$, and $|L(v_2)|\geq 3$. Due to (iv), $u_1$ and $u_2$ cannot share a neighbor. The same holds for $u_3$ and $u_4$. We claim that $L(u_1)\cap L(u_2)=\emptyset$. Otherwise, we can color $u_1$ and $u_2$ with the same color and finish the coloring in this order: $u_4$, $v_2$, and $u_3$. Consequently, $|L(u_1)\cup L(u_2)|\geq 5$. By symmetry, the same holds for $L(u_3)$ and $L(u_4)$. Thus, a $3$-cycle adjacent to a $4$-cycle in $G$ is reducible by \Cref{cor:Hall}.
\end{itemize}
This concludes the proof.
\end{proof}

We now look at adjacent $3$-cycles.

\begin{lemma}\label{injg3:config3}
If $uvw$ and $uvx$ are two adjacent $3$-cycles in $G$ with $x\neq w$, then $x$ is not incident to another $3$-cycle $xyz$ with $y,z\notin \{u,v,w\}$. 
\end{lemma}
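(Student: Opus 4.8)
The plan is to argue by contradiction: assume $G$ contains adjacent triangles $uvw$ and $uvx$ (sharing the edge $uv$) together with a third triangle $xyz$, where $y,z\notin\{u,v,w\}$. First I would fix the local structure. By \Cref{injg3:config1}(ii) every vertex lying on a triangle is a $4$-vertex, so $u,v,w,x,y,z$ all have degree $4$; since $x$ has degree $4$ and lies on both $uvx$ and $xyz$, its neighbourhood is exactly $N(x)=\{u,v,y,z\}$. From this together with $y,z\notin\{u,v,w\}$ one already gets that $u,v,w,x,y,z$ are pairwise distinct and that $w\notin N(x)$, and \Cref{injg3:config1}(iv) (a $4$-vertex cannot lie on two adjacent $3$-cycles and a further $4^-$-cycle) gives that the fourth neighbours $u'$ of $u$ and $v'$ of $v$ avoid $\{u,v,w,x,y,z\}$ and each other. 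Write also $\{w_1,w_2\}=N(w)\setminus\{u,v\}$, $\{y_1,y_2\}=N(y)\setminus\{x,z\}$, $\{z_1,z_2\}=N(z)\setminus\{x,y\}$; using \Cref{injg3:config2} (a $3$-cycle is never adjacent to a $4$-cycle), applied to the $4$-cycles that pass through $x$ and share an edge with the triangle $uvw$ or $xyz$, I would further record that the eleven vertices $u,v,w,u',v',y,z,y_1,y_2,z_1,z_2$ — exactly those that $x$ can ``see'' — are pairwise distinct.

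Next I would delete $x$; by minimality $G-x$ has an injective $L$-list-colouring with $11$ colours, and after additionally uncolouring $u,v,y,z$ this is a valid partial injective colouring of $G$, since every constraint present in $G$ but not in $G-x$ involves a pair inside $N(x)=\{u,v,y,z\}$. Now the uncoloured set is the core $K=\{u,v,x,y,z\}$. Counting, for each core vertex, the colours it sees on the coloured vertices — using that an uncoloured neighbour contributes only the colours of its own remaining neighbours, so that $x$ sees only the colours of $w,u',v',y_1,y_2,z_1,z_2$, that $u$ sees only those of $w,v',w_1,w_2$ and of $u'$'s three further neighbours, and symmetrically for $v,y,z$ — I expect the bounds $|L(x)|,|L(u)|,|L(v)|\ge 4$ and $|L(y)|,|L(z)|\ge 3$ (having first discarded surplus colours so that all lists sit inside $11$-element ground sets). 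Every two vertices of $K$ share a common neighbour ($u$ and $y$, or $v$ and $z$, share $x$; $u$ and $x$ share $v$; $y$ and $x$ share $z$; $u$ and $v$ share $w$), so a valid extension must colour all of $K$ with pairwise distinct colours; I would get this from \Cref{cor:Hall} applied to $K$ with $\bigcup_{t\in K}L(t)$.

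With the size bounds above, Hall's condition is automatic for every subset of $K$ of size at most four, so the argument comes down to the single inequality $\bigl|\bigcup_{t\in K}L(t)\bigr|\ge 5$, and this is where I expect the real work: it is not implied merely by the eleven vertices around $x$ being distinct. My plan for it: suppose $\bigl|\bigcup_{t\in K}L(t)\bigr|=4$; then, since $|L(u)|,|L(v)|,|L(x)|\ge 4$, all three of these lists equal that common $4$-set $T$, so each of $u$, $v$, $x$ sees exactly the seven colours of its ground set lying outside $T$. Writing out these three seven-element colour sets explicitly (in terms of the colours of $w$, $u'$, $v'$, $w_1$, $w_2$, $y_1$, $y_2$, $z_1$, $z_2$ and of the further neighbours of $u'$ and $v'$) and exploiting the overlaps they are forced to have — the one seen by $u$ contains $c(w)$, those seen by $v$ and $x$ contain $c(w)$ and $c(u')$ — together with \Cref{injg3:config1}(iv), \Cref{injg3:config2} and planarity, I would drive the situation to a colour shared by two vertices that see each other, a contradiction. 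If closing this comparison turns out to be too delicate, the fallback is to enlarge the core by also uncolouring $u'$ (whose residual list is then of size at least $5$), so that Hall's top condition becomes immediate.
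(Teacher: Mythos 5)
You have put your finger on the right difficulty --- the top Hall condition $\lvert\bigcup_{t\in K}L(t)\rvert\ge 5$ --- but neither of your two routes closes it, and this is a genuine gap rather than a detail. With only $u,v,y,z$ uncoloured and $w$ kept coloured, every two vertices of your core see each other, so no colour can ever be repeated inside $K$ and the only way to finish is a system of distinct representatives; yet nothing you have established rules out a colouring of $G-\{x\}$ in which $L(u)=L(v)=L(x)$ is one common $4$-set $T$ and $L(y),L(z)\subseteq T$. In that scenario the seven colours each of $u,v,x$ sees are simply seven distinct colours exhausting the rest of its own (private) list; the overlaps you mention ($c(w)$, $c(u')$, $c(v')$ occurring in several of these ``seen'' sets) impose no conflict between mutually visible coloured vertices, so the sketched contradiction does not materialise. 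Your fallback rests on a miscount: uncolouring the fourth neighbour $u'$ of $u$ only guarantees $|L(u')|\ge 1$ (its degree is unknown, and among the vertices it sees only $v$ and $x$ are uncoloured, so it may see up to ten colours), not $|L(u')|\ge 5$, and even uncolouring $w$ (which gives $|L(w)|\ge 3$) does not make the union condition ``immediate''. The paper supplies exactly the ingredient you are missing: it uncolours $w$ as well and first proves, via the auxiliary configurations (i) and (ii) of \Cref{fig:injg3:config3}, that $w$ cannot share a neighbour with both $y$ and $z$; for a non-seeing pair, say $(w,y)$, either $L(w)\cap L(y)\ne\emptyset$, in which case $w$ and $y$ receive the same colour and the rest is finished greedily in the order $z,u,v,x$, or the lists are disjoint, $|L(w)\cup L(y)|\ge 6$, and \Cref{cor:Hall} applies. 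Your core contains no non-seeing pair, so this identification/disjointness dichotomy is unavailable, and you offer no substitute for it.

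A secondary inaccuracy: the pairwise distinctness of the eleven vertices seen by $x$ does not follow from \Cref{injg3:config2} as claimed. The coincidence $y_i=z_j$, i.e.\ a second triangle $yzt$ sharing the edge $yz$ with $xyz$, only produces the chorded $4$-cycle $xytz$, and \Cref{injg3:config2} cannot be read as forbidding such cycles (otherwise two $3$-faces could never share an edge, making \Cref{injg3:config9} and rule \ru2 vacuous); this situation is precisely configuration (i) in the paper's proof, which needs its own reduction (colour $G-\{ux\}$, then recolour $u$ and $x$). The coincidence does not hurt your list-size lower bounds, but it is another point where your argument silently relies on a claim that requires a separate proof.
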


\begin{figure}[H]
\centering
\begin{subfigure}[b]{0.34\textwidth}
\centering
\begin{tikzpicture}[scale=0.6]{thick}
\begin{scope}[every node/.style={circle,draw,minimum size=1pt,inner sep=2}]
    \node[label={left: $w$}] (w) at (0,-1) {};
    \node[label={above left: $u$},label={above right: $1$}] (u) at (2,0) {};
    \node[label={below:$v$}] (v) at (2,-2) {};
    \node[fill,label={below:$x$},label={above:$2$}] (x) at (4,-1) {};
    \node[label={above: $y$}] (y) at (6,0) {};
    \node[label={below: $z$}] (z) at (6,-2) {};
    \node[label={above: $t$}] (t) at (8,-1) {};
\end{scope}

\begin{scope}[every edge/.style={draw=black}]
    \path (u) edge (v);
    \path (w) edge (v);
    \path (u) edge (w);
    \path (u) edge (x);
    \path (x) edge (v);
    \path (x) edge (y);
    \path (x) edge (z);
    \path (z) edge (y);
    \path (z) edge (t);
    \path (t) edge (y);
\end{scope}
\end{tikzpicture}
\caption{}
\end{subfigure}
\begin{subfigure}[b]{0.32\textwidth}
\centering
\begin{tikzpicture}[scale=0.6]{thick}
\begin{scope}[every node/.style={circle,draw,minimum size=1pt,inner sep=2}]
    \node[fill,label={above left: $w$},label={below left: $5$}] (w) at (0,-1) {};
    \node[label={above left: $u$},label={above right: $7$}] (u) at (2,0) {};
    \node[label={below left:$v$},label={below right: $7$}] (v) at (2,-2) {};
    \node[fill,label={below:$x$},label={above:$2$}] (x) at (4,-1) {};
    \node[label={above: $y$},label={right: $5$}] (y) at (6,0) {};
    \node[label={below: $z$},label={right: $5$}] (z) at (6,-2) {};
    \node[label={below: $s$},label={above: $4$}] (s) at (3,-4) {};
    \node[label={above: $t$},label={below: $4$}] (t) at (3,2) {};
\end{scope}

\begin{scope}[every edge/.style={draw=black}]
    \path (u) edge (v);
    \path (w) edge (v);
    \path (u) edge (w);
    \path (u) edge (x);
    \path (x) edge (v);
    \path (x) edge (y);
    \path (x) edge (z);
    \path (z) edge (y);
    \path (z) edge[bend left] (s);
    \path (s) edge[bend left] (w);
    \path (t) edge[bend left] (y);
    \path (t) edge[bend right] (w);
\end{scope}
\end{tikzpicture}
\caption{}
\end{subfigure}
\begin{subfigure}[b]{0.32\textwidth}
\centering
\begin{tikzpicture}[scale=0.6]{thick}
\begin{scope}[every node/.style={circle,draw,minimum size=1pt,inner sep=2}]
    \node[label={above left: $w$},label={below left: $3$}] (w) at (0,-1) {};
    \node[label={above left: $u$},label={above right: $5$}] (u) at (2,0) {};
    \node[label={below left:$v$},label={below right: $5$}] (v) at (2,-2) {};
    \node[fill,label={below:$x$},label={above:$5$}] (x) at (4,-1) {};
    \node[label={above: $y$},label={right: $3$}] (y) at (6,0) {};
    \node[label={below: $z$},label={right: $3$}] (z) at (6,-2) {};
\end{scope}

\begin{scope}[every edge/.style={draw=black}]
    \path (u) edge (v);
    \path (w) edge (v);
    \path (u) edge (w);
    \path (u) edge (x);
    \path (x) edge (v);
    \path (x) edge (y);
    \path (x) edge (z);
    \path (z) edge (y);
\end{scope}
\end{tikzpicture}
\caption{}
\end{subfigure}
\caption{Reducible configurations in the proof of \Cref{injg3:config3}.}
\label{fig:injg3:config3}
\end{figure}
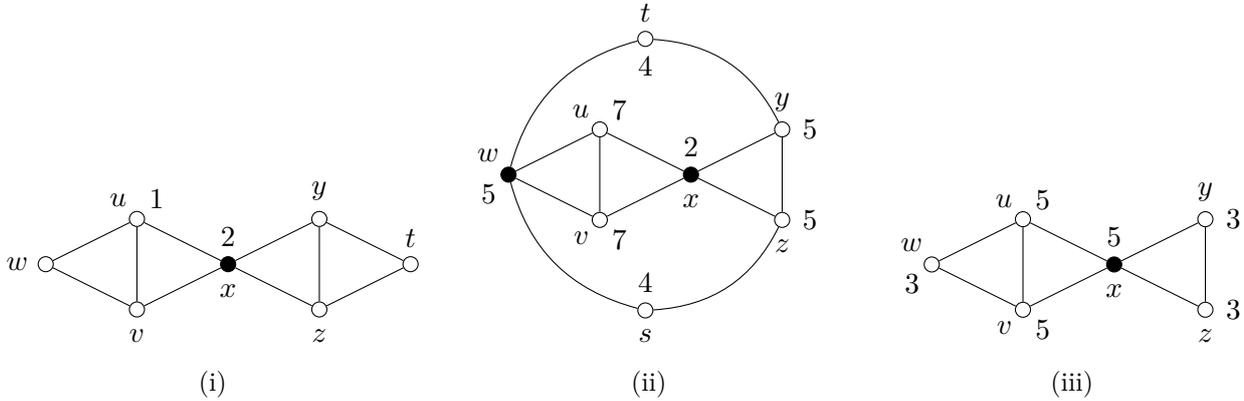

\begin{proof}
To prove \Cref{injg3:config3}, we will start by proving that the configurations in \Cref{fig:injg3:config3} are reducible and observe that \Cref{injg3:config3} corresponds to \Cref{fig:injg3:config3}(iii). First, observe that the vertices in $\{t,u,v,w,x,y,z\}$ are pairwise distinct due to \Cref{injg3:config1}(iv).
\begin{itemize}
\item[(i)] Color $G-\{ux\}$ and uncolor $u$ and $x$. Observe that $|L(u)|\geq 1$ and $|L(x)|\geq 2$. Thus, $u$ and $x$ are colorable.

\item[(ii)] First, observe that $s$ is distinct from all other vertices due to (i) and \Cref{injg3:config1}(iv). Moreover, $v$ cannot be adjacent to $s$ due to \Cref{injg3:config1}(iv). The same holds for $u$ and $s$. Also, $s$ cannot be adjacent to $t$ since $z$, $y$, $t$, $s$, and $x$ would form the configuration in \Cref{injg3:config2}. Now, color $G-\{x\}$ and uncolor the other vertices. The remaining list of colors for these vertices have size: $|L(u)|\geq 7$, $|L(v)|\geq 7$, $|L(w)|\geq 5$, $|L(x)|\geq 7$, $|L(y)|\geq 5$, $|L(z)|\geq 5$, $|L(s)|\geq 4$, and $|L(t)|\geq 4$. Due to the previous observations, $w$ and $s$ cannot share a neighbor. We claim that $L(w)\cap L(s)=\emptyset$. Otherwise, we can color $w$ and $s$ with the same color and finish the coloring in this order: $t$, $y$, $z$, $u$, $v$, and $x$. Consequently, $|L(w)\cup L(s)|\geq 9$. By \Cref{cor:Hall}, this configuration is reducible.

\item[(iii)] Color $G-\{x\}$ and uncolor the other vertices. The remaining list of colors for these vertices have size: $|L(u)|\geq 5$, $|L(v)|\geq 5$, $|L(w)|\geq 3$, $|L(x)|\geq 5$, $|L(y)|\geq 3$, and $|L(z)|\geq 3$. Due to (ii), either $w$ and $y$, or $w$ and $z$ cannot share a neighbor, say $w$ and $y$. We claim that $L(w)\cap L(y)=\emptyset$. Otherwise, we can color $w$ and $y$ with the same color and finish the coloring in this order: $z$, $u$, $v$, and $x$. Consequently, $|L(w)\cup L(y)|\geq 6$. By \Cref{cor:Hall}, this configuration is reducible.
\end{itemize}
This concludes the proof.
\end{proof}

Similar to $2$-distance coloring, the constraint at distance 2 in an injective coloring also naturally pushes us to look at configurations around $5$-faces.

\begin{lemma}\label{injg3:config9}
Let $f=u_1u_2u_3u_4u_5$ be a $5$-face in $G$ and let $u_4u_5v_4$ and $u_5v_4w$ be two distinct $3$-faces. Then, $f$ is adjacent to at most one other $3$-face.
\end{lemma}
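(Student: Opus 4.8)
The plan is to extract the forced local structure around the two given triangles, eliminate the edge $u_5u_1$ as a candidate carrier of a $3$-face, and then rule out that two of the remaining edges of $f$ lie on $3$-faces, using the structural lemmas where possible and a single Hall-type reducibility argument for the residual cases.

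\emph{Forced structure, and the edge $u_5u_1$.} Since $v_4$, $u_4$, and $u_5$ each lie on a $3$-cycle, \Cref{injg3:config1}(ii) forces each of them to be a $4$-vertex. Reading off the rotation at $u_5$ (whose incident faces are $f$, $u_4u_5v_4$, $u_5v_4w$, and exactly one more) gives $N(u_5)=\{u_1,u_4,v_4,w\}$, and similarly $N(u_4)=\{u_3,u_5,v_4,p\}$ and $N(v_4)=\{u_4,u_5,w,r\}$ for some vertices $p,r$. I would then show that $u_5u_1$ lies on no $3$-face: its second incident face would be a triangle $u_5u_1z$ with $z\in N(u_5)\setminus\{u_1\}=\{u_4,v_4,w\}$; for $z=u_4$ the edge $u_4u_5$, and for $z=v_4$ the edge $u_5v_4$, would lie on three faces, which is impossible in a simple plane graph; and for $z=w$ the $4$-vertex $u_5$ would be incident to the two adjacent $3$-cycles $u_5v_4w$ and $u_5u_1w$ and to the further $3$-cycle $u_4u_5v_4$, contradicting \Cref{injg3:config1}(iv). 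Hence every $3$-face incident to $f$ other than $u_4u_5v_4$ sits on one of $u_1u_2$, $u_2u_3$, $u_3u_4$, and it remains to prove that at most one of these three edges carries a $3$-face.

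\emph{Pruning.} Suppose two of $u_1u_2,u_2u_3,u_3u_4$ carry $3$-faces. If the edge is $u_2u_3$ or $u_3u_4$, its endpoint $u_3$ (resp.\ also $u_4$) lies on a $3$-cycle and is therefore a $4$-vertex by \Cref{injg3:config1}(ii); analysing the rotation at that $4$-vertex shows the apex of the incident triangle either is a vertex outside $\{u_1,\dots,u_5\}$, or forces an edge onto three faces, or turns $v_4$ into a $4$-vertex incident to the adjacent $3$-cycles $u_4u_5v_4,u_5v_4w$ plus a third $3$-cycle (excluded by \Cref{injg3:config1}(iv)), or coincides with $w$. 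The cases where \Cref{injg3:config3} applies — instantiated at the adjacent pair $u_4u_5v_4,u_5v_4w$ with $x=u_4$ or $x=w$ — are dispatched directly. What survives are configurations in which $f$ carries two further $3$-faces whose apices are new vertices (or one of them is $w$), with all the named vertices pairwise distinct by \Cref{injg3:config1,injg3:config2,injg3:config3}.

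\emph{Reducibility and the main obstacle.} For each surviving configuration I would argue as in the proofs of \Cref{injg3:config2,injg3:config3}: delete the $4$-vertex $u_5$, colour $G-\{u_5\}$ by minimality, then uncolour $u_1,u_2,u_3,u_4,v_4,w$ together with the triangle apices, and record the lower bounds $|L(\cdot)|$ coming from the fact that each uncoloured vertex sees at most one forbidden colour per coloured vertex it sees; $u_5$ in particular keeps many colours, since all its neighbours are uncoloured. One then identifies two vertices of the configuration sharing no common neighbour — the two apices, or one apex and a suitable $u_i$ — and argues their lists must be disjoint, for otherwise colouring both with a common colour and greedily extending (the apices, then $v_4,w,u_5$, then the $u_i$'s in a good order) yields an injective $L$-colouring of $G$; with that disjointness the union bound improves enough that \Cref{cor:Hall} colours the deleted vertices, a contradiction. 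The genuine difficulty is this last step: picking the deleted vertex so every uncoloured vertex retains enough colours, pinning down in each surviving configuration the right ``far apart'' pair and the greedy order that extends a common colour, and carrying out the (short but numerous) rotation checks in the pruning step that confirm no new small cycle slips past \Cref{injg3:config1,injg3:config2,injg3:config3}.
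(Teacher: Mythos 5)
Your setup matches the paper's in substance: you reduce to the unique surviving configuration (the two extra $3$-faces must sit on $u_1u_2$ and $u_2u_3$, with new apices $v_1,v_2$), and your pruning via \Cref{injg3:config1}(iv) for the edge $u_5u_1$ and \Cref{injg3:config3} for $u_3u_4$ is the same mechanism the paper invokes. Deleting only $u_5$ rather than all of $u_1,\dots,u_5$ is harmless, since the resulting uncolored set and the list-size bounds ($6,5,5,6,6$ for the $u_i$ and $4,4,5,4$ for $v_1,v_2,v_4,w$) come out the same either way.

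The gap is in the final step, which you flag as ``the genuine difficulty'' but then describe incorrectly: a single far-apart pair with disjoint lists does \emph{not} make \Cref{cor:Hall} go through. With nine vertices, eleven colors, and three lists of size only $4$, an $8$-subset omitting one member of your chosen pair can still have a union as small as $6$, so Hall fails. Moreover, the only pair that is \emph{guaranteed} not to share a neighbor is $(u_2,v_4)$ (because $v_4$ cannot be adjacent to $v_1$ or $v_2$ by \Cref{injg3:config2}); the other candidate pairs $(u_4,v_1)$ and $(u_5,v_2)$ genuinely can share a neighbor (e.g.\ $v_2$ and $u_5$ may both be adjacent to $w$), so one cannot simply ``pick a suitable far-apart pair.'' The paper's proof therefore first pins down $L(u_2)\cap L(v_4)=\emptyset$ — using the observation that after coloring $u_2,v_4$ alike and then $w,v_1,v_2$, the four vertices $u_1,u_3,u_4,u_5$ each retain two colors and pairwise see only their cyclic neighbors, so $2$-choosability of even cycles finishes — and then runs a three-way case analysis on whether $v_1$ sees $u_4$ and whether $v_2$ sees $u_5$, in each branch accumulating enough additional disjoint pairs (or recoloring intersecting pairs with a common color and recursing) before Hall applies. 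None of that machinery is present in your sketch, so as written the proof does not close.
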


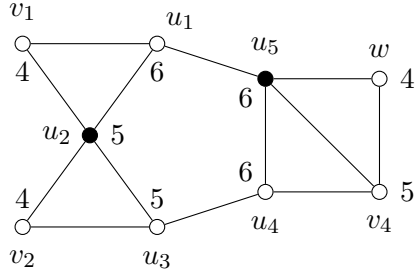
\begin{figure}[H]
\centering
\begin{tikzpicture}[scale=1.5]{thick}
\begin{scope}[every node/.style={circle,draw,minimum size=1pt,inner sep=2}]
    \node[label={above right:$u_1$},label={below:$6$}] (u1) at (0.05,1.31) {};
    \node[fill,label={left:$u_2$},label={right:$5$}] (u2) at (-0.54,0.5) {};
    \node[label={below:$u_3$},label={above:$5$}] (u3) at (0.05,-0.31) {};
    \node[label={below:$u_4$},label={above left:$6$}] (u4) at (1,0) {};
    \node[fill,label={above:$u_5$},label={below left:$6$}] (u5) at (1,1) {};
    
    \node[label={above:$v_1$},label={below:$4$}] (v1) at (-1.13,1.31) {};
    \node[label={below:$v_2$},label={above:$4$}] (v2) at (-1.13,-0.31) {};
    \node[label={below:$v_4$},label={right:$5$}] (v4) at (2,0) {};
    \node[label={above:$w$},label={right:$4$}] (w) at (2,1) {};
\end{scope}

\begin{scope}[every edge/.style={draw=black}]
    \path (u1) edge (u2);
    \path (u2) edge (u3);
    \path (u3) edge (u4);
    \path (u4) edge (u5);
    \path (u5) edge (u1);
    \path (u1) edge (v1);
    \path (u2) edge (v1);
    \path (v2) edge (u2);
    \path (u3) edge (v2);
    \path (u4) edge (v4);
    \path (u5) edge (v4);
    \path (u5) edge (w);
    \path (w) edge (v4);
\end{scope}
\end{tikzpicture}
\caption{Reducible configuration in \Cref{injg3:config9}.}
\label{fig:injg3:config9}
\end{figure}

\begin{proof}
Suppose by contradiction that $f$ is adjacent to at least two other $3$-faces. In such a case, the only possible configuration must be the one in \Cref{fig:injg3:config9} due to \Cref{injg3:config1}(iv) and \Cref{injg3:config3}. First, observe that the vertices in $S=\{u_1,u_2,u_3,u_4,u_5,v_1,v_2,v_4,w\}$ are pairwise distinct due to \Cref{injg3:config1}(ii, iv), \Cref{injg3:config2}, and \Cref{injg3:config3}. Color $G-\{u_1,u_2,u_3,u_4,u_5\}$ and uncolor $\{v_1,v_2,v_4,w\}$. The remaining list of colors for these vertices have size: $|L(u_1)|\geq 6$, $|L(u_2)|\geq 5$, $|L(u_3)|\geq 5$, $|L(u_4)|\geq 6$, $|L(u_5)|\geq 6$, $|L(v_1)|\geq 4$, $|L(v_2)|\geq 4$, $|L(v_4)|\geq 5$, and $|L(w)|\geq 4$. 

Observe that $v_4$ cannot be adjacent to $v_2$ nor $v_1$ due to \Cref{injg3:config2}. Thus, $u_2$ and $v_4$ cannot share a neighbor. If $L(u_2)\cap L(v_4)\neq \emptyset$, then we color $u_2$ and $v_4$ with the same color. Now, we color $w$, $v_1$, and $v_2$. Observe that each of the remaining list of colors for $u_1$, $u_3$, $u_4$, and $u_5$ has size at least 2 as they each see at most 4 different colors and if $u_3$ sees $w$ then $|L(u_3)|\geq 6$. Moreover, $u_1$ (resp. $u_3$) cannot see $u_5$ (resp. $u_4$) by \Cref{injg3:config1}(iv) (resp. \Cref{injg3:config3}). Since each of the remaining four vertices sees exactly two others, we can finish the coloring by the $2$-choosability of even cycles. Thus, we must have $L(u_2)\cap L(v_4)=\emptyset$.

We distinguish the following cases.

\textbf{Case 1:} $v_1$ shares a neighbor with $u_4$\\
Let $u$ be their common neighbor. We get $u\neq v_4$ (resp. $u_3$) by \Cref{injg3:config2}
(resp. \Cref{injg3:config1}(iv)). We also have $u\neq u_5$ since vertices of $S$ are all distinct. As a result, $v_2$ cannot share a neighbor with $u_5$ by planarity. Moreover, if $w$ sees $u_3$, then their common neighbor must be $u$ by planarity. However, $w$, $v_4$, $u_4$, $u$, and $u_5$ form the configuration in \Cref{injg3:config2}. Thus, $w$ cannot share a neighbor with $u_3$.

\begin{itemize}
\item If $L(u_3)\cap L(w)\neq \emptyset$, color $u_3$ and $w$ with the same color $c$. Let $L'(x)$ be the remaining list of colors for $x\in S\setminus\{u_3,w\}$.
\begin{itemize}
\item If $L'(u_5)\cap L'(v_2)\neq\emptyset$, then color $u_5$ and $v_2$ with the same color $c'$ and complete the coloring by \Cref{cor:Hall} as $L(u_2)\cap L(v_4) =\emptyset$ (as proven above). 
\item If $L'(u_5)\cap L'(v_2)=\emptyset$, then we complete the coloring of $S\setminus\{u_3,w\}$ by \Cref{cor:Hall}, which is possible since $|L'(u_2)\cup L'(v_4)|\geq 8$ and $|L'(u_5)\cup L'(v_2)|\geq 8$.
\end{itemize} 

\item If $L(u_3)\cap L(w)=\emptyset$, then $|L(u_3)\cup L(w)|\geq 9$. 
\begin{itemize}
\item If $L(u_5)\cap L(v_2)\neq\emptyset$, then color $u_5$ and $v_2$ with the same color $c$ and complete the coloring by \Cref{cor:Hall} as $L(u_2)\cap L(v_4) =\emptyset$ and $L(u_3)\cap L(w)=\emptyset$. 
\item If $L(u_5)\cap L(v_2)=\emptyset$, then we complete the coloring of $S$ by \Cref{cor:Hall}, which is possible since $|L(u_5)\cup L(v_2)|\geq 10$, $|L(u_3)\cup L(w)|\geq 9$, and $|L(u_2)\cup L(v_4)|\geq 10$.
\end{itemize}
\end{itemize}

\textbf{Case 2:} $v_2$ shares a neighbor with $u_5$.\\
Let $u$ be their common neighbor. We get $u\neq u_1$ (resp. $u_4$, $v_4$) by \Cref{injg3:config1}(iv)
(resp. \Cref{injg3:config3}, \Cref{injg3:config2}). So, $u=w$. Observe that in this case, $|L(u_1)|\geq 6$, $|L(u_2)|\geq 6$, $|L(u_3)|\geq 6$, $|L(u_4)|\geq 6$, $|L(u_5)|\geq 7$, $|L(v_1)|\geq 4$, $|L(v_2)|\geq 6$, $|L(v_4)|\geq 6$, and $|L(w)|\geq 6$.
\begin{itemize}
\item If $L(u_4)\cap L(v_1)\neq\emptyset$, then color $u_4$ and $v_1$ with the same color $c$ and complete the coloring by \Cref{cor:Hall} as $L(u_2)\cap L(v_4) =\emptyset$. 
\item If $L(u_4)\cap L(v_1)=\emptyset$, then we complete the coloring of $S$ by \Cref{cor:Hall}, which is possible since $|L(u_4)\cup L(v_1)|\geq 10$ and $|L(u_2)\cup L(v_4)|\geq 12$. 
\end{itemize}

\textbf{Case 3:} $v_1$ does not share a neighbor with $u_4$ and $v_2$ does not share a neighbor with $u_5$.
\begin{itemize}
\item If $L(u_4)\cap L(v_1)\neq\emptyset$, then color $u_4$ and $v_1$ with the same color $c$. Let $L'(x)$ be the remaining list of colors for $x\in S\setminus\{u_4,v_1\}$.
\begin{itemize}
\item If $L'(u_5)\cap L'(v_2)\neq\emptyset$, then color $u_5$ and $v_2$ with the same color $c'$ and complete the coloring by \Cref{cor:Hall} as $L(u_2)\cap L(v_4) =\emptyset$. 
\item If $L'(u_5)\cap L'(v_2)=\emptyset$, then we complete the coloring by \Cref{cor:Hall}, which is possible since $|L'(u_2)\cup L'(v_4)|\geq 8$ and $|L'(u_5)\cup L'(v_2)|\geq 8$.
\end{itemize} 

\item If $L(u_4)\cap L(v_1)=\emptyset$, then we complete the coloring of $S$ by \Cref{cor:Hall}, which is possible since $|L(u_4)\cup L(v_1)|\geq 10$ and $|L(u_2)\cup L(v_4)|\geq 12$. 
\begin{itemize}
\item If $L(u_5)\cap L(v_2)\neq\emptyset$, then color $u_5$ and $v_2$ with the same color $c$ and complete the coloring by \Cref{cor:Hall} as $L(u_2)\cap L(v_4) =\emptyset$ and $L(u_4)\cap L(v_1) =\emptyset$. 
\item If $L(u_5)\cap L(v_2)=\emptyset$, then we complete the coloring by \Cref{cor:Hall}, which is possible since $|L(u_2)\cup L(v_4)|\geq 10$, $|L(u_4)\cup L(v_1)|\geq 10$, and $|L(u_5)\cup L(v_2)|\geq 10$.
\end{itemize}
\end{itemize}
This concludes the proof.
\end{proof}

\begin{lemma}\label{injg3:config4}
A $5$-face in $G$ is not incident to a $3$-vertex and adjacent with three $3$-faces.
\end{lemma}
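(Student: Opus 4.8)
The plan is to mimic the proof of \Cref{injg3:config9}: pin down the local structure, delete the vertices of the $5$-face, recolour with the help of \Cref{cor:Hall}, and branch on a few ``remote'' pairs of vertices whose lists can be merged. \emph{Set-up.} Write $f=u_1u_2u_3u_4u_5$ and, without loss of generality, let $u_1$ be a $3$-vertex. By \Cref{injg3:config1}(ii) a $3$-vertex lies on no $3$-cycle, hence on no $3$-face, so the edges $u_5u_1$ and $u_1u_2$ bound no $3$-face; consequently the three $3$-faces adjacent to $f$ are glued along $u_2u_3$, $u_3u_4$, $u_4u_5$, with apices $x_1$, $x_2$, $x_3$ respectively. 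A short case analysis using \Cref{injg3:config1}(i),(ii), \Cref{injg3:config2} and \Cref{injg3:config3} shows that $u_1,\dots,u_5,x_1,x_2,x_3$ together with the third neighbour $t$ of $u_1$ are pairwise distinct, that $d(u_3)=d(u_4)=4$ (each already has the four neighbours forced by its two incident $3$-faces and $f$), and that $u_2\not\sim u_4$ and $u_3\not\sim u_5$ (otherwise one gets the bow-tie of \Cref{injg3:config1}(iv)). Moreover, by \Cref{injg3:config9} applied to $f$ read with each admissible cyclic relabelling and orientation, no further $3$-face is attached along any of $u_2x_1$, $u_3x_1$, $u_3x_2$, $u_4x_2$, $u_4x_3$, $u_5x_3$; this rigidity is what keeps the count below under control.

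\emph{Reduction.} I would then delete $U=\{u_1,u_2,u_3,u_4,u_5\}$. By minimality $G-U$ admits an injective $L$-list-colouring with the given $11$-element lists; uncolour $x_1,x_2,x_3$ as well. In an injective colouring a vertex sees colours only through its neighbours, and for each vertex of $W=\{u_1,\dots,u_5,x_1,x_2,x_3\}$ most of these neighbours are now uncoloured; a routine tally (the only delicate point being the single extra neighbour that a degree-$4$ vertex among $u_2,u_5,x_1,x_3$ may carry) gives $|L(u_1)|\geq 6$, $|L(u_3)|\geq 6$, $|L(u_4)|\geq 6$, $|L(u_2)|\geq 5$, $|L(u_5)|\geq 5$, $|L(x_2)|\geq 5$, and $|L(x_1)|\geq 4$, $|L(x_3)|\geq 4$.

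\emph{Hall argument.} By planarity together with \Cref{injg3:config2} and \Cref{injg3:config3}, the vertices $x_1$ and $x_3$ share no common neighbour (one would produce a triangle sharing an edge with $u_2u_3x_1$ or $u_4u_5x_3$, or a copy of \Cref{injg3:config1}(iv)), so $x_1$ and $x_3$ do not see each other; the same holds for the pair $u_2,x_3$ (here $u_2\not\sim u_4$ is used). If $L(x_1)\cap L(x_3)\neq\emptyset$, colour $x_1$ and $x_3$ with a common colour, decrease the bounds above by at most $1$ each, and finish the remaining vertices by \Cref{cor:Hall}, splitting once more on the remote pair $u_2,x_3$ exactly as in \Cref{injg3:config9} if the size bounds require it. If $L(x_1)\cap L(x_3)=\emptyset$, then $|L(x_1)\cup L(x_3)|\geq 8$ and \Cref{cor:Hall} (again possibly after one further split on $u_2,x_3$) colours all of $W$. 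Either way $G$ admits an injective $L$-list-colouring, contradicting the choice of $G$; and since this rules out the configuration, the lemma follows.

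\emph{Main obstacle.} No genuinely new idea is needed beyond \Cref{injg3:config9}; the work is bookkeeping. One must carefully account for the handful of extra colours contributed by the possible fourth neighbours of $u_2,u_5,x_1,x_3$ so that the size bounds are never violated, and one must run the planarity arguments certifying that the claimed ``remote'' pairs really have no common neighbour -- this is exactly where \Cref{injg3:config2}, \Cref{injg3:config3} and the bow-tie configuration of \Cref{injg3:config1}(iv) do the heavy lifting, and, as in \Cref{injg3:config9}, it may force the case analysis to branch once or twice more than in the clean outline above.
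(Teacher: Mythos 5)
Your reduction is set up exactly as in the paper: you identify the same configuration (the three $3$-faces must sit on the three edges opposite the $3$-vertex), delete the five face vertices, uncolour the three apices, and your list-size bounds agree with the paper's to the letter. The gap is in the choice of ``remote'' pairs. Neither $(x_1,x_3)$ nor $(u_2,x_3)$ is guaranteed to be a non-seeing pair, and the lemmas you cite do not establish this: they only exclude common neighbours \emph{among the named vertices} of the configuration. Nothing forbids a brand-new vertex $w$ adjacent to both $x_1$ and $x_3$ (this creates only $5$- and $6$-cycles, hence no copy of \Cref{injg3:config1}(iv), \Cref{injg3:config2} or \Cref{injg3:config3}), and nothing forbids the fourth neighbour of $u_2$ from being adjacent to $x_3$. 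Without at least one genuinely non-seeing pair the argument collapses, because the union of all eight lists is only guaranteed to have size $6$ (every list could sit inside $L(u_3)$), so \Cref{cor:Hall} fails on the full set and the merging step is not optional bookkeeping but the crux.

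The paper's proof chooses its pairs differently, and this is where the real idea lives. It merges the $3$-vertex with each of its two neighbours on $f$ (in your labels, the pairs $(u_1,u_2)$ and $(u_1,u_5)$): these are \emph{guaranteed} non-seeing, since a common neighbour would put the $3$-vertex on a triangle, contradicting \Cref{injg3:config1}(ii). For the remaining merge it proves only the disjunction ``$(x_1,u_5)$ or $(x_3,u_2)$ is non-seeing'': if both pairs had common neighbours, the two connecting paths would cross outside the disk bounded by $f$ and its three triangles, so by planarity the common neighbour is a single vertex $w$ adjacent to $u_2,x_1,u_5,x_3$, which yields a $3$-cycle sharing an edge with a $4$-cycle, contradicting \Cref{injg3:config2}. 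Your unconditional claim for $(u_2,x_3)$ is exactly one half of this disjunction, and only the disjunction is true. (A minor additional point: \Cref{injg3:config9} cannot be invoked here, since its hypothesis requires two $3$-faces sharing an edge, which never occurs in this configuration; the pairwise-distinctness and degree facts you need follow instead from \Cref{injg3:config1}(iv) and \Cref{injg3:config3}, as you also note.)
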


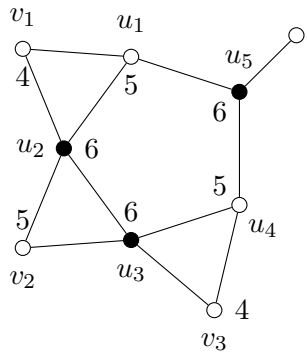
\begin{figure}[H]
\centering
\begin{tikzpicture}[scale=1.5]{thick}
\begin{scope}[every node/.style={circle,draw,minimum size=1pt,inner sep=2}]
    \node[label={above:$u_1$},label={below:$5$}] (u1) at (0.05,1.31) {};
    \node[fill,label={left:$u_2$},label={right:$6$}] (u2) at (-0.54,0.5) {};
    \node[fill,label={below:$u_3$},label={above:$6$}] (u3) at (0.05,-0.31) {};
    \node[label={below right:$u_4$},label={above left:$5$}] (u4) at (1,0) {};
    \node[fill,label={above :$u_5$},label={below left:$6$}] (u5) at (1,1) {};
    \node (5) at (1.5,1.5) {};
    
    \node[label={above:$v_1$},label={below:$4$}] (v1) at (-0.9,1.38) {};
    \node[label={below:$v_2$},label={above:$5$}] (v2) at (-0.9,-0.38) {};
    \node[label={below:$v_3$},label={right:$4$}] (v3) at (0.78,-0.93) {};
\end{scope}

\begin{scope}[every edge/.style={draw=black}]
    \path (u1) edge (u2);
    \path (u2) edge (u3);
    \path (u3) edge (u4);
    \path (u4) edge (u5);
    \path (u5) edge (u1);
    \path (u1) edge (v1);
    \path (u2) edge (v1);
    \path (u2) edge (v2);
    \path (u3) edge (v2);
    \path (u3) edge (v3);
    \path (u4) edge (v3);
    \path (u5) edge (5);
\end{scope}
\end{tikzpicture}
\caption{Reducible configuration in \Cref{injg3:config4}.}
\label{fig:injg3:config4}
\end{figure}

\begin{proof}
Suppose to the contrary that such a configuration exists in $G$. In which case, it must be the one in \Cref{fig:injg3:config4} due to \Cref{injg3:config1}(ii). First, observe that the vertices in $S=\{u_1,u_2,u_3,u_4,u_5,v_1,v_2,v_3\}$ are pairwise distinct due to \Cref{injg3:config1}(iv) and \Cref{injg3:config3}. 
Color $G-\{u_1,u_2,u_3,u_4,u_5\}$ and uncolor $\{v_1,v_2,v_3\}$. The remaning list of colors for these vertices have size: $|L(u_1)|\geq 5$, $|L(u_2)|\geq 6$, $|L(u_3)|\geq 6$, $|L(u_4)|\geq 5$, $|L(u_5)|\geq 6$, $|L(v_1)|\geq 4$, $|L(v_2)|\geq 5$, and $|L(v_3)|\geq 4$. Observe that $u_1$ and $u_5$ cannot share a neighbor due to \Cref{injg3:config1}(ii). The same holds for $u_4$ and $u_5$. 

Moreover, we claim that either $v_1$ and $u_4$, or $v_3$ and $u_1$ do not share a neighbor. Indeed, say $v_1$ and $u_4$ share a neighbor $w$, and $v_3$ and $u_1$ share a neighbor $w'$. Note that $w$ is distinct from $u_1$ by \Cref{injg3:config1}(ii), from $u_2$ and $u_3$ since vertices of $S$ are pairwise distinct, from $v_2$ by \Cref{injg3:config1}(iv), and from $v_3$ by \Cref{injg3:config2}. By symmetry and planarity, $w=w'$. However, this is impossible since $w$, $u_1$, $u_5$, $u_4$, and $v_1$ form the configuration in \Cref{injg3:config2}. 

Now, we can assume w.l.o.g. that $v_1$ and $u_4$ do not share a neighbor. In the following cases, we argue that $G$ is always colorable.
\begin{itemize}
\item If $L(v_1)\cap L(u_4)\neq \emptyset$, then color $v_1$ and $u_4$ with the same color $c$. Now, let $L'(x)$ be the remaining list of colors for $x\in S\setminus\{v_1,u_4\}$.
\begin{itemize}
\item If $L'(u_1)\cap L'(u_5)\neq \emptyset$, then color $u_1$ and $u_5$ with the same color (recall that they do not share a common neighbor) and finish by coloring $v_2$, $v_3$, $u_2$, and $u_3$ in this order.
\item If $L'(u_1)\cap L'(u_5)= \emptyset$, then $|L'(u_1)\cup L'(u_5)|\geq 9$. In this case, we conclude by coloring $S\setminus\{v_1,u_4\}$ due to \Cref{cor:Hall}.
\end{itemize}

\item If $L(v_1)\cap L(u_4)= \emptyset$, then $|L(v_1)\cup L(u_4)|\geq 9$. Now, we distinguish the two following cases.
\begin{itemize}
\item If $L(u_1)\cap L(u_5)\neq \emptyset$, then color $u_1$ and $u_5$ with the same color $c$. Now, let $L'(x)$ be the remaining list of colors for $x\in S\setminus\{u_1,u_5\}$. Since $|L(v_1)\cup L(u_4)|\geq 9$, we have $|L'(v_1)\cup L'(u_4)|\geq 7$. Thus, we conclude by coloring $S\setminus\{u_1,u_5\}$ due to \Cref{cor:Hall}.
\item If $L(u_1)\cap L(u_5)= \emptyset$, then $|L(u_1)\cup L(u_5)|\geq 11$. Since we also have $|L(v_1)\cup L(u_4)|\geq 9$, we can conclude by coloring $S$ due to \Cref{cor:Hall}.
\end{itemize}
\end{itemize}
This concludes the proof.
\end{proof}

\begin{lemma}\label{injg3:config5}
A $5$-face in $G$ is not adjacent with five $3$-faces.
\end{lemma}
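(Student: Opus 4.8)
The plan is to reduce to the already-established \Cref{injg3:config9}. Suppose for a contradiction that $G$ contains a $5$-face $f=u_1u_2u_3u_4u_5$ such that each of the five edges $u_iu_{i+1}$ (indices mod $5$) lies on a $3$-face. I first want to show this forces one of the vertices $u_i$ to be the apex of two consecutive $3$-faces in the sense required by \Cref{injg3:config9}, i.e., that there are two $3$-faces $u_iu_{i+1}v$ and $u_{i+1}vw$ sharing the edge $u_{i+1}v$. Since each edge $u_iu_{i+1}$ is on a $3$-face, each $u_i$ has degree $4$ (a $3$-vertex incident to a triangle is forbidden by \Cref{injg3:config1}(ii)), so each $u_i$ has exactly two neighbors outside $f$; call the third triangle-vertex on edge $u_iu_{i+1}$ by $t_i$. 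By planarity, the five triangles hang ``outside'' $f$, and one counts edges at each $u_i$: $u_i$ is incident to triangles on both $u_{i-1}u_i$ and $u_iu_{i+1}$, hence to vertices $t_{i-1}$ and $t_i$ (plus the two face-edges), so $\deg(u_i)=4$ forces $t_{i-1}$ and $t_i$ to be the \emph{only} external neighbors of $u_i$.

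Now I would argue $t_{i-1}=t_i$ is impossible for all $i$ simultaneously: if $t_{i-1}=t_i=:v$ for every $i$, then all five edges $u_iv$ exist and $v$ has degree $\geq 5$, contradicting $\Delta(G)=4$. Hence there is some $i$ with $t_{i-1}\neq t_i$; then the two triangles $t_{i-1}u_{i-1}u_i$ and $t_iu_iu_{i+1}$ are two triangles incident to $u_i$, which is a $4$-vertex, so $u_i$ is incident to two $3$-cycles. Consider whether these two $3$-cycles are \emph{adjacent} (share an edge): they share only the vertex $u_i$, so they are not adjacent; but then I use \Cref{injg3:config1}(iv)? No --- that needs adjacency. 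Instead I reconsider: the key is that somewhere two of the five triangles must actually share an \emph{edge} incident to $f$. I claim if no two of the five triangles share an edge, then the $t_i$ are (essentially) distinct and we get a $4$-vertex $u_i$ with two non-adjacent triangles plus the face $f$; here I invoke the structural lemmas to derive a contradiction, most plausibly that some $u_i$ ends up with too high a degree or reproduces \Cref{injg3:config1}(iv) after identifying a forced coincidence $t_{i-1}=t_{i+1}$ forced by planarity around the pentagon. In the remaining case two consecutive triangles \emph{do} share an edge, say $u_4u_5v_4$ and $u_5v_4w$ with $w=u_1$ or $w$ external; together with the hypothesis that $f$ is adjacent to the three further $3$-faces on $u_1u_2$, $u_2u_3$, $u_3u_4$, this is exactly a configuration containing the one forbidden by \Cref{injg3:config9} (two distinct $3$-faces $u_4u_5v_4$, $u_5v_4w$, and $f$ adjacent to at least one more $3$-face, indeed three more). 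That contradiction finishes the proof.

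The main obstacle I anticipate is the careful planarity bookkeeping in the first part: pinning down, from $\Delta(G)=4$ and the fact that all five edges of $f$ carry triangles, that two of those triangles are forced to share a bounding edge (so that \Cref{injg3:config9} genuinely applies), rather than merely meeting at a vertex of $f$. One has to rule out the ``pentagonal wheel of five separate triangles'' by a degree count at the external vertices and rule out intermediate identifications using \Cref{injg3:config1}(iv) and \Cref{injg3:config2,injg3:config3}. Once the reduction to the hypotheses of \Cref{injg3:config9} is in place, the lemma follows immediately, so essentially all the work is in verifying that the five-triangle pentagon cannot avoid creating two edge-adjacent triangles along $f$. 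I would also double-check the degenerate subcases where some $t_i$ coincides with a $u_j$ (so a triangle ``reflects back'' onto $f$), which are quickly excluded since $f$ has girth-respecting length $5$ and $G$ is simple, so no chord of $f$ exists that would be needed for such a coincidence.
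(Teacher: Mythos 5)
Your reduction to \Cref{injg3:config9} does not work, because the configuration you are trying to exclude by ``planarity bookkeeping'' --- five pairwise edge-disjoint triangles $u_1u_2v_1,\,u_2u_3v_2,\,u_3u_4v_3,\,u_4u_5v_4,\,u_5u_1v_5$ with five distinct external apexes --- is in fact a perfectly consistent planar configuration with $\Delta=4$. Each $u_i$ then has degree exactly $4$ (its two neighbors on $f$ plus $v_{i-1}$ and $v_i$), and each $v_i$ has degree at most $4$, so no degree count rules it out; nor do \Cref{injg3:config1}(iv), \Cref{injg3:config2}, or \Cref{injg3:config3}, since no two of these triangles share an edge. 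Your degree argument only eliminates the extreme case where all apexes coincide in a single vertex of degree $\geq 5$; it says nothing about the generic case of distinct apexes, and there is no ``forced coincidence $t_{i-1}=t_{i+1}$'' --- this is precisely the configuration drawn in \Cref{fig:injg3:config5}. Note also that \Cref{injg3:config9} concerns a triangle $u_4u_5v_4$ on an edge of $f$ together with a second triangle $u_5v_4w$ stacked on the edge $u_5v_4$ (an edge \emph{not} on $f$), so even if two of your five face-adjacent triangles did share an edge, that would not match the hypotheses of \Cref{injg3:config9} without further work.

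The paper handles exactly this irreducible ``pentagonal wheel'' by a direct coloring argument: it colors $G-\{u_1,\dots,u_5\}$ by minimality, uncolors $v_1,\dots,v_5$, observes $|L(u_i)|\geq 7$ and $|L(v_i)|\geq 5$, shows via \Cref{injg3:config2} and planarity that $v_i$ and $u_{i+3\,(\mathrm{mod}\,5)}$ never share a neighbor, and then either merges colors on such pairs or applies \Cref{cor:Hall} to the resulting large unions of lists. Some argument of this kind is unavoidable here; the lemma cannot be obtained purely structurally from the earlier configurations.
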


\begin{figure}[H]
\centering
\begin{tikzpicture}[scale=1.5]{thick}
\begin{scope}[every node/.style={circle,draw,minimum size=1pt,inner sep=2}]
    \node[fill,label={above:$u_1$},label={below:$7$}] (u1) at (0.05,1.31) {};
    \node[fill,label={left:$u_2$},label={right:$7$}] (u2) at (-0.54,0.5) {};
    \node[fill,label={below:$u_3$},label={above:$7$}] (u3) at (0.05,-0.31) {};
    \node[fill,label={below right:$u_4$},label={above left:$7$}] (u4) at (1,0) {};
    \node[fill,label={above right:$u_5$},label={below left:$7$}] (u5) at (1,1) {};
    
    \node[label={above:$v_1$},label={below:$5$}] (v1) at (-0.9,1.38) {};
    \node[label={below:$v_2$},label={above:$5$}] (v2) at (-0.9,-0.38) {};
    \node[label={below:$v_3$},label={right:$5$}] (v3) at (0.78,-0.93) {};
    \node[label={right:$v_4$},label={below:$5$}] (v4) at (1.81,0.5) {};
    \node[label={above:$v_5$},label={right:$5$}] (v5) at (0.78,1.93) {};
\end{scope}

\begin{scope}[every edge/.style={draw=black}]
    \path (u1) edge (u2);
    \path (u2) edge (u3);
    \path (u3) edge (u4);
    \path (u4) edge (u5);
    \path (u5) edge (u1);
    \path (u1) edge (v1);
    \path (u2) edge (v1);
    \path (u2) edge (v2);
    \path (u3) edge (v2);
    \path (u3) edge (v3);
    \path (u4) edge (v3);
    \path (u4) edge (v4);
    \path (u5) edge (v4);
    \path (u1) edge (v5);
    \path (u5) edge (v5);
\end{scope}
\end{tikzpicture}
\caption{Reducible configuration in \Cref{injg3:config5}.}
\label{fig:injg3:config5}
\end{figure}
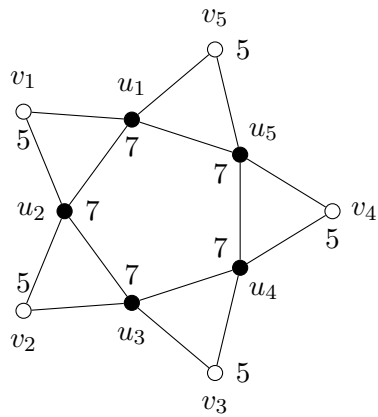

\begin{proof}
Suppose be contradiction that there exists a $5$-face $f=u_1u_2u_3u_4u_5$ that is adjacent to five $3$-faces like in \Cref{fig:injg3:config5}. First, observe that the vertices in $S=\{u_1,u_2,u_3,u_4,u_5,v_1,v_2,v_3,v_4,v_5\}$ are pairwise distinct due to \Cref{injg3:config1}(iv) and \Cref{injg3:config3}. 
Color $G-\{u_1,u_2,u_3,u_4,u_5\}$ and uncolor $\{v_1,v_2,v_3,v_4,v_5\}$. The remaning list of colors for these vertices have size: $|L(u_1)|\geq 7$, $|L(u_2)|\geq 7$, $|L(u_3)|\geq 7$, $|L(u_4)|\geq 7$, $|L(u_5)|\geq 7$, $|L(v_1)|\geq 5$, $|L(v_2)|\geq 5$, $|L(v_3)|\geq 5$, $|L(v_4)|\geq 5$, and $|L(v_5)|\geq 5$. Observe that, for $1\leq i\leq $, $v_i$ and $v_{i+2(\text{mod }5)}$ cannot be adjacent due to \Cref{injg3:config2}. By symmetry, the same holds for $v_i$ and $v_{i+3(\text{mod }5)}$. As a result, $v_i$ and $u_{i+3 (\text{mod }5)}$ cannot share a neighbor for $1\leq i\leq 5$. We have the following cases.
\begin{itemize}
\item If there exists $1\leq i\leq 5$ such that $L(v_i)\cap L(u_{i+3(\text{mod }5)})\neq \emptyset$, say $i=1$, then color $v_1$ and $u_4$ with the same color $c$. Now, let $L'(x)$ be the remaining list of colors for $x\in S\setminus\{v_1,u_4\}$.

\begin{itemize}
\item If there exists $2\leq i\leq 5$ such that $L'(v_i)\cap L'(u_{i+3(\text{mod }5)})\neq \emptyset$, say $i=2$, then color $v_2$ and $u_5$ with the same color $c'$ and let $L''(x)$ be the remaining list of colors for $x\in S\setminus\{v_1,v_2,u_4,u_5\}$. 
\begin{itemize}
\item If $L''(v_3)\cap L''(u_1)\neq \emptyset$, then color $v_3$ and $u_1$ with the same color $c''$ and finish by coloring $v_4$, $v_5$, $u_2$, and $u_3$.
\item If $L''(v_3)\cap L''(u_1) = \emptyset$, then $|L''(v_3)\cup L''(u_1)|\geq 8$. We complete the coloring of $S\setminus\{v_1,v_2,u_4,u_5\}$ by \Cref{cor:Hall}.
\end{itemize}

\item If, for every $2\leq i\leq 5$, $L'(v_i)\cap L'(u_{i+3(\text{mod }5)})= \emptyset$, then $|L'(v_i)\cup L'(u_{i+3(\text{mod }5)})|\geq 10$. In such a case, we can conclude by coloring $S\setminus\{v_1,u_4\}$ due to \Cref{cor:Hall}.
\end{itemize}

\item If, for every $1\leq i\leq 5$, $L(v_i)\cap L(u_{i+3(\text{mod }5)})= \emptyset$, then $|L(v_i)\cup L(u_{i+3(\text{mod }5)})|\geq 12$. In such a case, we can conclude by coloring $S$ due to \Cref{cor:Hall}.
\end{itemize}
It follows that a $5$-face in $G$ is adjacent with at most four $3$-faces, thus concluding the proof.
\end{proof}

For conciseness, we introduce a useful definition when considering $5$-faces.

\textbf{Bad faces:} We call a $5$-face \emph{bad} if it is adjacent to four $3$-faces (see \Cref{fig:injg3:bad}).

Observe that bad $5$-faces are adjacent to exactly four $3$-faces due to \Cref{injg3:config5}.

\begin{figure}[H]
\centering
\begin{tikzpicture}[scale=1.5,rotate=55]{thick}
\begin{scope}[every node/.style={circle,draw,minimum size=1pt,inner sep=2}]
    \node[fill] (u1) at (0.05,1.31) {};
    \node (u2) at (-0.54,0.5) {};
    \node (u3) at (0.05,-0.31) {};
    \node[fill] (u4) at (1,0) {};
    \node[fill] (u5) at (1,1) {};
    
    \node (v1) at (-0.9,1.38) {};
    \node (v3) at (0.78,-0.93) {};
    \node (v4) at (1.81,0.5) {};
    \node (v5) at (0.78,1.93) {};
\end{scope}

\begin{scope}[every edge/.style={draw=black}]
    \path (u1) edge (u2);
    \path (u2) edge (u3);
    \path (u3) edge (u4);
    \path (u4) edge (u5);
    \path (u5) edge (u1);
    \path (u1) edge (v1);
    \path (u2) edge (v1);
    \path (u3) edge (v3);
    \path (u4) edge (v3);
    \path (u4) edge (v4);
    \path (u5) edge (v4);
    \path (u1) edge (v5);
    \path (u5) edge (v5);
\end{scope}
\end{tikzpicture}
\caption{A bad $5$-face.}
\label{fig:injg3:bad}
\end{figure}
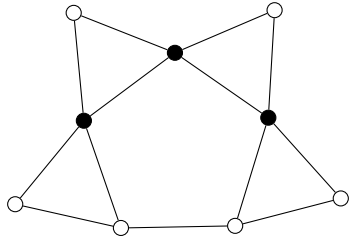

We now turn our attention to bad $5$-faces.

\begin{lemma}\label{injg3:config6}
A bad $5$-face in $G$ is not adjacent with a $4$-face.
\end{lemma}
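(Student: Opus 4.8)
The plan is to argue by contradiction in the usual way: suppose $G$ contains a bad $5$-face $f = u_1u_2u_3u_4u_5$ adjacent to four $3$-faces and also adjacent to a $4$-face, and derive a reducible configuration contradicting the minimality of $G$. First I would pin down the shape of the configuration. Since $f$ is bad, by the definition and by \Cref{injg3:config5} it is adjacent to exactly four $3$-faces; relabelling as in \Cref{fig:injg3:bad}, exactly one edge of $f$—say $u_2u_3$—is shared with a non-triangular face, and that face is now assumed to be the $4$-face. Then I would use \Cref{injg3:config1}(ii, iv), \Cref{injg3:config2}, and \Cref{injg3:config3} to show that all the vertices involved (the $u_i$, the apexes $v_1, v_3, v_4, v_5$ of the four triangles, and the two remaining vertices of the $4$-face incident to $u_2u_3$) are pairwise distinct, and to rule out extra coincidences of neighbors. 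In particular \Cref{injg3:config2} (a $3$-cycle is not adjacent to a $4$-cycle) immediately forbids the $4$-face from being incident to any edge $u_iu_{i+1}$ that also bounds one of the four triangles, which is what forces the $4$-face to sit precisely on $u_2u_3$.

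**The reduction.** I would then color $G - \{u_1,u_2,u_3,u_4,u_5\}$ by minimality of $G$ (using $11$ colors), and uncolor the four apex vertices $v_1, v_3, v_4, v_5$. Counting the number of colored vertices each uncolored vertex can see (a $4$-vertex sees at most $8$, lower degrees see fewer; each $v_i$ is a $4^-$-vertex by \Cref{injg3:config1}(ii) since it lies on a triangle that also contains a $3$-vertex $u_j$ — wait, more carefully, the apexes need not be low degree, so I would just compute $|L(v_i)| \geq 11 - (\text{seen colors})$ directly from the local structure), I would establish lower bounds on the residual lists $|L(u_i)|$ and $|L(v_i)|$, analogous to the bounds appearing in \Cref{injg3:config4} and \Cref{injg3:config5}. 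The presence of the $4$-face on $u_2u_3$ should improve the bounds on $u_2$ and $u_3$ (they see one fewer pair of opposite vertices across that face), which is exactly the extra slack that makes this configuration reducible where a generic bad $5$-face is not. Then, exactly as in the proofs of \Cref{injg3:config4,injg3:config5}, I would identify pairs of vertices that cannot share a common neighbor (using \Cref{injg3:config2} to forbid adjacencies among the $v_i$ and between $v_i$ and the far $u_j$, and planarity to limit common neighbors), and do a short case analysis: for each such safe pair, either their residual lists intersect, in which case we collapse them to one color and recurse on a smaller vertex set, or they are disjoint, in which case the union is large and we feed everything to Hall's corollary (\Cref{cor:Hall}). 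The branching will be a two- or three-level nested case split, mirroring \Cref{injg3:config5}, ending in every branch with a successful application of \Cref{cor:Hall}.

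**Main obstacle.** The delicate part is not the list arithmetic but the \emph{structural bookkeeping}: correctly enumerating all the ways the $4$-face can attach to $f$ and all the ways its two ``extra'' vertices could coincide with, or be adjacent to, the triangle apexes or the $u_i$. I expect that the combination of \Cref{injg3:config1}(iv), \Cref{injg3:config2}, and \Cref{injg3:config3} rules out every such coincidence, leaving a single configuration (up to the symmetry of which edge of $f$ carries the $4$-face, which one may fix w.l.o.g.), but this needs to be checked edge by edge and vertex by vertex. The second potential snag is whether the improved list bounds coming from the $4$-face are genuinely enough to push every branch of the Hall argument through; if some branch is tight, I would resolve it by noting an additional non-adjacency (again via \Cref{injg3:config2}) that yields one more color in a critical list, exactly as the tightness in \Cref{injg3:config4} is resolved there. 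Once the single configuration and the residual list bounds are in hand, the coloring extension is routine and follows the template already established in this section.
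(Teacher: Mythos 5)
Your strategy is essentially the paper's: fix the configuration (the $4$-face must sit on the unique non-triangular edge of the bad face, say $u_2u_3$, with vertices $v_2,w_2$), verify all vertices are distinct via \Cref{injg3:config1}(iv), \Cref{injg3:config2}, and \Cref{injg3:config3}, precolor by minimality, identify pairs that cannot share a common neighbor, and run the nested ``common color or disjoint lists'' case analysis ending in \Cref{cor:Hall}. The one substantive difference is the choice of precoloring. The paper deletes only $u_1,\dots,u_4$, uncolors $u_5,v_1,v_2,w_2,v_3$, and keeps the apexes $v_4,v_5$ colored; its three key pairs are $(v_1,u_4)$, $(v_3,u_1)$, and --- after a planarity argument --- one of $(v_2,u_5)$, $(w_2,u_5)$. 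You instead keep $v_2,w_2$ colored and uncolor $v_4,v_5$, so that third pair is unavailable and must be replaced by one of $(v_4,u_2)$, $(v_5,u_3)$. This is the point to be careful about: since $N(u_2)=\{u_1,u_3,v_1,v_2\}$ exactly, the only unexcluded way for $v_4$ and $u_2$ to share a neighbor is $v_4\sim v_2$ (none of the lemmas forbids this edge), and symmetrically $v_5$ and $u_3$ can share $w_2$; but the two chords $v_4v_2$ and $v_5w_2$ interleave on the outer boundary $v_1,v_2,w_2,v_3,v_4,v_5$, so planarity gives you one safe pair w.l.o.g. --- exactly the ``additional non-adjacency'' your sketch anticipates needing. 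With that pair in hand, your residual list sizes ($\geq 6$ for $u_1,\dots,u_4$, $\geq 7$ for $u_5$, $\geq 4$ for $v_1,v_3$, $\geq 5$ for $v_4,v_5$) do appear sufficient to close every branch of the Hall analysis, so the plan is viable; what remains is the unavoidable branch-by-branch verification (tracking the shrunken lists $L'$, $L''$ after each identification), which is the actual content of the paper's proof and which your proposal defers.
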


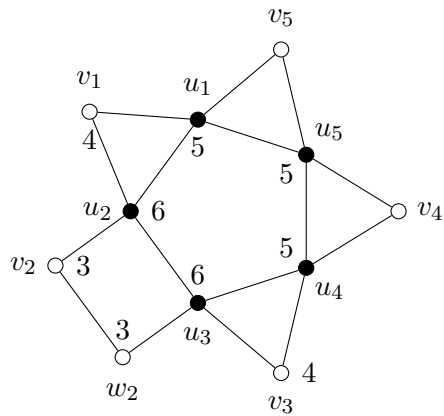
\begin{figure}[H]
\centering
\begin{tikzpicture}[scale=1.5]{thick}
\begin{scope}[every node/.style={circle,draw,minimum size=1pt,inner sep=2}]
    \node[fill,label={above:$u_1$},label={below:$5$}] (u1) at (0.05,1.31) {};
    \node[fill,label={left:$u_2$},label={right:$6$}] (u2) at (-0.54,0.5) {};
    \node[fill,label={below:$u_3$},label={above:$6$}] (u3) at (0.05,-0.31) {};
    \node[fill,label={below right:$u_4$},label={above left:$5$}] (u4) at (1,0) {};
    \node[fill,label={above right:$u_5$},label={below left:$5$}] (u5) at (1,1) {};
    
    \node[label={above:$v_1$},label={below:$4$}] (v1) at (-0.9,1.38) {};
    \node[label={left:$v_2$},label={right:$3$}] (v2) at (-1.2,0.02) {};
    \node[label={below:$w_2$},label={above:$3$}] (w2) at (-0.61,-0.79) {};
    \node[label={below:$v_3$},label={right:$4$}] (v3) at (0.78,-0.93) {};
    \node[label={right:$v_4$}] (v4) at (1.81,0.5) {};
    \node[label={above:$v_5$}] (v5) at (0.78,1.93) {};
\end{scope}

\begin{scope}[every edge/.style={draw=black}]
    \path (u1) edge (u2);
    \path (u2) edge (u3);
    \path (u3) edge (u4);
    \path (u4) edge (u5);
    \path (u5) edge (u1);
    \path (u1) edge (v1);
    \path (u2) edge (v1);
    \path (u2) edge (v2);
    \path (w2) edge (v2);
    \path (u3) edge (w2);
    \path (u3) edge (v3);
    \path (u4) edge (v3);
    \path (u4) edge (v4);
    \path (u5) edge (v4);
    \path (u1) edge (v5);
    \path (u5) edge (v5);
\end{scope}
\end{tikzpicture}
\caption{Reducible configuration in \Cref{injg3:config6}.}
\label{fig:injg3:config6}
\end{figure}

\begin{sloppypar}
\begin{proof}
Suppose by contradiction that there exists a $5$-face $f=u_1u_2u_3u_4u_5$ that is adjacent to four $3$-faces and one $4$-face like in \Cref{fig:injg3:config6}. First, observe that the vertices in $\{u_1,u_2,u_3,u_4,u_5,v_1,v_2,w_2,v_3,v_4,v_5\}$ are pairwise distinct due to \Cref{injg3:config1}(iv), \Cref{injg3:config2}, and \Cref{injg3:config3}. 
Color $G-\{u_1,u_2,u_3,u_4\}$ and uncolor $\{u_5,v_1,v_2,w_2,v_3\}$. Let $S=\{u_1,u_2,u_3,u_4,u_5,v_1,v_2,w_2,v_3\}$. The remaning list of colors for these vertices have size: $|L(u_1)|\geq 5$, $|L(u_2)|\geq 6$, $|L(u_3)|\geq 6$, $|L(u_4)|\geq 5$, $|L(u_5)|\geq 5$, $|L(v_1)|\geq 4$, $|L(v_2)|\geq 3$, $|L(w_2)|\geq 3$, and $|L(v_3)|\geq 4$. 

Observe that $v_1$ and $v_3$ (resp. $v_4$) cannot be adjacent due to \Cref{injg3:config2}. The same holds for $v_3$ and $v_5$. As a result, $v_1$ and $u_4$ (resp. $v_3$ and $u_1$) cannot share a neighbor. 

Similarly, $v_2$ (resp. $w_2$) cannot be adjacent to $v_5$ (resp. $v_4$) by \Cref{injg3:config2}. As a result, if $v_2$ (resp. $w_2$) share a neighbor with $u_5$, then $v_2$ must be adjacent to $v_4$ (resp. $v_5$). By planarity and by symmetry, we can assume w.l.o.g. that $u_5$ cannot share a neighbor with $v_2$. 

We distinguish the following cases.
\begin{itemize}
\item If $L(v_1)\cap L(u_4)\neq \emptyset$, then color $v_1$ and $u_4$ with the same color $c$. Now, let $L'(x)$ be the remaining list of colors for $x\in S\setminus\{v_1,u_4\}$.

\begin{itemize}
\item If $L'(v_3)\cap L'(u_1)\neq \emptyset$, then color $v_3$ and $u_1$ with the same color $c'$. Due to \Cref{injg3:config2}, $v_2$ and $w_2$ (resp. $u_2$ and $v_2$, $u_3$ and $w_2$) cannot share a neighbor. So, we can finish by coloring $v_2$, $w_2$, $u_5$, $u_2$ and $u_3$. 

\item If $L'(v_3)\cap L'(u_1)= \emptyset$, then $|L'(v_3)\cup L'(u_1)|\geq 7$. 
\begin{itemize}
\item If $L'(v_2)\cap L'(u_5)\neq \emptyset$, then color $v_2$ and $u_5$ with the same color $c'$ and finish by coloring $w_2$, $v_3$, $u_1$, $u_2$ and $u_3$.
\item If $L'(v_2)\cap L'(u_5) = \emptyset$, then $|L'(v_2)\cup L'(u_5)|\geq 6$. We complete the coloring of $S\setminus\{v_1,u_4\}$ by \Cref{cor:Hall}.\end{itemize}
\end{itemize}

\item If $L(v_1)\cap L(u_4)=\emptyset$, then by symmetry, $L(v_3)\cap L(u_1)=\emptyset$ due to the previous case. Thus, we have $|L(v_1)\cup L(u_4)|\geq 9$ and $|L(v_3)\cup L(u_1)|\geq 9$.
\begin{itemize}
\item If $L(v_2)\cap L(u_5)\neq \emptyset$, then color $v_2$ and $u_5$ with the same color $c$. Let $L'(x)$ be the remaining list of colors for $x\in S\setminus\{v_2,u_5\}$. Since $|L(v_1)\cup L(u_4)|\geq 9$ and $|L(v_3)\cup L(u_1)|\geq 9$, we have $|L(v_1)\cup L(u_4)|\geq 7$ and $|L(v_3)\cup L(u_1)|\geq 7$. Thus, we can the coloring of $S\setminus\{v_2,u_5\}$ by \Cref{cor:Hall}.
\item If $L(v_2)\cap L(u_5) = \emptyset$, then $|L(v_2)\cup L(u_5)|\geq 8$. We can color $S$ by \Cref{cor:Hall}.
\end{itemize}
\end{itemize}
This concludes the proof.
\end{proof}
\end{sloppypar}

\begin{lemma}\label{injg3:config7}
A bad $5$-face in $G$ is not adjacent with a $5$-face.
\end{lemma}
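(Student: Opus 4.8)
The plan is to argue by contradiction, following exactly the template of \Cref{injg3:config4,injg3:config5,injg3:config6}: assume the configuration occurs, delete the vertices of the bad $5$-face, color what remains by minimality, uncolor a few more vertices, and show the partial coloring extends via \Cref{cor:Hall} after disposing of a bounded number of cases by hand. First I would fix notation. Suppose a bad $5$-face $f=u_1u_2u_3u_4u_5$ is adjacent to a $5$-face $f'$. Since $f$ is bad it is adjacent to four $3$-faces, which must sit on four consecutive edges of $f$, so I relabel so that the $3$-faces are $u_1u_2v_1$, $u_3u_4v_3$, $u_4u_5v_4$, $u_5u_1v_5$ and $f'$ is incident to the remaining edge $u_2u_3$. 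Because a bad face is adjacent to at least three $3$-faces, \Cref{injg3:config4} forces $f$ to carry no $3$-vertex, so every $u_i$ has degree $4$; writing $t_2,t_3$ for the fourth neighbors of $u_2,u_3$, we have $f'=u_2u_3t_3bt_2$ for some vertex $b$. A first routine step, using \Cref{injg3:config1}(ii),(iv), \Cref{injg3:config2}, and \Cref{injg3:config3}, is to check that all named vertices are pairwise distinct — the two consecutive pairs of $3$-faces (around $u_1$ and around $u_4$), the girth condition, and the fact that $f'$ is a $5$-face force this just as in the earlier proofs.

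Next I would delete $\{u_1,u_2,u_3,u_4,u_5\}$, color the resulting (smaller, hence colorable) graph by minimality, and uncolor the apex vertices $v_1,v_3,v_4,v_5$, adding one or two vertices of $f'$ to the uncolored set if the counting requires it. A routine count of the colors each uncolored vertex can see yields lower bounds on the residual lists of the same order as in \Cref{injg3:config6} — roughly $|L(u_i)|$ between $5$ and $7$ and $|L(v_j)|\ge 4$ — with the customary bonus that a pair of vertices at distance $\le 2$ in $G$ but farther apart in the reduced graph each gains an extra available color. The heart of the proof is then a case analysis that identifies pairs of still-uncolored vertices with no common neighbor, which may therefore receive the same color: \Cref{injg3:config2} forbids any two of the apexes from being adjacent and forbids each apex from being adjacent to the ``far'' vertex of a neighboring $3$-cycle, and combining this with a planarity argument of the kind already used in \Cref{injg3:config4,injg3:config5,injg3:config6} produces safe pairs such as ``$v_1$ and $u_4$ share no neighbor'', ``$v_3$ and $u_1$ share no neighbor'', and ``one of $v_4,v_5$ shares no neighbor with the corresponding $u_i$''. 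For each such pair I would branch on whether its residual lists meet: if they do, color the pair with a common color and finish the remaining (at most seven) vertices by \Cref{cor:Hall}; if they do not, their lists together are large enough that \Cref{cor:Hall} applies at once to the whole uncolored set. Nesting two or three such branchings, exactly as in the proof of \Cref{injg3:config5}, should cover all cases.

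The hard part will be the bookkeeping in this case analysis: in every branch one must verify the Hall condition $|\bigcup_{v\in S}L(v)|\ge|S|$ for every subset $S$ of the uncolored vertices, which requires keeping careful track of which residual lists are disjoint and which vertices have picked up bonus colors. The planarity claims excluding common neighbors are routine given the template already established in this section; the delicate point is pinning down exactly which pairs are ``safe'', so that the branching is organized correctly and no case slips through.
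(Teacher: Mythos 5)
Your overall strategy is exactly the paper's: assume the configuration exists, note \Cref{injg3:config4} forces all $u_i$ to be $4$-vertices, delete $\{u_1,\dots,u_5\}$, color by minimality, uncolor the four apexes together with the two $f'$-vertices $t_2,t_3$ (the paper's $v_2,w_2$), and finish by branching on which residual lists intersect and invoking \Cref{cor:Hall}. The distinctness of the named vertices and the bounds on the residual lists also come out as you predict.

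The gap is in your identification of the ``safe'' pairs, which is where the whole proof lives. The bottleneck of this configuration is the two vertices $t_2,t_3$ of $f'$: after uncoloring they have lists of size only $3$, so every Hall verification involving them (and in particular the verification for the full $11$-vertex set) hinges on being able to either merge them with $u_2,u_3$ or color them first. The paper's case analysis is therefore built on the pairs $(u_2,t_2)$ and $(u_3,t_3)$ — safe by \Cref{injg3:config3}, since a common neighbor would create two adjacent triangles through $u_2t_2$ (resp.\ $u_3t_3$) next to the triangle $u_1u_2v_1$ (resp.\ $u_3u_4v_3$) — together with $(u_2,u_3)$, plus the two pairs $(v_1,u_4)$ and $(v_3,u_1)$ that you do name. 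Your sketch never mentions $(u_2,t_2)$, $(u_3,t_3)$, or $(u_2,u_3)$, and instead proposes ``one of $v_4,v_5$ shares no neighbor with the corresponding $u_i$.'' That pair plays no role in the paper's argument, and more importantly it does nothing to control the size-$3$ lists at $t_2,t_3$; without the config3-pairs the Hall condition fails already for subsets containing $t_2$ and $t_3$ once their lists coincide. So the plan as stated would stall precisely at the step you label ``bookkeeping'': the branching cannot be organized around the pairs you chose.
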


\begin{figure}[H]
\centering
\begin{tikzpicture}[scale=1.5]{thick}
\begin{scope}[every node/.style={circle,draw,minimum size=1pt,inner sep=2}]
    \node[fill,label={above:$u_1$},label={below:$7$}] (u1) at (1.95,1.31) {};
    \node[fill,label={above left:$u_2$},label={below right:$6$}] (u2) at (1,1) {};
    \node[fill,label={below left:$u_3$},label={above right:$6$}] (u3) at (1,0) {};
    \node[fill,label={below:$u_4$},label={above:$7$}] (u4) at (1.95,-0.31) {};
    \node[fill,label={right:$u_5$},label={left:$7$}] (u5) at (2.54,0.5) {};
    
    \node[label={above:$v_1$},label={below :$5$}] (v1) at (1.22,1.93) {};
    \node[label={below:$v_3$},label={above :$5$}] (v3) at (1.22,-0.93) {};
    \node[label={below:$v_4$},label={above:$5$}] (v4) at (2.9,-0.38) {};
    \node[label={above:$v_5$},label={below:$5$}] (v5) at (2.9,1.38) {};
    
    \node[label={above:$v_2$},label={below:$3$}] (v2) at (0.05,1.31) {};
    \node[label={below:$w_2$},label={above:$3$}] (w2) at (0.05,-0.31) {};
    \node (2) at (-0.54,0.5) {};
\end{scope}

\begin{scope}[every edge/.style={draw=black}]
    \path (u1) edge (u2);
    \path (u2) edge (u3);
    \path (u3) edge (u4);
    \path (u4) edge (u5);
    \path (u5) edge (u1);
    \path (u1) edge (v1);
    \path (u2) edge (v1);
    \path (u2) edge (v2);
    \path (v2) edge (2);
    \path (w2) edge (2);
    \path (u3) edge (w2);
    \path (u3) edge (v3);
    \path (u4) edge (v3);
    \path (u4) edge (v4);
    \path (u5) edge (v4);
    \path (u1) edge (v5);
    \path (u5) edge (v5);
\end{scope}
\end{tikzpicture}
\caption{Reducible configuration in \Cref{injg3:config7}.}
\label{fig:injg3:config7}
\end{figure}

\begin{proof}
Suppose be contradiction that there exists a bad $5$-face $f=u_1u_2u_3u_4u_5$ that is adjacent to a $5$-face as shown in \Cref{fig:injg3:config7}. First, observe that the vertices in $S=\{u_1,u_2,u_3,u_4,u_5,v_1,v_2,w_2,v_3,v_4,v_5\}$ are pairwise distinct due to \Cref{injg3:config1}(iv), \Cref{injg3:config2}, and \Cref{injg3:config3}. 
Color $G-\{u_1,u_2,u_3,u_4,u_5\}$ and uncolor $\{v_1,v_2,w_2,v_3,v_4,v_5\}$. The remaning list of colors for these vertices have size: $|L(u_1)|\geq 7$, $|L(u_2)|\geq 6$, $|L(u_3)|\geq 6$, $|L(u_4)|\geq 7$, $|L(u_5)|\geq 7$, $|L(v_1)|\geq 5$, $|L(v_2)|\geq 3$, $|L(w_2)|\geq 3$, $|L(v_3)|\geq 5$, $|L(v_4)|\geq 5$, and $|L(v_5)|\geq 5$. 
Observe that the following pairs of vertices cannot share a neighbor:
\begin{itemize}
\item $(u_1,v_3)$ and $(u_4,v_1)$. Otherwise, by symmetry, say $v_3$ sees $u_1$, then it would need to be adjacent to $v_1$ or $v_5$, which is impossible due to \Cref{injg3:config2}.
\item $(u_2,v_2)$ and $(u_3,w_2)$ due to \Cref{injg3:config3}.
\item $(u_2,u_3)$ since they neighbors are distinct vertices.
\end{itemize}

We distinguish the following cases:

\textbf{Case 1:} $L(u_2)\cap L(v_2)\neq\emptyset$.\\
Color $u_2$ and $v_2$ with the same color $c$. Now, let $L'(x)$ be the remaining list of colors for $x\in S\setminus\{u_2,v_2\}$.

\begin{itemize}
\item If $L'(w_2)\cap L'(u_3)\neq \emptyset$, then color $w_2$ and $u_3$ with the same color $c'$. Let $L''(x)$ be the remaining list of colors for $x\in S\setminus\{u_2,u_3,v_2,w_2\}$. We have the following:
$|L''(u_1)|\geq 5$, $|L''(u_4)|\geq 5$, $|L''(u_5)|\geq 5$, $|L''(v_1)|\geq 3$, and $|L''(v_3)|\geq 3$. Observe that $|L''(v_4)|\geq 4$ because if $v_4$ sees the color $c$, then $v_4$ sees either $v_2$ or $u_2$, in which case we had $|L(v_4)|\geq 6$. The same holds for $v_5$ and $c'$.
\begin{itemize}
\item If $L''(v_1)\cap L''(u_4)\neq \emptyset$ or $L''(v_3)\cap L''(u_1)\neq \emptyset$, say it holds for $L''(v_1)$ and $L''(u_4)$, then we color $v_1$ and $u_4$ with the same color $c''$ and finish by coloring $v_3$, $v_4$, $v_5$, $u_5$, and $u_1$.
\item If $L''(v_1)\cap L''(u_4)=\emptyset$ and $L''(v_3)\cap L''(u_1)=\emptyset$, then we complete the coloring by \Cref{cor:Hall}.
\end{itemize}
 
\item If $L'(w_2)\cap L'(u_3)=\emptyset$, then we have following cases.
\begin{itemize}
\item If $L'(v_1)\cap L'(u_4)\neq \emptyset$, then we color $v_1$ and $u_4$ with the same color $c'$. Let $L''(x)$ be the remaining list of colors for $x\in S\setminus\{u_2,u_4,v_2,v_1\}$. We have the following:
$|L''(u_1)|\geq 5$, $|L''(u_3)|\geq 4$, $|L''(u_5)|\geq 5$, $|L''(v_3)|\geq 3$, $|L''(v_5)|\geq 3$, and $|L''(w_2)|\geq 1$. Similar to the previous case, even if $v_4$ sees $c$, we get $|L''(v_4)|\geq 4$. Moreover, we have $|L''(u_3)\cup L''(w_2)|=|L''(u_3)|+|L''(w_2)|\geq 7$ since $L''(u_3)\cap L''(w_2)=\emptyset$.

\begin{itemize}
\item If $|L''(w_2)|=1$, then $|L''(u_3)|\geq 6$. Thus, we can coloring the remaining vertices in this order: $w_2$, $v_3$, $v_5$, $v_4$, $u_1$, $u_5$, and $u_3$.
\item If $|L''(w_2)|\geq 2$, then we can always color $w_2$ last. Now, we either complete the coloring by \Cref{cor:Hall}, or $L''(u_1)\cap L''(v_3)\neq \emptyset$, in which case, we color $u_1$ and $v_3$ with the same color and finish by coloring $v_5$, $v_4$, $u_3$, $u_5$, and $w_2$ in this order.
\end{itemize} 

\item If $L'(v_3)\cap L'(u_1)\neq\emptyset$, then we color $v_3$ and $u_1$ with the same color $c'$. Let $L''(x)$ be the remaining list of colors for $x\in S\setminus\{u_1,u_2,v_2,v_3\}$. We have the following:
$|L''(u_3)|\geq 4$, $|L''(u_4)|\geq 5$, $|L''(u_5)|\geq 5$, $|L''(v_1)|\geq 3$, $|L''(v_4)|\geq 4$, $|L''(v_5)|\geq 3$, and $|L''(w_2)|\geq 1$. Moreover, we have $|L''(u_3)\cup L''(w_2)|=|L''(u_3)+|L''(w_2)|\geq 7$ since $L''(u_3)\cap L''(w_2)=\emptyset$.
\begin{itemize}
\item If $|L''(w_2)|=1$, then $|L''(u_3)|\geq 6$. Thus, we can coloring the remaining vertices in this order: $w_2$, $v_1$, $v_5$, $v_4$, $u_4$, $u_5$, and $u_3$.
\item If $|L''(w_2)|\geq 2$, then we can always color $w_2$ last. Now, we either complete the coloring by \Cref{cor:Hall}, or $L''(u_4)\cap L''(v_1)\neq \emptyset$, in which case, we color $u_4$ and $v_1$ with the same color and finish by coloring $v_5$, $v_4$, $u_3$, $u_5$, and $w_2$ in this order.
\end{itemize}

\item If $L'(v_1)\cap L'(u_4)=\emptyset$ and $L'(v_3)\cap L'(u_1)=\emptyset$, then we have $|L'(v_1)\cup L'(u_4)|\geq 10$, $|L'(v_3)\cup L'(u_1)|\geq 10$, and $|L'(w_2)\cup L'(u_3)|\geq 8$. Thus, we can conclude by \Cref{cor:Hall}.
\end{itemize}
\end{itemize}

\textbf{Case 2:} $L(u_2)\cap L(v_2)=\emptyset$.\\
By symmetry, we also have $L(w_2)\cap L(u_3)=\emptyset$ or we would be back in \textbf{Case 1}.
\begin{itemize}
\item If $L(v_1)\cap L(u_4)\neq \emptyset$, then we color $v_1$ and $u_4$ with the same color $c$. Now, color $v_2$, $w_2$, $v_3$, and $v_4$ in this order. Note that $v_5$ has at least one color left, otherwise $|L(v_5)|=5$ and it would need to see every colored vertex in $S$, in which case $|L(v_5)|\geq 6$. So, we can also color $v_5$. Let $L'(x)$ be the remaining list of colors for $x\in \{u_1,u_2,u_2,u_5\}$. By the same arguments along with the fact that $L(v_2)\cap L(u_2)=\emptyset$ and $L(w_2)\cap L(u_3)=\emptyset$, we must have $|L'(u_1)|\geq 3$, $|L'(u_2)|\geq 2$, $|L'(u_3)|\geq 2$, and $|L'(u_5)|\geq 3$. Observe that $u_2$ does not share a neighbor with $u_3$. Thus, we can either complete the coloring by \Cref{cor:Hall}, or by coloring $u_2$ and $u_3$ with the same color and finish with $u_1$ and $u_5$.
\item If $L(v_1)\cap L(u_4) = \emptyset$, then by symmetry, we also have $L(v_3)\cap L(u_1) = \emptyset$ or we would be back to the previous case. In this case, we start by coloring $v_2$ and $w_2$. Let $L'(x)$ be the remaining list of colors for $x\in S\setminus\{v_2,w_2\}$.
\begin{itemize}
\item If $L'(u_2)\cap L'(u_3)\neq \emptyset$, then we color $u_2$ and $u_3$ with the same color $c'$. Let $L''(x)$ be the remaining list of colors for $x\in S\setminus\{u_2,u_3,v_2,w_2\}$. Observe that, by similar arguments as above, recounting the colors for $L''(x)$ gives us $|L''(u_1)|\geq 5$, $|L''(u_4)|\geq 5$, $|L''(u_5)|\geq 6$, $|L''(v_1)|\geq 3$, $|L''(v_3)|\geq 3$, $|L''(v_4)|\geq 4$, and $|L''(v_5)|\geq 4$. We can conclude by \Cref{cor:Hall} since $|L''(v_1)\cup L''(u_4)|\geq 8$ and $|L''(v_3)\cup L''(u_1)|\geq 8$ as $L(v_1)\cap L(u_4)= \emptyset$ and $L(v_3)\cap L(u_1)= \emptyset$.

\item If $L'(u_2)\cap L'(u_3)=\emptyset$, then we have $|L'(u_2)\cup L'(u_3)|\geq 10$, $|L'(v_1)\cup L'(u_4)|\geq 10$, and $|L'(v_3)\cup L'(u_1)|\geq 10$ as $L(v_1)\cap L(u_4)= \emptyset$ and $L(v_3)\cap L(u_1)= \emptyset$. We complete the coloring by \Cref{cor:Hall}.
\end{itemize} 
\end{itemize}
This concludes the proof.
\end{proof}

\subsection{Discharging}\label{injg3:discharging}

Knowing the reducible structures in $G$, we apply the following rules in the discharging procedure to get a contradiction with \Cref{equation}:

\begin{itemize}
\item[\ru0] Every $5^+$-face $f$ gives $\frac{1}{3}$ to each incident $3$-vertex.
\item[\ru1] Every $5^+$-face $f$ gives $\frac{1}{3}$ to each adjacent $3$-face that is not adjacent with any $3$-faces.
\item[\ru2] Every $5^+$-face $f$ gives $\frac{1}{2}$ to each adjacent $3$-face that is adjacent with another $3$-face.
\item[\ru3] Every $6^+$-face $f$ gives $\frac{1}{3}$ to each adjacent bad $5$-face.
\end{itemize}

\begin{figure}[H]
\begin{minipage}[b]{0.2\textwidth}
\centering
\begin{tikzpicture}[scale=0.5]{thick}
\begin{scope}[every node/.style={circle,draw,minimum size=1pt,inner sep=2}]
    \node (1) at (0,0) {};
    \node[fill] (2) at (2,1) {};
    \node (3) at (4,0) {};
    \node (4) at (2,3) {};
    
    \node[draw=none] (f0) at (0,2) {$f$};
\end{scope}

\begin{scope}[every edge/.style={draw=black}]
    \path (1) edge (2);
    \path (2) edge (3);
    \path (2) edge (4);
    \path[->] (f0) edge node[above] {$\frac13$} (2);
\end{scope}
\end{tikzpicture}
\caption{\ru0.}
\end{minipage}
\begin{minipage}[b]{0.2\textwidth}
\centering
\begin{tikzpicture}[scale=0.5]{thick}
\begin{scope}[every node/.style={circle,draw,minimum size=1pt,inner sep=2}]
    \node (1) at (0,0) {};
    \node[draw=none] (2) at (2,1) {};
    \node (3) at (4,0) {};
    \node (4) at (2,3) {};
    
    \node[draw=none] (f0) at (-1,2) {$f$};
\end{scope}

\begin{scope}[every edge/.style={draw=black}]
    \path (1) edge (3);
    \path (3) edge (4);
    \path (1) edge (4);
    \path[->] (f0) edge node[above] {$\frac13$} (2);
\end{scope}
\end{tikzpicture}
\caption{\ru1.}
\end{minipage}
\begin{minipage}[b]{0.2\textwidth}
\centering
\begin{tikzpicture}[scale=0.5]{thick}
\begin{scope}[every node/.style={circle,draw,minimum size=1pt,inner sep=2}]
    \node (1) at (0,0) {};
    \node[draw=none] (2) at (2,1) {};
    \node (3) at (4,0) {};
    \node (4) at (2,3) {};
    \node (5) at (2,-3) {};
    
    \node[draw=none] (f0) at (-1,2) {$f$};
\end{scope}

\begin{scope}[every edge/.style={draw=black}]
    \path (1) edge (3);
    \path (3) edge (4);
    \path (1) edge (4);
    \path (3) edge (5);
    \path (1) edge (5);
    \path[->] (f0) edge node[above] {$\frac12$} (2);
\end{scope}
\end{tikzpicture}
\caption{\ru2.}
\end{minipage}
\begin{minipage}[b]{0.4\textwidth}
\centering
\begin{tikzpicture}[scale=1.5]{thick}
\begin{scope}[every node/.style={circle,draw,minimum size=1pt,inner sep=2}]
    \node[fill] (u1) at (1.95,1.31) {};
    \node[fill] (u2) at (1,1) {};
    \node[fill] (u3) at (1,0) {};
    \node[fill] (u4) at (1.95,-0.31) {};
    \node[fill] (u5) at (2.54,0.5) {};
    
    \node (v1) at (1.22,1.93) {};
    \node (v3) at (1.22,-0.93) {};
    \node (v4) at (2.9,-0.38) {};
    \node (v5) at (2.9,1.38) {};
    
    \node (v2) at (0.05,1.31) {};
    \node (w2) at (0.05,-0.31) {};
    
    \node[draw=none] (f) at (0.1, 0.5) {$f$};
    \node[draw=none] (bad) at (1.5, 0.5) {};
\end{scope}

\begin{scope}[every edge/.style={draw=black}]
    \path (u1) edge (u2);
    \path (u2) edge (u3);
    \path (u3) edge (u4);
    \path (u4) edge (u5);
    \path (u5) edge (u1);
    \path (u1) edge (v1);
    \path (u2) edge (v1);
    \path (u2) edge (v2);
    \path (u3) edge (w2);
    \path (u3) edge (v3);
    \path (u4) edge (v3);
    \path (u4) edge (v4);
    \path (u5) edge (v4);
    \path (u1) edge (v5);
    \path (u5) edge (v5);
    \path[->] (f) edge node[above left] {$\frac13$} (bad); 
\end{scope}
\end{tikzpicture}
\caption{\ru3.}
\end{minipage}
\end{figure}

Now we prove \Cref{thm:injg3} using both discharging procedure and structural properties of $G$ proven in \Cref{injg3:structure} and the discharging rules stated above.

\begin{proof}[Proof of \Cref{thm:injg3}]
Let $G$ be a minimal counterexample to the theorem and let $\mu(u)$ be the initial charge assignment for the vertices and faces of $G$ with the charge $\mu(u)=d(u)-4$ for each vertex $u\in V(G)$, and $\mu(f)=d(f)-4$ for each face $f\in F(G)$. By \Cref{equation}, we have that the total sum of the charges is negative.

Let $\mu^*$ be the assigned charges after the discharging procedure. In what follows, we will prove that: $$\forall x \in V(G)\cup F(G), \mu^*(x)\geq 0.$$

We first prove that the final charge on each vertex is non-negative. Let $u$ be a vertex in $V(G)$. Note that $u$ has degree at least $3$ by \Cref{injg3:minimumDegree}. Recall also that $\Delta(G)=4$ and $\mu(u)=d(u)-4$, thus we now consider the following two cases.

\textbf{Case 1:} $d(u)=4$\\ 
Then, $u$ does not give any charge. So,
$$\mu^*(u)=\mu(u)=d(u)-4=0.$$

\textbf{Case 2:} $d(u)=3$\\ 
Then, $\mu(u)=d(u)-4=-1$. By \Cref{injg3:config1}(ii), $u$ is incident to only $5^+$-faces. Thus, $u$ receives three times $\frac13$ by \ru0. So, 
$$\mu^*(u)=-1+3\cdot\frac13=0.$$

Secondly, we prove that the final charge on each face is also non-negative. Let $f$ be a face in $F(G)$ and recall that $\mu(f)=d(f)-4$. We distinguish the following cases.

\textbf{Case 1:} $d(f)\geq 6$\\
We claim that $f$ never gives more than $\frac13 d(f)$ by \ru0-\ru3. Indeed, we argue that $f$ sends at most $\frac13$ per incident edge. Observe that we can also view \ru0 and \ru2 as $f$ giving charge along an incident edge as shown in \Cref{fig:injg3:6faces}. Consider an edge $uv$ incident to $f$, if $f$ gives $\frac16$ along $uv$ by \ru0, then it cannot give additional charge along $uv$ by any other rule since a $3$-vertex cannot be incident to a $3$-cycle by \Cref{injg3:config1}(ii), and it cannot be at distance 1 from two adjacent $3$-cycles by \Cref{injg3:config1}(iii). If $f$ gives $\frac13$ along $uv$ by \ru1, then it cannot give along $uv$ by \ru2 due to \Cref{injg3:config1}(iv), nor by \ru3 by definition. If $f$ gives (at most $\frac13$) along $uv$ by \ru2, then it cannot also give along $uv$ by \ru3 due to \Cref{injg3:config3}.
To conclude, we have
$$ \mu^*(u)\geq \mu(u)-\frac13 d(f)=d(f)-4-\frac13 d(f) = \frac23 d(f)-4\geq 0$$
since $d(f)\geq 6$.

\begin{figure}[H]
\centering
\begin{subfigure}[b]{0.32\textwidth}
\centering
\begin{tikzpicture}[scale=0.6]{thick}
\begin{scope}[every node/.style={circle,draw,minimum size=1pt,inner sep=2}]
    \node (1) at (0,0) {};
    \node[fill] (2) at (2,1) {};
    \node (3) at (4,0) {};
    \node (4) at (2,3) {};
    
    \node[draw=none] (f0) at (0,2) {$f$};
\end{scope}

\begin{scope}[every edge/.style={draw=black}]
    \path (1) edge (2);
    \path (2) edge (3);
    \path (2) edge (4);
    \draw[->] (f0) to[out=-90,in=180] (1,0.5) node[below] {$\frac16$} to[out=0,in=-90] (2);
    \draw[->] (f0) to[out=0,in=180] (2,2) node[right] {$\frac16$} to[out=0,in=45] (2);
\end{scope}
\end{tikzpicture}
\caption*{\ru0.}
\end{subfigure}
\begin{subfigure}[b]{0.32\textwidth}
\centering
\begin{tikzpicture}[scale=0.6]{thick}
\begin{scope}[every node/.style={circle,draw,minimum size=1pt,inner sep=2}]
    \node (0) at (-3,0) {};
    \node[fill] (1) at (0,0) {};
    \node[draw=none] (2) at (2,1) {};
    \node (3) at (4,0) {};
    \node (4) at (2,3) {};
    \node (5) at (2,-3) {};
    
    \node[draw=none] (f0) at (-1.5,1) {$f$};
\end{scope}

\begin{scope}[every edge/.style={draw=black}]
    \path (0) edge (3);
    \path (3) edge (4);
    \path (1) edge (4);
    \path (3) edge (5);
    \path (1) edge (5);
    \path[->] (f0) edge node[above] {$\frac13$} (2);
    \path[->] (f0) edge[out=-90,in=-90] node[below] {$\frac16$} (2);
\end{scope}
\end{tikzpicture}
\caption*{\ru2.}
\end{subfigure}
\caption{A $6^+$-face $f$ sending charges through incident edges.}
\label{fig:injg3:6faces}
\end{figure}
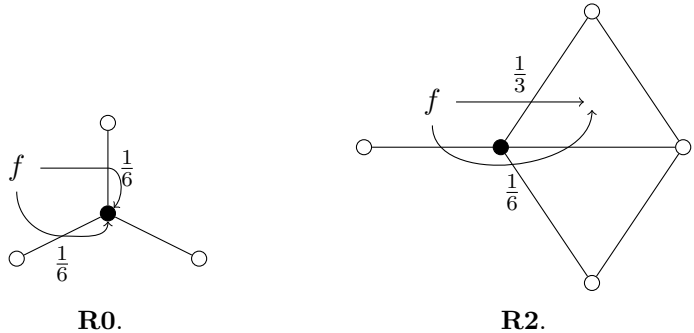

\textbf{Case 2:} $d(f)=5$\\
Recall that $\mu(f)=d(f)-4=1$. Let $i_0$, $i_1$, and $i_2$ be respectively the number of times $f$ gives charge by \ru0, \ru1, and \ru2. Note that \ru3 does not apply to $5$-faces. Observe now that we have the following inequalities:
\begin{itemize}
\item $i_1\leq 4$ by \Cref{injg3:config5}.
\item $i_0+i_2\leq 2$ by \Cref{injg3:config1} and \cref{injg3:config3}.
\item If $i_0\geq 1$, then $i_0+i_1\leq 3$ by \Cref{injg3:config1}(i, ii) and \Cref{injg3:config4}.
\end{itemize}

Recall that $f$ gives $\frac13 i_0 + \frac13 i_1 + \frac12 i_2$ by \ru0, \ru1, and \ru2.

If $i_2 = 2$, then $i_0=0$ since $i_0+i_2\leq 2$. We have $i_1=0$ due to \Cref{injg3:config1}(iv) and \Cref{injg3:config3}. Thus,
$$ \mu^*(f)\geq 1 - 2\cdot\frac12=0.$$ 

If $i_2 = 1$, then we have the two following cases since $i_0+i_2\leq 2$:
\begin{itemize}
\item If $i_0=1$, then $i_1=0$ due to \Cref{injg3:config1}(ii, iii, iv) and \Cref{injg3:config3}. Thus,
$$ \mu^*(f)\geq 1 - \frac13 - \frac12=\frac16.$$ 

\item If $i_0=0$, then $i_1\leq 1$ due to \Cref{injg3:config9}. Thus,
$$ \mu^*(f)\geq 1 - \frac13 - \frac12=\frac16.$$
\end{itemize}

If $i_2=0$, then we distinguish two cases:
\begin{itemize}
\item If $i_0\geq 1$, then recall $i_0+i_1\leq 3$. Thus,
$$ \mu^*(f)\geq 1 - 3\cdot\frac13 =0.$$
\item If $i_0=0$ and $i_1\leq 3$, then 
$$ \mu^*(f)\geq 1 - 3\cdot\frac13 =0.$$
\item If $i_0=0$ and $i_1=4$ (recall $i_1\leq 4$), then $f$ is adjacent to a $6^+$-face due to \Cref{injg3:config6} and \Cref{injg3:config7}. So $f$ receives $\frac13$ by \ru3. Thus, 
$$ \mu^*(f)\geq 1 - 4\cdot\frac13 +\frac13 =0.$$
\end{itemize}
Thus, we have that after the discharging procedure $5$-faces have non-negative charge.

\textbf{Case 3:} $d(f)=4$\\
Since $4$-faces do not give any charge, we have
$$ \mu^*(f)=\mu(f)=d(f)-4=0.$$

\textbf{Case 4:} $d(f)=3$\\
Recall that $\mu(f)=d(f)-4=-1$. If $f$ is adjacent only with $5^+$-faces, then $f$ receives three times $\frac13$ by \ru1. So,
$$ \mu^*(f)\geq -1+3\cdot\frac13=0.$$
On the other hand, if $f$ is adjacent to a $4^-$-face, then by \Cref{injg3:config2} and \Cref{injg3:config1}(iv) $f$ is adjacent with exactly one $3$-face. In such a case, it receives $\frac12$ from each of the two adjacent $5^+$-faces by \ru2. So,
$$ \mu^*(f)\geq -1+2\cdot\frac12=0.$$

Finally, it follows that after the discharging procedure the charge of every vertex and every face is non-negative and thus the final total sum is non-negative, a contradiction to \Cref{equation}.
\end{proof}

\section{Injective list-coloring of triangle-free planar graphs}\label{injg4}

Let $G$ be a minimal counterexample to \Cref{thm:injg4}. More precisely, $G$ has maximum degree 4, girth at least 4, and $\chi^i_\ell(G)\geq 10$.

\subsection{Structural properties of $G$}\label{injg4:structure}

Observe that since $g(G)\geq 4$, whenever two vertices are adjacent, they do not see each other (they do not share a common neighbor). Otherwise, $G$ would contain a $3$-cycle. As a result, an injective coloring of $G$ is also an exact square coloring as only vertices at distance exactly 2 see each other.

\begin{lemma}\label{injg4:minimumDegree}
The minimum degree of $G$ is at least 2.
\end{lemma}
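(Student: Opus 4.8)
The plan is simply to rule out the existence of a $1$-vertex in $G$. As noted in the preliminaries, $G$ is connected and contains at least one edge, so its minimum degree is already at least $1$; hence showing that $G$ has no $1$-vertex establishes the claim. This is the exact analogue of the first half of \Cref{injg3:minimumDegree} (and of \Cref{2dg4:minimumDegree}), so I expect the argument to be a short local modification with plenty of slack.

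Suppose for contradiction that $v$ is a $1$-vertex of $G$, with unique neighbor $u$. I would set $H := G - \{v\}$; deleting $v$ removes one vertex and one edge, so $H$ has strictly fewer vertices plus edges than $G$, and $H$ is still planar with girth at least $4$ and maximum degree at most $4$. By the minimality of $G$, the graph $H$ is injectively $(\Delta(G)+5)$-choosable, i.e.\ $9$-choosable, so for the given list assignment restricted to $V(H)$ it admits an injective list-coloring $\phi$. Now extend $\phi$ to $v$: since the coloring is injective, the vertex $v$ conflicts only with the vertices of $N_G(u)\setminus\{v\}$, of which there are at most $d(u)-1\leq 3$. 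Hence at most $3$ colors of $L(v)$ are forbidden, and as $|L(v)|\geq 9$ there remains an admissible color for $v$. Coloring $v$ with it yields an injective list-coloring of $G$, contradicting the choice of $G$.

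There is no genuine obstacle here; the only point requiring (routine) care is that the reduced graph $H$ still satisfies the hypotheses under which we may invoke minimality — girth at least $4$ and maximum degree at most $4$ — which is immediate, since deleting a vertex can only decrease degrees and can only lengthen or destroy cycles. As in the earlier minimum-degree lemmas, the gap between the $9$ available colors and the at most $3$ forbidden ones is comfortable, so no case analysis is needed.
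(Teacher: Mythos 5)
Your proposal is correct and is essentially identical to the paper's own proof: delete the $1$-vertex, color the smaller graph by minimality with the $9$ available colors, and extend to $v$, which sees only the at most $3$ other neighbors of its unique neighbor. No further comment is needed.
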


\begin{proof}
If $G$ contains a 1-vertex $v$, then we can simply remove $v$ and color the resulting graph, which is possible by minimality of $G$. Then, we add $v$ back and extend the coloring, since $v$ shares a neighbor with at most $3$ other vertices and we have $9$ colors in total.
\end{proof}

Unlike the previous cases, we do not have enough colors to reduce a $2$-vertex directly. However, the presence of such a ``small'' vertex guarantees that its neighbors must have a larger neighborhood.

\begin{lemma}\label{injg4:counting}
If a $4$-vertex $u$ in $G$ is adjacent to a $2$-vertex, then $d^{\#2}(u)\geq 9$.
\end{lemma}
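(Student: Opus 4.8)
The plan is to prove the contrapositive form: assuming $u$ is a $4$-vertex adjacent to a $2$-vertex $w$ and that $d^{\#2}(u)\le 8$, I would build an injective $L$-list-coloring of $G$ for an arbitrary assignment $L$ with $|L(v)|\ge 9$, contradicting that $G$ is a counterexample. Write $x$ for the second neighbour of $w$ and $a,b,c$ for the other three neighbours of $u$. Since $g(G)\ge 4$, there is no triangle $uwx$, so $u\not\sim x$ and $x\notin\{a,b,c\}$; hence $d_G(u,x)=2$, i.e. $x\in N^{\#2}_G(u)$. Recall also that because $g(G)\ge 4$ a vertex sees exactly the vertices at distance exactly $2$ from it, so the injective constraint at $u$ is precisely that $\phi(u)$ avoids the colors appearing on $N^{\#2}_G(u)$.

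Next I would delete $w$. The graph $G-w$ has strictly fewer vertices and edges than $G$ and is still planar with girth at least $4$ and maximum degree at most $4$: if $\Delta(G-w)=4$ it is not a counterexample and so is injectively $9$-list-colorable by minimality of $G$, while if $\Delta(G-w)\le 3$ the greedy bound $\chi^i_\ell\le\Delta(\Delta-1)+1\le 7$ again makes it $9$-list-colorable. Fix such a coloring $\phi$ of $G-w$ and then uncolor $u$.

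It remains to recolor $u$ and then color $w$. Every vertex of $N^{\#2}_G(u)$ lies in $G-w$ (none of them equals $w$, which is at distance $1$ from $u$) and differs from $u$, so all of them are colored by $\phi$; since $|N^{\#2}_G(u)|\le 8<9$ we may pick a color for $u$ in $L(u)$ avoiding all colors used on $N^{\#2}_G(u)$. As the only injective constraints involving $u$ are the pairs $u,v$ with $v\in N^{\#2}_G(u)$, this restores a valid coloring of $G-w$, and since $x\in N^{\#2}_G(u)$ it also guarantees $\phi(u)\ne\phi(x)$, so adding $w$ back creates no conflict. Finally $w$ sees only vertices of $(N(u)\setminus\{w\})\cup(N(x)\setminus\{w\})$, a set of size at most $3+3=6$, so at least $3$ colors of $L(w)$ remain; coloring $w$ completes an injective $L$-list-coloring of $G$, a contradiction.

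The substantive point is the potential collision $\phi(u)=\phi(x)$: removing $w$ destroys the common neighbour forcing $u$ and $x$ to differ, and—unlike in the $2$-distance setting—adding the edge $ux$ does not help, since injective colorings permit equal colors on adjacent vertices. The hypothesis $d^{\#2}(u)\le 8$ is exactly what makes the repair (uncolouring and recolouring $u$) possible; the case distinction on $\Delta(G-w)$ is only a routine technicality.
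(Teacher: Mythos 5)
Your proof is correct and follows essentially the same route as the paper's: color $G-\{w\}$ by minimality, uncolor and recolor $u$ using $d^{\#2}(u)\le 8<9$, then color the $2$-vertex, which sees at most $6$ colors. The extra care you take (the case $\Delta(G-w)\le 3$, and checking that $x\in N^{\#2}_G(u)$ so the $u$--$x$ conflict created by restoring $w$ is already handled) only fills in routine details the paper leaves implicit.
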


\begin{proof}
Suppose by contradiction that $u$ is a $4$-vertex that is adjacent to a $2$-vertex $v$ and $d^{\#2}(u)\leq 8$. Then, color $G-\{v\}$ by minimality and uncolor $u$. Vertex $u$ sees as many colors as $d^{\#2}(u)\leq 8$, so $u$ is colorable. Finish by coloring $v$ which sees only $d^{\#2}(v)\leq 6$ colors.
\end{proof}

\begin{lemma}\label{injg4:config1}
Graph $G$ cannot contain the following configurations:
\begin{itemize}
\item[(i)] Two adjacent $3^-$-vertices.
\item[(ii)] A $4$-vertex adjacent to two $2$-vertices.
\item[(iii)] A $4$-vertex adjacent to a $2$-vertex and two $3$-vertices.
\item[(iv)] A $2$-vertex incident to a $4$-cycle.
\item[(v)] A $3$-vertex incident to two $4$-cycles.
\item[(vi)] A $4$-vertex $u$ adjacent to a $2$-vertex and a $3$-vertex $v$, and $uv$ is incident to a $4$-cycle.
\end{itemize}
\end{lemma}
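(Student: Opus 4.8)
The plan is to keep the fixed minimal counterexample $G$ and rule out each of the six configurations by the standard scheme: delete a small set of edges/vertices, colour the strictly smaller graph by minimality, restore what was deleted, uncolour a bounded set of vertices near the deletion, and recolour them (greedily, or via \Cref{cor:Hall} when a group of vertices must get pairwise distinct colours). The single recurring bookkeeping point is this: when a deleted edge $xy$ (or a deleted vertex) is put back, the only pairs that can \emph{newly} see each other are pairs having $x$ or $y$ as a common neighbour; so it suffices that each such pair either already shared some other neighbour in the smaller graph (hence already received distinct colours) or has an uncoloured endpoint. I will arrange one of these alternatives in every case. Recall that, by $g(G)\ge 4$, adjacent vertices never see each other, and $d^{\#2}(v)\le\sum_{w\in N(v)}(d(w)-1)$.

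Configurations (ii), (iii) and (vi) should be immediate from \Cref{injg4:counting}. In each, the centre is a $4$-vertex $u$ adjacent to a $2$-vertex, so \Cref{injg4:counting} forces $d^{\#2}(u)\ge 9$; but a direct count of $d^{\#2}(u)=\bigl|\bigcup_{w\in N(u)}(N(w)\setminus\{u\})\bigr|$ gives at most $8$. For (ii) the other three neighbours contribute at most $1+1+3+3$ (two of them being $2$-vertices); for (iii) at most $1+2+2+3$ (one $2$-vertex and two $3$-vertices); for (vi) the $4$-cycle through the edge $ut$, where $t$ is the $3$-neighbour of $u$, forces its fourth vertex to be a common neighbour of $t$ and of one further neighbour of $u$, so that vertex is counted twice and the bound drops from $9$ to $8$. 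The only thing to check in (vi) is that this doubly-counted vertex is correctly located whichever of $u$'s three edges (besides $ut$) the $4$-cycle uses, which is a short case check.

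For the three remaining configurations I use genuine recolourings. In (i), with $uv$ the edge joining the two $3^-$-vertices, colour $G-uv$ and uncolour $u,v$: since $g(G)\ge 4$ they do not see each other, each sees at most $2+3+3=8$ colours (one neighbour has degree $\le 3$), so both are recolourable greedily. In (iv), the neighbours $a,c$ of the $2$-vertex $v$ are the two cycle-vertices adjacent to $v$ on the $4$-cycle, hence they also share the fourth vertex of that cycle; so in $G-v$ they already have distinct colours, restoring $v$ creates no clash among coloured vertices, and $v$ itself sees at most $d(a)+d(c)-3\le 5$ colours, so it is recolourable. In (v), the two $4$-cycles through the $3$-vertex $u$ must, by pigeonhole on $u$'s three edges, share an edge $ua$ at $u$; delete $ua$, colour, and uncolour $u$. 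One checks $d^{\#2}(u)\le 7$ (each $4$-cycle produces a repeated element in the union defining $d^{\#2}(u)$), so $u$ is recolourable, \emph{provided} that restoring $ua$ does not clash the already-coloured pairs $(a,b)$ and $(a,c)$, where $b,c$ are $u$'s other neighbours.

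I expect (v) to be the main obstacle. When the two $4$-cycles use the edge pairs $\{ua,ub\}$ and $\{ua,uc\}$, the vertex $a$ shares the opposite cycle-vertex with each of $b$ and $c$, so both pairs already got distinct colours and we are done. The delicate subcase is when both $4$-cycles use $\{ua,ub\}$: then $a$ and $c$ are forced to differ only through $u$, which is absent from $G-ua$. In that subcase I would instead delete an edge of $u$ lying on neither cycle, or, if a problematic pair still survives, uncolour a suitable small set (a subset of $\{u,a,c\}$ together with the relevant cycle vertices) and finish with \Cref{cor:Hall}. Pinning down exactly which edge to delete and bounding the union of the lists of the uncoloured vertices in this subcase is where the argument needs the most care, and where the bulk of the work lies.
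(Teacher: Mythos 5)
Configurations (i)--(iv) and (vi) in your proposal are handled exactly as in the paper: (i) and (iv) by the same edge/vertex deletion with the same degree counts, and (ii), (iii), (vi) by showing $d^{\#2}(u)\leq 8$ and invoking \Cref{injg4:counting}; your bookkeeping principle about which pairs newly see each other when a deleted edge is restored is precisely the (implicit) justification the paper relies on, and these five cases are correct as written.

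For (v) you have put your finger on a real subtlety, but you should know that the paper's own proof does not resolve it either: it deletes ``the edge incident to both cycles'' and simply asserts that every pair of neighbours of $u$ remains at distance $2$ in $G-e$, which is exactly the claim that fails in your degenerate subcase where both $4$-cycles use the edge pair $\{ua,ub\}$ (equivalently, $a$ and $b$ have two common neighbours $x\neq y$ besides $u$): there the pair $(a,c)$ certifies its distance $2$ only through $u$, and deleting $ua$ destroys that certificate. Your proposed escapes do not obviously work --- deleting $ub$ or $uc$ merely relocates the bad pair to $(b,c)$ or to both $(c,a)$ and $(c,b)$, and uncolouring $c$ fails because $c$ may satisfy $d^{\#2}(c)=11>9$. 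The correct resolution is to notice that \Cref{injg4:config1}(v) is only ever invoked for $4$-\emph{faces} (in the definitions of small/medium vertices and in the discharging), and two distinct faces at a degree-$3$ vertex of a plane graph occupy two distinct corners of the rotation at $u$ and hence share \emph{exactly one} edge at $u$; in that situation the deleted shared edge, say $ub$ with faces $uaxb$ and $ubyc$, leaves $(a,c)$ at distance $2$ via $u$ and $(a,b)$, $(c,b)$ at distance $2$ via $x$ and $y$ respectively, so the argument closes. So rather than trying to reduce the degenerate $K_{2,3}$-type configuration (which is where you say ``the bulk of the work lies''), you should restate and prove (v) for two $4$-cycles sharing exactly one edge at $u$ (or directly for $4$-faces), which is all the discharging needs.
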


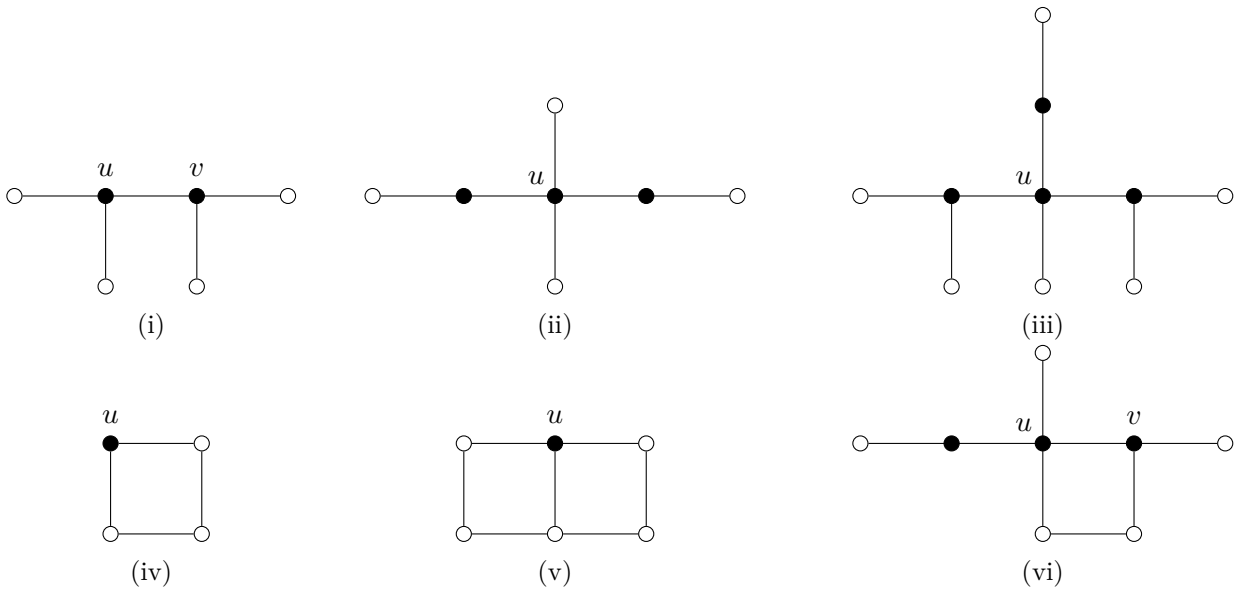
\begin{figure}[H]
\centering
\begin{subfigure}[b]{0.24\textwidth}
\centering
\begin{tikzpicture}[scale=0.6]{thick}
\begin{scope}[every node/.style={circle,draw,minimum size=1pt,inner sep=2}]
    \node (1) at (0,0) {};
    \node[fill,label={above:$u$}] (2) at (2,0) {};
    \node (20) at (2,-2) {};
    \node[fill,label={above:$v$}] (3) at (4,0) {};
    \node (30) at (4,-2) {};
    \node (4) at (6,0) {};
\end{scope}

\begin{scope}[every edge/.style={draw=black}]
    \path (1) edge (4);
    \path (2) edge (20);
    \path (3) edge (30);
\end{scope}
\end{tikzpicture}
\caption{}
\end{subfigure}
\begin{subfigure}[b]{0.37\textwidth}
\centering
\begin{tikzpicture}[scale=0.6]{thick}
\begin{scope}[every node/.style={circle,draw,minimum size=1pt,inner sep=2}]
    \node (0) at (-2,0) {};
    \node[fill] (1) at (0,0) {};
    \node[fill,label={above left:$u$}] (2) at (2,0) {};
    \node (20) at (2,-2) {};
    \node (21) at (2,2) {};
    \node[fill] (3) at (4,0) {};
    \node (4) at (6,0) {};
\end{scope}

\begin{scope}[every edge/.style={draw=black}]
    \path (0) edge (4);
    \path (21) edge (20);
\end{scope}
\end{tikzpicture}
\caption{}
\end{subfigure}
\begin{subfigure}[b]{0.37\textwidth}
\centering
\begin{tikzpicture}[scale=0.6]{thick}
\begin{scope}[every node/.style={circle,draw,minimum size=1pt,inner sep=2}]
    \node (0) at (-2,0) {};
    \node[fill] (1) at (0,0) {};
    \node (10) at (0,-2) {};
    \node[fill,label={above left:$u$}] (2) at (2,0) {};
    \node (20) at (2,-2) {};
    \node[fill] (21) at (2,2) {};
    \node (22) at (2,4) {};
    \node[fill] (3) at (4,0) {};
    \node (4) at (6,0) {};
    \node (30) at (4,-2) {};
\end{scope}

\begin{scope}[every edge/.style={draw=black}]
	\path (1) edge (10);    
    \path (0) edge (4);
    \path (22) edge (20);
    \path (3) edge (30);
\end{scope}
\end{tikzpicture}
\caption{}
\end{subfigure}

\begin{subfigure}[b]{0.24\textwidth}
\centering
\begin{tikzpicture}[scale=0.6]{thick}
\begin{scope}[every node/.style={circle,draw,minimum size=1pt,inner sep=2}]
    \node[fill,label={above:$u$}] (2) at (2,0) {};
    \node (20) at (2,-2) {};
    \node (3) at (4,0) {};
    \node (30) at (4,-2) {};
\end{scope}

\begin{scope}[every edge/.style={draw=black}]
    \path (2) edge (3);
    \path (2) edge (20);
    \path (3) edge (30);
    \path (20) edge (30);
\end{scope}
\end{tikzpicture}
\caption{}
\end{subfigure}
\begin{subfigure}[b]{0.37\textwidth}
\centering
\begin{tikzpicture}[scale=0.6]{thick}
\begin{scope}[every node/.style={circle,draw,minimum size=1pt,inner sep=2}]
    \node (1) at (0,0) {};
    \node (10) at (0,-2) {};
    \node[fill,label={above:$u$}] (2) at (2,0) {};
    \node (20) at (2,-2) {};
    \node (3) at (4,0) {};
    \node (30) at (4,-2) {};
\end{scope}

\begin{scope}[every edge/.style={draw=black}]
    \path (1) edge (3);
    \path (1) edge (10);
    \path (2) edge (20);
    \path (3) edge (30);
    \path (10) edge (20);
    \path (20) edge (30);
\end{scope}
\end{tikzpicture}
\caption{}
\end{subfigure}
\begin{subfigure}[b]{0.37\textwidth}
\centering
\begin{tikzpicture}[scale=0.6]{thick}
\begin{scope}[every node/.style={circle,draw,minimum size=1pt,inner sep=2}]
    \node (0) at (-2,0) {};
    \node[fill] (1) at (0,0) {};
    \node[fill,label={above left:$u$}] (2) at (2,0) {};
    \node (20) at (2,-2) {};
    \node (21) at (2,2) {};
    \node[fill,label={above:$v$}] (3) at (4,0) {};
    \node (4) at (6,0) {};
    \node (30) at (4,-2) {};
\end{scope}

\begin{scope}[every edge/.style={draw=black}]
    \path (0) edge (4);
    \path (21) edge (20);
    \path (3) edge (30);
    \path (20) edge (30);
\end{scope}
\end{tikzpicture}
\caption{}
\end{subfigure}
\caption{Reducible configurations in \Cref{injg4:config1}.}
\end{figure}

\begin{proof}
We separate the proof into four parts based on the configurations.
\begin{itemize}
\item[(i)] Suppose by contradiction that there exist two adjacent $3^-$-vertices $u$ and $v$. Color $G-\{uv\}$ by minimality. Uncolor $u$ and $v$. Observe that $d^{\#2}(u)\leq 8$. The same holds for $v$. Thus, $u$ and $v$ are colorable.
\item[(iv)] Suppose by contradiction that there exists a $2$-vertex $u$ incident to a $4$-cycle. Color $G-\{u\}$ by minimality. Observe that the two neighbors of $u$ will also have different colors in $G$ since they are at distance $2$ in $G-\{u\}$. Thus, we only need to color $u$ which sees only $d^{\#2}(u)\leq 5$ colors.
\item[(v)] Suppose by contradiction that there exists a $3$-vertex $u$ incident to two $4$-cycles. Let $e$ be the edge incident to $u$ that is incident to both cycles. Color $G-\{e\}$ by minimality and uncolor $u$. Observe that every pair of neighbors of $u$ are still at distance 2 in $G-\{e\}$. Thus, we only need to color $u$ which sees only $d^{\#2}(u)\leq 7$ colors.
\item[(ii),] (iii), and (vi) Observe that the $4$-vertex $u$ with the $2$-neighbor in these configurations always verifies $d^{\#2}(u)\leq 8$, which is impossible due to \Cref{injg4:counting}.
\end{itemize}
Thus, if $G$ contains any of the above configurations, then $\chi^i_\ell(G)\leq 9$, a contradiction.
\end{proof}

Before continuing with proving some more structural results, we first give some additional useful definitions and observations.

\textbf{Good and bad faces:} We call a $5$-face \emph{bad} if it is incident to a $2$-vertex and a $3$-vertex. Additionally, we call a $5^+$-face \emph{good}, if it is not a bad $5$-face.

The following observation is a direct consequence of \Cref{injg4:config1}(i).

\begin{observation}
A $2$-vertex and a $3$-vertex on a bad $5$-face $f$ in $G$ must be at distance $2$ and they are the only $3^-$-vertices on $f$.
\end{observation}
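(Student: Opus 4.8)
The plan is to derive the Observation as an immediate corollary of \Cref{injg4:config1}(i), using the girth hypothesis and \Cref{injg4:minimumDegree} only for bookkeeping. First I would fix notation: let $f=v_1v_2v_3v_4v_5$ be a bad $5$-face. Since $g(G)\geq 4$, the boundary of $f$ is a chordless $5$-cycle on five distinct vertices — a chord of a $5$-cycle would split it into a triangle and a $4$-cycle, contradicting $g(G)\geq 4$, and a repeated boundary vertex would force either a short cycle or a bridge whose removal leaves a triangle. Moreover, by \Cref{injg4:minimumDegree} the minimum degree of $G$ is at least $2$, so every $3^-$-vertex incident to $f$ is a $2$-vertex or a $3$-vertex. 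By the definition of a bad $5$-face, $f$ is incident to at least one $2$-vertex and at least one $3$-vertex.

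The core step is a counting argument on the boundary $5$-cycle. By \Cref{injg4:config1}(i) no two $3^-$-vertices of $G$ are adjacent, hence no two $3^-$-vertices incident to $f$ are consecutive on the boundary cycle. Since an independent set in a $5$-cycle has size at most $2$, the face $f$ carries at most two $3^-$-vertices; combined with the previous paragraph it carries exactly two — one $2$-vertex $x$ and one $3$-vertex $y$. This already yields the assertion that $x$ and $y$ are the only $3^-$-vertices on $f$. For the distance claim: $x$ and $y$ are non-consecutive vertices of the boundary $5$-cycle, so the shorter boundary arc joining them has length $2$, giving $d_G(x,y)\leq 2$; and $d_G(x,y)\geq 2$ because $x$ and $y$ are non-adjacent by \Cref{injg4:config1}(i). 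Hence $d_G(x,y)=2$.

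I do not anticipate any genuine obstacle: the paper's own phrasing ("a direct consequence of \Cref{injg4:config1}(i)") confirms the statement is a short corollary. The only mildly delicate point is the routine justification that a $5$-face of a girth-at-least-$4$ plane graph is bounded by a chordless $5$-cycle of distinct vertices, which is used tacitly elsewhere in the paper; I would dispatch it in a single sentence as above rather than expand it.
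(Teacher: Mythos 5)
Your argument is correct and is exactly the argument the paper leaves implicit when it calls the Observation ``a direct consequence of \Cref{injg4:config1}(i)'': non-adjacency of $3^-$-vertices forces them to form an independent set on the boundary $5$-cycle, which has independence number $2$, giving exactly one $2$-vertex and one $3$-vertex at facial-distance (hence graph-distance) $2$. The extra sentence verifying that the boundary of a $5$-face is a chordless cycle on five distinct vertices under $g(G)\geq 4$ is a harmless formalization of something the paper also uses tacitly.
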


\begin{figure}[H]
\centering
\begin{tikzpicture}[scale=1.5,rotate=-18]{thick}
\begin{scope}[every node/.style={circle,draw,minimum size=1pt,inner sep=2}]
    \node (u1) at (0.05,1.31) {};
    \node[fill] (u2) at (-0.54,0.5) {};
    \node (u3) at (0.05,-0.31) {};
    \node (u4) at (1,0) {};
    \node[fill] (u5) at (1,1) {};
    
    \node (v2) at (-1.2,0.02) {};
\end{scope}

\begin{scope}[every edge/.style={draw=black}]
    \path (u1) edge (u2);
    \path (u2) edge (u3);
    \path (u3) edge (u4);
    \path (u4) edge (u5);
    \path (u5) edge (u1);
    \path (u2) edge (v2);
\end{scope}
\end{tikzpicture}
\caption{A bad face.}
\end{figure}
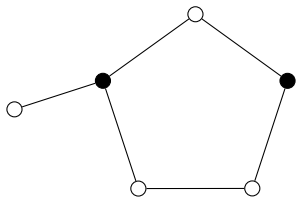

To further help us with the proofs, we now divide $3^-$-vertices into three different types. 

\textbf{Small, medium, and large $3^-$-vertices:} We call a $3^-$-vertex \emph{small}, if it is either a $2$-vertex or a $3$-vertex incident to a bad $5$-face and a $4$-face. A $3$-vertex is called \emph{medium}, if it is incident to either a bad $5$-face or a $4$-face. Finally, a $3$-vertex is called \emph{large}, if it is neither medium nor small. 

Due to \Cref{injg4:config1}(vi) we have the following observation.

\begin{observation}
A $4$-face in $G$, adjacent with a bad $5$-face $f$ and incident to a small $3$-vertex $v$, is not incident to the common neighbor of a vertex $v$ and the $2$-vertex on $f$. 
\end{observation}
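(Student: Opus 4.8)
The plan is a short proof by contradiction that reduces the asserted incidence to the forbidden configuration \Cref{injg4:config1}(vi). Write the bad $5$-face as $f=u_1u_2u_3u_4u_5$ with $u_2$ its $2$-vertex; by the preceding observation (a $2$-vertex and a $3$-vertex on a bad $5$-face lie at distance $2$), the $3$-vertex $v$ incident to $f$ is, after relabelling, $u_4$, and their common neighbour on $f$ — the vertex named in the statement — is $u_3$. Let $F$ be a $4$-face incident to $v=u_4$ and adjacent to $f$, and suppose for contradiction that $F$ is also incident to $u_3$. Since $g(G)\ge 4$, the boundary of $f$ is a $5$-cycle on distinct vertices, and that of $F$ a $4$-cycle.

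First I would show that $F$ must contain the edge $u_3u_4$. As $d(u_4)=3$, its incident edges are $u_4u_3,\ u_4u_5,\ u_4w$ for the third neighbour $w$, and the faces incident to $u_4$ are $f$ together with (at most) two others, namely the faces met when $\{u_4u_5,u_4w\}$, respectively $\{u_3u_4,u_4w\}$, are consecutive around $u_4$ ($f$ being the face in which $u_3u_4,u_4u_5$ are consecutive). Since $F\neq f$ is incident to $u_4$, its bounding $4$-cycle uses the edge $u_4w$ together with either $u_4u_5$ or $u_3u_4$. In the first case the vertex set of $F$ is $\{u_4,u_5,w,p\}$ with $p\ne u_4$, and here $w\neq u_3$ (as $w\notin\{u_3,u_5\}$) while $p\neq u_3$ as well (otherwise $u_3u_5\in E(G)$, giving a triangle $u_3u_4u_5$ and contradicting $g(G)\ge 4$); thus $F$ would not be incident to $u_3$, against our assumption. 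Hence $F$ is the face whose boundary uses $u_3u_4$ and $u_4w$, so $u_3u_4$ lies on the $4$-cycle $F$. Now $u_3$ is adjacent on $f$ to the $2$-vertex $u_2$ and to the $3$-vertex $v=u_4$; since two $3^-$-vertices are never adjacent by \Cref{injg4:config1}(i), we get $d(u_3)=4$. Therefore $u_3$ is a $4$-vertex adjacent to a $2$-vertex ($u_2$) and a $3$-vertex ($v$), with $u_3v$ incident to a $4$-cycle ($F$), which is exactly \Cref{injg4:config1}(vi) — a contradiction. Consequently $F$ is not incident to $u_3$.

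I expect the only genuinely fiddly point to be the planarity bookkeeping of the second paragraph, namely showing that once $F$ is incident to the degree-$3$ vertex $u_4$ and to $u_3$, it must actually use the edge $u_3u_4$ rather than meet $u_3$ as a non-adjacent vertex of its bounding cycle or share some other edge with $f$; after that, the conclusion is an immediate appeal to \Cref{injg4:config1}(i) and (vi). An alternative way to settle that point is to note that any face incident to $v$ other than $f$ shares with $f$ an edge incident to $v$, which here is $u_3u_4$ or $u_4u_5$, and $u_4u_5$ is not incident to $u_3$ since $u_3u_5\notin E(G)$ by $g(G)\ge 4$.
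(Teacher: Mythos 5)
Your proposal is correct and takes essentially the same route as the paper: the paper derives this observation in one line from \Cref{injg4:config1}(vi), and your argument just fills in the incidence bookkeeping (that the $4$-face must actually contain the edge $u_3u_4$, and that $d(u_3)=4$ by \Cref{injg4:config1}(i)) before invoking that configuration.
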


\begin{figure}[H]
\begin{subfigure}[b]{0.49\textwidth}
\centering
\begin{tikzpicture}[scale=0.6]{thick}
\begin{scope}[every node/.style={circle,draw,minimum size=1pt,inner sep=2}]
    \node (1) at (0,0) {};
    \node[fill,label={above:$u$}] (2) at (2,0) {};
    \node (3) at (4,0) {};
\end{scope}

\begin{scope}[every edge/.style={draw=black}]
    \path (1) edge (3);
\end{scope}
\end{tikzpicture}
\begin{tikzpicture}[scale=1.5,rotate=-25]{thick}
\begin{scope}[every node/.style={circle,draw,minimum size=1pt,inner sep=2}]
    \node (u1) at (0.05,1.31) {};
    \node[fill,label={above:$u$}] (u2) at (-0.54,0.5) {};
    \node (u3) at (0.05,-0.31) {};
    \node (u4) at (1,0) {};
    \node[fill] (u5) at (1,1) {};
    
    \node (v2) at (-1.2,0.02) {};
    \node (w2) at (-0.61,-0.79) {};
\end{scope}

\begin{scope}[every edge/.style={draw=black}]
    \path (u1) edge (u2);
    \path (u2) edge (u3);
    \path (u3) edge (u4);
    \path (u4) edge (u5);
    \path (u5) edge (u1);
    \path (u2) edge (v2);
    \path (v2) edge (w2);
    \path (w2) edge (u3);
\end{scope}
\end{tikzpicture}
\caption{A small vertex $u$.}
\end{subfigure}
\quad
\begin{subfigure}[b]{0.49\textwidth}
\centering
\begin{tikzpicture}[scale=0.6]{thick}
\begin{scope}[every node/.style={circle,draw,minimum size=1pt,inner sep=2}]
    \node (1) at (0,0) {};
    \node[fill,label={above:$u$}] (2) at (2,0) {};
    \node (20) at (2,-2) {};
    \node (3) at (4,0) {};
    \node (30) at (4,-2) {};
\end{scope}

\begin{scope}[every edge/.style={draw=black}]
    \path (1) edge (3);
    \path (2) edge (20);
    \path (3) edge (30);
    \path (20) edge (30);
\end{scope}
\end{tikzpicture}
\quad
\begin{tikzpicture}[scale=1.5,rotate=-18]{thick}
\begin{scope}[every node/.style={circle,draw,minimum size=1pt,inner sep=2}]
    \node (u1) at (0.05,1.31) {};
    \node[fill,label={above:$u$},label={[label distance=+0.3cm]above left:$5^+$-face}] (u2) at (-0.54,0.5) {};
    \node (u3) at (0.05,-0.31) {};
    \node (u4) at (1,0) {};
    \node[fill] (u5) at (1,1) {};
    
    \node (v2) at (-1.2,0.02) {};
    \node[draw=none] (w2) at (-0.7,-0.1) {$5^+$-face};
\end{scope}

\begin{scope}[every edge/.style={draw=black}]
    \path (u1) edge (u2);
    \path (u2) edge (u3);
    \path (u3) edge (u4);
    \path (u4) edge (u5);
    \path (u5) edge (u1);
    \path (u2) edge (v2);
\end{scope}
\end{tikzpicture}
\caption{A medium vertex $u$.}
\end{subfigure}
\caption{Small and medium $3^-$-vertices.}
\end{figure}
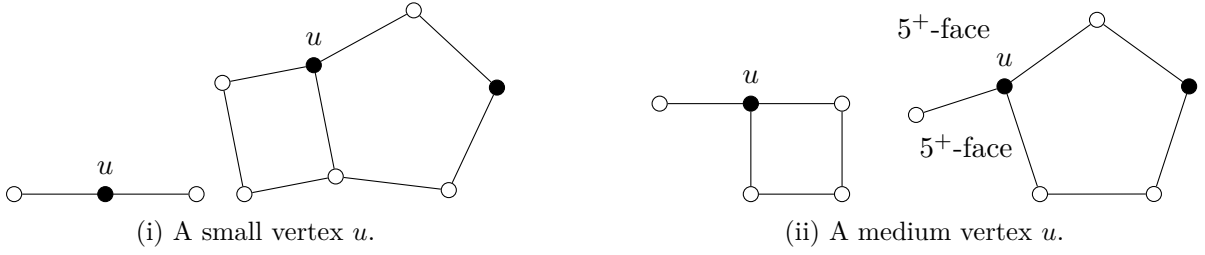

We are now ready to prove some structural properties regarding bad $5$-faces.

\begin{lemma}\label{injg4:config3}
Let $f=v_1v_2v_3v_4v_5$ be a bad $5$-face in $G$ where $v_1$ is the $3$-vertex and $v_3$ is the $2$-vertex. Let $f'=v'_1v'_2v'_3v'_4v'_5\neq f$ be another $5$-face incident to $v_1=v'_1$. Then, we have the following:
\begin{itemize}
\item If $f'$ is incident to $v_1v_2$, then $f'$ does not contain any other $3^-$-vertices (distinct from $v_1$).
\item If $f'$ is incident to $v_1v_5$, then $f'$ does not contain any other small vertices (distinct from $v_1$).
\end{itemize}
\end{lemma}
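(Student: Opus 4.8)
The plan is to derive a contradiction from the minimality of $G$ by extending a precolouring of a strictly smaller instance and invoking Hall's theorem in the form of \Cref{cor:Hall}. Throughout, $G$ is coloured from lists of size $9=\Delta+5$ and, since $g(G)\ge 4$, two vertices conflict exactly when they are at distance $2$. I would first fix the local picture around $f$: by the Observation on bad $5$-faces, $d(v_1)=3$, $d(v_3)=2$, the vertices $v_2,v_4,v_5$ are $4$-vertices, and $v_1$ and $v_3$ are at distance $2$ on $f$; let $x$ be the third neighbour of $v_1$ (not on $f$), so $d(x)=4$ by \Cref{injg4:config1}(i). Since $v_1$ has degree $3$ and two of its three incident faces are $f$ and $f'$, the edges of $f'$ at $v_1$ are $v_1x$ together with exactly one of $v_1v_2$ or $v_1v_5$; hence $f'=v_1v_2v'_3v'_4x$ in the first case and $f'=v_1v_5v'_3v'_4x$ in the second.

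Next I would pin down where the hypothetical offending vertex $w$ on $f'$ sits. The endpoints of the edge $f'$ shares with $f$ are $v_1$ and a $4$-vertex ($v_2$, resp.\ $v_5$), and $x=v'_5$ is a $4$-vertex, so $w\in\{v'_3,v'_4\}$; moreover $w$ is unique, since $v'_3v'_4$ is an edge and \Cref{injg4:config1}(i) forbids two adjacent $3^-$-vertices. In the first case $v'_3$ is a neighbour of $v_2$, which is already adjacent to the $2$-vertex $v_3$ and the $3$-vertex $v_1$, so \Cref{injg4:config1}(ii) and (iii) force $v'_3$ to be a $4$-vertex and hence $w=v'_4$; in the second case there is no such pinning and I would only use that $w\in\{v'_3,v'_4\}$ and that $w$ is small. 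A routine check, using $g(G)\ge 4$ and \Cref{injg4:config1} (in particular (iv), so that the $2$-vertex $v_3$ lies on no $4$-cycle), shows that $v_1,v_2,v_3,v_4,v_5,v'_3,v'_4,x$ are pairwise distinct and that $v_2$ and $v_4$ have $v_3$ as their only common neighbour.

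For the reduction I would delete from $G$ a small substructure incident to $v_1$ — an edge at $v_1$, or the $2$-vertex $w$ together with an incident edge if $w$ happens to be a $2$-vertex — chosen so that the removed graph is a strictly smaller planar graph of maximum degree $\le 4$ and girth $\ge 4$, hence injectively $9$-list-colourable by minimality, and so that the only conflicts of $G$ not already honoured by this precolouring involve the handful of vertices clustered around $v_1$. I would then uncolour that cluster — in particular $v_1$, $v_3$ and $w$ — and bound the forbidden colours at each uncoloured vertex: $d^{\#2}(v_3)\le 6$ and $v_3$ sees the uncoloured $v_1$, so $|L(v_3)|\ge 4$; $d^{\#2}(v_1)\le 9$ and $v_1$ sees the uncoloured $v_3$ and $w$, so $|L(v_1)|\ge 2$; and $v_1$ is seen by $w$, while $d^{\#2}(w)\le 6$ if $w$ is a $2$-vertex and $d^{\#2}(w)\le 8$ if $w$ is a small $3$-vertex (being incident to a $4$-face forces two of its distance-$2$ neighbours to coincide), so $|L(w)|\ge 1$, and $|L(w)|\ge 2$ when $w$ is small. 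A short verification then gives that the lists on the uncoloured cluster meet the hypothesis of \Cref{cor:Hall} — the triple $\{v_1,v_3,w\}$ is already covered by $L(v_3)$, every pair by $L(v_1)$ or a larger list, and singletons are non-empty — so the precolouring extends to all of $G$, contradicting the choice of $G$.

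The delicate point, and the source of the asymmetry between the two parts of the statement, is the $2$-vertex $v_3$: unlike in Sections~\ref{2dg4} and~\ref{injg3} we cannot delete $v_3$ outright, because the constraint that its two neighbours $v_2,v_4$ receive distinct colours is invisible to any colouring of $G-v_3$ (they share no other neighbour, by \Cref{injg4:config1}(iv)); the deletion must therefore be arranged so that this constraint, and the symmetric one at $w$ when $w$ is a $2$-vertex, is preserved or cheaply restored while every uncoloured $4$-vertex still has room. In the first case this works for an arbitrary $3^-$-vertex $w$ because the shared $4$-vertex $v_2$ is ``overloaded'' — adjacent to both a $2$-vertex and a $3$-vertex — which via \Cref{injg4:config1}(ii),(iii) confines $w$ to $v'_4$ with $4$-vertex neighbours; in the second case the shared vertex $v_5$ carries only the $3$-vertex $v_1$, there is no such confinement, and the count only closes once $w$ is known to be small.
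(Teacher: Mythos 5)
Your setup is sound: the localization of the offending vertex $w$ to $\{v'_3,v'_4\}$, the use of \Cref{injg4:config1}(i)--(iii) to confine it to a single position in the first case, and the identification of the real difficulty (the constraint between the two neighbours of a $2$-vertex becomes invisible once that $2$-vertex is deleted) all match the paper. But the proof never materializes past that diagnosis: the reduction is left as ``an edge at $v_1$, or the $2$-vertex $w$ together with an incident edge, chosen so that\dots'', and no concrete choice makes your three-vertex Hall check close. Two specific failures. First, once you delete $v_3$ (the paper's actual reduction), you must also uncolour $v_2$ or $v_4$ to restore the $\db{v_2}{v_4}$ constraint; that vertex is a $4$-vertex with $d^{\#2}\le 9$ and only one spare colour, so it joins the cluster with $|L|\ge 1$. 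Then \Cref{cor:Hall}, which forces \emph{all-distinct} colours, can fail on the triple $\{v_1,v_2,w\}$ (lists of sizes $2,1,2$ whose union may have size $2$), even though $v_1$ and $v_2$ are adjacent and hence need not differ. The paper sidesteps this with an explicit greedy order ($v_2$, $v'_4$, $v_1$, $v_3$), not Hall; your blanket Hall invocation is the wrong tool for this cluster.

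Second, and more seriously, the sub-case of the second bullet where $w$ is a $2$-vertex adjacent to the third neighbour $x$ of $v_1$ cannot be dispatched by any small uncoloured cluster. Deleting either $2$-vertex destroys a constraint between two $4$-vertices each of which has up to $9$--$10$ exact-distance-$2$ neighbours and hence essentially zero recolouring slack; this is why the paper uncolours all eight vertices of $f\cup f'$, works with lists of sizes down to $2$ on mutually conflicting vertices ($v_4$, $v_5$, $v'_4$, $v_2$, $v'_2$), and needs a genuine planarity argument (if $L(v_4)\cap L(v'_2)\neq\emptyset$ forces $v_4$ to see $v'_2$ and symmetrically, then $v_2,v_4,v'_2,v'_4$ would share a common neighbour, creating a $4$-cycle through the $2$-vertex $v_3$, contradicting \Cref{injg4:config1}(iv)) to certify the Hall condition. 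Nothing in your sketch produces the disjoint-list dichotomies or the planarity step, and the ``short verification'' you defer to is precisely the hard part of the lemma. (A smaller point: the paper's second case also splits off the possibility that $w$ is a small $3$-vertex, which it resolves by re-applying the first bullet to the \emph{other} bad face incident to $w$ rather than by a direct counting argument; your sketch does not account for this either.)
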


\begin{figure}[!htb]
\centering
\begin{subfigure}[b]{0.49\textwidth}
\centering
\begin{tikzpicture}[scale=1.5]{thick}
\begin{scope}[every node/.style={circle,draw,minimum size=1pt,inner sep=2}]
    \node[label={above:$v_5$}] (v2) at (0.05,1.31) {};
    \node[label={above:$v_4$}] (v3) at (-0.54,0.5) {};
    \node[fill,label={below:$v_3$},label={above:$4$}] (v4) at (0.05,-0.31) {};
    \node[label={below:$v_2$},label={above left:$1$}] (v5) at (1,0) {};
    \node[fill,label={above:$v_1$},label={below left:$2$}] (v1) at (1,1) {};
    \node[label={above:$v'_5$}] (v'2) at (1.95,1.31) {};
    \node[label={above:$v'_4$},label={left:$2$}] (v'3) at (2.54,0.5) {};
    \node[label={below:$v'_3$}] (v'4) at (1.95,-0.31) {};
\end{scope}

\begin{scope}[every edge/.style={draw=black}]
    \path (v1) edge (v2);
    \path (v2) edge (v3);
    \path (v3) edge (v4);
    \path (v4) edge (v5);
    \path (v5) edge (v1);
    \path (v5) edge (v'4);
    \path (v1) edge (v'2);
    \path (v'3) edge (v'2);
    \path (v'3) edge (v'4);
\end{scope}
\end{tikzpicture}
\caption*{Case 1.}
\end{subfigure}
\begin{subfigure}[b]{0.49\textwidth}
\centering
\begin{tikzpicture}[scale=1.5]{thick}
\begin{scope}[every node/.style={circle,draw,minimum size=1pt,inner sep=2}]
    \node[label={above:$v_2$}] (v2) at (0.05,1.31) {};
    \node[fill,label={above:$v_3$}] (v3) at (-0.54,0.5) {};
    \node[label={below:$v_4$}] (v4) at (0.05,-0.31) {};
    \node[label={below:$v_5$}] (v5) at (1,0) {};
    \node[fill,label={above:$v_1$}] (v1) at (1,1) {};
    \node[label={above:$v'_2$}] (v'2) at (1.95,1.31) {};
    \node[label={above:$v'_3$}] (v'3) at (2.54,0.5) {};
    \node[label={below:$v'_4$}] (v'4) at (1.95,-0.31) {};
\end{scope}

\begin{scope}[every edge/.style={draw=black}]
    \path (v1) edge (v2);
    \path (v2) edge (v3);
    \path (v3) edge (v4);
    \path (v4) edge (v5);
    \path (v5) edge (v1);
    \path (v5) edge (v'4);
    \path (v1) edge (v'2);
    \path (v'3) edge (v'2);
    \path (v'3) edge (v'4);
\end{scope}
\end{tikzpicture}
\caption*{Case 2.}
\end{subfigure}
\caption{Reducible configurations from \Cref{injg4:config3}.}
\end{figure}
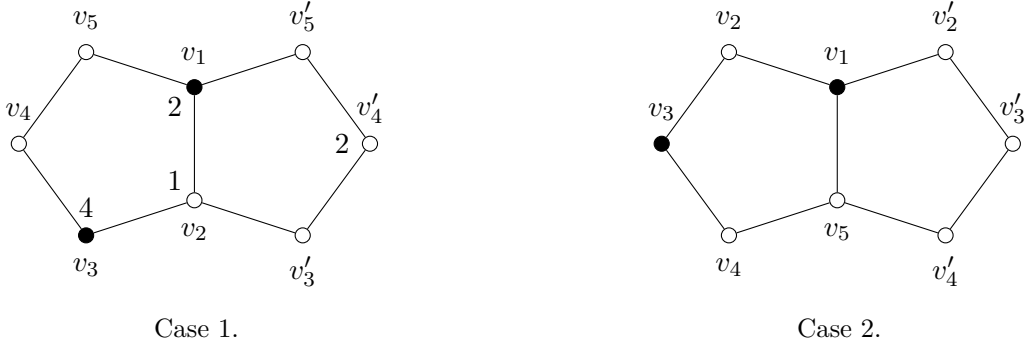

\begin{proof}
We assume w.l.o.g. that $v'_1=v_1$. Since $g(G)\geq 4$, every vertex of $f$ and $f'$ (except for the two common vertices that is $v_1$ and one of its neighbor) is distinct.

Suppose by contradiction that $f'$ contains (another) small vertex different from $v_1$.

\textbf{Case 1:} If $f'$ is incident to $v_1v_2$, say $v'_2=v_2$. First, observe that $d(v'_3)=4$ due to \Cref{injg4:config1}(iii) and $d(v'_5)=4$ due to \Cref{injg4:config1}(i). Thus, $v'_4$ must be a $3^-$-vertex. Color $G-\{v_3\}$ and uncolor $v_1$, $v_2$, and $v'_4$. Observe that $|L(v_1)|\geq 2$, $|L(v_2)|\geq 1$, $|L(v_3)|\geq 4$, $|L(v'_4)|\geq 2$. Therefore, we can color $v_2$, $v'_4$, $v_1$, and $v_3$ in this order.

\textbf{Case 2:} If $f'$ is incident to $v_1v_5$, say $v'_5=v_5$. By \Cref{injg4:config1}(i), $v'_2$ cannot be a small vertex, and at most one of $v'_3$ and $v'_4$ can be. Thus, we have the following two cases:
\begin{itemize}
\item If $v'_4$ is a $3^-$-vertex, then color $G-\{v_3\}$ and uncolor $v_1$, $v_4$, and $v'_4$. Observe that $|L(v_1)|\geq 3$, $|L(v_4)|\geq 1$, $|L(v_3)|\geq 4$, $|L(v'_4)|\geq 2$. Therefore, we can color $v_4$, $v'_4$, $v_1$, and $v_3$ in this order.
\item If $v'_3$ is a $3^-$-vertex, then recall that $v'_3$ is a small vertex. 
\begin{itemize}
\item If $v'_3$ is a small $3$-vertex, then it is incident to a bad $5$-face $f''\neq f'$ (since $f'$ is a good face) and a $4$-face. If $f''$ is incident to $v'_2v'_3$, then the $4$-face must be incident to $v'_3v'_4$. By \Cref{injg4:config1}(iii, vi), $f''$ cannot be incident to a $2$-vertex, which is a contradiction. Thus, $f''$ must be incident to $v'_3v'_4$. By \Cref{injg4:config1}(vi), the $2$-vertex incident to $f''$ must be adjacent to $v'_4$. However, in this case, we can use the same proof as in \textbf{Case 1} from the point of view of $v'_3$, $f''$, and $f'$ instead.
\item If $v'_3$ is a $2$-vertex, then we color $G-\{v_3\}$ and uncolor every vertex on $f$ and $f'$. Observe that the remaining list of colors for these vertices have size: $|L(v_1)|\geq 5$, $|L(v_2)|\geq 3$, $|L(v_3)|\geq 5$, $|L(v_4)|\geq 2$, $|L(v_5)|\geq 2$, $|L(v'_2)|\geq 3$, $|L(v'_3)|\geq 5$, and $|L(v'_4)|\geq 2$. Moreover, if we can color $v_2$, $v_4$, $v_5$, $v'_2$, and $v'_4$, then we can always finish by coloring $v_1$, $v'_3$, and $v_3$ in this order.

If $L(v_4)$ and $L(v_5)$ have a common color $c$, then we color them with $c$ and color $v'_4$, $v_2$, and $v'_2$ in this order. The same holds for $L(v'_4)$ and $L(v_5)$. As a result, $|L(v_4)\cup L(v_5)|\geq 4$ and $|L(v'_4)\cup L(v_5)|\geq 4$.

If $L(v_4)$ and $L(v'_2)$ have a common color $c$, then we proceed as follows. Suppose $v_4$ and $v'_2$ do not see each other, then we color them with $c$. Recall that $L(v'_4)\cap L(v_5)=\emptyset$. So, we can color $v'_4$, $v_2$, and $v_5$ in this order. The same holds for $L(v'_4)$ and $L(v_2)$. 
As a result, if $L(v_4)$ and $L(v'_2)$ share a color, then $v_4$ must see $v'_2$. The same holds for $v'_4$ and $v_2$. By planarity, $v_2$, $v_4$, $v'_2$, and $v'_4$ must share a common neighbor $u$. However, this is impossible since $v_3$ would be a $2$-vertex incident to the $4$-cycle $v_2v_3v_4u$ contradicting \Cref{injg4:config1}(iv). Finally, we must have $|L(v_4)\cup L(v'_2)|\geq 5$ or $|L(v'_4)\cup L(v_2)|\geq 5$.

Finally, we have $|L(v_4)\cup L(v_5)|\geq 4$, $|L(v'_4)\cup L(v_5)|\geq 4$, and at least one of the following two inequalities: $L(v_4)\cup L(v'_2)|\geq 5$ or $|L(v'_4)\cup L(v_2)|\geq 5$. Therefore, we can always color $v_2$, $v_4$, $v_5$, $v'_2$, and $v'_4$ by \Cref{cor:Hall}.
\end{itemize} 
\end{itemize}
Thus, we can conclude that $\chi^i_\ell(G)\leq 9$, a contradiction.
\end{proof}

Finally, we show that small vertices cannot be close to each other from the perspective of a face of size at least $6$.

\begin{definition}[Facial-distance]
Let $f=u_1u_2\dots u_{d(f)}$ be a face in $F(G)$, and let $u_i$ and $u_j$ be vertices incident to $f$. The facial-distance on $f$ between $u_i$ and $u_j$ is their distance on the cycle $u_1u_2\dots u_{d(f)}$ (which is $\min(i-j (\text{\emph{mod} }d(f)),j-i (\text{\emph{mod} }d(f)))$).
\end{definition}

\begin{lemma}\label{injg4:config4}
Two small vertices incident to a same $6^+$-face $f$ in $G$ are at facial-distance at least $3$ on $f$.
\end{lemma}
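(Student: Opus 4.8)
The plan is to run a reducibility argument: suppose for contradiction that $G$ contains two small vertices $u$ and $v$ incident to a common $6^+$-face $f$ at facial-distance at most $2$ on $f$, and produce an injective $9$-list-colouring of $G$. Facial-distance $1$ is immediate, since two small vertices at facial-distance $1$ are adjacent $3^-$-vertices, contradicting \Cref{injg4:config1}(i). So assume the facial-distance is exactly $2$ and write $\partial f=u\,w\,v\,x\dots x'$, where $w$ is the common neighbour of $u$ and $v$ between them on $f$, $x$ is the other neighbour of $v$ on $f$, and $x'$ the other neighbour of $u$ on $f$. Since $d(f)\geq 6$ the vertices $u,w,v,x,x'$ are pairwise distinct, and as $w$ is adjacent to the $3^-$-vertex $u$ it is a $4$-vertex by \Cref{injg4:config1}(i).

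The case where $u$ and $v$ are both $2$-vertices is dispatched at once: then $w$ is a $4$-vertex adjacent to the two $2$-vertices $u$ and $v$, contradicting \Cref{injg4:config1}(ii). So at least one of $u,v$ is a small $3$-vertex, and here I would record the local structure of such a vertex: it lies on a bad $5$-face and on a $4$-face, the $4$-face lies on the side away from the $2$-vertex of the bad face (this is the Observation before \Cref{injg4:config3}, via \Cref{injg4:config1}(vi)), two of its neighbours share the apex of that $4$-face, and by \Cref{injg4:config1}(v) it lies on no further $4$-cycle; consequently two of its neighbours have a common neighbour other than it precisely when they are the two neighbours lying on its $4$-face. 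I also note the degree bounds $d^{\#2}(z)\leq 6$ for a $2$-vertex $z$ and $d^{\#2}(z)\leq 8$ for a small $3$-vertex $z$, the latter because the $4$-face incident to $z$ forces an overlap inside $N^{\#2}(z)$.

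The reduction itself deletes $u$ and $v$, colours $G-\{u,v\}$ by minimality, uncolours a small residual set $R$, and re-extends. The key point is that the injective constraints lost by deleting $u$ and $v$ --- namely that the neighbours of $u$ (resp.\ of $v$) must be pairwise distinctly coloured --- are, by the previous observation, mostly already enforced in $G-\{u,v\}$ through the apices of the relevant $4$-faces; the only pairs not automatically separated involve the common neighbour $w$, so one may take $R$ to consist of $u$, $v$, $w$, and at most one further neighbour on each side. It then remains to $L$-list-colour the conflict graph on $R$ (two of its vertices adjacent when they see each other in $G$) via \Cref{cor:Hall}; the lower bounds on the list sizes come from the $d^{\#2}$ bounds above, from $g(G)\geq 4$ (a vertex never sees its own neighbours, so uncolouring the neighbours costs nothing for that vertex), and from $d(f)\geq 6$, which keeps $w,x,x'$, the off-$f$ neighbours, and the $4$-face apices pairwise distinct so that the claimed overlaps really occur. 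The case analysis runs over the types of $u$ and $v$ and, when a small $3$-vertex is present, over the position of its bad $5$-face and $4$-face relative to $f$.

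The hard part will be the bookkeeping in the case where $u$ and $v$ are both small $3$-vertices: the configuration then contains about ten named vertices, and one must simultaneously check that they are pairwise distinct, decide which neighbour pairs are pre-separated, and verify that the remaining list-size bounds suffice for Hall on the residual conflict graph. This is exactly where the $6^+$-face hypothesis is essential and not merely convenient: on a $5$-face the statement is false, a bad $5$-face being precisely an instance of two small vertices at facial-distance $2$, so $d(f)\geq 6$ must be used to force the distinctness that makes the overlaps appear. I expect the analysis to reduce to a small number of explicit pictures, each closed by the same Hall-type counting already used in the proofs of \Cref{injg4:config3} and \Cref{injg3:config9}.
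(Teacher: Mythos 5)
Your two easy cases are right and match the paper: facial-distance $1$ dies by \Cref{injg4:config1}(i), and two $2$-vertices at facial-distance $2$ die by \Cref{injg4:config1}(ii). But for the remaining cases your plan diverges from what actually works, and I don't believe it closes. The paper's argument here is almost entirely structural, not a deletion-plus-Hall reduction: if $v_1$ is a $2$-vertex and $v_3$ a small $3$-vertex with common neighbour $v_2$ on $f$, then \Cref{injg4:config1}(vi) forbids the $4$-face of $v_3$ from containing $v_2v_3$, so the bad $5$-face of $v_3$ must contain $v_2v_3$; but the $2$-vertex that this bad $5$-face is required to carry would then violate \Cref{injg4:config1}(ii) or (vi) — a contradiction with no colouring at all. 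The two-small-$3$-vertices case is similarly pinned down by (i), (iii), (vi) until only one rigid picture survives, and that one is killed by deleting a single $2$-vertex and recolouring three vertices. You cite the relevant Observation but use it only to decide which neighbour pairs are "pre-separated," missing that it already forces the contradiction.

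The concrete gap in your reduction is the list bookkeeping you defer. With only $\Delta+5=9$ colours there is no slack: by \Cref{injg4:counting}, the common neighbour $w$ of a $2$-vertex and a small $3$-vertex satisfies $d^{\#2}(w)\geq 9$, and \Cref{injg4:config1}(vi) prevents the $4$-face of the small $3$-vertex from passing through $w$, so there is no forced overlap inside $N^{\#2}(w)$; after colouring $G-\{u,v\}$ and uncolouring $w$, the vertex $w$ can see $9$ distinct colours and $L(w)=\emptyset$. Uncolouring a vertex of $N^{\#2}(w)$ to fix this cascades, because that vertex is again a $4$-vertex adjacent to the $2$-vertex and hence also has $d^{\#2}\geq 9$ with all $2$-neighbours of degree $4$; you end up with several mutually conflicting vertices each holding a list of size $1$, which Hall cannot resolve. (Contrast this with the paper's one small reduction, where the forced $4$-faces through $v_2$ create two overlaps in $N^{\#2}(v_2)$ and give $|L(v_2)|\geq 2$.) Your claim that "the only pairs not automatically separated involve $w$" is also off: a deleted small $3$-vertex has three pairs of neighbours, only the pair on its $4$-face is pre-separated, and one of the two remaining unseparated pairs need not involve $w$. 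As written, the proposal is a strategy whose hard cases are explicitly left open and whose counting appears to fail exactly where the margin is zero.
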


\begin{proof}
By \Cref{injg4:config1}(i), small vertices cannot be adjacent. By \Cref{injg4:config1}(ii), two $2$-vertices must be at distance at least $3$. We only need to check if a small $3$-vertex and a $2$-vertex, or two small $3$-vertices can be at facial-distance $2$ on $f$. Let $f=v_1v_2v_3v_4\dots$ is a $6^+$-face.

Suppose that $v_1$ is a $2$-vertex and $v_3$ a small $3$-vertex. Observe that $v_2v_3$ cannot be incident to a $4$-face by \Cref{injg4:config1}(vi), so $v_3v_4$ must be incident to a $4$-face and $v_2v_3$ is incident to a bad $5$-face (different from $f$ since $f$ is a $6^+$-face). However, due to \Cref{injg4:config1}(ii, vi), the bad $5$-face incident to $v_2v_3$ cannot be incident to a $2$-vertex, which is a contradiction.

Now, suppose that $v_1$ and $v_3$ are small $3$-vertices. They must both be incident to some $4$-faces and bad $5$-faces. If $v_3v_4$ is incident to a $4$-face, then $v_2v_3$ must be incident to a bad $5$-face. However, by \Cref{injg4:config1}(i, iii, vi), this $5$-face cannot be incident to any $2$-vertex. As a result, $v_2v_3$ must be incident to a $4$-face. By symmetry, $v_1v_2$ is also incident to a $4$-face. Additionally, $v_3v_4$ must be incident to a bad $5$-face. By \Cref{injg4:config1}(vi), the $2$-vertex $u$ on this bad $5$-face must be adjacent to $v_4$. Now, color $G-\{u\}$ and uncolor $v_4$ and $v_2$. Observe that $|L(v_4)|\geq 1$, $|L(v_2)|\geq 2$, and $|L(u)|\geq 3$. Thus, we can finish by coloring $v_4$, $v_2$, and $v_3$ in this order.
\end{proof}

\begin{figure}[H]
\centering
\begin{subfigure}[b]{0.49\textwidth}
\centering
\begin{tikzpicture}[scale=1.5]{thick}
\begin{scope}[every node/.style={circle,draw,minimum size=1pt,inner sep=2}]
    \node (u1) at (0.05,1.31) {};
    \node[fill,label={above:$v_3$}] (u2) at (-0.54,0.5) {};
    \node (u3) at (0.05,-0.31) {};
    \node (u4) at (1,0) {};
    \node[fill] (u5) at (1,1) {};
    
    \node[label={above:$v_2$},label={[label distance=+1cm]above:$f$}] (v2) at (-1.2,0.02) {};
    \node (w2) at (-0.61,-0.79) {};
    \node[fill,label={above:$v_1$}] (t) at (-1.86,0.5) {};
    \node (t1) at (-2.45,1.31) {};
\end{scope}

\begin{scope}[every edge/.style={draw=black}]
    \path (u1) edge (u2);
    \path (u2) edge (u3);
    \path (u3) edge (u4);
    \path (u4) edge (u5);
    \path (u5) edge (u1);
    \path (u2) edge (v2);
    \path (v2) edge (w2);
    \path (w2) edge (u3);
    \path (v2) edge (t);
    \path (t) edge (t1);
\end{scope}
\end{tikzpicture}
\end{subfigure}
\begin{subfigure}[b]{0.49\textwidth}
\centering
\begin{tikzpicture}[scale=1.5,rotate=-105]{thick}
\begin{scope}[every node/.style={circle,draw,minimum size=1pt,inner sep=2},yscale=-1,xscale=1]
    \node[label={above:$v_2$},label={[label distance=+1cm]above:$f$}] (u1) at (0.05,1.31) {};
    \node[fill,label={above:$v_3$}] (u2) at (-0.54,0.5) {};
    \node (u3) at (0.05,-0.31) {};
    \node (u4) at (1,0) {};
    \node[fill] (u5) at (1,1) {};
    
    \node[label={above:$v_4$}] (v2) at (-1.2,0.02) {};
    \node (w2) at (-0.61,-0.79) {};
    
    \node[fill,label={above:$v_1$}] (t) at (0.05,2.31) {};
    \node (t1) at (-0.54,3.12) {};
\end{scope}

\begin{scope}[every edge/.style={draw=black}]
    \path (u1) edge (u2);
    \path (u2) edge (u3);
    \path (u3) edge (u4);
    \path (u4) edge (u5);
    \path (u5) edge (u1);
    \path (u2) edge (v2);
    \path (v2) edge (w2);
    \path (w2) edge (u3);
    \path (u1) edge (t);
    \path (t) edge (t1);
\end{scope}
\end{tikzpicture}
\end{subfigure}

\begin{subfigure}[b]{0.49\textwidth}
\centering
\begin{tikzpicture}[scale=1.5,rotate=-108]{thick}
\begin{scope}[every node/.style={circle,draw,minimum size=1pt,inner sep=2},yscale=-1,xscale=1]
    \node[label={above:$v_2$},label={[label distance=+1cm]above:$f$}] (u1) at (0.05,1.31) {};
    \node[fill,label={above:$v_3$}] (u2) at (-0.54,0.5) {};
    \node (u3) at (0.05,-0.31) {};
    \node (u4) at (1,0) {};
    \node[fill] (u5) at (1,1) {};
    
    \node[label={above:$v_4$}] (v2) at (-1.2,0.02) {};
    \node (w2) at (-0.61,-0.79) {};
    
    \node[fill,label={above:$v_1$}] (t) at (0.05,2.31) {};
    \node (v) at (-0.1,3) {};
    \node (t1) at (1,2.62) {};
\end{scope}

\begin{scope}[every edge/.style={draw=black}]
    \path (u1) edge (u2);
    \path (u2) edge (u3);
    \path (u3) edge (u4);
    \path (u4) edge (u5);
    \path (u5) edge (u1);
    \path (u2) edge (v2);
    \path (v2) edge (w2);
    \path (w2) edge (u3);
    \path (u1) edge (t);
    \path (t) edge (v);
    \path (t) edge (t1);
\end{scope}
\end{tikzpicture}
\end{subfigure}
\begin{subfigure}[b]{0.49\textwidth}
\centering
\begin{tikzpicture}[scale=1.5]{thick}
\begin{scope}[every node/.style={circle,draw,minimum size=1pt,inner sep=2}]
    \node[label={above:$v_4$}] (u1) at (0.05,1.31) {};
    \node[fill,label={above:$v_3$}] (u2) at (-0.54,0.5) {};
    \node (u3) at (0.05,-0.31) {};
    \node (u4) at (1,0) {};
    \node[fill] (u5) at (1,1) {};
    
    \node[fill,label={above:$v_2$},label={[label distance=+1cm]above:$f$}] (v2) at (-1.2,0.02) {};
    \node (w2) at (-0.61,-0.79) {};
    \node (w3) at (-1.79,-0.79) {};
    \node (t1) at (-2.45,1.31) {};
    \node[fill,label={above:$v_1$}] (t2) at (-1.86,0.5) {};
    \node (t3) at (-2.45,-0.31) {};
    \node (t4) at (-3.4,0) {};
    \node[fill] (t5) at (-3.4,1) {};
\end{scope}

\begin{scope}[every edge/.style={draw=black}]
    \path (u1) edge (u2);
    \path (u2) edge (u3);
    \path (u3) edge (u4);
    \path (u4) edge (u5);
    \path (u5) edge (u1);
    \path (u2) edge (v2);
    \path (v2) edge (w2);
    \path (w2) edge (u3);
    \path (v2) edge (t2);
    \path (t2) edge (t1);
    \path (t2) edge (t3);
    \path (t3) edge (t4);
    \path (t4) edge (t5);
    \path (t5) edge (t1);
    \path (w3) edge (v2);
    \path (w3) edge (t3);
\end{scope}
\end{tikzpicture}
\end{subfigure}
\caption{Reducible configurations in \Cref{injg4:config4}.}
\end{figure}
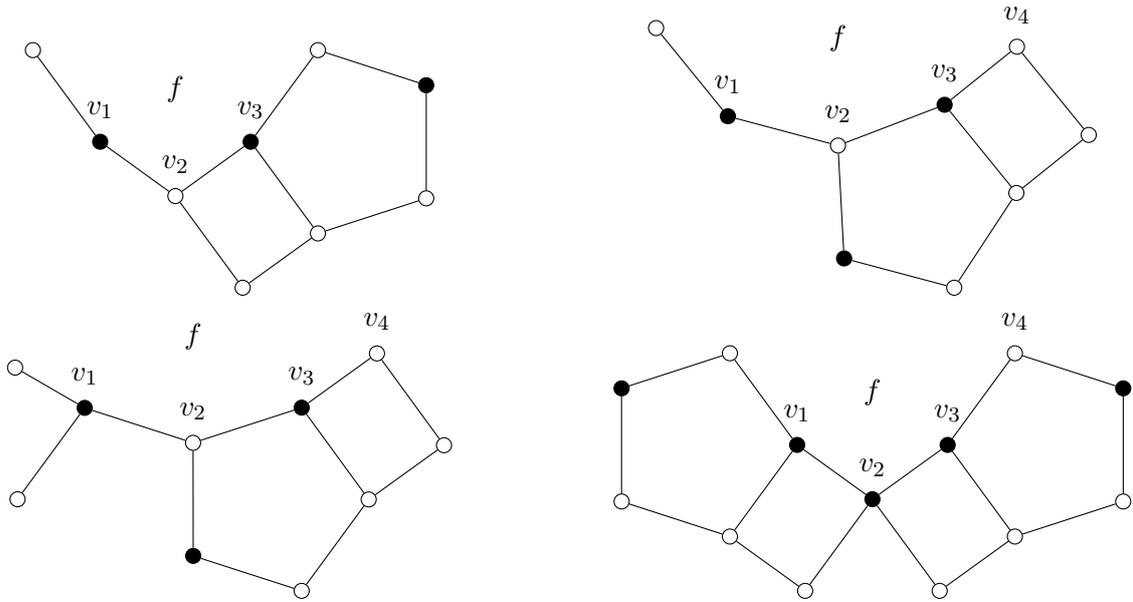

\subsection{Discharging procedure}\label{injg4:discharging}

To get a contradiction with \Cref{equation} we apply the following rules in the discharging procedure:

\begin{itemize}
\item[\ru0] Every $5^+$-face $f$ gives $1$ to each $2$-vertex.
\item[\ru1] Every good $5^+$-face $f$ gives $1$ to each small $3$-vertex.
\item[\ru2] Every good $5^+$-face $f$ gives $\frac12$ to each medium $3$-vertex.
\item[\ru3] Every $5^+$-face $f$ gives $\frac13$ to each large $3$-vertex.
\end{itemize}

\begin{figure}[H]
\begin{minipage}[b]{0.14\textwidth}
\centering
\begin{tikzpicture}[scale=0.5]{thick}
\begin{scope}[every node/.style={circle,draw,minimum size=1pt,inner sep=2}]
    \node (1) at (0,0) {};
    \node[fill,label={below:$u$}] (2) at (2,0) {};
    \node (3) at (4,0) {};
    \node[draw=none] (f) at (2,2) {$f$};
\end{scope}

\begin{scope}[every edge/.style={draw=black}]
    \path (1) edge (3);
    \path[->] (f) edge node[right] {1} (2);
\end{scope}
\end{tikzpicture}
\caption{\ru0.}
\end{minipage}
\begin{minipage}[b]{0.24\textwidth}
\centering
\begin{tikzpicture}[scale=1.5,rotate=-36]{thick}
\begin{scope}[every node/.style={circle,draw,minimum size=1pt,inner sep=2}]
    \node (u1) at (0.05,1.31) {};
    \node[fill,label={below left:$u$}] (u2) at (-0.54,0.5) {};
    \node (u3) at (0.05,-0.31) {};
    \node (u4) at (1,0) {};
    \node[fill] (u5) at (1,1) {};
    
    \node (v2) at (-1.2,0.02) {};
    \node (w2) at (-0.61,-0.79) {};
    \node[draw=none] (f) at (-1,1.1) {$f$};
\end{scope}

\begin{scope}[every edge/.style={draw=black}]
    \path (u1) edge (u2);
    \path (u2) edge (u3);
    \path (u3) edge (u4);
    \path (u4) edge (u5);
    \path (u5) edge (u1);
    \path (u2) edge (v2);
    \path (v2) edge (w2);
    \path (w2) edge (u3);
    \path[->] (f) edge node[right] {1} (u2);
\end{scope}
\end{tikzpicture}
\caption{\ru1.}
\end{minipage}
\begin{minipage}[b]{0.45\textwidth}
\centering
\begin{tikzpicture}[scale=0.6]{thick}
\begin{scope}[every node/.style={circle,draw,minimum size=1pt,inner sep=2}]
    \node (1) at (0,0) {};
    \node[fill,label={below left:$u$}] (2) at (2,0) {};
    \node (20) at (2,-2) {};
    \node (3) at (4,0) {};
    \node (30) at (4,-2) {};
    \node[draw=none] (f) at (2,2) {$f$};
\end{scope}

\begin{scope}[every edge/.style={draw=black}]
    \path (1) edge (3);
    \path (2) edge (20);
    \path (3) edge (30);
    \path (20) edge (30);
    \path[->] (f) edge node[right] {$\frac12$} (2);
\end{scope}
\end{tikzpicture}
\begin{tikzpicture}[scale=1.5,rotate=-36]{thick}
\begin{scope}[every node/.style={circle,draw,minimum size=1pt,inner sep=2}]
    \node (u1) at (0.05,1.31) {};
    \node[fill,label={below left:$u$}] (u2) at (-0.54,0.5) {};
    \node (u3) at (0.05,-0.31) {};
    \node (u4) at (1,0) {};
    \node[fill] (u5) at (1,1) {};
    
    \node (v2) at (-1.2,0.02) {};
    \node[draw=none] (f) at (-1,1.1) {$f$};
\end{scope}

\begin{scope}[every edge/.style={draw=black}]
    \path (u1) edge (u2);
    \path (u2) edge (u3);
    \path (u3) edge (u4);
    \path (u4) edge (u5);
    \path (u5) edge (u1);
    \path (u2) edge (v2);
    \path[->] (f) edge node[right] {$\frac12$} (u2);
\end{scope}
\end{tikzpicture}
\caption{\ru2.}
\end{minipage}
\begin{minipage}[b]{0.15\textwidth}
\centering
\begin{tikzpicture}[scale=0.5]{thick}
\begin{scope}[every node/.style={circle,draw,minimum size=1pt,inner sep=2}]
    \node (1) at (0,0) {};
    \node[fill] (2) at (2,1) {};
    \node (3) at (4,0) {};
    \node (4) at (2,3) {};
    
    \node[draw=none] (f0) at (0,2) {$f$};
\end{scope}

\begin{scope}[every edge/.style={draw=black}]
    \path (1) edge (2);
    \path (2) edge (3);
    \path (2) edge (4);
    \path[->] (f0) edge node[above] {$\frac13$} (2);
\end{scope}
\end{tikzpicture}
\caption{\ru3.}
\end{minipage}
\end{figure}

Now we can proceed by proving \Cref{thm:injg4} using the discharging procedure together with the structural properties of $G$ proven in \Cref{injg4:structure} and the discharging rules stated above.

\begin{proof}[Proof of \Cref{thm:injg4}]
Let $G$ be a minimal counterexample to the theorem and let $\mu(u)$ be the initial charge assignment for the vertices and faces of $G$ with the charge $\mu(u)=d(u)-4$ for each vertex $u\in V(G)$, and $\mu(f)=d(f)-4$ for each face $f\in F(G)$. By \Cref{equation}, we have that the total sum of the charges is negative.

Let $\mu^*$ be the assigned charges after the discharging procedure. In what follows, we will prove that: $$\forall x \in V(G)\cup F(G), \mu^*(x)\geq 0.$$

Let $u$ be a vertex in $V(G)$. Vertex $u$ has degree at least $2$ by \Cref{injg4:minimumDegree}. Recall that $\Delta(G)=4$ and $\mu(u)=d(u)-4$.

\textbf{Case 1:} If $d(u)=4$, then $u$ does not give any charge. So,
$$\mu^*(u)=\mu(u)=d(u)-4=0.$$

\textbf{Case 2:} If $d(u)=3$, then $\mu(u)=d(u)-4=-1$ and we have the following cases:
\begin{itemize}
\item If $u$ is a small $3$-vertex, then $u$ is incident to one good $5^+$-face due to \Cref{injg4:config1}(v) and \Cref{injg4:config3}. By \ru1, we have
$$ \mu^*(u)\geq -1 + 1 = 0.$$
\item If $u$ is a medium $3$-vertex, then it is incident to two good $5^+$-faces due to \Cref{injg4:config1}(v) and \Cref{injg4:config3}. By \ru2, we have
$$ \mu^*(u)\geq -1 + 2\cdot\frac12 = 0.$$
\item If $u$ is a large $3$-vertex, then by definition, $u$ is incident to only $5^+$-face. Thus, by \ru3, we have
$$ \mu^*(u)\geq -1 + 3\cdot\frac13 = 0.$$
\end{itemize}

\textbf{Case 3:} If $d(u)=2$, then $u$ has to be incident to only $5^+$-faces due to \Cref{injg4:config1}(iv). By \ru0, we have
$$ \mu^*(u)\geq d(u)-4+2\cdot 1 = 0.$$

Let $f$ be a face in $F(G)$. Recall that $\mu(f)=d(f)-4$ and $d(f)\geq 4$ since $g(G)\geq 4$. Let $i_0$, $i_1$, $i_2$, and $i_3$ be respectively the number of times $f$ gives charge by \ru0, \ru1, \ru2, and \ru3. We distinguish the following cases.\\
\textbf{Case 1:} $d(f)\geq 7$\\
Let $u$ and $v$ be two small vertices on $f$. By \Cref{injg4:config4}, $u$ and $v$ must be at facial-distance at least 3 on $f$. As a result, the neighbors of $u$ and $v$ on $f$ are distinct. Moreover, due to \Cref{injg4:config1}(i), those neighbors are $4$-vertices. Thus, we also have $i_2+i_3\leq d(f)-3(i_0+i_1)$. Due to \Cref{injg4:config1}(i), we also have $i_2+i_3\leq \frac{1}{2}d(f)$. Consequently, $i_2+i_3\leq \min(d(f)-3(i_0+i_1),\frac{1}{2}d(f))$.

We claim that $f$ gives at most $\frac{5}{12}d(f)$ charge away. Indeed, recall that $f$ gives $i_0+i_1+\frac{1}{2}i_2+\frac{1}{3}i_3$ by \ru0, \ru1, \ru2, and \ru3. By the above inequalities,
\begin{itemize}
\item if $d(f)-3(i_0+i_1)\leq \frac{1}{2}d(f)$, then $i_0+i_1\geq \frac{1}{6}d(f)$. Moreover, we get 
\begin{align*}
i_0+i_1+\frac{1}{2}i_2+\frac{1}{3}i_3 & \leq i_0+i_1 +\frac12(i_2+i_3)\\
& \leq i_0+i_1+\frac12(d(f)-3(i_0+i_1))\\
& \geq \frac12(d(f)-(i_0+i_1))\\
& \geq \frac{1}{2}(d(f)-\frac16 d(f))\\
& \geq \frac{5}{12}d(f)
\end{align*}
\item if $d(f)-3(i_0+i_1)>\frac{1}{2}d(f)$, then $i_0+i_1 < \frac{1}{6}d(f)$. Moreover, we get 
\begin{align*}
i_0+i_1+\frac{1}{2}i_2+\frac{1}{3}i_3 & \leq i_0+i_1 +\frac12(i_2+i_3)\\
& \leq \frac16 d(f)+\frac12\cdot\frac12 d(f)\\
& \geq \frac{5}{12}d(f)
\end{align*}
\end{itemize}
To conclude, we have 
$$ \mu^*(f)\geq d(f)-4-\frac{5}{12}d(f)\geq \frac{7}{12}d(f)-4\geq 0$$
since $d(f)\geq 7$.

\textbf{Case 2:} $d(f)=6$\\
Similar to the previous case, two small vertices cannot be at facial-distance 2 on $f$ by \Cref{injg4:config4}. As a result, we get $i_0+i_1\leq 2$. Moreover, by \Cref{injg4:config1}(i), two $3^-$-vertices cannot be adjacent, so we get $i_0+i_1+i_2+i_3\leq 3 \Leftrightarrow i_2+i_3\leq 3-(i_0+i_1)$. Now, we distinguish the following cases.
\begin{itemize}
\item If $i_0+i_1=2$, then observe that, since small vertices cannot share neighbors on $f$ and that their neighbors are all $4$-vertices, we have exactly two $3^-$-vertices on $f$. In other words, $i_2+i_3=0$. Recall that $\mu(f)=d(f)-4=2$ and that $f$ gives $i_0+i_1+\frac12 i_2 + \frac13 i_3$ by \ru0, \ru1, \ru2, and \ru3. Thus,
$$ \mu^*(f)\geq 2 - (i_0+i_1+\frac12 i_2 + \frac13 i_3) = 2-2+0=0.$$
\item If $i_0+i_1\leq 1$, then we get
$$ \mu^*(f)\geq 2 - (i_0+i_1+\frac12 i_2 + \frac13 i_3) \geq 2 - (i_0+i_1+\frac12 (3-(i_0+i_1)))\geq \frac12 - \frac12(i_0+i_1)\geq 0.$$ 
\end{itemize}

\textbf{Case 3:} $d(f)=5$\\
Recall that $\mu(f)=d(f)-4=1$. Observe that we have the following inequalities.
\begin{itemize}
\item $i_0+i_1+i_2+i_3\leq 2$ since there are no adjacent $3^-$-vertices by \Cref{injg4:config1}(i).
\item $i_0\leq 1$ due to \Cref{injg4:config1}(ii).
\item $i_1\leq 1$ due to \Cref{injg4:config3}. 
\end{itemize}
Recall that $f$ gives $i_0+i_1+\frac12 i_2 + \frac13 i_3$.
\begin{itemize}
\item If $i_0=1$, then either $f$ is incident to a $3$-vertex, in which case, it is a bad $5$-face and \ru1, \ru2, \ru3 do not apply (by definition of a bad face), or it is not incident to any $3$-vertex. In both cases, $i_1+i_2+i_3=0$. So,
$$ \mu^*(f)\geq 1 - (i_0+i_1+\frac12 i_2 + \frac13 i_3)\geq 1 - (1 + 0)=0.$$
\item If $i_0=0$ and $i_1=1$, then $f$ cannot be incident to any other (than the small $3$-vertex) $3^-$-vertices due to \Cref{injg4:config3}. As a result, $i_2+i_3=0$. So, $$ \mu^*(f)\geq 1 - (i_0+i_1+\frac12 i_2 + \frac13 i_3)\geq 1 - (1 + 0)=0.$$
\item If $i_0=i_1=0$, then 
$$ \mu^*(f)\geq 1 - (i_0+i_1+\frac12 i_2 + \frac13 i_3)\geq 1 - (0 + \frac12\cdot 2)=0.$$
\end{itemize}

\textbf{Case 4:} $d(f)=4$\\
Recall that $\mu(f)=d(f)-4=0$. Since $f$ does not give any charge, we have
$$ \mu^*(f)=\mu(f)=0.$$

We started with a negative total charge, but after the discharging procedure, which preserved the total sum, we end up with a non-negative total sum, a contradiction with \Cref{equation}. In other words, there exist no counter-examples to \Cref{thm:injg4}.
\end{proof}

\section{Exact square list-coloring of planar graphs}\label{exact}

Let $G$ be a minimal counterexample to \Cref{thm:exact}. More precisely, $G$ has maximum degree 4 and $\chi^{\#2}_\ell(G)\geq 11$.

\subsection{Structural properties of $G$}\label{exact:structure}

Unlike injective coloring where vertices incident to the same triangle need different colors, in exact square coloring, two vertices see each if and only if the distance between them is exactly 2.

\begin{lemma}\label{exact:minimumDegree}
The minimum degree of $G$ is at least 2.
\end{lemma}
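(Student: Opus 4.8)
The plan is to follow the same template as the minimum-degree lemmas in the previous sections (\Cref{2dg4:minimumDegree}, \Cref{injg3:minimumDegree}, \Cref{injg4:minimumDegree}): since $G$ is a minimal counterexample it is connected, so it has no $0$-vertex, and it therefore suffices to rule out the existence of a $1$-vertex.

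Concretely, I would suppose for contradiction that $G$ contains a $1$-vertex $v$ with unique neighbor $u$, and set $H = G-\{v\}$. The graph $H$ is planar with maximum degree at most $4$ and has strictly fewer vertices and edges than $G$, so by the minimality of $G$ (or, in the case $\Delta(H)\le 3$, by the trivial greedy bound, since then each vertex sees at most $\Delta^2-\Delta\le 6$ vertices at distance exactly $2$) it admits an exact square list-coloring $\phi$ for the list assignment $L$ restricted to $V(H)$, where $|L(w)|\ge 10$ for every $w$. Because $v$ is a leaf, no shortest path in $G$ between two vertices of $H$ passes through $v$, hence $d_H(x,y)=d_G(x,y)$ for all $x,y\in V(H)$; in particular deleting $v$ creates no new pair of vertices at distance exactly $2$, so $\phi$ is a legal partial exact square list-coloring of $G$.

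It then remains to color $v$. In $G$, the vertex $v$ sees precisely the vertices of $N_G(u)\setminus\{v\}$, and $|N_G(u)\setminus\{v\}|\le \Delta(G)-1=3$. Thus at most $3$ colors of $L(v)$ are forbidden, leaving at least $10-3=7\ge 1$ available colors; coloring $v$ with one of them extends $\phi$ to an exact square $L$-list-coloring of $G$, contradicting $\chi^{\#2}_\ell(G)\ge 11$.

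This argument is essentially routine: the only points needing a word of care are the observation that removing a leaf introduces no new distance-exactly-two constraints (so $\phi$ transfers back to $G$), and the elementary count that $v$ sees at most $\Delta-1$ colors. I do not expect any genuine obstacle in this lemma — the real difficulties of this section will lie in the later structural results controlling $2$-vertices and $3$-vertices near short faces, in the spirit of \Cref{injg4:counting} and \Cref{injg4:config1} and the bad-face analysis that follows.
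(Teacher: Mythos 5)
Your proposal is correct and follows essentially the same argument as the paper: delete the $1$-vertex, color $G-\{v\}$ by minimality, and observe that $v$ sees at most $\Delta-1=3$ vertices while $10$ colors are available. The extra remarks about distances being preserved under leaf deletion are sound but not needed beyond what the paper states.
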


\begin{proof}
Suppose that $v$ is a $1$-vertex in $G$. We now color $G-\{v\}$ by minimality and then we can color $v$, since $v$ sees at most three vertices and we have $10$ colors.
\end{proof}

Similarly as in the previous section, a $2$-vertex is not reducible by itself, but it provides a nice counting argument to prove that ``smaller'' objects must be further away.

\begin{lemma}\label{exact:counting}
If a $4^-$-vertex $u$ is adjacent with a $3^-$-vertex $v$, then $d^{\#2}(u)\geq 10$. Moreover, if $v$ is a $2$-vertex, then every vertex in $N^{\#2}(u)$ is a $4$-vertex.
\end{lemma}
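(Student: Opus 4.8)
The plan is to argue by contradiction from minimality of $G$, handling the two assertions separately. For the first assertion, suppose $d^{\#2}(u)\le 9$. Rather than deleting the vertex $v$, I would delete the edge $uv$: put $H=G-\{uv\}$. Since $H$ has fewer edges than $G$ (and no more vertices), minimality gives an exact square $L$-coloring $\phi$ of $H$. Uncolor $u$ and $v$. As $u$ and $v$ are adjacent, neither lies in $N^{\#2}_G(u)$, so all of $N^{\#2}_G(u)$ is still colored by $\phi$; as $|N^{\#2}_G(u)|\le 9$ we may recolor $u$ avoiding the colors on $N^{\#2}_G(u)$. Then $N^{\#2}_G(v)$ is entirely colored as well ($u\notin N^{\#2}_G(v)$), and since $v$ is a $3^-$-vertex we have $d^{\#2}_G(v)\le 3\cdot 3=9$, so $v$ can also be recolored avoiding $N^{\#2}_G(v)$.

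To see that this yields a valid exact square $L$-coloring of $G$, observe that any length-$2$ path of $G$ absent from $H$ must use the edge $uv$, hence has $u$ or $v$ as an endpoint; therefore every pair of vertices at distance exactly $2$ in $G$ that is not at distance $2$ in $H$ contains $u$ or $v$, and such pairs are precisely the ones taken care of by the recoloring of $u$ and $v$ with respect to $N^{\#2}_G(\cdot)$. Every other distance-$2$ pair of $G$ is a distance-$2$ pair of $H$ whose two colors were never changed, so $\phi$ separates it. This contradicts the choice of $G$, proving $d^{\#2}(u)\ge 10$.

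For the ``moreover'' assertion, let $v$ be a $2$-vertex and suppose for contradiction that some $w\in N^{\#2}(u)$ is a $3^-$-vertex. Since $u$ has at most four neighbors, one of which is the $2$-vertex $v$, we get $d^{\#2}_G(u)\le 1+3\cdot 3=10$; also $d^{\#2}_G(w)\le 3\cdot 3=9$, and $d^{\#2}_G(v)\le 3+3=6$ because $v$ has exactly two neighbors. As above take $H=G-\{uv\}$, color it by minimality, and then uncolor $u$, $v$, and $w$ (note $w\neq u,v$ since $d_G(u,w)=2$). Recolor in the order $u$, then $w$, then $v$: when recoloring $u$ the only uncolored vertex of $N^{\#2}_G(u)$ is $w$, so $u$ sees at most $9$ colors; when recoloring $w$ the vertex $u\in N^{\#2}_G(w)$ is already colored and at most one further vertex of $N^{\#2}_G(w)$ (namely $v$, if present) is still uncolored, so $w$ sees at most $9$ colors; and $v$ sees at most $d^{\#2}_G(v)\le 6$ colors. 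Hence all three can be colored. Validity is checked as before: the new distance-$2$ pairs of $G$ all contain $u$ or $v$ and are handled by the recoloring of $u$ and $v$; every distance-$2$ pair $\{w,z\}$ with $z\notin\{u,v\}$ already exists in $H$ and is handled because $w$ was recolored avoiding $N^{\#2}_G(w)$; the pair $\{u,w\}$ is separated because $w$ is colored after $u$ while avoiding $u$'s color; and all remaining distance-$2$ pairs keep their $\phi$-colors. This contradiction shows every vertex of $N^{\#2}(u)$ is a $4$-vertex.

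The step requiring care — and the reason for deleting the edge $uv$ rather than the vertex $v$ — is the extension. Deleting $v$ would allow (when $v$ is a $3$-vertex) two neighbors of $v$ to become a distance-$2$ pair in $G$ not present as such in $G-\{v\}$, and $\phi$ might color them equally; this clash need not be repairable by recoloring $u$ alone, as the culprits could be $4$-vertices with exact $2$-distance degree up to $12$. Deleting the edge $uv$ confines all newly created distance-$2$ constraints to pairs incident with $u$ or $v$, exactly the vertices freed for recoloring. The only other tight point is that $|L(u)|\ge 1$ only barely, which dictates recoloring $u$ first in the second part and means we never rely on a second free color at $u$.
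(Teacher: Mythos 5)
Your proposal is correct and follows essentially the same route as the paper: delete the edge $uv$, color $G-\{uv\}$ by minimality, and recolor $u$ then $v$ (and, for the second assertion, also uncolor $w$ and recolor in the order $u$, $w$, $v$) using the same degree counts $d^{\#2}(u)\le 10$, $d^{\#2}(w)\le 9$, $d^{\#2}(v)\le 6$. Your explicit check that all newly created distance-$2$ pairs involve $u$ or $v$ is a point the paper leaves implicit, but the argument is the same.
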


\begin{proof}
Suppose to the contrary and let $u$ be a $4^-$-vertex with $d^{\#2}(u)< 10$ adjacent with a $3^-$-vertex $v$. Note that if $d(u)\le 3$, then the condition $d^{\#2}(u)< 10$ is always satisfied. Thus, we have $d^{\#2}(v)< 10$. Now we color $G-\{uv\}$ by minimality and uncolor the vertices $u$ and $v$. Next, observe that since $d^{\#2}(u)< 10$ and $d^{\#2}(v)< 10$, we have that $|L(u)|\ge 1$ and $|L(v)|\ge 1$. We can thus finish the coloring by first coloring $u$ and then $v$.
Assume now that $v$ is a $2$-vertex. From above we have that $u$ must be a $4$-vertex adjacent with three other $4$-vertices. Thus, we have that $d^{\#2}(u) = 10$. Suppose that there exists a $3^-$-vertex $w\in N^{\#2}(u)$. In that case we color $G-{uv}$ by minimality and uncolor the vertices $u$, $v$, and $w$. Since $w\in N^{\#2}(u)$, $d(v) = 2$, and $d(w)\le 3$, we have that the remaining list of colors for these vertices have size: $|L(u)|\ge 1$, $|L(v)|\ge 4$, and $|L(w)|\ge 2$. We can then finish by coloring $u$, $w$, and $v$ in this order.
\end{proof}

As a consequence of \Cref{exact:counting}, we obtain the configurations in \Cref{exact:config1} directly.

\begin{lemma}\label{exact:config1}
Graph $G$ cannot contain the following configurations:
\begin{itemize}
\item[(i)] Two adjacent $3^-$-vertices.
\item[(ii)] A $4$-vertex adjacent with a $2$-vertex and a $3^-$-vertex.
\item[(iii)] A $4$-vertex adjacent with three $3$-vertices. 
\item[(iv)] A $3^-$-vertex incident to a $3$-cycle.
\item[(v)] A $3^-$-vertex at distance $1$ from a $3$-cycle.
\item[(vi)] A $2$-vertex incident to a $4$-cycle.
\item[(vii)] A $3$-vertex incident to two adjacent $4$-cycles.
\end{itemize}
\end{lemma}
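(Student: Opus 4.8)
The plan is to obtain all seven configurations as consequences of \Cref{exact:counting}: in each case I would exhibit a $4^-$-vertex $w$ that is adjacent to a $3^-$-vertex and yet satisfies $d^{\#2}(w)\le 9$, contradicting the conclusion $d^{\#2}(w)\ge 10$ of that lemma. The only estimate needed is $d^{\#2}(w)\le\bigl|\bigcup_{y\in N(w)}\bigl(N(y)\setminus(\{w\}\cup N(w))\bigr)\bigr|\le\sum_{y\in N(w)}(d(y)-1)\le 12$, which drops strictly below the naive sum whenever two neighbours of $w$ have a common neighbour, or one neighbour of $w$ is adjacent to another neighbour of $w$, or some neighbour of $w$ has degree at most $3$. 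The triangles and $4$-cycles in (iv)--(vii) are exactly what produce such slack.

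Cases (i)--(iii) I would dispose of at once. For (i), two adjacent $3^-$-vertices $u,v$ give $d^{\#2}(u)\le 3\cdot 3=9$. For (ii) and (iii), taking $w$ to be the $4$-vertex, the sum over its four neighbours of $d(y)-1$ is $1+2+3+3=9$ and $2+2+2+3=9$ respectively, each below $10$.

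For the cycle configurations I would run a dichotomy on the distinguished $3^-$-vertex $u$ (in (v), the one adjacent to the triangle). If $u$ has a $3^-$ neighbour then $u$ and that neighbour are adjacent $3^-$-vertices, handled as in (i); so I may assume every neighbour of $u$ is a $4$-vertex. If $d(u)=2$, which can happen in (iv) and (vi), then the two neighbours of $u$ already share a common neighbour (the third triangle vertex, respectively the vertex opposite $u$ on the $4$-cycle), giving $d^{\#2}(u)\le 5<10$ when some neighbour of $u$ has degree $\le 3$, and otherwise I apply \Cref{exact:counting} to one of the two $4$-neighbours of $u$ and use that shared vertex to bring its count down to $9$. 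If $d(u)=3$, occurring in (iv), (v), (vii), I apply \Cref{exact:counting} to a neighbour $a$ of $u$ that is a vertex of the triangle or of one of the $4$-cycles; the cycle through $u$ forces two vertices to be simultaneously neighbours of $a$ and of $u$ (or, in (v), two triangle vertices that are mutually adjacent and both adjacent to $a$), and discounting this overcount gives $d^{\#2}(a)\le 9$, the desired contradiction.

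The step I expect to require genuine care is (vii): one must check that a vertex $a$ of $u$'s neighbourhood really lies on both $4$-cycles, handle the degenerate possibilities in which the two $4$-cycles share more than one edge or their vertices opposite the shared edge coincide (which only improve the count), and make sure no triangle arising incidentally in the argument is unaccounted for --- but such a triangle would itself be forbidden by (iv). Everything else is routine degree arithmetic bounded by $\Delta=4$, which is precisely why the lemma follows ``directly'' from \Cref{exact:counting}.
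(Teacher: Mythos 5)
Your proposal is correct and is essentially the paper's argument: the paper gives no written proof at all, simply asserting that all seven configurations follow ``directly'' from \Cref{exact:counting}, and your case-by-case degree counting (with the slack coming from low-degree neighbours, adjacencies among neighbours, and double-counted distance-$2$ vertices forced by the triangles and $4$-cycles) is exactly the routine verification that assertion presupposes. The only quibble is cosmetic: in (iv) with $d(u)=2$ the two neighbours of $u$ are adjacent rather than sharing a ``third triangle vertex'', but the resulting overcount is the same and your bound of $9<10$ still holds.
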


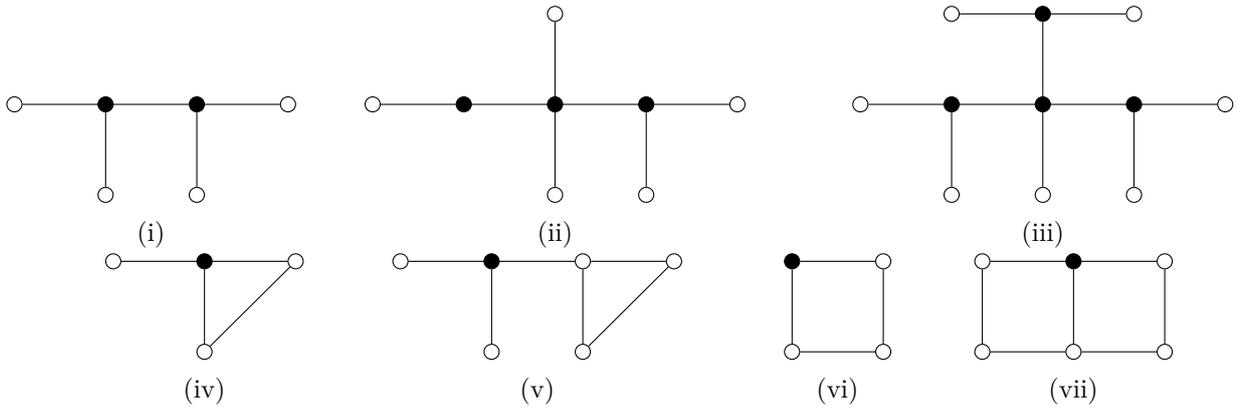
\begin{figure}[H]
\centering
\begin{subfigure}[b]{0.24\textwidth}
\centering
\begin{tikzpicture}[scale=0.6]{thick}
\begin{scope}[every node/.style={circle,draw,minimum size=1pt,inner sep=2}]
    \node (1) at (0,0) {};
    \node[fill] (2) at (2,0) {};
    \node (20) at (2,-2) {};
    \node[fill] (3) at (4,0) {};
    \node (30) at (4,-2) {};
    \node (4) at (6,0) {};
\end{scope}

\begin{scope}[every edge/.style={draw=black}]
    \path (1) edge (4);
    \path (2) edge (20);
    \path (3) edge (30);
\end{scope}
\end{tikzpicture}
\caption{}
\end{subfigure}
\begin{subfigure}[b]{0.37\textwidth}
\centering
\begin{tikzpicture}[scale=0.6]{thick}
\begin{scope}[every node/.style={circle,draw,minimum size=1pt,inner sep=2}]
    \node (0) at (-2,0) {};
    \node[fill] (1) at (0,0) {};
    \node[fill] (2) at (2,0) {};
    \node (20) at (2,-2) {};
    \node (21) at (2,2) {};
    \node[fill] (3) at (4,0) {};
    \node (4) at (6,0) {};
    \node (30) at (4,-2) {};
\end{scope}

\begin{scope}[every edge/.style={draw=black}]
    \path (0) edge (4);
    \path (21) edge (20);
    \path (3) edge (30);
\end{scope}
\end{tikzpicture}
\caption{}
\end{subfigure}
\begin{subfigure}[b]{0.37\textwidth}
\centering
\begin{tikzpicture}[scale=0.6]{thick}
\begin{scope}[every node/.style={circle,draw,minimum size=1pt,inner sep=2}]
    \node (0) at (-2,0) {};
    \node[fill] (1) at (0,0) {};
    \node[fill] (2) at (2,0) {};
    \node (20) at (2,-2) {};
    \node[fill] (21) at (2,2) {};
    \node[fill] (3) at (4,0) {};
    \node (4) at (6,0) {};
    \node (30) at (4,-2) {};
    \node (10) at (0,-2) {};
    \node (11) at (0,2) {};
    \node (31) at (4,2) {};
\end{scope}

\begin{scope}[every edge/.style={draw=black}]
    \path (0) edge (4);
    \path (21) edge (20);
    \path (3) edge (30);
    \path (1) edge (10);
    \path (11) edge (31);
\end{scope}
\end{tikzpicture}
\caption{}
\end{subfigure}

\begin{subfigure}[b]{0.2\textwidth}
\centering
\begin{tikzpicture}[scale=0.6]{thick}
\begin{scope}[every node/.style={circle,draw,minimum size=1pt,inner sep=2}]
    \node (1) at (0,0) {};
    \node[fill] (2) at (2,0) {};
    \node (20) at (2,-2) {};
    \node (3) at (4,0) {};
\end{scope}

\begin{scope}[every edge/.style={draw=black}]
    \path (1) edge (3);
    \path (2) edge (20);
    \path (20) edge (3);
\end{scope}
\end{tikzpicture}
\caption{}
\end{subfigure}
\begin{subfigure}[b]{0.3\textwidth}
\centering
\begin{tikzpicture}[scale=0.6]{thick}
\begin{scope}[every node/.style={circle,draw,minimum size=1pt,inner sep=2}]
    \node (1) at (0,0) {};
    \node[fill] (2) at (2,0) {};
    \node (20) at (2,-2) {};
    \node (3) at (4,0) {};
    \node (30) at (4,-2) {};
    \node (4) at (6,0) {};
\end{scope}

\begin{scope}[every edge/.style={draw=black}]
    \path (1) edge (3);
    \path (3) edge (4);
    \path (2) edge (20);
    \path (3) edge (30);
    \path (30) edge (4);
\end{scope}
\end{tikzpicture}
\caption{}
\end{subfigure}
\begin{subfigure}[b]{0.15\textwidth}
\centering
\begin{tikzpicture}[scale=0.6]{thick}
\begin{scope}[every node/.style={circle,draw,minimum size=1pt,inner sep=2}]
    \node[fill] (2) at (2,0) {};
    \node (20) at (2,-2) {};
    \node (3) at (4,0) {};
    \node (30) at (4,-2) {};
\end{scope}

\begin{scope}[every edge/.style={draw=black}]
    \path (2) edge (3);
    \path (2) edge (20);
    \path (3) edge (30);
    \path (20) edge (30);
\end{scope}
\end{tikzpicture}
\caption{}
\end{subfigure}
\begin{subfigure}[b]{0.2\textwidth}
\centering
\begin{tikzpicture}[scale=0.6]{thick}
\begin{scope}[every node/.style={circle,draw,minimum size=1pt,inner sep=2}]
    \node (1) at (0,0) {};
    \node (10) at (0,-2) {};
    \node[fill] (2) at (2,0) {};
    \node (20) at (2,-2) {};
    \node (3) at (4,0) {};
    \node (30) at (4,-2) {};
\end{scope}

\begin{scope}[every edge/.style={draw=black}]
    \path (1) edge (3);
    \path (1) edge (10);
    \path (2) edge (20);
    \path (3) edge (30);
    \path (10) edge (20);
    \path (20) edge (30);
\end{scope}
\end{tikzpicture}
\caption{}
\end{subfigure}
\caption{Reducible configurations in \Cref{exact:config1}.}
\end{figure}

We finish the study of $G$ structural properties by looking at small cycles.

\begin{figure}[!htb]
\centering
\begin{minipage}[b]{0.49\textwidth}
\centering
\begin{tikzpicture}[scale=0.6]{thick}
\begin{scope}[every node/.style={circle,draw,minimum size=1pt,inner sep=2}]
    \node[label={above:$u$},label={below right:1}] (u1) at (2,0) {};
    \node[label={below:$v$},label={above right:1}] (u2) at (2,-2) {};
    \node[label={below:$x$}] (u3) at (4,-2) {};
    \node[label={above:$y$}] (u4) at (4,0) {};
    
    \node[label={left:$w$}] (v1) at (0,-1) {};
\end{scope}

\begin{scope}[every edge/.style={draw=black}]
    \path (u1) edge (u4);
    \path (u1) edge (u2);
    \path (u2) edge (u3);
    \path (u3) edge (u4);
    \path (u1) edge (v1);
    \path (v1) edge (u2);
\end{scope}
\end{tikzpicture}
\quad
\begin{tikzpicture}[scale=0.6]{thick}
\begin{scope}[every node/.style={circle,draw,minimum size=1pt,inner sep=2}]
    \node[label={above:$u$},label={right:1}] (u1) at (2,0) {};
    \node[label={below:$v$},label={right:1}] (u2) at (2,-2) {};
    \node[label={above:$x$}] (u4) at (4,-1) {};
    
    \node[label={left:$w$}] (v1) at (0,-1) {};
\end{scope}

\begin{scope}[every edge/.style={draw=black}]
    \path (u1) edge (u4);
    \path (u1) edge (u2);
    \path (u2) edge (u4);
    \path (u1) edge (v1);
    \path (v1) edge (u2);
\end{scope}
\end{tikzpicture}
\caption{Reducible configurations in \Cref{exact:config2}.}
\end{minipage}
\begin{minipage}[b]{0.49\textwidth}
\centering
\begin{tikzpicture}[scale=0.6]{thick}
\begin{scope}[every node/.style={circle,draw,minimum size=1pt,inner sep=2}]
    \node[label={above: $w$},label={left: $2$}] (u) at (2,0) {};
    \node[label={below:$v$},label={left: $2$}] (v) at (2,-2) {};
    \node[fill,label={below:$u$},label={above: $2$}] (x) at (4,-1) {};
    \node[label={above: $y$},label={right: $2$}] (y) at (6,0) {};
    \node[label={below: $x$},label={right: $2$}] (z) at (6,-2) {};
\end{scope}

\begin{scope}[every edge/.style={draw=black}]
    \path (u) edge (v);
    \path (u) edge (x);
    \path (x) edge (v);
    \path (x) edge (y);
    \path (x) edge (z);
    \path (z) edge (y);
\end{scope}
\end{tikzpicture}
\caption{Reducible configuration in \Cref{exact:config3}.}
\end{minipage}

\begin{minipage}[b]{0.5\textwidth}
\centering
\begin{tikzpicture}[scale=1.5,rotate=90]{thick}
\begin{scope}[every node/.style={circle,draw,minimum size=1pt,inner sep=2}]
    \node[label={left:$u_3$},label={right:$2$}] (u1) at (1.95,1.31) {};
    \node[label={below left:$u_4$},label={above right:$2$}] (u2) at (1,1) {};
    \node[label={below right:$u_5$},label={above left:$2$}] (u3) at (1,0) {};
    \node[fill,label={right:$u_1$},label={left:$6$}] (u4) at (1.95,-0.31) {};
    \node[label={above:$u_2$},label={below:$2$}] (u5) at (2.54,0.5) {};
    
    \node (v1) at (1.22,1.93) {};
    
\end{scope}

\begin{scope}[every edge/.style={draw=black}]
    \path (u1) edge (u2);
    \path (u2) edge (u3);
    \path (u3) edge (u4);
    \path (u4) edge (u5);
    \path (u5) edge (u1);
    \path (u1) edge (v1);
    \path (u2) edge (v1);
\end{scope}
\end{tikzpicture}
\caption{Reducible configuration in \Cref{exact:config4}.}
\end{minipage}
\end{figure}

\begin{lemma}\label{exact:config2}
A $3$-cycle in $G$ is not adjacent with a $4^-$-cycle.
\end{lemma}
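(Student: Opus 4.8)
The plan is to suppose, for contradiction, that $G$ contains a $3$-cycle sharing an edge $uv$ with a $4^-$-cycle. Up to relabeling there are exactly two shapes to handle: (a) two triangles $uvw$ and $uvx$ glued along $uv$, and (b) a triangle $uvw$ glued along $uv$ to a chordless $4$-cycle $uvxy$ (so $v\sim x\sim y\sim u$); a $4$-cycle on $uv$ carrying a chord reduces to shape (a). In either shape I would first use the minimality of $G$ to properly $L$-color $G-\{uv\}$ with the given $10$-lists, then uncolor $u$ and $v$. The colors left on $V(G)\setminus\{u,v\}$ still form a valid exact square list-coloring with respect to the distances of $G$: if $a,b\notin\{u,v\}$ are non-adjacent in $G$, then every common neighbor of $a$ and $b$ survives in $G-\{uv\}$ (the deleted edge is incident to neither $a$ nor $b$), so $d_{G-\{uv\}}(a,b)=2$ whenever $d_G(a,b)=2$, and the inherited coloring honors that constraint.

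It then remains to recolor $u$ and $v$. Since $uv\in E(G)$, in an exact square coloring $u$ imposes no constraint on $v$ and conversely, so it is enough to show $d^{\#2}_G(u)\le 9$ and $d^{\#2}_G(v)\le 9$; then $L(u)$ and $L(v)$, each of size $10$, still contain a usable color. The count is short: from $N^{\#2}_G(u)\subseteq\bigl(\bigcup_{z\in N(u)}N(z)\bigr)\setminus N[u]$ and $d(z)\le 4$ for $z\in N(u)$, the union has at most $1+\sum_{z\in N(u)}(d(z)-1)\le 13$ vertices; in shape (b), $x$ lies in both $N(v)$ and $N(y)$ with $v,y\in N(u)$, which reduces this to $12$, and since $u,v,w$ all lie in $\bigl(\bigcup_{z\in N(u)}N(z)\bigr)\cap N[u]$ we get $d^{\#2}_G(u)\le 12-3=9$; in shape (a) one likewise removes a double count of $v$ and then excludes $u,v,w,x$, obtaining $d^{\#2}_G(u)\le 8$. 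By the symmetry of each picture in $u$ and $v$, the same bounds hold for $d^{\#2}_G(v)$.

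Finally I would assemble the reduction: color $u$ from $L(u)$ avoiding its (at most nine, already colored) exact-square neighbors, then color $v$ from $L(v)$, whose exact-square neighbors are all colored because the only vertex besides $v$ left uncolored is $u$, and $u\notin N^{\#2}_G(v)$. This produces an exact square $L$-list-coloring of $G$, contradicting the choice of $G$, which proves the lemma. There is no real obstacle beyond routine bookkeeping: the two points to state with care are that the partial coloring inherited from $G-\{uv\}$ is genuinely valid for $G$, and the degree count giving $d^{\#2}_G(u),d^{\#2}_G(v)\le 9$; in particular, unlike the $5$-face lemmas, no appeal to \Cref{cor:Hall} is needed, since a two-step greedy extension suffices. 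I would also note in passing that \Cref{exact:config1}(iv) and (v) already force $u$, $v$, $w$ and all their neighbors to be $4$-vertices, so we are automatically in the worst case for the count, but this is not otherwise used.
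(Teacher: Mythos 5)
Your proposal is correct and follows essentially the same route as the paper: delete the shared edge $uv$, color $G-\{uv\}$ by minimality, uncolor $u$ and $v$, and recolor them greedily using the fact that adjacency removes the mutual constraint and that $d^{\#2}_G(u),d^{\#2}_G(v)\leq 9$. The paper states this very tersely ("$|L(u)|\geq 1$ and $|L(v)|\geq 1$"), whereas you make explicit the degree count and the validity of the inherited partial coloring, but the argument is the same.
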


\begin{proof}
Suppose to the contrary and let $C = uvw$ be a $3$-cycle in $G$. Let $C' = uvxy$ be a $4^-$-cycle in $G$. Note that possibly $x = y$, in which case $C'$ is a $3$-cycle. We now color $G-{uv}$ by minimality and uncolor the vertices $u$ and $v$. Observe that whether $x = y$, or $x\not = y$, we have that $|L(u)|\ge 1$ and $|L(v)|\ge 1$. Therefore, we can finish the coloring by first coloring $u$ and then $v$.
\end{proof}

\begin{lemma}\label{exact:config3}
A $3$-cycle in $G$ is not incident to another $3$-cycle.
\end{lemma}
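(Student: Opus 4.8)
The plan is a routine reduction. Suppose for contradiction that $G$ contains two $3$-cycles $uvw$ and $uxy$ that meet only in the vertex $u$ (the case where they share an edge is handled by \Cref{exact:config2}); then $u$ has the four distinct neighbors $v,w,x,y$, so $d(u)=4$, and since each of $u,v,w,x,y$ is incident to a $3$-cycle, \Cref{exact:config1}(iv) forces all five of them to be $4$-vertices. Put $S=\{u,v,w,x,y\}$. I will use repeatedly that, by \Cref{exact:config2}, none of the following can occur: an edge from a vertex of one triangle to a vertex of the other (this would create a $3$-cycle sharing an edge with $uvw$ or $uxy$), nor a common neighbor of two vertices among $u,v,w$ other than the triangle vertex joining them (this would create a $3$- or $4$-cycle sharing an edge with $uvw$), and likewise for $u,x,y$.

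First I would color $H:=G-uv$. Since $w,x,y$ still have degree $4$, $H$ is planar with $\Delta(H)=4$ and has fewer edges than $G$, so by minimality $H$ admits an exact square list-coloring $\phi$ from lists of size $10$. For any two vertices $a,b\in V(G)\setminus S$, no path of length at most $2$ in $G$ between them uses the edge $uv$ (otherwise $a$ or $b$ would equal $u$ or $v$), hence $d_G(a,b)=2$ if and only if $d_H(a,b)=2$; consequently the restriction of $\phi$ to $V(G)\setminus S$ is already a valid partial exact square coloring of $G$. It remains to uncolor the five vertices of $S$ and re-extend $\phi$ over $S$ with respect to the distances of $G$.

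The extension needs two ingredients. First, each $z\in S$ has at least two available colors: among the already-colored vertices, $z$ only has to avoid the colors on $N^{\#2}_G(z)\setminus S$, and one checks $|N^{\#2}_G(z)\setminus S|\le 8$ for each of the five vertices, using that all vertices within distance $2$ of $z$ that are relevant here are $4$-vertices (\Cref{exact:config1}(iv) and (v)), that the second neighbors of $v$ and $w$ seen through $u$ are exactly $x,y\in S$ (and symmetrically those of $x,y$ through $u$ are $v,w\in S$), and that every remaining way of enlarging $N^{\#2}_G(z)\setminus S$ would produce one of the forbidden configurations listed above. Hence $|L(z)|\ge 10-8=2$ for all $z\in S$. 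Second, the coloring must be consistent within $S$: the only pairs of $S$ at distance exactly $2$ in $G$ are $\{v,x\},\{v,y\},\{w,x\},\{w,y\}$ (each through $u$, since the ``diagonal'' edges are forbidden), while $u$ is adjacent to $v,w,x,y$; so the conflicts inside $S$ form a $4$-cycle on $\{v,w,x,y\}$ together with the isolated vertex $u$. Since $u$ simply takes one of its at least two colors and even cycles are $2$-choosable, the $4$-cycle can be colored from the lists of size $2$, extending $\phi$ to an exact square list-coloring of $G$ with $10$ colors, contradicting that $G$ is a counterexample.

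The only part that requires care is the bound $|N^{\#2}_G(z)\setminus S|\le 8$ for each of the five vertices $z\in S$: this is a mildly tedious case analysis verifying that each potential enlargement of the second neighborhood of $z$ (a neighbor of $v$ adjacent to $x$, two neighbors of $v$ adjacent to one another, a neighbor of $w$ adjacent to $x$, and so on) creates a $3$-cycle adjacent to a $3$- or $4$-cycle and hence contradicts \Cref{exact:config2}, or directly violates \Cref{exact:config1}. Everything else---the distance-preservation observation after deleting $uv$ and the $2$-choosability of the $4$-cycle---is immediate.
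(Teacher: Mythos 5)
Your proposal is correct and follows essentially the same route as the paper: the paper simply deletes the vertex $u$ (rather than the edge $uv$), observes that each of $u,v,w,x,y$ keeps at least two available colors, notes that the internal conflicts form the $4$-cycle $v\!-\!x\!-\!w\!-\!y$ with $u$ isolated, and finishes by the $2$-choosability of even cycles before coloring $u$ last. The only cosmetic difference is your edge-deletion plus re-uncoloring of all five vertices, and your ``mildly tedious'' count $|N^{\#2}_G(z)\setminus S|\le 8$ is in fact immediate from the degree bound (through $u$ one only reaches $x,y\in S$, leaving at most $2+3+3=8$ vertices outside $S$), so no extra case analysis is really needed.
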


\begin{proof}
Suppose to the contrary and let $C = uvw$ be a $3$-cycle in $G$. Let $C' = uxy$ be another $3$-cycle in $G$. Note that possibly the vertices $v$, $w$, $x$, and $y$ are all distinct, otherwise we are done by \Cref{exact:config2}. We now color $G-{u}$ by minimality and uncolor the vertices $v$, $w$, $x$, and $y$. Observe that the remaining list of colors for vertices $u$, $v$, $w$, $x$, and $y$ have size: $|L(u)|\ge 2$, $|L(v)|\ge 2$, $|L(w)|\ge 2$, $|L(x)|\ge 2$, and $|L(y)|\ge 2$. Since each of the vertices $v$, $w$, $x$, and $y$ sees at most two others, we can color these vertices by the $2$-choosability of even cycles and finally coloring $u$ with one of the two remaining colors.
\end{proof}

\begin{lemma}\label{exact:config4}
A $5$-cycle in $G$ incident to a $2$-vertex is not adjacent to a $3$-cycle.
\end{lemma}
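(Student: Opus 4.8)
The plan is to first pin down the configuration and then reduce it by the usual delete--recolour argument. Let the $5$-cycle be $C=u_1u_2u_3u_4u_5$ and let $u_3u_4$ be the edge it shares with a $3$-cycle $u_3u_4v_1$ (here ``adjacent to a $3$-cycle'' means sharing an edge, as in \Cref{exact:config2}). The $2$-vertex of $C$ cannot be an endpoint of the shared edge, since it would then be incident to a $3$-cycle, contradicting \Cref{exact:config1}(iv); nor can it be a cycle-neighbour of such an endpoint, since it would then be at distance $1$ from the $3$-cycle, contradicting \Cref{exact:config1}(v). Hence it must be the unique vertex of $C$ at distance $2$ along $C$ from both $u_3$ and $u_4$, so after relabelling we may assume $d(u_1)=2$; this is exactly the configuration in the figure. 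Combining \Cref{exact:config1,exact:config2,exact:config3} one checks that $u_1,\dots,u_5,v_1$ are pairwise distinct, that $d(u_3)=d(u_4)=d(v_1)=4$ by \Cref{exact:config1}(iv), and that (for example) $u_2\not\sim u_5$, $u_2\not\sim u_4$, $u_3\not\sim u_5$, $u_2\not\sim v_1$, and the fourth neighbour of $u_3$ is not adjacent to $u_2$ — each such adjacency would create either a $3$-cycle incident to $u_1$, or two adjacent $3$-cycles, or two $3$-cycles sharing a vertex.

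Next I would reduce. Since $u_1$ is a $2$-vertex with neighbours $u_2$ and $u_5$, colour $G-u_1$ by minimality (it is planar with $\Delta\le 4$ and has strictly fewer vertices plus edges) and then uncolour $u_2,u_3,u_4,u_5$. Any distance-exactly-$2$ path of $G$ between two still-coloured vertices that passes through $u_1$ would have both endpoints in $N(u_1)=\{u_2,u_5\}$, which are uncoloured; hence every distance-exactly-$2$ relation of $G$ among the still-coloured vertices already holds in $G-u_1$, so the remaining colouring is a valid partial exact square colouring that only needs to be extended to $\{u_1,\dots,u_5\}$. A routine count of how many colours each of these five vertices can see among the already-coloured vertices — using that $u_3$ and $u_4$ are uncoloured and that the adjacencies ruled out above prevent over-counting — gives $|L(u_1)|\ge 6$ and $|L(u_i)|\ge 2$ for $i\in\{2,3,4,5\}$. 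Moreover, among $u_1,\dots,u_5$ the pairs at distance exactly $2$ in $G$ are precisely $u_1u_3$, $u_3u_5$, $u_5u_2$, $u_2u_4$, $u_4u_1$ (the consecutive pairs $u_iu_{i+1}$ are at distance $1$, hence impose no constraint in an exact square colouring), so these five vertices must be properly coloured on an auxiliary $5$-cycle.

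To finish, remove $u_1$ from this auxiliary $5$-cycle, leaving the path $u_3-u_5-u_2-u_4$; a path is $2$-choosable, so it can be properly coloured from the lists of size $\ge 2$, after which $u_1$ is forbidden at most the two colours of $u_3$ and $u_4$ and still has $|L(u_1)|\ge 6$, so it too can be coloured. This extends the colouring to $G$, contradicting the minimality of $G$ and proving the lemma.

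The main obstacle is twofold. First, one must be scrupulous in showing the configuration is forced: every placement of the $2$-vertex has to be eliminated, and the disjointness of $u_1,\dots,u_5,v_1$ together with all the ``no short cycle'' facts needed for the list counts rely on carefully combining \Cref{exact:config1,exact:config2,exact:config3}. Second, the auxiliary ``sees'' graph on $u_1,\dots,u_5$ is an \emph{odd} cycle, so a naive greedy colouring can fail; the resolution is that the $2$-vertex $u_1$ inherits a much larger list, which lets us colour the remaining path first and keep $u_1$ for last. Getting the bound $|L(u_1)|\ge 6$ exactly right — in particular confirming that the uncoloured vertices $u_3$ and $u_4$ really are at distance exactly $2$ from $u_1$, so that they do not count against its list — is the most delicate piece of bookkeeping.
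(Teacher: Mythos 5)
Your proof is correct and follows essentially the same strategy as the paper: uncolor the five cycle vertices, observe that adjacent pairs impose no exact-square constraint so the constraint graph on $u_2,u_3,u_4,u_5$ is the path $u_4\!-\!u_2\!-\!u_5\!-\!u_3$ (handled by $2$-choosability), and color the $2$-vertex $u_1$ last using its list of size at least $6$. The only difference is cosmetic: you apply minimality to $G-u_1$ (deleting the $2$-vertex), whereas the paper deletes the edge $u_3u_4$ shared with the triangle; both yield the same list sizes and the same finishing argument.
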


\begin{proof}
Suppose to the contrary and let $C = u_1u_2u_3u_4u_5$ be a $5$-cycle with $d(u_1) = 2$. Due to \Cref{exact:config1}(iv, v) we have that the adjacent $3$-cycle must be incident with the edge $u_3u_4$. We now color $G-{u_3u_4}$ by minimality and uncolor all the vertices of $C$. Note that the remaining list of colors for these vertices have size: $|L(u_1)|\ge 6$, $|L(u_2)|\ge 2$, $|L(u_3)|\ge 2$, $|L(u_4)|\ge 2$, and $|L(u_5)|\ge 2$. Observe that $u_2$ does not see $u_3$, $u_3$ does not see $u_4$ and $u_4$ does not see $u_5$. Thus, we can color the vertices $u_2$, $u_3$, $u_4$, and $u_5$ by the $2$-choosability of paths and finally coloring $u_1$ with one of the four remaining colors.
\end{proof}

\subsection{Discharging procedure}\label{exact:discharging}

To get a contradiction with \Cref{equation} we apply the following rules in the discharging procedure:

\begin{itemize}
\item[\ru0] Every $5^+$-face $f$ gives $1$ to each incident $2$-vertex.
\item[\ru1] Every $5^+$-face $f$ gives $\frac{1}{3}$ to each incident $3$-vertex that is not incident to a $4$-face.
\item[\ru2] Every $5^+$-face $f$ gives $\frac{1}{2}$ to each incident $3$-vertex that is incident to a $4$-face.
\item[\ru3] Every $5^+$-face $f$ gives $\frac{1}{3}$ to each adjacent $3$-face.
\end{itemize}

\begin{figure}[H]
\centering
\begin{minipage}[b]{0.24\textwidth}
\centering
\begin{tikzpicture}[scale=0.5]{thick}
\begin{scope}[every node/.style={circle,draw,minimum size=1pt,inner sep=2}]
    \node (1) at (0,0) {};
    \node[fill,label={below:$u$}] (2) at (2,0) {};
    \node (3) at (4,0) {};
    \node[draw=none] (f) at (2,2) {$f$};
\end{scope}

\begin{scope}[every edge/.style={draw=black}]
    \path (1) edge (3);
    \path[->] (f) edge node[right] {1} (2);
\end{scope}
\end{tikzpicture}
\caption{\ru0.}
\end{minipage}
\begin{minipage}[b]{0.24\textwidth}
\centering
\begin{tikzpicture}[scale=0.6]{thick}
\begin{scope}[every node/.style={circle,draw,minimum size=1pt,inner sep=2}]
    \node (1) at (0,0) {};
    \node[fill,label={below left:$u$}] (2) at (2,0) {};
    \node (20) at (2,-2) {};
    \node (3) at (4,0) {};
    \node[draw=none] (f) at (2,2) {$f$};
\end{scope}

\begin{scope}[every edge/.style={draw=black}]
    \path (1) edge (3);
    \path (2) edge (20);
    \path[->] (f) edge node[right] {$\frac13$} (2);
\end{scope}
\end{tikzpicture}
\caption{\ru1.}
\end{minipage}
\begin{minipage}[b]{0.24\textwidth}
\centering
\begin{tikzpicture}[scale=0.6]{thick}
\begin{scope}[every node/.style={circle,draw,minimum size=1pt,inner sep=2}]
    \node (1) at (0,0) {};
    \node[fill,label={below left:$u$}] (2) at (2,0) {};
    \node (20) at (2,-2) {};
    \node (3) at (4,0) {};
    \node (30) at (4,-2) {};
    \node[draw=none] (f) at (2,2) {$f$};
\end{scope}

\begin{scope}[every edge/.style={draw=black}]
    \path (1) edge (3);
    \path (2) edge (20);
    \path (3) edge (30);
    \path (20) edge (30);
    \path[->] (f) edge node[right] {$\frac12$} (2);
\end{scope}
\end{tikzpicture}
\caption{\ru2.}
\end{minipage}
\begin{minipage}[b]{0.24\textwidth}
\centering
\begin{tikzpicture}[scale=0.5]{thick}
\begin{scope}[every node/.style={circle,draw,minimum size=1pt,inner sep=2}]
    \node (1) at (0,0) {};
    \node[draw=none] (2) at (2,1) {};
    \node (3) at (4,0) {};
    \node (4) at (2,3) {};
    
    \node[draw=none] (f0) at (-1,2) {$f$};
\end{scope}

\begin{scope}[every edge/.style={draw=black}]
    \path (1) edge (3);
    \path (3) edge (4);
    \path (1) edge (4);
    \path[->] (f0) edge node[above] {$\frac13$} (2);
\end{scope}
\end{tikzpicture}
\caption{\ru3.}
\end{minipage}
\end{figure}

Now we can proceed by proving \Cref{thm:exact} using the discharging procedure together with the structural properties of $G$ proven in \Cref{exact:structure} and the discharging rules stated above.

\begin{proof}[Proof of \Cref{thm:exact}]

Let $G$ be a minimal counterexample to the theorem and let $\mu(u)$ be the initial charge assignment for the vertices and faces of $G$ with the charge $\mu(u)=d(u)-4$ for each vertex $u\in V(G)$, and $\mu(f)=d(f)-4$ for each face $f\in F(G)$. By \Cref{equation}, we have that the total sum of the charges is negative.

Let $\mu^*$ be the assigned charges after the discharging procedure. In what follows, we will prove that: $$\forall x \in V(G)\cup F(G), \mu^*(x)\geq 0.$$

Let $u$ be a vertex in $V(G)$. Vertex $u$ has degree at least $2$ by \Cref{exact:minimumDegree}. Recall that $\Delta(G)=4$ and $\mu(u)=d(u)-4$.

\textbf{Case 1:} If $d(u)=4$, then $u$ does not give any charge. So,

$$\mu^*(u)=\mu(u)=d(u)-4=0.$$

\textbf{Case 2:} If $d(u)=3$, then $\mu(u)=d(u)-4=-1$ and due to \Cref{exact:config1}(iv, vii) we have the following two cases:
\begin{itemize}
\item If $u$ is not incident to any $4$-face, then it receives $\frac{1}{3}$ from each of the three incident $5^+$-faces by \ru1. So,
$$ \mu^*(u) = -1 + 3\cdot\frac13 = 0.$$
\item If $u$ is incident to a $4$-face, then it has exactly one incident $4$-face and two incident $5^+$-faces due to \Cref{exact:config1}(vii). Therefore, $u$ receives $\frac12$ from each incident $5^+$-face by \ru2. So,
$$ \mu^*(u) = -1 + 2\cdot\frac12 = 0.$$
\end{itemize}

\textbf{Case 3:} If $d(u)=2$, then $\mu(u)=d(u)-4=-2$ and due to \Cref{exact:config1}(iv, vi) $u$ is incident with two $5^+$-faces. Thus, $u$ receives $1$ from each incident face by \ru0. So,
$$ \mu^*(u) = -2 + 2\cdot 1 = 0.$$

Let $f$ be a face in $F(G)$. Recall that $\mu(f)=d(f)-4$. Let $i_0$, $i_1$, $i_2$, and $i_3$ be respectively the number of times $f$ gives charge by \ru0, \ru1, \ru2, and \ru3. We distinguish the following cases.\\

\textbf{Case 1:} $d(f)\geq 6$\\
We claim that $f$ never gives more than $\frac13 d(f)$ by \ru0-\ru3. Indeed, we argue that $f$ sends at most $\frac13$ per incident edge. Observe that we can also view \ru0, \ru1, and \ru2 as $f$ giving charge along incident edges of $f$ as shown in \Cref{fig:exact:discharging}. Consider now an edge $uv$ incident to $f$. If $f$ gives $\frac14$ by \ru0, then it cannot give any additional charge along $uv$ since a $2$-vertex is at distance at least 2 from any $3$-face by \Cref{exact:config1}(iv) and is also at distance at least $4$ from any other $3^-$-vertex by \Cref{exact:counting}. If $f$ gives either $\frac16$ (respectively $\frac14$) along $uv$ by \ru1 (respectively \ru2), then $f$ cannot give any additional charge along $uv$. Indeed, as argued above, $f$ cannot give charge along $uv$ by \ru0 and also $f$ cannot give additional charge along $uv$ by \ru1-\ru3 due to \Cref{exact:config1}(i, iv, v). Finally, if $f$ gives $\frac13$ along $uv$ by \ru3, then, due to \Cref{exact:config1}(iv, v) and \Cref{exact:config2}, $f$ cannot give any additional charge along $uv$. To conclude, we have
$$ \mu^*(u)\ge \mu(u)-\frac13 d(f) = d(f)-4-\frac13 d(f) = \frac23 d(f)-4\ge 0$$
since $d(f)\ge 6$.\\

\begin{figure}[H]
\centering
\begin{subfigure}[b]{0.33\textwidth}
\centering
\begin{tikzpicture}[scale=0.7]{thick}
\begin{scope}[every node/.style={circle,draw,minimum size=1pt,inner sep=2}]
    \node (0) at (-2,0) {};
    \node (1) at (0,0) {};
    \node[fill] (2) at (2,0) {};
    \node (3) at (4,0) {};
    \node (4) at (6,0) {};
    \node[draw=none] (f) at (2,1.5) {$f$};
\end{scope}

\begin{scope}[every edge/.style={draw=black}]
    \path (0) edge (1);
    \path (1) edge (3);
    \path (3) edge (4);
    \draw[->] (f) to[out=0,in=90] (3,0) node[above right] {$\frac14$} to[out=-90,in=-45] (2);
    \draw[->] (f) to[out=180,in=90] (1,0) node[above left] {$\frac14$} to[out=-90,in=225] (2);
    \draw[->] (f) to[out=0,in=90] (5,0) node[above right] {$\frac14$} to[out=-90,in=-45] (2);
    \draw[->] (f) to[out=180,in=90] (-1,0) node[above left] {$\frac14$} to[out=-90,in=225] (2);
\end{scope}
\end{tikzpicture}
\caption*{\ru0.}
\end{subfigure}
\begin{subfigure}[b]{0.32\textwidth}
\centering
\begin{tikzpicture}[scale=0.7]{thick}
\begin{scope}[every node/.style={circle,draw,minimum size=1pt,inner sep=2}]
    \node (1) at (0,0) {};
    \node[fill] (2) at (2,0) {};
    \node (20) at (2,-2) {};
    \node (3) at (4,0) {};
        \node[draw=none] (f) at (2,1.5) {$f$};
\end{scope}

\begin{scope}[every edge/.style={draw=black}]
    \path (1) edge (3);
    \path (2) edge (20);
    \draw[->] (f) to[out=0,in=90] (3,0) node[above right] {$\frac16$} to[out=-90,in=-45] (2);
    \draw[->] (f) to[out=180,in=90] (1,0) node[above left] {$\frac16$} to[out=-90,in=225] (2);
\end{scope}
\end{tikzpicture}
\caption*{\ru1.}
\end{subfigure}
\begin{subfigure}[b]{0.32\textwidth}
\centering
\begin{tikzpicture}[scale=0.7]{thick}
\begin{scope}[every node/.style={circle,draw,minimum size=1pt,inner sep=2}]
    \node (1) at (0,0) {};
    \node[fill] (2) at (2,0) {};
    \node (20) at (2,-2) {};
    \node (3) at (4,0) {};
    \node (30) at (4,-2) {};
    \node[draw=none] (f) at (2,1.5) {$f$};
\end{scope}

\begin{scope}[every edge/.style={draw=black}]
    \path (1) edge (3);
    \path (2) edge (20);
    \path (3) edge (30);
    \path (20) edge (30);
    \draw[->] (f) to[out=0,in=90] (3,0) node[above right] {$\frac14$} to[out=-90,in=-45] (2);
    \draw[->] (f) to[out=180,in=90] (1,0) node[above left] {$\frac14$} to[out=-90,in=225] (2);
\end{scope}
\end{tikzpicture}
\caption*{\ru2.}
\end{subfigure}
\caption{\label{fig:exact:discharging}}
\end{figure}

\textbf{Case 2:} $d(f)=5$\\
Recall that $\mu(f)=d(f)-4=1$. Observe that we have the following inequalities regarding the values $i_0$, $i_1$, $i_2$, and $i_3$:
\begin{itemize}
\item $i_0\leq 1$, due to \Cref{exact:config1}(i, ii).
\item $i_1 + i_2 + i_3\leq 2$. Indeed, by \Cref{exact:config1}(i) we have that $f$ is incident to at most two $3$-vertices. Moreover, by \Cref{exact:config3} we have that $f$ is adjacent with at most two $3$-faces. Next, if $f$ is incident to exactly one $3$-vertex, then by \Cref{exact:config1}(iv, v) we have that $f$ is adjacent with at most one $3$-face. Finally, if $f$ is incident to exactly two $3$-vertices, then again by \Cref{exact:config1}(iv, v) we have that $f$ is not adjacent to any $3$-faces.
\end{itemize}

Recall that $f$ gives $i_0 + \frac13 i_1 + \frac12 i_2 + \frac13 i_3$ by \ru0, \ru1, \ru2, and \ru3.\\

If $i_0 = 1$, then, by \Cref{exact:config1}(i, ii, iv, v) and \Cref{exact:config4}, we have that $i_1 + i_2 + i_3 = 0$. So,
$$ \mu^*(f)\geq 1 - 1=0.$$ 

If $i_0 = 0$, then since $i_1 + i_2 + i_3\leq 2$,
\begin{align*}
\mu^*(f)& \geq \mu(f)-\frac{1}{3}i_i-\frac{1}{2}i_2-\frac{1}{3}i_3\\
& \geq 1-\frac{1}{2}(i_1 + i_2 + i_3)\\
& \geq 1-2\cdot\frac{1}{2} = 0.
\end{align*}

\textbf{Case 3:} $d(f)=4$\\
Recall that $\mu(f)=d(f)-4=0$. Since $f$ does not give any charge, we have
$$ \mu^*(f)=\mu(f)=0.$$

\textbf{Case 4:} $d(f)=3$\\
Recall that $\mu(f)=d(f)-4=-1$. Due to \Cref{exact:config2} we have that $f$ receives $\frac13$ from each adjacent face by \ru3. So,
$$ \mu^*(f)=-1 + 3\cdot\frac13=0.$$

To conclude, we started with a charge assignment with a negative total sum, but after the discharging procedure, which preserved that sum, we end up with a non-negative one, which is a contradiction. In other words, there exists no counter-example to \Cref{thm:exact}.
\end{proof}

\section{Conclusion}\label{conclusion}

Almost all results in $2$-distance (and also injective) coloring of planar graphs (of high girth) are proved using the discharging method. Many of these proofs use planarity only to bound the number of edges with respect to the number of vertices in any induced subgraph, which can also be achieved by bounding the maximum average degree. The \emph{maximum average degree}, denoted by $\mad(G)$, is taken as the maximum over all subgraphs $H$ of $G$ of average degree of $H$. Note that for planar graphs of girth $g$ we have the inequality $\mad(G) < \frac{2g}{g-2}$. This allows one to easily translate the condition on maximum average degree to the condition on girth in the case of planar graphs. Thus, we can ask if our results can be extended to non-planar graphs $G$ which have maximum average degree $\mad(G)<\frac{2\cdot 4}{4-2}=4$.

To answer this question we need to look at finite projective planes. For a prime power $q$, a projective plane $PG(2,q)$, of order $q$, consists of $q^2 + q + 1$ points and $q^2 + q + 1$ lines such that any two points belong to exactly one line and any two lines intersect in exactly one point. Moreover, each point belongs to exactly $q+1$ lines and each line contains exactly $q+1$ points. Let $IG(q)$ be the incidence graph of a projective plane $PG(2,q)$ (i.e., a bipartite graph whose vertices are the points and lines of $PG(2,q)$ and an edge $uv\in E(IG(q))$ if and only if a line $u$ contains a point $v$). In~\cite{HahnKraSirSott02}, Hahn \textit{et al.} proved the following result.

\begin{theorem}[Hahn \textit{et al.}~\cite{HahnKraSirSott02}]\label{HahnIG}
Let $G$ be a connected graph of maximum degree $\Delta\ge 3$. Then, $\chi^{i}(G) = \Delta^2 - \Delta + 1$ if and only if there exists a projective plane of order $q = \Delta - 1$ and $G$ is isomorphic to $IG(q)$.
\end{theorem}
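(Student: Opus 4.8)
The plan is to prove both implications by passing to the auxiliary \emph{common-neighbour} graph $H$ on vertex set $V(G)$ in which $uv\in E(H)$ exactly when $u$ and $v$ have a common neighbour in $G$; by definition $\chi^i(G)=\chi(H)$. First I would record the universal bound: for any $v$, at most $\sum_{w\in N(v)}(d(w)-1)\le d(v)(\Delta-1)\le \Delta^2-\Delta$ vertices other than $v$ share a neighbour with $v$, so $\Delta(H)\le \Delta^2-\Delta$ and hence $\chi^i(G)\le \Delta^2-\Delta+1$ always.

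For the ``if'' direction, assuming $G\cong IG(q)$ with $q=\Delta-1$, I would use that $IG(q)$ is bipartite with parts $P$ (points) and $L$ (lines): no two vertices in different parts share a neighbour, so $H$ splits as a disjoint union of a graph on $P$ and a graph on $L$. Since any two points lie on a unique common line and any two lines meet in a unique common point, both of these graphs are complete; as $|P|=|L|=q^2+q+1$, this gives $\chi^i(G)=\chi(H)=q^2+q+1=\Delta^2-\Delta+1$.

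For the ``only if'' direction, suppose $\chi(H)=\Delta^2-\Delta+1$ and take a component $H_0$ of $H$ with $\chi(H_0)=\Delta^2-\Delta+1$. Brooks' theorem gives $\Delta^2-\Delta+1=\chi(H_0)\le \Delta(H_0)+1\le \Delta(H)+1\le \Delta^2-\Delta+1$, so every inequality is tight: $\Delta(H_0)=\Delta^2-\Delta$ and $H_0$ is a complete graph or an odd cycle. Since $\Delta\ge 3$ makes $\Delta^2-\Delta\ge 6>2$, the odd cycle is impossible, so $H_0\cong K_{\Delta^2-\Delta+1}$; let $S$ be the underlying set of $\Delta^2-\Delta+1$ vertices of $G$, which pairwise share neighbours. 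Fixing $v\in S$, the $\Delta^2-\Delta$ vertices of $S\setminus\{v\}$ lie inside $\bigcup_{w\in N(v)}(N(w)\setminus\{v\})$, a set of size at most $\sum_{w\in N(v)}(d(w)-1)\le \Delta(\Delta-1)=\Delta^2-\Delta$; tightness then forces, all at once, $d(v)=\Delta$, $d(w)=\Delta$ for every $w\in N(v)$, the sets $N(w)\setminus\{v\}$ ($w\in N(v)$) pairwise disjoint, and $S\setminus\{v\}=\bigsqcup_{w\in N(v)}(N(w)\setminus\{v\})$. Consequently every neighbour $w$ of a vertex of $S$ has $N(w)\subseteq S$ and $|N(w)|=\Delta=q+1$, and any two vertices of $S$ have a \emph{unique} common neighbour.

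Finally I would recognise the projective plane: set $L:=N(S)$ and view $(S,L)$ as an incidence structure under adjacency in $G$. Each point of $S$ is incident to $q+1$ lines, each line of $L$ to $q+1$ points, any two points to a unique common line, and counting incidences gives $|L|=|S|=q^2+q+1$; thus $(S,L)$ is a symmetric $2$-$(q^2+q+1,\,q+1,\,1)$ design, whence any two lines meet in exactly one point and $(S,L)\cong PG(2,q)$, so $G[S\cup L]\cong IG(q)$. Since every vertex of $S\cup L$ already has all $\Delta$ of its $G$-neighbours inside $S\cup L$ (points are joined only to lines of $L$, lines only to points of $S$), this set is a union of components of $G$, and connectivity forces $G\cong IG(q)$. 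The main obstacle is the ``only if'' direction: squeezing an actual clique $K_{\Delta^2-\Delta+1}$ in $H$ out of the mere equality $\chi(H)=\Delta^2-\Delta+1$, which is precisely where Brooks' theorem enters; once that clique is in hand, the tightness of the elementary degree count does all the structural work and the design-theoretic recognition step is routine.
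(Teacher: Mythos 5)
First, a caveat: the paper does not prove this statement at all --- it is quoted verbatim from Hahn, Kratochv\'il, \v{S}ir\'a\v{n}, and Sotteau and used only in the concluding section --- so there is no in-paper proof to compare against. Judged on its own, your strategy is the natural one and almost all of it checks out: the reduction to $\chi(H)$ for the common-neighbour graph, the degree bound $\Delta(H)\le\Delta^2-\Delta$, the ``if'' direction via the two cliques on points and on lines, the extraction of a clique $K_{\Delta^2-\Delta+1}$ from $\chi(H)=\Delta^2-\Delta+1$ via Brooks' theorem, and the tightness analysis forcing $d(v)=\Delta$, the partition $S\setminus\{v\}=\bigsqcup_{w\in N(v)}(N(w)\setminus\{v\})$, and the uniqueness of common neighbours are all correct.

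There is, however, one genuine gap, in the final recognition step: the parenthetical ``points are joined only to lines of $L$, lines only to points of $S$'' silently assumes $S\cap L=\emptyset$, equivalently that $S$ is an independent set of $G$, and nothing you proved rules this out. If some $u,v\in S$ are adjacent, then their unique common neighbour $x$ lies in $N(v)\setminus\{u\}\subseteq S$ (by the tightness property applied at $u$), so $x\in S\cap L$, $N(x)\subseteq S\cap L$, and by connectivity $V(G)=S=L$; you would then have a $\Delta$-regular graph on $\Delta^2-\Delta+1$ vertices in which every two vertices have exactly one common neighbour. Your counting arguments are all consistent with such a graph (the incidence count $|S|(q+1)=|L|(q+1)$ degenerates to a tautology), so the degenerate case does not contradict anything you wrote; without excluding it, the conclusion $|V(G)|=2(q^2+q+1)$ and the bipartite isomorphism $G\cong IG(q)$ fail. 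Excluding it needs a real input: either the Friendship Theorem (which forces a universal vertex, of degree $\Delta^2-\Delta>\Delta$, a contradiction) or the standard eigenvalue computation showing that a strongly regular graph with $\lambda=\mu=1$ exists only for degree $2$. With that one lemma inserted, the rest of your argument --- the symmetric $2$-$(q^2+q+1,q+1,1)$ design recognition and the closure of $S\cup L$ under neighbourhoods --- goes through.
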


A similar result, but for the exact square coloring was shown by Focaud \textit{et al.} in~\cite{FouEtAl}.

\begin{theorem}[Foucaud \textit{et al.}~\cite{FouEtAl}]\label{FocaudIG}
Let $G$ be a connected graph of maximum degree $\Delta\ge 3$. Then, $\chi^{\#2}(G) = \Delta^2 - \Delta + 1$ if and only if there exists a projective plane of order $q = \Delta - 1$ and $G$ is isomorphic to $IG(q)$.
\end{theorem}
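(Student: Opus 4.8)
The plan is to reduce the characterization to Brooks' theorem applied to an auxiliary graph, and then to recover the projective plane axioms by double counting. Write $G^{\#2}$ for the graph on $V(G)$ in which two vertices are adjacent exactly when they are at distance $2$ in $G$, so that $\chi^{\#2}(G)=\chi(G^{\#2})$. Since every vertex $v$ has at most $d(v)(\Delta-1)\le\Delta(\Delta-1)=\Delta^2-\Delta$ vertices at distance exactly $2$, we have $\Delta(G^{\#2})\le\Delta^2-\Delta$ and hence the universal bound $\chi^{\#2}(G)\le\Delta^2-\Delta+1$. For the ``if'' direction, in $IG(q)$ any two point-vertices lie on a unique common line and hence are at distance exactly $2$; since there are $q^2+q+1=\Delta^2-\Delta+1$ points, they must receive pairwise distinct colors, so $\chi^{\#2}(IG(q))\ge\Delta^2-\Delta+1$, and equality follows.

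For the ``only if'' direction, suppose $\chi(G^{\#2})=\Delta^2-\Delta+1$. Together with $\Delta(G^{\#2})\le\Delta^2-\Delta$ this forces $\chi(G^{\#2})=\Delta(G^{\#2})+1$, so by Brooks' theorem some component of $G^{\#2}$ is either a clique $K_{\Delta^2-\Delta+1}$ or an odd cycle; the latter needs $\Delta^2-\Delta=2$, impossible when $\Delta\ge 3$. Let $S$ be the vertex set of such a clique. As $S$ is a full component of $G^{\#2}$, every $v\in S$ sees in $G$ exactly the $\Delta^2-\Delta$ vertices of $S\setminus\{v\}$ at distance $2$ and no others, so equality holds in $|N^{\#2}(v)|\le d(v)(\Delta-1)$. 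This equality forces, for each $v\in S$: $d(v)=\Delta$, every neighbor of $v$ has degree $\Delta$, $N(v)$ is independent in $G$ (no triangle through $v$), and distinct neighbors of $v$ have no further common neighbor (no $4$-cycle through $v$). In particular, any two vertices of $S$ have a unique common neighbor in $G$.

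Set $q:=\Delta-1$ and let $T$ be the union of the neighborhoods of the vertices of $S$; then $S\cap T=\varnothing$ and each $v\in S$ has all $q+1$ of its neighbors in $T$. Writing $d_S(w)$ for the number of neighbors in $S$ of a vertex $w\in T$, counting paths $v\,w\,v'$ with $v\neq v'$ in $S$ gives $\sum_{w\in T}\binom{d_S(w)}{2}=\binom{|S|}{2}$, while counting edges between $S$ and $T$ gives $\sum_{w\in T}d_S(w)=(q+1)|S|$; combining these yields $\sum_{w\in T}d_S(w)^2=(q+1)^2|S|$, so the bound $d_S(w)\le d(w)\le q+1$ is tight for every $w\in T$. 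Hence every $w\in T$ has all its neighbors in $S$ and $|T|=|S|=q^2+q+1$. A dual count of paths $w\,v\,w'$ with $w\neq w'$ in $T$, together with the fact that two vertices of $T$ cannot share two neighbors in $S$ (else a pair of $S$ would share two neighbors in $T$), shows any two vertices of $T$ have a unique common neighbor in $S$. Thus $(S,T)$ with the incidence relation of $G$ satisfies all the axioms of a projective plane of order $q$, so $G[S\cup T]\cong IG(q)$; since $G$ is connected and no vertex of $S\cup T$ has a neighbor outside $S\cup T$, we get $G\cong IG(q)$ with $q=\Delta-1$.

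The step I expect to be the main obstacle is the equality analysis in the ``only if'' direction: squeezing out simultaneously the full regularity of $G$ near $S$ and the local girth conditions from a single counting inequality, and then verifying that the equality case of the double count over $T$ really forces both the regularity of $T$ and the dual ``two lines meet in one point'' axiom rather than merely an ``on average'' statement. One must also handle carefully the degenerate alternatives in Brooks' theorem, which is precisely where the hypothesis $\Delta\ge 3$ enters.
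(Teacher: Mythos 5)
The paper does not prove this statement: it is quoted from Foucaud \textit{et al.} as a known external result, so there is no in-paper argument to compare against. Judged on its own, your proof is correct and self-contained. The Brooks-theorem step on the exact-square graph correctly isolates a clique component $S$ of size $\Delta^2-\Delta+1$ (the odd-cycle alternative is properly killed by $\Delta\ge 3$); the equality analysis of $|N^{\#2}(v)|\le \sum_{u\in N(v)}(d(u)-1)\le d(v)(\Delta-1)$ genuinely forces all four conclusions you list ($\Delta$-regularity at and around $v$, no triangle through $v$, no $4$-cycle through $v$), which gives the ``two points lie on a unique line'' axiom; and both double counts over $T$ are tight term by term ($\sum_w d_S(w)(d_S(w)-(q+1))=0$ with nonpositive terms forces $d_S(w)=q+1$ for all $w\in T$, and the dual count of pairs of $T$ exactly exhausts $\binom{|T|}{2}$, so the ``two lines meet in a unique point'' axiom is exact rather than merely on average). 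Since every vertex of $S$ has all its neighbours in $T$ and vice versa, connectivity of $G$ gives $G\cong IG(q)$; the converse direction is immediate because the $q^2+q+1$ point-vertices of $IG(q)$ are pairwise at distance exactly $2$. This is essentially the standard argument (the same strategy underlies the analogous injective-coloring characterization of Hahn \textit{et al.}), and I see no gap.
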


\begin{figure}[H]
    \centering
    \includegraphics[scale=0.8]{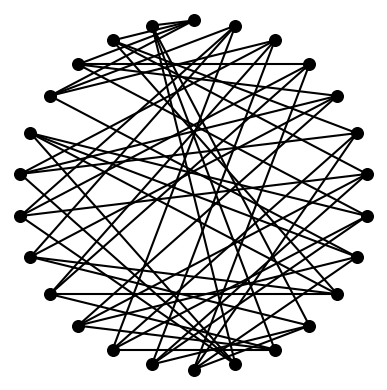}
    \caption{A $4$-regular bipartite graph $IG(3)$.}
    \label{fig:concl:bipartite}
\end{figure}

None of our results on injective and exact square coloring can be extended to non-planar graphs $G$ with maximum average degree $\mad(G)<4$ even in their non-list version. Indeed, note that for the graph $G = IG(3)$ as seen in \Cref{fig:concl:bipartite} we have $\chi^i(G)=\chi^{\#2}(G) = 13$ by \Cref{HahnIG,FocaudIG}. Moreover, removing any vertex from $G$ yields a graph $G'$ with $\mad(G')<4$ and $\chi^i(G') = \chi^{\#2}(G') = 12$, which still proves that our results are ``optimal'' in the sense that planarity is needed not only for sparseness. It is easy to see that $\chi^{\#2}_\ell(G)\leq \chi^i_\ell(G)\leq 12$ for every graph $G$ with $\Delta(G)=4$ and $\mad(G)<4$ and this bound is tight for the graph $G'$ as explained above.

\begin{theorem}\label{thm:mad}
For every graph $G$ with $\Delta(G)=4$ and $\mad(G)<4$, $\chi^i_\ell(G)\leq 12$. 
\end{theorem}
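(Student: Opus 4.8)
The plan is to argue by minimality and reduce the whole statement to a single degree condition, with no discharging phase needed. Let $G$ be a counterexample minimizing $|V(G)|+|E(G)|$. As usual $G$ is connected, and there is a list assignment $L$ with $|L(v)|\ge 12$ for every $v\in V(G)$ — which, after deleting colors, we may take to be exactly $12$ — such that $G$ has no injective $L$-coloring while every proper subgraph of $G$ does. I claim that $G$ has minimum degree at least $4$. Granting this, $G$ is $4$-regular, so $\mad(G)\ge \ad(G)=\frac{2|E(G)|}{|V(G)|}=4$, contradicting $\mad(G)<4$; hence no counterexample exists.

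To prove the claim, suppose $u\in V(G)$ has degree $d\le 3$ with neighbours $v_1,\dots,v_d$, and let $\phi$ be an injective $L$-coloring of $G-u$, which exists by minimality. In the injective sense, $u$ sees exactly the vertices of $\bigcup_{i=1}^{d}\bigl(N_G(v_i)\setminus\{u\}\bigr)$, and this set has size at most $\sum_{i=1}^{d}(d_G(v_i)-1)\le 3d\le 9$. The only reason $\phi$ might not extend to an injective $L$-coloring of $G$ is that $v_1,\dots,v_d$ now share the common neighbour $u$ and therefore must receive pairwise distinct colors, which $\phi$ does not necessarily provide. I remove this obstruction by recoloring the offending vertices one at a time: to recolor $v_i$, it suffices to choose a color of $L(v_i)$ avoiding both the at most $9$ colors that $v_i$ sees in $G-u$ (each of the at most three neighbours of $v_i$ in $G-u$ has at most three further neighbours) and the at most $d-1\le 2$ colors currently used on the other $v_j$'s; since $9+2<12$, such a color exists. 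This recoloring keeps $\phi$ a valid injective $L$-coloring of $G-u$, since changing the color of $v_i$ only affects constraints between $v_i$ and the vertices it sees, all of which we have avoided. After at most $d-1\le 2$ such steps the colors of $v_1,\dots,v_d$ are pairwise distinct; finally we color $u$, which still sees at most $9$ colors and hence has at least $12-9=3$ choices in $L(u)$. The result is an injective $L$-coloring of $G$, a contradiction, so $G$ has minimum degree at least $4$ and the proof is complete.

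I do not foresee a genuine obstacle: the argument is just a local refinement of the greedy bound $\chi^i_\ell(G)\le \Delta^2-\Delta+1=13$, using $\mad(G)<4$ only through the fact that it rules out $4$-regularity, and the single delicate point — separating the neighbours of the deleted vertex by local recoloring — has ample room ($12$ available colors against at most $11$ forbidden ones). It is worth pointing out that this is precisely where planarity is essential for the sharper bound $\Delta+7$ of \Cref{thm:injg3}: the graph obtained from $IG(3)$ by deleting one vertex has $\mad<4$ but injective chromatic number $12$, so $\Delta+7=11$ fails in the $\mad$ setting whereas $\Delta+8=12$ survives, and this is exactly the content of \Cref{thm:mad}.
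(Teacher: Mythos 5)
Your proposal is correct and takes essentially the same route as the paper: its proof of \Cref{thm:mad} is precisely the observation that a minimal counterexample would have minimum degree $4$ (via a reduction similar to \Cref{injg3:minimumDegree}), hence be $4$-regular, contradicting $\mad(G)<4$. Your local recoloring of the deleted vertex's neighbours simply fills in the details of that minimum-degree step, and the counting ($9+2<12$ for each recoloring, and at least $12-9=3$ colors left for $u$) is sound.
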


Indeed, if a counter-example $G$ to \Cref{thm:mad} exists, then it would have minimum degree 4 (the proof is similar to \Cref{injg3:minimumDegree}), which is a contradiction since $G$ would be a $4$-regular graph but $\mad(G)<4$. 

For 2-distance coloring, our result is also not extendable to graphs with $\mad(G)<4$ due to the graph in \Cref{fig:concl:mad2d} for which $\chi^2(G) = 13$. In 2016, Cranston and Rabern~\cite{CraRab16} showed the following result.

\begin{theorem}[Cranston and Rabern~\cite{CraRab16}]
If $G$ is a connected graph with maximum degree $\Delta \ge 3$ and $G$ is not
the Peterson graph, the Hoffman-Singleton graph, or a Moore graph with $\Delta = 57$, then $\chi^2_{\ell}(G)\le  \Delta^2 - 1$.
\end{theorem}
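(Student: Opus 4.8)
The plan is to translate the statement into a list-coloring problem on the square $G^2$ and then shave two colors off the trivial greedy bound using a Brooks-type theorem together with a Borodin--Kostochka-type theorem. Writing $H = G^2$ for the square of $G$ and $k = \Delta^2$, a $2$-distance list-coloring of $G$ is exactly a list-coloring of $H$, so $\chi^2_\ell(G) = \chi_\ell(H)$, and $\Delta(H) \le k$. The greedy bound already gives $\chi_\ell(H) \le \Delta(H) + 1 \le k + 1$; the goal is to prove $\chi_\ell(H) \le k - 1$ outside the three exceptional graphs. Since $G$ is connected, $H$ is connected, and I would split according to the value of $\Delta(H)$: if $\Delta(H) \le k - 2$ the greedy bound already yields $\chi_\ell(H) \le k - 1$, so the real work is confined to the two cases $\Delta(H) \in \{k-1,\, k\}$.

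The second step is a clique analysis of $H$, which I expect to be the technical heart of the argument. I would show that whenever $H$ contains a clique on $k$ or $k+1$ vertices, $G$ is forced to have, around some maximum-degree vertex, the locally tree-like structure of a Moore graph (girth at least $5$, with the relevant second-neighborhoods of full size and pairwise disjoint); propagating this constraint by connectivity and the resulting diameter-$2$ requirement forces $G$ to be a $\Delta$-regular Moore graph of diameter $2$, hence to attain the Moore bound $n = \Delta^2 + 1$. By the Hoffman--Singleton classification, such graphs exist only for $\Delta \in \{2,3,7,57\}$, and $\Delta = 2$ is excluded by hypothesis; the survivors are exactly the Petersen graph ($\Delta = 3$), the Hoffman--Singleton graph ($\Delta = 7$), and the putative Moore graph with $\Delta = 57$. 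For every other $G$ this yields $\omega(H) \le k - 1$.

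With $\omega(H) \le k - 1$ in hand I would finish by a Brooks/Borodin--Kostochka dichotomy. In the case $\Delta(H) = k - 1$, the list version of Brooks' theorem already gives $\chi_\ell(H) \le \Delta(H) = k - 1$, since the only connected obstruction of maximum degree $k - 1 \ge 8$ would be $K_k$, which $\omega(H) \le k - 1$ rules out. In the case $\Delta(H) = k$, I would invoke a list version of the Borodin--Kostochka theorem: a graph with maximum degree $k$ and no $K_k$ is $(k-1)$-choosable once $k$ is large enough. When $\Delta(G) \ge 4$ we have $k = \Delta^2 \ge 16$, comfortably inside the range where such theorems apply, so this disposes of all these cases at once.

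The main obstacle is the regime where $k$ is small, namely $\Delta(G) = 3$ and hence $k = 9$: this sits right at the threshold where Borodin--Kostochka-type conclusions are delicate, and moreover one needs the \emph{list} version rather than the ordinary chromatic version. I would handle $\Delta(G) = 3$ by hand, exploiting the sparsity and the special structure of squares of subcubic graphs (whose only relevant exception is the Petersen graph) to establish $(k-1) = 8$-choosability directly, for instance via a degeneracy or discharging argument on $H$. A secondary difficulty, internal to the clique analysis of the second step, is ruling out the intermediate possibility $\omega(H) = k$ — a diameter-$2$, defect-$1$ ``almost Moore'' configuration — by showing it either cannot occur for non-exceptional $G$ or is nonetheless $(k-1)$-choosable; pinning this down tightly is precisely what makes the list of exceptions contain exactly the three named graphs and nothing more.
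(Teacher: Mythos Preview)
The paper does not prove this theorem. It is quoted in the Conclusion as an external result of Cranston and Rabern~\cite{CraRab16} and is used only to remark that $\chi^2_\ell(G)\le 15$ when $\Delta(G)=4$; no argument for it is given or even sketched. So there is nothing in the paper to compare your proposal against.

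That said, your plan has a genuine gap worth flagging. You propose to finish, in the case $\Delta(H)=k=\Delta^2$ with $\omega(H)\le k-1$, by invoking ``a list version of the Borodin--Kostochka theorem'' to conclude $\chi_\ell(H)\le k-1$. No such theorem is available in the generality you need: the Borodin--Kostochka statement (that $\omega(G)<\Delta$ implies $\chi(G)\le\Delta-1$ for $\Delta\ge 9$) is still a conjecture even for ordinary chromatic number, and the proven instances (e.g.\ Reed's result for very large $\Delta$) do not come with a list-coloring analogue strong enough to cover all $k\ge 16$. Cranston and Rabern's actual proof does not appeal to a general Borodin--Kostochka-type black box; it exploits the specific structure of squares (in particular, how large cliques in $G^2$ arise from the neighbourhood structure of $G$) to obtain the bound directly. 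Your overall architecture---reduce to $H=G^2$, analyse $\omega(H)$ via the Moore-graph classification, then bound $\chi_\ell(H)$---is the right shape, but the final step needs a bespoke argument rather than an off-the-shelf theorem.
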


Thus, when $\Delta(G) = 4$, we immediately get that $\chi^2_{\ell}(G)\le 15$. We believe that the following conjecture is true. 

\begin{conjecture}
For every graph $G$ with $\Delta(G) = 4$ and $\mad(G)<4$, $\chi^2(G)\le 13$.
\end{conjecture}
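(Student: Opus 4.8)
The plan is to follow the discharging template of \Cref{thm:2dg4}, but---lacking planarity---to run the entire argument on vertices and to replace the girth hypothesis by the sparsity bound $\mad(G)<4$. I would take a counterexample $G$ minimizing $|V(G)|+|E(G)|$, so that $\Delta(G)=4$, $\mad(G)<4$, $\chi^2(G)\ge 14$, and every proper subgraph of $G$ is $2$-distance $13$-colorable; equivalently, I work in the square $G^2$ and aim to show $\chi(G^2)\le 13$. As a floor, the theorem of Cranston and Rabern already gives $\chi^2_\ell(G)\le \Delta^2-1=15$, so the task is to save exactly two colors, and since the statement concerns the chromatic (non-list) parameter, I am free to \emph{choose} the inductive colorings cleverly rather than to cope with adversarial lists.

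First I would harvest local reducible configurations as in \Cref{2dg4:minimumDegree} and \Cref{2dg4:config1}. A $1$-vertex sees at most $4<13$ vertices and is deleted outright; a $2$-vertex sees at most $8$, and a $3$-vertex at most $3+3\cdot 3=12$ vertices, so each is individually colorable once its neighbours are correctly separated. Clustered low-degree vertices are then reduced by the familiar device of deleting one connecting edge $uv$ (which only lowers $\mad$), coloring the smaller graph by minimality, and greedily recoloring the uncoloured $u$ and $v$, whose $2$-distance degrees have dropped below $13$. This should yield the reducibility of, for instance, two adjacent $3$-vertices, a $3$-vertex adjacent to a $2$-vertex, and a $3$-vertex incident to two $4$-cycles. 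A lone $2$-vertex is handled, as in \Cref{2dg4:minimumDegree}, by deletion when its two neighbours already lie at distance at most $2$ and by suppression otherwise, the latter demanding a short check that $\mad<4$ is not pushed up to $4$ by the added edge.

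The discharging itself is where the $\Delta=4$, threshold-$4$ regime is genuinely degenerate and must be confronted head-on. Assigning $\mu(v)=d(v)-4$ gives $\sum_v\mu(v)=2|E(G)|-4|V(G)|<0$ by $\mad(G)<4$, but now \emph{every} vertex starts with charge at most $0$ and the $4$-vertices start at exactly $0$: there is no positive charge anywhere to redistribute. Hence the discharging can move nothing, and the only route to the contradiction $\sum_v\mu^*(v)\ge 0$ is to force $\mu(v)=0$ for all $v$, that is, to prove that an irreducible $G$ is $4$-regular, whence $\mad(G)=4$, contradicting $\mad(G)<4$. The whole proof therefore collapses to a single demand: that \textbf{every} vertex of degree at most $3$ lie in a reducible configuration.

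The hard part---and, I expect, the reason the statement is only conjectured---is the isolated $3$-vertex $v$ whose neighbours $a,b,c$ all have degree $4$ and which shares no second neighbour with any of them, so that $d^{\#2}(v)=12$ and nothing forces $a,b,c$ to be pairwise separated. Deleting $v$ and recoloring does not suffice: an inductive $2$-distance coloring of $G-\{v\}$ may give $a,b,c$ fewer than three colors, and reinserting $v$ then violates the constraint between the coinciding pair, while the naive repairs---adding edges among $a,b,c$, or splicing in a fresh common neighbour---either raise a degree above $4$ or fail to decrease $|V(G)|+|E(G)|$. This is precisely the local picture realised by the extremal graphs: the incidence graph $IG(3)$ of \Cref{HahnIG,FocaudIG}, with one vertex deleted, is an almost-$4$-regular graph of maximum average degree below $4$ whose scattered $3$-vertices drive $\chi^2$ up to the conjectured value $13$. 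Overcoming this should require a genuine recoloring argument---a Kempe-type exchange on two color classes meeting $N(v)$, using the $\mad<4$ sparsity to guarantee that $a,b,c$ can be separated without propagating conflicts---or a finite structural classification showing that the projective-plane-like neighbourhood forcing $d^{\#2}(v)=12$ cannot tile a graph of maximum average degree below $4$. Converting either heuristic into a theorem is the crux that the discharging skeleton above does not, by itself, resolve.
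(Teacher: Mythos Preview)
This statement is a \emph{conjecture} in the paper, not a theorem: the authors do not prove it. They note that Cranston and Rabern's result gives $\chi^2_\ell(G)\le 15$ for $\Delta=4$, exhibit a graph on 13 vertices with $\Delta=4$, $\mad<4$, and diameter~2 (\Cref{fig:concl:mad2d}) showing that $13$ would be tight, and then simply state the conjecture. There is no proof in the paper for you to be compared against.

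Your proposal is not a proof either, and you say so yourself. You correctly observe that with $\mu(v)=d(v)-4$ and $\Delta=4$ there is no positive charge anywhere, so the discharging collapses to the single demand that every $3^-$-vertex be reducible. You then identify the genuine obstruction: an isolated $3$-vertex $v$ with three $4$-neighbours and $d^*(v)=12$, for which neither deletion nor edge-addition yields a valid reduction, and for which you can only gesture at a Kempe-exchange or structural-classification argument. That is precisely the gap; your sketch does not close it, and neither does the paper. So the honest summary is: the statement remains open, your outline reproduces the natural first attack and correctly diagnoses where it stalls, and the paper offers nothing further.
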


\begin{figure}[!htb]
    \centering
    \includegraphics[scale=0.8]{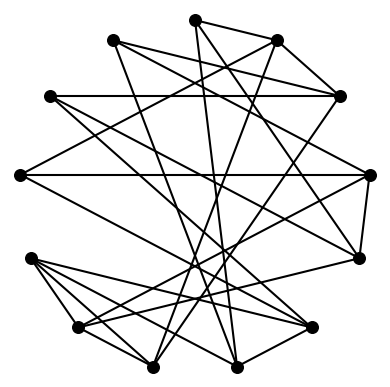}
    \caption{A graph $G$ on 13 vertices with $\Delta(G)=4$, $\mad(G)<4$, and diameter 2.}
    \label{fig:concl:mad2d}
\end{figure}

We are also interested in planar graphs with girth 4. Due to constructions in \cite{lm21bis} and \cite{wegner} we have some lower bounds for the 2-distance coloring of planar graphs with girth 4. We believe these lower bounds to be tight and thus we conjecture the following.

\begin{conjecture}
For every planar graph $G$ with girth at least 4 and maximum degree $\Delta$, $\chi^2(G)\le \Delta+3$ for $3\leq \Delta\leq 5$ and $\chi^2(G)\le\lfloor\frac32\Delta\rfloor$ for $\Delta\geq 6$.
\end{conjecture}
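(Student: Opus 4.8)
The plan is to attack the conjecture by the same discharging framework used throughout this paper, applied to a minimal counterexample $G$ (minimizing $|V(G)|+|E(G)|$) equipped with a list assignment $L$ satisfying $|L(v)|=\Delta+3$ when $3\le\Delta\le 5$ and $|L(v)|=\lfloor\frac32\Delta\rfloor$ when $\Delta\ge 6$. As before I would assign $\mu(v)=d(v)-4$ and $\mu(f)=d(f)-4$; since $g(G)\ge 4$ forces $d(f)\ge 4$, every face has nonnegative initial charge, with $4$-faces exactly neutral, and \Cref{equation} gives a negative total $-8$. The whole difficulty is therefore concentrated on the $3^-$-vertices, which carry all the negative charge, and on the abundance of $4$-faces, which can absorb no charge of their own.

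First I would build a substantially stronger library of reducible configurations than the one behind \Cref{thm:2dg4}, because the target bound is far tighter: for $\Delta=4$ the conjecture asks for $7$ colors against the $11$ of \Cref{thm:2dg4}. A degree-$4$ vertex can have $2$-distance degree $d^*(v)$ as large as $\Delta^2=16$, far above the $7$ available colors, so no low-degree vertex is directly colorable and naive vertex deletion and recoloring fail. Instead every reducibility proof must exploit the $4$-cycles that $g(G)\ge 4$ permits: a $4$-cycle forces two second-neighbors of a vertex to coincide, creating exactly the color coincidences that make \Cref{cor:Hall} applicable after precoloring $G$ minus a small subgraph. I would classify $2$-vertices by the degrees of their neighbors and the faces they lie on, forbid short chains of $3^-$-vertices, and forbid dense local clusters of $3$-vertices incident to several $4$-faces, certifying each reducible by a Hall-type extension argument in the spirit of \Cref{2dg4:config2i,2dg4:config2ii}.

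The discharging rules would then route charge from $5^+$-faces and from $4$-vertices toward the $3^-$-vertices, under the crucial constraint that $4$-faces are inert, so that a small-degree vertex sitting on many $4$-faces is the worst case. The structural lemmas must then guarantee that such vertices are sparse enough (consistent with the inequality $\mad(G)<4$ implied by $g\ge 4$) that the surviving $5^+$-faces and heavy vertices can cover the deficit. For the small-$\Delta$ regime I would calibrate a nearly lossless redistribution against the conjectured extremal constructions of \cite{lm21bis,wegner}, ensuring every discharging inequality is tight precisely on those graphs.

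The main obstacle, I expect, is twofold. Within standard discharging, the tightness of the conjectured bounds leaves essentially no slack, so the reducible-configuration library must be simultaneously rich enough to be unavoidable and local enough that each configuration's reducibility can be certified by \Cref{cor:Hall}; reconciling these two demands is where the real work lies, and it is already well beyond the effort needed for \Cref{thm:2dg4}. More seriously, the regime $\Delta\ge 6$ with the coefficient $\frac32$ is the girth-$4$ analogue of \Cref{conj:Wegner} itself, which remains open for general planar graphs. I would not expect a purely local discharging argument to deliver the $\lfloor\frac32\Delta\rfloor$ bound, and it seems likely that this half of the conjecture requires a genuinely global or asymptotic ingredient, in the style of Havet \textit{et al.}~\cite{havet}, rather than the finite reducible-configuration analysis that would settle the cases $3\le\Delta\le 5$.
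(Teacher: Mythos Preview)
This statement is posed in the paper as an open \emph{conjecture} in the concluding section; the paper offers no proof, only the observation that the claimed bounds match the known lower-bound constructions from \cite{lm21bis,wegner}. There is therefore no argument in the paper against which to compare your proposal.

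Your write-up is not a proof either, and you are explicit about this: it is a high-level plan that correctly identifies where the difficulty lies. Your diagnosis is accurate on both counts. For $3\le\Delta\le 5$ the target $\Delta+3$ is so far below the trivial $d^*(v)$ bound that the kind of local reducibility arguments used for \Cref{thm:2dg4} would need to become dramatically more intricate, with essentially no slack in the discharging; the paper's own $\Delta+7$ result for $\Delta=4$, $g\ge 4$ is four colors away from the conjectured $7$, which gives a sense of the gap. For $\Delta\ge 6$ the bound $\lfloor\frac32\Delta\rfloor$ is the girth-$4$ version of Wegner's conjecture, and you are right that a purely local discharging proof is not expected to reach it; the only known route to the correct leading constant in the girth-$3$ case is the asymptotic argument of Havet \textit{et al.}~\cite{havet}.

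In short: there is no gap to point to because there is no proof on either side. Your outline is a reasonable description of how one would \emph{begin}, together with an honest assessment of why the problem remains open.
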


Similarly, due to constructions in~\cite{LuzSkreTan09}, we conjecture the following for injective and exact square coloring which is the same in the case of planar graphs with girth 4.

\begin{conjecture}
For every planar graph $G$ with girth at least 4 and maximum degree $\Delta$, $\chi^i(G)\le 4$ for $\Delta=3$, $\chi^i(G)=\chi^{\#2}(G)\le \Delta+2$ for $4\leq \Delta\leq 5$ and $\chi^i(G)=\chi^{\#2}(G)\le \lfloor\frac32\Delta\rfloor$ for $\Delta\geq 6$.
\end{conjecture}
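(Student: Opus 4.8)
The plan is first to collapse the three printed regimes into a single target. Since $G$ has girth at least $4$ it is triangle-free, so no two adjacent vertices share a neighbour and $\chi^i(G)=\chi^{\#2}(G)$ by the equality recorded in the introduction; it therefore suffices to bound $\chi^i_\ell$. Moreover the three stated bounds all agree with $\lfloor\tfrac32\Delta\rfloor$ for every $\Delta\ge 3$ (indeed $\lfloor\tfrac32\cdot 3\rfloor=4$, while $\lfloor\tfrac32\cdot 4\rfloor=6=\Delta+2$ and $\lfloor\tfrac32\cdot 5\rfloor=7=\Delta+2$), so the conjecture is equivalent to proving $\chi^i_\ell(G)\le\lfloor\tfrac32\Delta\rfloor$ for planar $G$ of girth at least $4$. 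I would fix $\Delta$, set $k=\lfloor\tfrac32\Delta\rfloor$, and take a minimal counterexample $G$ (minimising $|V(G)|+|E(G)|$) with $\chi^i_\ell(G)>k$; as in every proof of this excerpt, $G$ is connected and the goal is a contradiction with \Cref{equation} via discharging, after isolating reducible configurations.

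The structural phase would generalise \Cref{injg4:config1,injg4:config2,injg4:config3,injg4:config4} from $\Delta=4$ to arbitrary $\Delta$, with the color budget lowered to $k$. The governing fact is that an uncolored vertex $u$ is colorable whenever $d^{\#2}(u)<k$, and $d^{\#2}(u)\le\sum_{w\in N(u)}(d(w)-1)$, with equality when $u$ lies on no $4$-cycle. This forces, exactly in the spirit of \Cref{injg4:counting} and \Cref{exact:counting}, that low-degree vertices create large exact-$2$-neighbourhoods: a $2$-vertex may only be adjacent to vertices $u$ with $d^{\#2}(u)\ge k$, hence to $\Delta$-vertices whose further neighbours are again of high degree. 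I would then prove that adjacent $3^-$-vertices are forbidden, that $2$- and $3$-vertices are pairwise far apart (quantified by a facial-distance bound on large faces as in \Cref{injg4:config4}), and that $4$-cycles cannot cluster around low-degree vertices, using Hall's theorem (\Cref{cor:Hall}) to recolor an uncolored patch whenever two non-seeing vertices can share a color.

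For the discharging phase, assign $\mu(x)=d(x)-4$ to vertices and faces. Since girth $4$ forbids triangles, every face has size at least $4$, so the only deficient objects are $2$-vertices ($-2$), $3$-vertices ($-1$) and $4$-faces ($0$), while $\Delta$-vertices carry charge $\Delta-4$ and $5^+$-faces carry $d(f)-4$. The new ingredient relative to \Cref{injg4:discharging} and \Cref{exact:discharging} is that, for large $\Delta$, high-degree vertices themselves carry substantial surplus, so the rules should push charge not only from $5^+$-faces to incident low-degree vertices but also from $\Delta$-vertices along incident edges to their $2$- and $3$-neighbours; the structural reductions above are precisely what guarantee each deficient vertex is surrounded by enough such donors, and the girth-$4$ sparsity is what must make the tighter balance close. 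For the small cases $\Delta\in\{3,4,5\}$ this is a direct refinement of the discharging already carried out in the excerpt, with the budget reduced from $\Delta+5$ to $\Delta+2$.

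The genuine obstacle is the regime $\Delta\ge 6$, where $\lfloor\tfrac32\Delta\rfloor$ is an exact Wegner-type constant. Even for ordinary $2$-distance coloring the precise value is open for large $\Delta$; the best general planar bound, due to Havet \textit{et al.}~\cite{havet}, is only $\tfrac32\Delta(1+o(1))$, and since injective coloring is a relaxation this yields $\chi^i(G)\le\tfrac32\Delta(1+o(1))$ but not the floor. Converting this asymptotic estimate into the additive-error-free bound $\lfloor\tfrac32\Delta\rfloor$, uniformly in $\Delta$ and using only girth-$4$ sparsity, is where I expect the argument to stall, and it is the reason the statement remains a conjecture; the realistic near-term contribution is a complete discharging proof of the cases $\Delta\in\{3,4,5\}$ (the $\Delta+2$ bound), which lies just beyond the $\Delta+5$ bound of \Cref{thm:injg4} and whose tightness is witnessed by the girth-$4$ constructions of~\cite{LuzSkreTan09}, leaving no slack to exploit.
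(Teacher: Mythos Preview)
The statement you were asked to prove is a \emph{conjecture} posed in the Conclusion (\Cref{conclusion}); the paper provides no proof of it, and indeed presents it explicitly as open. There is therefore no paper proof to compare your proposal against. You recognise this yourself in your final paragraph, where you correctly note that the $\lfloor\tfrac32\Delta\rfloor$ bound for $\Delta\ge 6$ is a Wegner-type constant that current techniques cannot reach, and that the realistic target is the small-$\Delta$ cases.

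Your preliminary reduction is sound: for girth at least $4$ the graph is triangle-free, so $\chi^i=\chi^{\#2}$, and the three regimes do collapse to the single bound $\lfloor\tfrac32\Delta\rfloor$ for all $\Delta\ge 3$. Your sketch of how one would attack the small cases---generalising the reducible configurations of \Cref{injg4} with a tighter color budget and letting high-degree vertices donate charge directly---is a reasonable research plan, though note that even the case $\Delta=4$ with bound $\Delta+2=6$ is already three colors below the paper's \Cref{thm:injg4}, so ``direct refinement'' undersells the work required. One small correction: you cite \texttt{injg4:config2}, but the paper has no lemma with that label (the numbering in \Cref{injg4} skips from \Cref{injg4:config1} to \Cref{injg4:config3}).

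In short: there is no gap to diagnose and no alternative approach to contrast, because the paper does not claim a proof. Your write-up is an honest outline of why the conjecture is hard, not a proof, and should be read as such.
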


Finally, unlike in the injective coloring, in the exact square coloring we are only concerned by conflicts between vertices at distance exactly 2, hence the presence of triangles does not create any conflicts. Thus, we believe that a similar conjecture holds for exact square coloring.

\begin{conjecture}
For every planar graph $G$ with maximum degree $\Delta$, $\chi^{\#2}(G)=\lfloor\frac32\Delta\rfloor$.
\end{conjecture}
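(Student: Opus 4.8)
The plan is to prove the upper bound $\chi^{\#2}_\ell(G)\le\lfloor\tfrac32\Delta\rfloor$ (the matching lower bound being witnessed by the known Wegner-type planar constructions, so the content is the upper bound) by the same minimal-counterexample-plus-discharging scheme used throughout this paper. Fix $\Delta$, let $G$ minimise $|V(G)|+|E(G)|$ among counterexamples, and assign $\mu(v)=d(v)-4$ and $\mu(f)=d(f)-4$ so that the total charge is negative by \Cref{equation}. The first task is a library of reducible configurations. The distinctive feature of exact square colouring is that adjacent vertices, and vertices that share a neighbour inside a triangle, do \emph{not} conflict, so a vertex whose incident edges lie in many short cycles has exact $2$-distance degree $d^{\#2}$ well below the trivial bound $d(v)(\Delta-1)$; consequently the natural reducible objects are vertices of small degree, vertices of small $d^{\#2}$, and vertices whose distance-$2$ neighbourhoods collapse onto one another, each reduced by deleting a vertex or an edge, invoking minimality, and finishing greedily or via \Cref{cor:Hall}, exactly as in \Cref{exact:counting,exact:config1,exact:config2,exact:config3,exact:config4}. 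These should be organised into a dichotomy of the form ``either $v$ has small $d^{\#2}$, or $v$ is surrounded by enough large faces and high-degree vertices to subsidise its charge deficiency'', the analogue of the heavy-vertex argument underlying the $\Delta\ge 8$ case of Wegner's conjecture.

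With that structure in hand I would run the discharging step. Degree-$\ge 4$ vertices far from degeneracies keep non-negative charge for free; $2$- and $3$-vertices, and the degenerate neighbourhoods created by triangles, are fed by their incident $5^+$-faces, so the rules will read ``every $5^+$-face gives a fixed fraction to each incident low-degree or degenerate vertex'', together with face-to-face transfers across $5$-faces, in the style of \ru0--\ru3 of \Cref{exact:discharging}. The face side is the easy half: a $d(f)$-face with $d(f)\ge 6$ sends at most $\tfrac13$ per incident edge once the reducibility lemmas guarantee that the two objects fed across a single edge never overlap, and $5$-faces are dispatched by a short case split on the number of incident low-degree vertices and adjacent $3$-faces. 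The delicate point is to pick the weights so that the vertex charges balance uniformly in $\Delta$; this is conceivable precisely because the target $\lfloor\tfrac32\Delta\rfloor$ is linear, the $3\Delta/2$ coefficient being pinned down by the triangular-lattice-like extremal graphs.

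The main obstacle is exactly the obstacle to Wegner's conjecture for $4\le\Delta\le7$: in that range a vertex of degree $\Delta$ sitting in a dense patch of triangles and $4$-faces can still see close to $\lfloor\tfrac32\Delta\rfloor$ distinct colours while none of its neighbours is forced to have large degree, so the ``borrow charge from a heavy neighbour'' philosophy collapses and no purely local discharging argument can close the gap. Overcoming this would require either a global colouring argument --- a semi-random or entropy-compression scheme in the spirit of Havet, van den Heuvel, McDiarmid and Reed, rewritten so that adjacency and triangle-membership are not treated as constraints --- or a very intricate, $\Delta$-by-$\Delta$ reducibility analysis of the dense triangulated regions. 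I would first try to push the discharging proof through for $\Delta\ge 8$, where the asymptotic $\tfrac32\Delta(1+o(1))$ bound for $2$-distance colouring already yields $\lfloor\tfrac32\Delta\rfloor+1$ and one only needs to shave a single colour by exploiting that exact square colouring genuinely relaxes $2$-distance colouring; and I would treat $\Delta\in\{4,5,6,7\}$ separately, with $\Delta=4$ --- a strengthening of \Cref{thm:exact} from $\Delta+6$ to $\Delta+2$ colours --- as the natural first target.
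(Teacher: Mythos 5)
This statement is left as an open conjecture in the paper --- the authors prove no part of it --- and your proposal does not prove it either: it is a research programme, and by your own admission it stops exactly at the point where the proof would have to begin. Concretely, the discharging template you invoke works in \Cref{exact} only because $\Delta+6=10$ colours strictly exceed the exact $2$-distance degree of the small vertices being reduced (a $3$-vertex has $d^{\#2}\leq 3(\Delta-1)=9$, so \Cref{exact:counting} and the configurations of \Cref{exact:config1} give genuine slack). With only $\lfloor\tfrac32\Delta\rfloor$ colours (six when $\Delta=4$) even a $2$-vertex can see $2(\Delta-1)$ colours, so the minimum-degree and small-$d^{\#2}$ reductions you list no longer reduce anything, and you supply no replacement configurations, no discharging rules, and no verification that faces retain non-negative charge under the initial assignment $\mu=d-4$; the ``dichotomy'' and ``heavy-neighbour subsidy'' are named but never formulated. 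You also concede that for $4\leq\Delta\leq 7$ no purely local argument of this kind is known to work, and that for $\Delta\geq 8$ the known $2$-distance machinery only yields $\lfloor\tfrac32\Delta\rfloor+1$ with no mechanism offered for shaving the last colour by exploiting the exact-square relaxation.

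A second gap concerns the other half of the conjectured equality: you assert the lower bound is ``witnessed by the known Wegner-type planar constructions,'' but those constructions are extremal for $2$-distance (or injective) colouring, and the conflicts they create frequently come from adjacent vertices or vertices sharing a neighbour on a triangle --- precisely the conflicts that vanish in exact square colouring. That $\chi^{\#2}$ of such a construction still reaches $\lfloor\tfrac32\Delta\rfloor$ requires a separate verification (or a new construction), which you do not provide. So the proposal is a reasonable sketch of how one might attack the conjecture, consistent in spirit with the paper's methods, but it contains no proof of either the upper or the lower bound.
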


\section*{Acknowledgements}
Both authors were supported by the research grant PHC PROTEUS 2020, project EColoGra N° 44236RL. The first author was also supported by the INS2I project ACDG.
The second author acknowledges support by the Young Researchers Grant of the Slovenian Research Agency and the partial support by the program P1--0383 and the project J1--3002.

\bibliographystyle{plain}
\bibliography{References.bib}

\end{document}